\documentclass[12pt,reqno]{amsart}
\title{On the Unitary Cohomology of Semisimple Groups}
\usepackage{amsmath, amsthm, amsopn, amssymb, microtype,mathtools}
\usepackage{mathrsfs} 
\usepackage[all,cmtip]{xy}
\usepackage{enumerate, tikz, etoolbox, intcalc, geometry, caption, subcaption, afterpage}
\usepackage{tikz-cd}
\usepackage[english]{babel}
\usepackage{comment}
\usepackage[utf8]{inputenc}
\usepackage{graphicx}

\newsavebox{\waterbox}
\newsavebox{\earthbox}
\newsavebox{\firebox}
\newsavebox{\airbox}

\sbox{\waterbox}{\raisebox{-0.4ex}{\includegraphics[height=2ex]{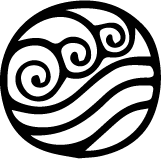}}}
\sbox{\earthbox}{\raisebox{-0.4ex}{\includegraphics[height=1.8ex]{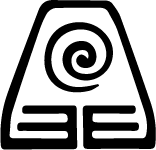}}}
\sbox{\firebox}{\raisebox{-0.4ex}{\includegraphics[height=2ex]{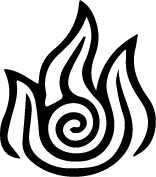}}}
\sbox{\airbox}{\raisebox{-0.4ex}{\includegraphics[height=2ex]{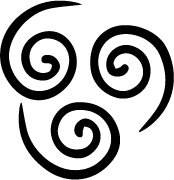}}}

\newcommand{\watersym}{\usebox{\waterbox}}
\newcommand{\earthsym}{\usebox{\earthbox}}
\newcommand{\firesym}{\usebox{\firebox}}
\newcommand{\airsym}{\usebox{\airbox}}

\author[U. Bader]{Uri Bader\textsuperscript{\earthsym}}
\address{university of Maryland, College Park, USA and Weizmann Institute of Science, Rehovot, Israel}
\email{uri.bader@gmail.com}

\author[M. Glasner]{Michael Glasner\textsuperscript{\watersym}}
\address{Weizmann Institute of Science, Rehovot, Israel}
\email{michael.glasner.a@gmail.com}

\author[Y. Gorfine]{Yuval Gorfine\textsuperscript{\airsym}}
\address{Weizmann Institute of Science, Rehovot, Israel} % (Added missing address for Yuval!)
\email{yuval.gorfine@gmail.com}

\author[R. Sauer]{Roman Sauer\textsuperscript{\firesym}}
\address{Karlsruhe Institute of Technology, Karlsruhe, Germany}
\email{roman.sauer@kit.edu}

\thanks{\textsuperscript{\watersym}\textsuperscript{\airsym}Weizmann Institute of Science;}
\thanks{\textsuperscript{\firesym}Karlsruhe Institute of Technology;}
\thanks{\textsuperscript{\earthsym}University of Maryland and Weizmann Institute of Science.}

\subjclass[2020]{Primary 22E41; Secondary 22E46, 20G10}
\keywords{Continuous Cohomology, Simple Lie Groups, Unitary Representations, Torsion Cohomology, Cohomology of Arithmetic Groups}

\usepackage[colorlinks=true,urlcolor=blue,pdfborder={0 0 0}]{hyperref}
\hypersetup{linkcolor=[rgb]{0,0,0.6}}
\hypersetup{citecolor=[rgb]{0,0.6,0}}
\usepackage[nameinlink]{cleveref}
\crefname{theorem}{Theorem}{Theorems}
\crefname{theorem}{Theorem}{Theorems}
\crefname{mainthm}{Theorem}{Theorems}
\crefname{lemma}{Lemma}{Lemmas}
\crefname{lem}{Lemma}{Lemmas}
\crefname{remark}{Remark}{Remarks}
\crefname{prop}{Proposition}{Propositions}
\crefname{defn}{Definition}{Definitions}
\crefname{corollary}{Corollary}{Corollaries}
\crefname{cor}{Corollary}{Corollaries}
\crefname{section}{Section}{Sections}
\crefname{figure}{Figure}{Figures}
\crefname{quest}{Question}{Questions}

\newcommand{\N}{\mathbb{N}}
\newcommand{\Z}{\mathbb{Z}}
\newcommand{\Q}{\mathbb{Q}}
\newcommand{\R}{\mathbb{R}}
\newcommand{\C}{\mathbb{C}}

\newcommand{\calN}{\mathcal{N}}

\newtheorem{theorem}{Theorem}[section]
\newtheorem{lemma}[theorem]{Lemma}
\newtheorem{proposition}[theorem]{Proposition}
\newtheorem{corollary}[theorem]{Corollary}

\newtheorem{question}[theorem]{Question}
\newtheorem{prob}[theorem]{Problem}
\newtheorem{mainthm}{Theorem}

\theoremstyle{definition}
\newtheorem{definition}[theorem]{Definition}
\newtheorem{example}[theorem]{Example}
\newtheorem{remark}[theorem]{Remark}
\newtheorem{terminology}[theorem]{Terminology}
\newtheorem{conjecture}[theorem]{Conjecture}

\newcommand{\supp}{\operatorname{Supp}}

\newcommand{\SL}{\operatorname{SL}}
\newcommand{\PSL}{\operatorname{PSL}}

\newcommand{\SO}{\operatorname{SO}}
\newcommand{\torH}{\mathring{H}}
\newcommand{\barH}{\bar{H}}

\newcommand{\abs}[1]{\left| #1 \right|}

\newcommand{\Ind}{\operatorname{Ind}}
\newcommand{\Res}{\operatorname{Res}}

\makeatletter
\renewcommand\paragraph{\@startsection{paragraph}{4}{\z@}%
  {1.5ex \@plus 1ex \@minus .2ex}{-1em}%
  {\normalfont\normalsize\bfseries}}
\makeatother

\begin{document}

\begin{abstract}
We study the continuous homology and cohomology of semisimple Lie groups with coefficients in arbitrary unitary representations. The case of irreducible representations was determined by Vogan and Zuckerman; we focus on reducible representations. The (co)homology splits into its \emph{Hausdorff} and \emph{torsion} parts. The Hausdorff part is governed by containment of irreducible cohomological representations. We show that the torsion part is governed by \emph{weak containment} of irreducible cohomological representations. Precisely, we show that a unitary representation admits non-zero torsion if and only if there is a cohomological point which is not isolated in its support. We discuss in length examples of rank-$1$ groups, completely determining unitary cohomology for the group $\SO^\circ(n,1)$.

For a simple Lie group, \cite{BaderSauer} showed that the first degree in which it obtains non-trivial cohomology for some unitary representation with no invariant vectors is related to the rank of the group. We discuss the analogous question regarding torsion cohomology, and show that the corresponding first degree could be much higher: in the presence of property (T), it is bounded below by the square root of the dimension of the symmetric space. A technical device that we use  is the restriction to well chosen dense subgroups which satisfy finiteness properties, which we call \emph{cohomological witnesses}. We combine it with results on the unitary cohomology of groups which satisfy finiteness properties. These results are of an independent interest.
\end{abstract}

\maketitle

\section{introduction}\label{sec1}
The goal of this paper is to systematically study the continuous homology and cohomology of a semisimple group $G$ with coefficients in unitary representations. 
We will explain below our conventions regarding the meaning of the term ``semisimple'' (see Terminology~\ref{terminology:semisimple}).
For a unitary representation $\pi$, the continuous homology and cohomology in degree $q$ will be denoted $H_q(G; \pi)$ and $H^q(G;\pi)$ respectively.
These spaces have natural structures of topological vector spaces that are not necessarily Hausdorff.

\begin{mainthm}\label{intro:torsion-reduced}
Let $G$ be a semisimple group and let $\pi$ be a unitary representation.
Then in every degree $q$ both $H_q(G;\pi)$ and $H^q(G;\pi)$ are decomposed, as topological vector spaces, into a direct sum of a Hilbert space and a space with the trivial topology. This decomposition is functorial with respect to the representation $\pi$.
\end{mainthm}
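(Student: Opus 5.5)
We compute the continuous (co)homology with a complex that is a cochain complex of Hilbert spaces with bounded differentials. Then the decomposition reduces to a statement about such complexes.

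\textbf{Step 1: reduction to a complex of Hilbert spaces.} Let $K\le G$ be a maximal compact subgroup, $\mathfrak g=\mathfrak k\oplus\mathfrak p$ the Cartan decomposition, and $\pi^\infty$ the smooth vectors. By van Est, $H^q(G;\pi)$ is the cohomology of the relative Lie algebra complex $C^q=\operatorname{Hom}_K(\wedge^q\mathfrak p,\pi^\infty)$. This complex consists of Fréchet spaces rather than Hilbert spaces, so we work with the $K$-type decomposition instead. Write $\pi=\widehat\bigoplus_{\tau}\pi_\tau$ over $K$-types $\tau$. The Casimir $\Omega$ of $G$ commutes with the differentials. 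By Kuga's formula, $\Omega$ acts on $C^\bullet$ as a Laplacian: it is chain homotopic to zero (Wigner's lemma), so $\Omega$ acts by $0$ on cohomology.

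Decompose $\pi$ spectrally with respect to $\Omega$, or better with respect to the centre $Z(\mathfrak g)$, via a direct integral over the unitary dual. Split $\pi=\pi_{0}\oplus\pi_{\neq0}$ according to whether the infinitesimal character is, or is not, one of the finitely many infinitesimal characters that appear in $\wedge^\bullet\mathfrak p\otimes$ finite-dimensional data. For a unitary representation the relevant character is that of the trivial representation.

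\textbf{Step 2: the part away from the cohomological spectrum.} On $\pi_{\neq0}$, the function $\Omega$ (or a suitable element of $Z(\mathfrak g)$) is bounded away from $0$ on a neighbourhood of the cohomological points. The problem is that it need not be bounded away from $0$ on the whole spectrum. We therefore cut further. Take $\pi_\epsilon$ to be the spectral subspace where the infinitesimal character lies within distance $\epsilon$ of the trivial one, and let $\pi^\epsilon$ be its complement.

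On $\pi^\epsilon$ the Casimir, acting through a bounded inverse on each $K$-isotypic piece, gives a contracting homotopy. By Vogan–Zuckerman and Wigner, only finitely many $K$-types $\tau$ with $\operatorname{Hom}_K(\wedge^q\mathfrak p,\tau)\ne0$ matter. Restricted to the finitely many $K$-types occurring in $\wedge^\bullet\mathfrak p$, $C^\bullet$ becomes a finite complex of Hilbert spaces $\operatorname{Hom}_K(\wedge^q\mathfrak p,\pi_\tau)$. The differentials are given by $\mathfrak p$-action, which is unbounded but closed. The key point is that $\Omega$ controls them: on the $\tau$-isotypic part, $\|Xv\|^2$ is bounded by $\langle(-\Omega+c_\tau)v,v\rangle$. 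Hence, after restricting to the $\epsilon$-spectral piece, where $\Omega$ is bounded, the differentials are bounded operators. Moreover, the complex computed from $\pi_\epsilon$ computes the full cohomology, since $\pi^\epsilon$ contributes nothing.

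\textbf{Step 3: the Hilbert-complex splitting.} We are left with a complex $0\to C^0\xrightarrow{d}C^1\to\cdots$ of Hilbert spaces with bounded differentials. Set $Z=\ker d$ (closed), $B=\operatorname{im}d$, and $\bar B$ its closure. Then
\[
H^q=Z/B\cong Z/\bar B\oplus \bar B/B,
\]
which holds topologically once we identify $Z/\bar B$ with the Hilbert space $Z\ominus\bar B$ (harmonic forms). The quotient $\bar B/B$ carries the trivial topology, since $B$ is dense in $\bar B$.

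Functoriality follows because a $G$-map $\pi\to\pi'$ induces a bounded chain map. Such a map preserves $Z$ and $\bar B$, and it commutes with the spectral cuts, because $Z(\mathfrak g)$ is natural. The decomposition is the canonical one, $\bar H^q\oplus \torH^q$ with $\bar H^q=H^q/\overline{\{0\}}$, so it is independent of $\epsilon$ and of other choices. For homology we argue dually, using the adjoint complex; alternatively we identify $H_q(G;\pi)$ with the homology of $\wedge^q\mathfrak p\otimes_K\pi$ and apply the same argument.

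\textbf{Main obstacle.} The hard part is Step 2: showing that the spectral cut leaves a complex with bounded differentials whose cohomology is unchanged, and that this splitting matches the intrinsic topology on continuous cohomology. For the latter I would first try the fact that van Est is a topological isomorphism for Fréchet complexes. I would then compare the Fréchet topology on the $\epsilon$-piece with its Hilbert topology, using that the smooth vectors of a representation with bounded $\Omega$ on finitely many $K$-types are all vectors.
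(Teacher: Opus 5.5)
\textbf{The gap.} Even granting all its details, your argument proves the theorem only for real Lie groups, and it leaves homology unproved. In this paper a semisimple group (Terminology~\ref{terminology:semisimple}) may have factors $\mathbf{G}_i(k_i)$ over non-archimedean local fields, or mix such factors with real ones. None of the tools in Steps~1--3 (Lie algebra, Casimir, $K$-types, van Est) exist for those factors, so your proof says nothing about them. You would need, e.g., the Bruhat--Tits building, whose invariant cochains give a finite Hilbert complex built from vectors fixed by compact open stabilizers, combined with your complex for the archimedean part. The paper sidesteps this with a type-free argument. It splits $\pi=\pi_{\mathrm{cohom}}\oplus\mathring\pi$, and Blanc's direct-integral result (Proposition~\ref{prop:red cohom direct integral}) gives $\barH^q(G;\mathring\pi)=0$. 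Finiteness and Hausdorffness of $H^q(G;\rho)$ for the finitely many cohomological $\rho$ (Casselman, Vogan--Zuckerman, Theorem~\ref{thm:product formula}), together with Theorem~\ref{thm:cohom of amplif}, then make $H^q(G;\pi_{\mathrm{cohom}})$ a Hilbert space.

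\textbf{Homology.} Your phrase ``argue dually via the adjoint complex'' presupposes that the adjoint Hilbert complex computes continuous homology \emph{with its natural topology}. That is exactly the non-trivial point, because continuous homology is built from the non-Fr\'echet spaces $L^2_c(G^q,\pi)$. The paper first uses the duality Theorem~\ref{thm:indexShift} (closed-range and Pt\'ak-space arguments) to get $\torH_q(G;\pi_{\mathrm{cohom}})=0$ and $\barH_q(G;\mathring\pi)=0$. It then uses a uniform lattice, the homological Shapiro lemma, the Hilbert chain complex of Remark~\ref{rem:finiteness prop homology} and the completeness in Corollary~\ref{cor:homIsNotMetr} to see that $\barH_q(G;\pi)$ is Hilbertian. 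The lattice detour is needed precisely because no Poincar\'e duality is available for non-real factors. For real groups your second alternative can be rescued. You would cite Pichaud's regularization (Lemma~\ref{lem:smooth and cont hom}) and the relative Lie algebra description of differentiable homology (Blanc--Wigner), so that Poincar\'e duality turns homology in degree $q$ into cohomology in degree $d-q$, as in Theorem~\ref{thm:homology and cohomology}. As written, however, this step is only an assertion.

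\textbf{What does work, and two repairs.} Within its scope, your cohomological argument is a genuinely different and workable route. Once $\Omega$ is bounded, your estimate makes $\mathfrak p$ act boundedly on the finitely many $K$-types occurring in $\wedge^\bullet\mathfrak p$. All vectors there are then smooth and the Fr\'echet and Hilbert topologies agree, so a topological van Est theorem reduces everything to a finite complex of Hilbert spaces. This bypasses Vogan--Zuckerman and the direct-integral theorem. The paper's route, by contrast, works uniformly over all local fields and gives the finer formula $\barH^q(G;\pi)\cong\bigoplus_\rho\mathrm{Hom}_G(\rho,\pi)\otimes H^q(G;\rho)$.

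First repair: cut by the spectrum of $\Omega$ itself, $\pi_\epsilon=E_\Omega(-\epsilon,\epsilon)\pi$. With your cut by infinitesimal character, $\Omega$ need not be invertible on the complement. It vanishes on some unitary irreducibles whose infinitesimal character is far from that of the trivial representation, already for $\PSL_2(\C)$ on suitable unitary principal series. Alternatively, use $\sum_j (z_j-c_j)^*(z_j-c_j)$, where the $z_j$ generate $Z(\mathfrak g)$ and the $c_j$ are their values on the trivial representation.

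Second repair: functoriality of the splitting does not follow from an intertwiner preserving $Z$ and $\bar B$. That only gives compatibility with $\torH^q\subseteq H^q$ and with the quotient $\barH^q$. You also need the intertwiner to commute with $d^*$, which holds because $d^*$ is built from the $\mathfrak p$-action; then harmonic cochains go to harmonic cochains.
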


We denote the corresponding decompositions by 
\begin{equation} \label{eq:tor-bar}
   H_q(G;\pi)= \torH_q(G;\pi)\oplus \barH_q(G;\pi), \quad H^q(G;\pi)= \torH^q(G;\pi)\oplus \barH^q(G;\pi), 
\end{equation}
where the spaces $\torH_q(G;\pi)$ and $\torH^q(G;\pi)$, called the \emph{homology torsion} and \emph{cohomology torsion} respectively, have the trivial topology, and the spaces $\barH_q(G;\pi)$ and $\barH^q(G;\pi)$, called the \emph{Hausdorff homology} and \emph{Hausdorff cohomology} respectively\footnote{The Hausdorff (co)homology is also called the \emph{reduced (co)homology}, but we find this terminology confusing and try to avoid it.}, are isomorphic to Hilbert spaces.
There is no particular choice of inner products on the Hausdorff (co)homology, only a topological vector space structure isomorphic to that of a Hilbert space.

\begin{mainthm}\label{intro:homology-cohomology}
Let $G$ be a semisimple group and let $\pi$ be a unitary representation of $G$.
Then $\bar{H}^q(G;\pi)$ is naturally isomorphic to the complex conjugate\footnote{Do not confuse here the bar notation with the complex conjugation.} of $\bar{H}_q(G;\pi)^*$ and  
\[
    \torH_q(G;\pi)=0 \iff \torH^{q+1}(G;\pi)=0.
\]
    Moreover, if $G$ is a Lie group and $d$ is the dimension of the corresponding symmetric space, then we have natural Poincar\'e duality isomorphisem of topological vector spaces
    $H_q(G;\pi)\cong H^{d-q}(G;\pi)$, commuting with the decomposition \eqref{eq:tor-bar}.
\end{mainthm}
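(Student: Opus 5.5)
We would compute both $H_q(G;\pi)$ and $H^q(G;\pi)$ from a single finite-dimensional model of bounded operators between Hilbert spaces. Concretely, for a semisimple Lie group $G$ with maximal compact $K$, we use the van Est / Hochschild--Mostow identification. It computes continuous cohomology by the relative Lie algebra complex $C^q=\mathrm{Hom}_K(\Lambda^q\mathfrak{p},\pi^\infty)$. Dually, continuous homology is computed by $C_q=(\Lambda^q\mathfrak{p}\otimes\pi^\infty)_K$, or equivalently via the complex of invariant forms on the symmetric space $X=G/K$.

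The first step is to replace smooth vectors by Hilbert spaces. The Casimir operator is elliptic on $X$, so the complexes are quasi-isomorphic (as topological complexes) to Hilbert complexes of $L^2$-type. In these, each $C^q$ is $(\Lambda^q\mathfrak{p}^*\otimes \mathcal{H}_\pi)^K$ and the differentials are closed densely defined (or, after Sobolev completion, bounded) operators. For the non-Lie semisimple case (products with $p$-adic factors), we would take the product of this complex with the simplicial cochain complex of the Bruhat--Tits building. That complex is a finite-length complex of Hilbert spaces with bounded differentials.

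For the first claim, cohomology $\ker d_q/\operatorname{im} d_{q-1}$ splits as $\ker d_q/\overline{\operatorname{im} d_{q-1}}\oplus \overline{\operatorname{im} d_{q-1}}/\operatorname{im} d_{q-1}$. The first summand is Hausdorff and isomorphic to the Hilbert space of harmonic elements $\ker d_q\cap\ker d_{q-1}^*$. The second carries the trivial topology. In a Hilbert complex the orthogonal complement of $\overline{\operatorname{im}}$ inside $\ker$ gives a topological splitting. Functoriality holds because $G$-maps $\pi\to\pi'$ induce chain maps preserving kernels and closures of images, hence preserving the torsion subspace. The Hausdorff quotient is then canonical, and the splitting is canonical up to the quotient identification.

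For the duality, homology is the cohomology of the adjoint complex $(C^\bullet, d^*)$, with a complex conjugation coming from the Riesz identification. Standard Hilbert complex facts give the rest. First, $\ker d_q\ominus\overline{\operatorname{im}}d_{q-1}$ coincides with the harmonic space for the adjoint complex. Second, $\operatorname{im} d_q$ is closed iff $\operatorname{im} d_q^*$ is closed (closed range theorem). This yields $\bar H^q\cong\overline{\bar H_q^*}$. It also shows that torsion in homology degree $q$, namely non-closedness of $\operatorname{im} d_q^*$, is equivalent to torsion in cohomology degree $q+1$, namely non-closedness of $\operatorname{im} d_q$.

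For Poincaré duality when $G$ is a Lie group with $d=\dim X$, we use the Hodge star on $\Lambda^\bullet\mathfrak{p}$. This requires $K$ to preserve an orientation, which holds after passing to the connected case or twisting by the orientation character. The star gives a $K$-equivariant isometry $\Lambda^q\mathfrak{p}\to\Lambda^{d-q}\mathfrak{p}$ intertwining $d^*$ with $\pm d$. It is therefore an isomorphism of complexes between the homology complex in degree $q$ and the cohomology complex in degree $d-q$. It is an isometry of Hilbert complexes, so it preserves closures of images and hence the decomposition.

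The main obstacle is the first step: showing that the continuous (co)homology, with its natural topology, is topologically isomorphic to the cohomology of such a Hilbert complex. This requires controlling topologies through the van Est isomorphism and the passage from $\pi^\infty$ to Sobolev completions. We would try to do it with elliptic regularity for the Casimir operator, together with the fact that these complexes are Fréchet and the open mapping theorem.
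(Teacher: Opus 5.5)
Your route differs from the paper's, and the Hilbert-space arguments inside your model are correct. Harmonic forms give $\bar H^q\cong\overline{\bar H_q^{\,*}}$. The closed range theorem for the closed operator $d_q$ gives the torsion shift. The Hodge star on $\Lambda^\bullet\mathfrak p$ (legitimate, since $K$ is connected) gives Poincar\'e duality compatible with \eqref{eq:tor-bar}. Work with the closed densely defined $d$ on $(\Lambda^q\mathfrak p^*\otimes\mathcal H_\pi)^K$, not with Sobolev-bounded versions, because only the $L^2$ adjoint matches the homology differential.

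\textbf{The gap.} It is the step you flag yourself, and your proposed fix only covers cohomology. On the cohomology side, van Est can be combined with the fact that, on the finitely many $K$-types of $\Lambda^q\mathfrak p$, Kuga's Laplacian is comparable to an elliptic element of $U(\mathfrak g)$. This can be made to give a topological comparison between $\mathrm{Hom}_K(\Lambda^\bullet\mathfrak p,\pi^\infty)$ and the $L^2$ complex.

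Continuous homology, however, is defined through relatively projective resolutions of complete locally convex modules. Its natural resolution $L^2_c(G^\bullet,\pi)$ is an LB-space, not Fr\'echet, and a priori $\bar H_q(G;\pi)$ need not even be metrizable (cf.\ Corollary~\ref{cor:homIsNotMetr}). So your sentence ``dually, continuous homology is computed by $(\Lambda^q\mathfrak p\otimes\pi^\infty)_K$'' is an unproved homological van Est theorem with topology. Fr\'echet open-mapping arguments cannot supply it. You need:
\begin{itemize}
\item a regularization theorem $H_*(G;\pi)\cong H_{d,*}(G;\pi^\infty)$ (Pichaud; Lemma~\ref{lem:smooth and cont hom});
\item a comparison of differentiable homology with relative Lie algebra homology by continuous chain homotopy equivalences;
\item for $p$-adic or mixed factors, a check that building chains tensored with $\pi$ form a strong relatively projective resolution, and that the mixed total complex computes both theories topologically.
\end{itemize}
With these inputs cited, your argument goes through.

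\textbf{How the paper proceeds.} For the first assertion the paper uses no model at all. Theorem~\ref{thm:indexShift} holds for any locally compact second countable group with reflexive Banach coefficients. The paper shows that the bar resolutions $L^2_{\mathrm{loc}}(G^q,V^*)$ and $L^2_c(G^q,V)$ are a dual pair of reflexive spaces (Lemma~\ref{lem:dual pair}, via Grothendieck's distinguished spaces). It then controls the non-metrizable homology side with Pt\'ak's open mapping theorem and Banach's closed range theorem for Fr\'echet spaces, so $p$-adic factors cost nothing. For Poincar\'e duality it cites Blanc--Wigner together with the regularization lemmas. Compatibility with \eqref{eq:tor-bar} comes from the splitting $\pi=\pi_{\mathrm{cohom}}\oplus\mathring\pi$.

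\textbf{Trade-off.} Once completed, your approach gives one concrete picture in which the Hilbert structure on $\bar H$, the torsion shift and Poincar\'e duality all come from a single Hilbert complex, with $\bar H$ realized as harmonic forms. The price is analytic input at least as heavy as what the paper cites, and the method is confined to semisimple groups.
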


In view of the above theorem, we are free to focus here mainly on the continuous cohomology.
This is convenient, as in certain aspects the theory of continuous cohomology is better behaved and better developed than the theory of continuous homology.
For instance, the theory of continuous cohomology of
\emph{irreducible} unitary representations of semisimple groups was satisfyingly settled by Vogan and Zuckerman in the 1980's, relaying on contributions by several authors, in the two papers \cite{VZ} and \cite{Unitarizability}.
In particular, for an irreducible unitary representation $\pi$, at every degree $q$, $H^q(G;\pi)$ is Hausdorff and finite dimensional. 
Moreover, up to isomorphism, there exist only finitely many \emph{cohomological representations}, that is irreducible representations which have non-zero continuous cohomology at some degree.
We denote this finite set of isomorphism types by $\widehat{G}_{\mathrm{cohom}}$.

\begin{prob} \label{prob:main}
    Let $G$ be a semisimple group and let $\pi$ be a unitary representation.
    Find a mechanism that answers the following questions by means 
    of \emph{containment} and \emph{weak containment} of irreducible cohomological representations in $\pi$.
\begin{enumerate}
    \item Does $\pi$ have a Hausdorff cohomology? In which degrees? How much?
    \item Does $\pi$ have a torsion cohomology? In which degrees?
\end{enumerate}
Note that we do not ask ``how much'' for the torsion cohomology, as we do not have good ways to describe an answer to such a question.
\end{prob}

Below we will provide satisfactory answers to question 1 and to the first part of question 2. It turns out that the second part of question 2 is more subtle, and much of this paper will be devoted to dealing with this specific problem.

\begin{remark} \label{rem:measure?}
    One might hope for better answers to the above questions by means of a complete data regarding $\pi$, e.g. by means of its presentation as a direct integral with respect to a measure $\mu$ on the unitary dual of $G$. Containment of an irreducible representation is equivalent to an atom of $\mu$ and weak containment is equivalent to a point in its support. 
    However, we will see below that the partial information about $\mu$ considered in Problem~\ref{prob:main} is enough to answer the above questions.
\end{remark}

Given a unitary representation $\pi$, we denote by $\pi_{\mathrm{cohom}}$ the subrepresentation of $\pi$ consisting of the direct sum of the (finitely many)  isotypic subrepresentations associated with irreducible cohomological representations\footnote{We hold the temptation to denote it $\bar{\pi}$.}, and we denote its orthogonal complement by $\mathring{\pi}$, thus
\[ \pi=\pi_{\mathrm{cohom}} \oplus \mathring{\pi} \qquad \mbox{and} \qquad 
\pi_{\mathrm{cohom}} \cong 
\bigoplus_{\rho \in \widehat{G}_{\mathrm{cohom}}} \mathrm{Hom}_G(\rho,\pi)\otimes \rho. \]
To clarify the notation on the right-hand side, our convention here is that the direct sum is an $\ell^2$-direct sum, we endow the spaces $\mathrm{Hom}_G(\rho,\pi)$ with their natural Hilbert space structures (see Remark \ref{rem:hilbert structure}) and our convention is that the tensor product notation stands for the completed Hilbertian tensor product. The Hilbert space $\mathrm{Hom}_G(\rho,\pi)$ is called the \emph{multiplicity module} of $\rho$ in $\pi$ and is taken with the trivial $G$-action.

The subrepresentations $\pi_{\mathrm{cohom}}$ and $\mathring{\pi}$ are said to be the \emph{Hausdorff part} and the \emph{torsion part} of $\pi$ correspondingly.
The next theorem states that this decomposition is compatible with the decomposition given in \eqref{eq:tor-bar}.

\begin{mainthm} \label{intro-Hausdorff}
Let $G$ be a semisimple group and let $\pi$ be a unitary representation of $G$. Then in any degree $q$, $\barH^q(G;\mathring{\pi})=0$ and $\torH^q(G;\pi_{\mathrm{cohom}})=0$, thus $\torH^q(G;\pi)=\torH^q(G;\mathring{\pi})$ and 
\[ \bar{H}^q(G;\pi)= \bar{H}^q(G;\pi_{\mathrm{cohom}}) \cong \bigoplus_{\rho \in \widehat{G}_{\mathrm{cohom}}} \mathrm{Hom}_G(\rho,\pi) \otimes H^q(G;\rho). \]
\end{mainthm}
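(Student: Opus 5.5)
The argument splits $\pi=\pi_{\mathrm{cohom}}\oplus\mathring{\pi}$ and treats the two summands separately. Continuous cohomology commutes with finite direct sums of representations, and the decomposition of Theorem~\ref{intro:torsion-reduced} is functorial. Hence it suffices to prove two vanishing statements: $\barH^q(G;\mathring{\pi})=0$, and $\torH^q(G;\pi_{\mathrm{cohom}})=0$ together with the stated formula for $\barH^q(G;\pi_{\mathrm{cohom}})$. We work throughout with the van Est / relative Lie algebra model $H^q(G;\pi)\cong H^q(\mathfrak g,K;\pi^\infty)$, computed by the complex $C^q=\mathrm{Hom}_K(\Lambda^q(\mathfrak g/\mathfrak k),\pi^\infty)$. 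For semisimple groups with non-archimedean factors we use the analogous complex of smooth cochains on the Bruhat--Tits building.

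\textbf{The Hausdorff part.} Write $\pi_{\mathrm{cohom}}=\bigoplus_\rho M_\rho\otimes\rho$ with $M_\rho=\mathrm{Hom}_G(\rho,\pi)$ carrying the trivial $G$-action. Since $\widehat{G}_{\mathrm{cohom}}$ is finite, it suffices to treat a single $M\otimes\rho$. The cochain complex of $M\otimes\rho$ is the completed tensor product $M\hat\otimes C^\bullet(\rho)$. By Vogan--Zuckerman, $C^\bullet(\rho)$ has finite-dimensional Hausdorff cohomology. Its differentials have closed range: each $K$-isotypic piece of $\Lambda^q(\mathfrak g/\mathfrak k)^*\otimes\rho$ is finite dimensional, and only finitely many $K$-types occur in the complex. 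Therefore $C^\bullet(\rho)$ splits as a topological direct sum of a finite-dimensional complex and a contractible one. Tensoring this splitting with $M$ gives $H^q(G;M\otimes\rho)\cong M\otimes H^q(G;\rho)$, which is Hausdorff, so its torsion vanishes.

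\textbf{The torsion part.} For $\mathring{\pi}$ we use the Laplacian, i.e.\ the Casimir operator. By Kuga's lemma, on $C^q$ the combinatorial Laplacian $dd^*+d^*d$ acts by $\pi(\Omega)$ up to sign and normalization. Decompose $\mathring{\pi}$ as a direct integral $\int^\oplus\sigma\,d\mu(\sigma)$ over $\widehat G$. By definition of $\mathring{\pi}$, the measure $\mu$ has no atoms at cohomological points. The Hausdorff cohomology $\barH^q$ is identified with $L^2$-harmonic cochains, namely $\ker d\cap\ker d^*$ in the Hilbert completion, restricted to the finitely many relevant $K$-types. A harmonic cochain lies in the kernel of the Laplacian, so it is supported in the set of $\sigma$ on which Casimir acts by $0$ and which contribute to $K$-types in $\Lambda^q$. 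A nonzero $\sigma$-component for $\mu$-almost every such $\sigma$ would force $\sigma$ to have nonzero harmonic cochains, hence $\sigma$ would be cohomological, by Vogan--Zuckerman. The set of such $\sigma$ is finite and $\mu$ gives it measure zero, so the harmonic cochains vanish and $\barH^q(G;\mathring{\pi})=0$.

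The main obstacle is the identification of $\barH^q$ with harmonic cochains for a reducible $\pi$. This needs $\overline{\operatorname{im} d}\oplus\overline{\operatorname{im} d^*}\oplus\ker\Delta$ to be an orthogonal Hodge decomposition on $C^q$ completed in the $K$-finite $L^2$ norm. It also needs compatibility of the smooth-vector topology with the $L^2$ topology on this finite-$K$-type complex. I would establish this first, using that $K$-type restrictions make each cochain space a finite sum of $\mathrm{Hom}_K(\tau,\pi)$, on which $d$ is bounded. For non-archimedean factors the Casimir argument is replaced by the corresponding Hecke or Iwahori-fixed-vector analysis, in which only finitely many Steinberg-type irreducibles are cohomological.
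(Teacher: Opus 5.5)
Your treatment of $\pi_{\mathrm{cohom}}$ is a valid alternative to the paper's. For $M\otimes\rho$, every $(\mathfrak g,K)$-cochain takes values in finitely many isotypic pieces $M\otimes\rho_\tau$ with $\dim\rho_\tau<\infty$. These pieces consist of smooth vectors and carry the Hilbert topology, so $C^\bullet(M\otimes\rho)=M\otimes C^\bullet(\rho)$ with $C^\bullet(\rho)$ finite dimensional. This route relies on van Est being a \emph{topological} isomorphism, and on an analogue for non-archimedean and mixed factors that you only gesture at. The paper instead works on the bar complex and uses exactness of $\ell^2$-amplification (Theorem~\ref{thm:cohom of amplif}), which is uniform in $G$. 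Also, do not appeal to the functoriality in Theorem~\ref{intro:torsion-reduced}: in the paper its cohomological half is deduced from the present statement. Plain additivity of $H^q$ over $\pi_{\mathrm{cohom}}\oplus\mathring{\pi}$ is all you need.

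The genuine gap is in the proof of $\barH^q(G;\mathring{\pi})=0$, at the step you flag yourself, and the remedy you propose is false. For reducible $\pi$ the spaces $\mathrm{Hom}_K(\tau,\pi)$ are in general infinite dimensional. On them $d$, which is built from $\pi(X)$ with $X\in\mathfrak p$, is \emph{unbounded} for the $L^2$ norm: for $\pi=L^2(G/K)$, $d\colon C^0\to C^1$ is the gradient. Correspondingly, the smooth topology on $C^q$ is in general strictly finer than the $L^2$ topology, and $\barH^q$ is defined using the closure of $\operatorname{im} d$ in the smooth topology. So no bounded-operator Hodge decomposition identifies $\barH^q$ with $L^2$-harmonic cochains. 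Without that identification, your fibrewise argument says nothing about $\barH^q$.

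A correct replacement is spectral calculus of the Casimir. The projections $E_\varepsilon=\mathbf{1}_{\{|\lambda|\geq\varepsilon\}}(\bar{\Omega})$ commute with $\pi(G)$, hence preserve smooth $K$-invariant cochains and commute with $d$ and $d^*$. By Kuga's lemma $\Delta=dd^*+d^*d$ is a multiple of $\Omega$. So for a smooth cocycle $c$ one gets $E_\varepsilon c=d\bigl(d^*\Delta^{-1}E_\varepsilon c\bigr)$. Moreover $E_\varepsilon c\to c-P_{\ker\bar\Omega}c$ in every seminorm $\|\pi(Y)\,\cdot\,\|$ with $Y\in U(\mathfrak g)$. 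Hence $\barH^q=0$ as soon as $\ker\bar\Omega$ contains no $K$-type of $\Lambda^q\mathfrak p$. That is exactly what your direct-integral argument shows, via Borel--Wallach's $H^q(\mathfrak g,K;\sigma)=\mathrm{Hom}_K(\Lambda^q\mathfrak p,\sigma)$ when $\sigma(\Omega)=0$; Vogan--Zuckerman only supplies finiteness. The non-archimedean and mixed factors would still need separate treatment.

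The paper avoids all of this by quoting Blanc's direct-integral theorem (Proposition~\ref{prop:red cohom direct integral}): if $\mu$-almost every fibre has vanishing Hausdorff cohomology, so does the integral, for any locally compact second countable group. Since $\mu$ charges no point of the finite set $\widehat{G}_{\mathrm{cohom}}$, almost every fibre of $\mathring{\pi}$ has zero cohomology, and the result follows.
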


Any choice of Hilbert space structures on the finite dimensional spaces $H^q(G;\rho)$ gives rise to a choice of a Hilbert space structure on $\barH^q(G;\pi)$, by the expression above, but there is no canonical such choice.
By Theorems \ref{intro:torsion-reduced} and \ref{intro:homology-cohomology} we obtain similar identities in homology, namely 
$\torH_q(\pi)=\torH_q(\mathring{\pi})$ and 
\[ \bar{H}_q(G;\pi)= \bar{H}_q(G;\pi_{\mathrm{cohom}}) \cong \bigoplus_{\rho \in \widehat{G}_{\mathrm{cohom}}} \mathrm{Hom}_G(\rho,\pi) \otimes H_q(G;\rho). \]

Theorem \ref{intro-Hausdorff} provides a full answer to question 1 of Problem~\ref{prob:main}.
We are left to discuss question 2 regarding the cohomological torsion.
The above expressions show that the Hausdorff cohomology of a unitary representation stems from containment of cohomological irreducible subrepresentations.
In a similar fashion, it turns out that the torsion cohomology stems from \emph{weak containment} of cohomological irreducible subrepresentations.
We will now explain more precisely this phenomenon.

We denote by $\widehat{G}$ the unitary dual of $G$, that is the set of isomorphism types of irreducible unitary representations.
The \emph{support} of a unitary representation $\pi$, $\mathrm{supp}(\pi)\subset \widehat{G}$, is the subset of irreducible representations which are weakly contained in $\pi$.
Running over all the unitary representations $\pi$, the collection of all supports forms the closed subsets of the \emph{Fell topology} on $\widehat{G}$.
The following theorem shows that the questions whether $\pi$ has either cohomology or torsion cohomology depend only on its support.
We thus say that these are \emph{topological properties} of $\pi$.

We denote by $\mathrm{supp}(\pi)'$ the subset of $\mathrm{supp}(\pi)$ consisting of non-isolated points.
These are the irreducible representations $\rho\in \widehat{G}$ which are weakly contained in the orthogonal complement of their own isotypic component in $\pi$. 
Such irreducible representations $\rho$ are said to be \emph{essentially weakly contained} in $\pi$.

\begin{mainthm}\label{intro:topological propery1}
Let $G$ be a semisimple group, and let $\pi$ be a unitary representation of $G$.
Then $\pi$ has cohomology in some degree if and only if it weakly contains a cohomological representation, and $\pi$ has cohomological torsion in some degree if and only if it essentially weakly contains a cohomological representation, that is 
\[ \exists_{q\in\N}~H^q(G;\pi)\neq 0 \iff \mathrm{supp}(\pi) \cap \widehat{G}_{\mathrm{cohom}} \neq \emptyset,  \]
and 
\[ \exists_{q\in\N}~\torH^q(G;\pi)\neq 0 \iff \mathrm{supp}(\pi)' \cap \widehat{G}_{\mathrm{cohom}} \neq \emptyset.  \]
\end{mainthm}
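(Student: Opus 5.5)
We argue through the decomposition $\pi=\pi_{\mathrm{cohom}}\oplus\mathring{\pi}$ and Theorem~\ref{intro-Hausdorff}. Since cohomology is additive for finite direct sums, $H^q(G;\pi)=H^q(G;\pi_{\mathrm{cohom}})\oplus H^q(G;\mathring{\pi})$. The first summand is Hausdorff and equals $\bigoplus_\rho \mathrm{Hom}_G(\rho,\pi)\otimes H^q(G;\rho)$, while the second is pure torsion. Every $\rho\in\widehat{G}_{\mathrm{cohom}}$ has $H^q(G;\rho)\ne0$ for some $q$. Hence $\bigoplus_q \bar H^q(G;\pi)\neq 0$ exactly when $\pi$ contains some cohomological $\rho$. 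The whole theorem therefore reduces to one claim:
\[ \exists_q\ H^q(G;\mathring{\pi})\neq 0 \iff \mathrm{supp}(\mathring{\pi})\cap \widehat{G}_{\mathrm{cohom}}\neq\emptyset. \]
Both equivalences of the theorem follow from this claim once we relate $\mathrm{supp}(\mathring\pi)$ to $\mathrm{supp}(\pi)$.

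For a cohomological $\rho$, we have $\rho\in\mathrm{supp}(\mathring\pi)$ if and only if $\rho$ is weakly contained in the complement of its own isotypic component. This complement is $\mathring\pi$ plus the other cohomological isotypic parts. Those parts are finite sums of isotypic pieces of irreducibles different from $\rho$, and the support of $\mathrm{Hom}_G(\rho',\pi)\otimes\rho'$ is $\{\rho'\}$. So being in $\mathrm{supp}(\mathring\pi)$ is the same as being in $\mathrm{supp}(\pi)'$. Moreover $\mathrm{supp}(\pi)=\mathrm{supp}(\pi_{\mathrm{cohom}})\cup\mathrm{supp}(\mathring\pi)$, and the cohomological points of the first set are exactly those contained in $\pi$. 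With the claim in hand, both statements follow.

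To prove the claim, use the Casimir element. On the Harish-Chandra module level, via the van Est / relative Lie algebra description $H^q(G;\pi)=H^q(\mathfrak g,K;\pi^\infty)$, the Casimir $\Omega$ acts trivially on $(\mathfrak g,K)$-cohomology when the coefficient is the trivial infinitesimal character (Wigner's lemma). Every cohomological irreducible has infinitesimal character equal to that of the trivial representation, so $\Omega$ acts on it by a fixed scalar $c_0$. Decompose $\mathring\pi$ along the $K$-types that actually occur in cohomological representations and along the spectrum of $\Omega$, using a direct integral over $\widehat G$ or the spectral measure of $\Omega$ on each relevant $K$-isotypic piece. Finitely many $K$-types matter, namely those in $\wedge^q\mathfrak p^*$.

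For ($\Leftarrow$): if $\mathrm{supp}(\mathring\pi)\cap\widehat{G}_{\mathrm{cohom}}=\emptyset$, then since $\widehat G_{\mathrm{cohom}}$ is finite and the relevant $K$-types are finite, the spectrum of $\Omega$ on those $K$-isotypic pieces stays a positive distance from $c_0$. I would justify this by upper semicontinuity/Fell continuity of infinitesimal character data: irreducibles with infinitesimal character near $\chi_{\mathrm{triv}}$ and containing those $K$-types accumulate only at cohomological ones. Then $\Omega-c_0$ is invertible there and gives a contracting homotopy, so the complex is acyclic.

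For ($\Rightarrow$): if some cohomological $\rho$ lies in $\mathrm{supp}(\mathring\pi)$, then $\mathring\pi$ has no $\rho$-subrepresentation but $\rho$ is weakly contained. Take the lowest degree $q$ with $H^q(G;\rho)\neq0$ and build almost-cocycles from matrix coefficients approximating $\rho$'s. The Hausdorff cohomology of $\mathring\pi$ vanishes by Theorem~\ref{intro-Hausdorff}, so either $H^q(G;\mathring\pi)$ or a neighbouring degree picks up a non-closed coboundary image. I would show that $d$ on the relevant chain piece is not of closed range: the norms $\|dx\|/\mathrm{dist}(x,\ker d)$ tend to $0$ along vectors concentrated spectrally near $\rho$. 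A non-closed range of $d^{q-1}$ or $d^{q}$ yields nonzero torsion in degree $q$ or $q+1$. I expect this step to be the main obstacle: converting weak containment into failure of closed range, which requires quantitative control of the differential near $\rho$ in the Fell topology. I would first attempt it via a direct integral decomposition, where the differential becomes a measurable field of finite-rank operators on the relevant $K$-types, and use the fact that its nonzero singular values near $\rho$ degenerate.
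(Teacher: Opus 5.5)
Your reduction to the claim $\exists_q H^q(G;\mathring\pi)\neq 0\iff\mathrm{supp}(\mathring\pi)\cap\widehat G_{\mathrm{cohom}}\neq\emptyset$ is correct, and so is the identification $\mathrm{supp}(\mathring\pi)\cap\widehat G_{\mathrm{cohom}}=\mathrm{supp}(\pi)'\cap\widehat G_{\mathrm{cohom}}$. (Your paragraphs labelled ($\Leftarrow$) and ($\Rightarrow$) are swapped.) The genuine gap is the non-vanishing direction: a cohomological $\rho\in\mathrm{supp}(\mathring\pi)$ should force $\torH^q(G;\pi)\neq 0$ for some $q$, and you do not prove this. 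The ``fact'' you plan to use, that the nonzero singular values of $d_t$ degenerate as $\sigma_t\to\rho$, is exactly the content of this direction. Wigner's lemma is purely qualitative and cannot supply it. Nothing you cite makes $\mathrm{Hom}_K(\wedge^q\mathfrak p,\sigma_t^\infty)$ and $d_t$ behave continuously in the Fell topology.

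The missing idea is to realise the Laplacian of a complex computing $H^*(G;\pi)$ as an element of an algebra acting on $\pi$, so that weak containment produces approximately harmonic cochains. The paper does this by passing to a dense $\mathrm{FP}_\infty(\Q)$ cohomological witness $\Gamma$ (Theorem~\ref{thm:witness exists}). There the cochains are $V^{m_q}$ and $\Delta_q\in M_{m_q}(C^*(\Gamma))$. A harmonic vector of $\rho$ gives a state killing $\Delta_q^2$, which is a limit of vector states of $\pi$ by Lemma~\ref{lem:irred rep weak cont}. Since $\ker\Delta_{q,V}=0$ for $\mathring\pi$, the splitting $V^{m_q}=\ker d_{q+1,V}\oplus\ker d_{q,V}^*$ gives an almost kernel of $\Delta_q^-$ or $\Delta_q^+$. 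This is torsion in degree $q$ or $q+1$ (Theorem~\ref{thm:main thm finit prop 2}), transferred back to $G$ by the witness property.

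In your Lie-algebra setting the corresponding tool is Kuga's lemma, not Wigner's: for unitary coefficients the Laplacian on $\mathrm{Hom}_K(\wedge^q\mathfrak p,V^\infty)$ equals $-\Omega$, with $\Omega$ normalised to vanish on the trivial representation. With it, smoothed approximants $\pi(f)v_n$, projected to a $K$-type $\tau\subset\wedge^q\mathfrak p$ of $\rho$, give unit vectors $w_n$ of $\mathring\pi$ with $\|\Omega w_n\|\to 0$. Also $\ker\Omega$ vanishes on these isotypic pieces because $\widehat G_{\mathrm{cohom}}$ is finite. The paper's splitting argument then goes through, once you check that on spectral pieces with $\Omega$ bounded and finitely many $K$-types the Fr\'echet topology of smooth vectors agrees with the Hilbert one. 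Kuga is equally needed in your vanishing direction. In general $\Omega=c_0$ does not determine the infinitesimal character, and it is Kuga that makes a Fell limit containing a $K$-type of $\wedge\mathfrak p$ with $\Omega=c_0$ cohomological. You also still need a compactness statement producing such a limit.

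Second, Terminology~\ref{terminology:semisimple} allows factors $\mathbf G_i(k_i)$ with $k_i$ non-archimedean. There is no Lie algebra, Casimir element or van Est theorem there, so even a completed Casimir argument leaves $p$-adic and mixed products untreated. The witness approach handles all factors uniformly. On your route you would need a separate non-archimedean argument, for instance the finite cellular complex of the Bruhat--Tits building with Hecke operators in place of $\Omega$, together with a way to handle products mixing both kinds of factors.
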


It follows in particular that non-cohomological irreducible representations in $\widehat{G}$ do not contribute at all to the cohomology and 
cohomological representations which are \emph{isolated} in $\widehat{G}$ do not contribute at all to torsion cohomology. 
We will see later that cohomological representations which are not isolated in $\widehat{G}$ do contribute some torsion cohomology.
We view the statements of Theorem~\ref{intro:topological propery1} as satisfying ``if and only if'' criteria for the existence of cohomology and torsion cohomology.
In particular, Theorem~\ref{intro:topological propery1} answers the first part of question 2 of Problem~\ref{prob:main}.
The only remaining question is the second part of this question -  in which degrees does the torsion cohomology appear.
This last question turns out to be a subtle one.
Before addressing it, we address the question raised in Remark~\ref{rem:measure?}, namely, are we addressing the right problem?
The following theorem shows that the existence of torsion cohomology in each degree is a topological property, hence indeed, it is in the scope of Problem~\ref{prob:main}.

\begin{mainthm}\label{intro:topological propery}
    Let $G$ be a semisimple group, and let $\pi$ and $\rho$ be unitary representations of $G$.
    If $\rho$ is weakly contained in $\pi$ and $\torH^q(G;\rho) \ne 0$, then $\torH^q(G;\pi) \ne 0$. In particular, if $\mathrm{supp}(\pi)=\mathrm{supp}(\rho)$ then 
    \[ \torH^q(G;\pi)\neq 0 \iff \torH^q(G;\rho) \neq 0. \]
\end{mainthm}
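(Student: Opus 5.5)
We plan to reduce everything to the torsion parts and to a criterion for torsion that is visibly monotone under weak containment. By Theorem~\ref{intro-Hausdorff}, $\torH^q(G;\rho)=\torH^q(G;\mathring{\rho})$ and $\torH^q(G;\pi)=\torH^q(G;\mathring{\pi})$. So it suffices to relate the torsion of $\rho$ to that of $\pi$.

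The criterion we use is the standard one. The cohomology is Hausdorff in degree $q$ exactly when the differential $d^{q-1}$ of a suitable complex has closed range. We take the complex of $K$-finite or smooth $(\mathfrak g,K)$-cochains, or better the finitely generated model coming from a cocompact lattice or a cohomological witness acting on the symmetric space. In that model each differential is a finite matrix over the group ring acting on $\pi^{n}$. Closed range of a bounded operator $T$ between Hilbert spaces is then equivalent to a spectral gap: $T^*T$ has no spectrum in $(0,\varepsilon)$. The torsion $\torH^q(G;\pi)$ vanishes iff $d^{q-1}_\pi$ has closed range. Equivalently, $0$ is not an accumulation point of the spectrum of the Laplacian-type operator $(d^{q-1}_\pi)^*d^{q-1}_\pi$ restricted away from its kernel.

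The key step is that the spectrum of a self-adjoint operator given by a fixed element $a$ of the (maximal) group $C^*$-algebra behaves well under weak containment. If $\rho\prec\pi$, there is a $*$-homomorphism $C^*_\pi\to C^*_\rho$, so $\mathrm{spec}(\rho(a))\subset\mathrm{spec}(\pi(a))$. We apply this to $a=\Delta=(d^{q-1})^*d^{q-1}$, or better to $f(\Delta)$ for continuous functions $f$. Suppose $\torH^q(G;\rho)\neq0$. Then $\mathrm{spec}(\rho(\Delta))$ accumulates at $0$ from the right, so $\mathrm{spec}(\pi(\Delta))\supset\mathrm{spec}(\rho(\Delta))$ also accumulates at $0$. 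Hence $d^{q-1}_\pi$ does not have closed range, and $\torH^q(G;\pi)\ne0$.

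The main obstacle is making the differential a genuine element of a $C^*$-algebra, or of matrices over it, uniformly in the representation. The continuous cochain complex of $G$ involves infinite-dimensional function spaces. Our first attempt is to pass to a dense finitely generated subgroup $\Gamma$ with good finiteness properties whose restriction computes $H^q(G;\cdot)$ compatibly with the torsion/Hausdorff decomposition, i.e. a cohomological witness. Weak containment for $G$ implies weak containment for $\Gamma$. The differentials of $\Gamma$ with a finite-type resolution are then matrices over $\mathbb{Z}\Gamma\subset C^*(\Gamma)$.

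If that comparison is unavailable in degree $q$, we would instead use the $(\mathfrak g,K)$-complex. There the Casimir acts on $K$-types, and by Wigner-type arguments the relevant operator is a Casimir eigenvalue problem on finitely many $K$-types; this is the model we would try second. The "in particular" statement follows by applying the first statement in both directions.
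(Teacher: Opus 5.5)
Your argument is correct. Its outer reduction is the same as the paper's, but the key step is genuinely different.

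\textbf{Shared reduction.} You restrict to a cohomological witness $\Gamma<G$ of type $\mathrm{FP}_\infty(\Q)$ up to degree $q$ (Theorem~\ref{thm:witness exists}). Its property (3) identifies $\torH^q(G;\cdot)$ with $\torH^q(\Gamma;\cdot)$, and weak containment over $G$ restricts to weak containment over $\Gamma$. The paper does exactly this.

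\textbf{The paper's key step.} It proves the finite-type statement, Proposition~\ref{prop:non-red is topological finit prop}, only under the extra hypothesis $H^q(\Gamma;\rho)=\torH^q(\Gamma;\rho)$. That is why it first replaces $\rho$ by $\mathring\rho$. The argument then uses vector states:
\begin{enumerate}
\item it approximates the state of an almost-kernel vector of $\Delta^-_{q,U}$ by states of vectors in $V^{m_q}$;
\item it splits these vectors along $\ker\Delta^-_{q,V}$;
\item it shows the component in $\ker\Delta^-_{q,V}$ tends to zero, using disjointness and the asymptotic-orthogonality Lemma~\ref{assympot ortho of states}.
\end{enumerate}

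\textbf{Your key step.} You use spectral inclusion instead. Since $\rho\prec\pi$, the map $\pi(x)\mapsto\rho(x)$ is a unital $*$-homomorphism on the images of $M_n(C^*(\Gamma))$, because kernels of amplifications are amplifications of kernels. Hence $\mathrm{spec}\,\rho(\Delta^+_{q-1})\subseteq\mathrm{spec}\,\pi(\Delta^+_{q-1})$. Moreover, $\torH^q\neq 0$ says exactly that $0$ is a non-isolated point of $\mathrm{spec}(\Delta^+_{q-1})$. This follows from Lemma~\ref{lemma:non-Hausdorff characterization} together with the fact that isolated spectral points of a self-adjoint operator are eigenvalues.

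\textbf{Comparison.} Your route is shorter and strictly stronger: it needs no purely-torsion hypothesis on $\rho$. So the splitting via Theorem~\ref{intro-Hausdorff} in your first paragraph is superfluous. What the paper's state-space machinery buys is uniformity with Theorems~\ref{thm:main thm finit prop 1} and~\ref{thm:main thm finit prop 2}. There one must actually produce a weakly contained representation with non-zero Hausdorff cohomology as a GNS limit. Spectral inclusion only transports information from $\rho$ to $\pi$ and cannot do that.

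\textbf{Minor remarks.}
\begin{itemize}
\item The cocompact-lattice model you mention would not suffice by itself. $H^q(\Lambda;\pi|_\Lambda)$ also contains $H^q(G;\pi\otimes L^2_0(G/\Lambda))$, whose torsion is not controlled.
\item The $(\mathfrak g,K)$ fallback is unnecessary, since witnesses exist in every degree.
\item For the ``in particular'' part, say explicitly that equal supports make $\pi$ and $\rho$ weakly equivalent, because each representation is weakly equivalent to its support. The first statement then applies in both directions.
\end{itemize}
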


\begin{remark}
Theorem~\ref{intro:topological propery1} shows that both the existence of cohomology and the existence of torsion in some degree are topological properties, and Theorem~\ref{intro:topological propery} shows that the existence of torsion cohomology in a given degree is a topological property.
That is, these properties are determined by the support of a representation.
However, note that the existence of cohomology in a given degree is \emph{not} a topological property. Moreover, the existence of Hausdorff cohomology, in some or in a given degree, is \emph{not} a topological property.
\end{remark}

A cohomological irreducible unitary representation $\rho$ is said to be $q$-\emph{cohomological} if it is cohomological in degree $q$, that is $H^q(G;\rho) \ne 0$. The set of isomorphism types of such representations is denoted $\widehat{G}_{\mathrm{q-cohom}} \subseteq \widehat{G}_{\mathrm{cohom}}$.
The following is an analogue of Theorem~\ref{intro:topological propery1} that applies to a specific degree $q$.

\begin{mainthm}\label{intro: main thm2}
    Let $G$ be a semisimple group and let $\pi$ be a unitary representation of $G$.
    Then $\pi$ has torsion cohomology in degrees $q$ or $q+1$ if and only if it essentially weakly contains a cohomological representation in degree $q$, that is 
    \[ \torH^q(G;\pi)\neq 0 \quad \mbox{or} \quad \torH^{q+1}(G;\pi)\neq 0 \qquad \Longleftrightarrow \qquad \mathrm{supp}(\pi)' \cap \widehat{G}_{\mathrm{q-cohom}} \neq \emptyset.  \]
\end{mainthm}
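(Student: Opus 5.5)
Both implications will be obtained by turning weak containment into a statement about the Laplacian (equivalently, the Casimir) acting on $(\mathfrak g,K)$-cochains, and then reading off degree-$q$ information via the Hodge-type decomposition underlying Theorems~\ref{intro:torsion-reduced} and \ref{intro-Hausdorff}.

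\textbf{Step 1: decompose $\pi$.} Write $\pi$ as a direct integral over $\widehat G$ and compute $H^*(G;\pi)$ by the relative Lie algebra complex $C^q=\mathrm{Hom}_K(\wedge^q\mathfrak p,\pi)$. By Wigner's lemma and Kuga's formula, the combinatorial Laplacian $\Delta=dd^*+d^*d$ on $C^q$ acts on the fibre over $\rho$ as the scalar $\chi_\rho(\Omega)-\chi_{\mathrm{triv}}(\Omega)$. By Vogan--Zuckerman, only finitely many $K$-types $\wedge^q\mathfrak p$ occur, and the infinitesimal character of the trivial representation is attained only by the finitely many representations in $\widehat G_{\mathrm{cohom}}$. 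Hence, restricting to a neighbourhood of those fibres, $\Delta$ tends to $0$ exactly as $\rho$ approaches the infinitesimal character of the trivial representation. Consequently, torsion in degree $q$ or $q+1$ is equivalent to the failure of $d\colon C^q\to C^{q+1}$ (restricted to $\mathring\pi$, using Theorem~\ref{intro-Hausdorff}) to have closed range. Indeed, non-closed range of $d_q$ gives torsion in degree $q+1$, and non-closed range of $d_{q-1}$ gives torsion in degree $q$; the "$q$ or $q+1$" formulation pairs these up through $d_q$ together with its adjoint $d_q^*$.

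\textbf{Step 2 (the direction $\Leftarrow$).} Let $\rho\in\mathrm{supp}(\pi)'\cap\widehat G_{\mathrm{q\text{-}cohom}}$. Then $\rho$ is weakly contained in $\mathring\pi$ restricted away from its isotypic part, so there are unit vectors in $\mathring\pi$ whose associated matrix coefficients approximate those of $\rho$. I will transfer a nonzero harmonic cochain $\omega\in\mathrm{Hom}_K(\wedge^q\mathfrak p,\rho)$, which is a finite-$K$-type datum, to cochains $\omega_n\in C^q(\mathring\pi)$ satisfying $\|\omega_n\|=1$ while $\|d\omega_n\|,\|d^*\omega_n\|\to0$. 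Since $\mathring\pi$ has no Hausdorff cohomology, the $\omega_n$ are almost harmonic but stay away from harmonic cochains. It follows that $\Delta$ on $C^q(\mathring\pi)$ is not bounded below, so either $d_q$ or $d_{q-1}$ fails to have closed range. This gives torsion in degree $q+1$ or in degree $q$.

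\textbf{Step 3 (the direction $\Rightarrow$).} Suppose no $q$-cohomological representation is essentially weakly contained. Remove the isolated cohomological summands, which are harmless by Theorem~\ref{intro-Hausdorff}. In the remaining $\mathring\pi$, the spectral measure of the fibres on which $\Delta|_{C^q}$ is nonzero stays away from the trivial infinitesimal character near $\widehat G_{\mathrm{q\text{-}cohom}}$. It can still approach $(q\pm1)$-cohomological representations, and those fibres are exactly what must be controlled. I will split $C^q=\ker d\oplus(\ker d)^\perp$ fibrewise and show that $d_q$ and $d_{q-1}$ have closed range by bounding $\Delta$ below on $C^q(\mathring\pi)$. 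Doing so requires showing that whenever $\rho_n\to\rho$ with $\rho$ cohomological but not $q$-cohomological, the $K$-type components $\mathrm{Hom}_K(\wedge^q\mathfrak p,\rho_n)$ have $\Delta$ bounded below. This is where the difficulty lies: a $(q+1)$-cohomological limit could create almost-closed cochains in degree $q$. I would first rule this out using continuity of $K$-type multiplicities under Fell convergence together with Vogan--Zuckerman's description of the $A_{\mathfrak q}(\lambda)$ modules. The point to establish is that if $\mathrm{Hom}_K(\wedge^q\mathfrak p,\rho)=0$, or if $\rho$ has no degree-$q$ cohomology, then the relevant $K$-types of approximating $\rho_n$ carry a Laplacian bounded away from $0$. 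Equivalently, the degree-$q$ portion of the Dirac or Laplace operator is uniformly invertible near $\rho$. This step, a uniform spectral gap via $K$-type bookkeeping, is the main obstacle.
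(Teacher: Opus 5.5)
\textbf{The direction $\Rightarrow$ is not proved, and the strategy you propose for it fails.} Your Step~2 is essentially a $(\mathfrak g,K)$-version of the paper's argument for $\Leftarrow$: an almost harmonic sequence split along $\ker\Delta_q^+\oplus\ker\Delta_q^-$, as in Theorem~\ref{thm:main thm finit prop 2}. This holds modulo controlling $d\omega_n$, which uniform-on-compacta approximation of matrix coefficients does not do by itself.

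Step~3 cannot work as formulated. By Kuga's formula, $\Delta$ acts on all of $C^q(\rho_n)=\mathrm{Hom}_K(\wedge^q\mathfrak p,\rho_n)$ by the scalar $\chi_{\rho_n}(\Omega)-\chi_{\mathrm{triv}}(\Omega)$. The Casimir eigenvalue varies continuously on $\widehat G$, so along any sequence converging to a cohomological point this scalar tends to $0$. There is therefore no uniform spectral gap on any fibre with $C^q(\rho_n)\neq 0$, and bounding $\Delta$ below forces $\mathrm{Hom}_K(\wedge^q\mathfrak p,\rho_n)=0$ on all such fibres.

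That vanishing fails near a single non-$q$-cohomological limit, because $K$-types are not continuous under Fell convergence. A $K$-type of the limit persists nearby, but nearby representations can carry $K$-types the limit lacks. For example, the spherical complementary series $\rho_s$ of $\SO^\circ(n,1)$ converges to $\mathbf 1$, which is not $1$-cohomological. Yet $\mathfrak p$ is a $K$-type of every $\rho_s$, so $C^1(\rho_s)\neq 0$ while $\Delta\to 0$; this is how $\torH^1$ arises for representations with almost invariant vectors. The theorem survives only because the \emph{other} endpoint of this complementary series is $1$-cohomological. The phenomenon lives on the whole non-Hausdorff limit set, not in a neighbourhood of one point.

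A $K$-type bookkeeping proof would need at least three things: Vogan's convergence theorem; the fact that every composition factor of the limiting induced representation is a unitary limit point; and a compactness statement ensuring that fibres with $C^q\neq 0$ and Casimir tending to that of $\mathbf 1$ accumulate at all. None of these is in your sketch.

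\textbf{The missing idea} is a compactness argument that turns an almost kernel into an honest kernel in a weakly contained representation, with no knowledge of $K$-types near $\widehat G_{\mathrm{cohom}}$. In the paper, $\Rightarrow$ is immediate from Theorem~\ref{main1} applied in degrees $q$ and $q+1$. That theorem is proved by restricting to a cohomological witness $\Gamma$ of type $\mathrm{FP}_\infty(\Q)$ (Theorem~\ref{thm:witness exists}). There the cochain spaces are finite amplifications $V^{m_q}$ and the Laplacians lie in the \emph{unital} $C^*$-algebra $M_{m_q}(C^*(\Gamma))$. An almost kernel of $\Delta_q^-$ (resp.\ $\Delta_{q-1}^+$) gives vector states whose weak-$*$ limit has a GNS representation $\sigma\prec\pi$ with a nonzero harmonic vector (Theorem~\ref{thm:main thm finit prop 1}). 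Witness properties (1)--(2) bring this back to $\widehat G$, and working inside $\mathring\pi$ gives essentiality. If you want to stay with $(\mathfrak g,K)$-cochains, you would have to replace compactness of the state space by something like an equicontinuity argument for the matrix-valued coefficients of the almost harmonic cochains.

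Three further problems:
\begin{enumerate}
\item Your method applies only to real Lie groups. The statement also covers non-archimedean and mixed factors, which the witness technique handles uniformly.
\item You never justify replacing the Fr\'echet complex on smooth vectors, which carries the topology of $H^*(G;\pi)$, by a Hilbert-space closed-range criterion. This matters because $d$ is unbounded on $\mathrm{Hom}_K(\wedge^q\mathfrak p,\mathcal H_\pi)$.
\item In Step~1, non-closed range of $d\colon C^q\to C^{q+1}$ detects torsion in degree $q+1$ only. The correct criterion for ``degree $q$ or $q+1$'' is that $\Delta_q$ fails to be bounded below on $(\ker\Delta_q)^\perp$, i.e.\ that $d_q$ or $d_{q-1}^*$ fails to have closed range.
\end{enumerate}
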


\begin{remark}
    For $G=\SL_2(\R)$ there exists a representation $\pi$ such that the above properties are satisfied in all degrees $q=0$, $q=1$ and $q=2$, 
    exhibiting the three different possibilities for the left-hand side property. For $q=0$ there exists torsion in degree $q+1$ but not in degree $q$, for $q=1$ there exists torsion in both degrees $q$ and $q+1$, and 
    for $q=2$ there exists torsion in degree $q$ but not in degree $q+1$.
    This will be discussed in \S\ref{sec6}.
\end{remark}

\begin{remark}
In view of Theorem~\ref{intro:homology-cohomology}, Theorem~\ref{intro: main thm2} can be reformulated nicely by  
 \[ \torH^q(G;\pi)\neq 0 \quad \mbox{or} \quad \torH_{q}(G;\pi)\neq 0 \qquad \Longleftrightarrow \qquad \mathrm{supp}(\pi)' \cap \widehat{G}_{\mathrm{q-cohom}} \neq \emptyset.  \]
\end{remark}

We do not have a satisfying ``if and only if'' criterion for torsion cohomology in degree $q$.  
The following is a one sided implication.
Note that its first part follows trivially from Theorem~\ref{intro: main thm2},
but the ``moreover'' part provides extra information.

\begin{mainthm}\label{intro: main thm1}
    Let $G$ be a semisimple group and let $\pi$ be a unitary representation of $G$.
    Fix a degree $q$. Then we have
     \[ \torH^q(G;\pi)\neq 0 \quad \Longrightarrow \quad \mathrm{supp}(\pi)' \cap \widehat{G}_{\mathrm{q-cohom}} \neq \emptyset \quad \mbox{and}\quad  \mathrm{supp}(\pi)' \cap \widehat{G}_{\mathrm{(q-1)-cohom}} \neq \emptyset.\]
    Moreover, $\pi$ essentially weakly contains irreducible representations $\rho_1\in \widehat{G}_{\mathrm{q-cohom}}$ and $\rho_2\in \widehat{G}_{\mathrm{(q-1)-cohom}}$ such that $\rho_1$ and $\rho_2$ are inseparable by neighborhoods in $\mathrm{supp}(\pi)$. That is, if $U_1$ is a neighborhood of $\rho_1$ in $\widehat{G}$, and $U_2$ is a neighborhood of $\rho_2$, then $\mathrm{supp}(\pi) \cap U_1 \cap  U_2 \ne \emptyset$.
\end{mainthm}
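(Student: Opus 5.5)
We reduce to the torsion part and decompose along the support. By Theorem~\ref{intro-Hausdorff}, $\torH^q(G;\pi)=\torH^q(G;\mathring{\pi})$. Moreover $\mathrm{supp}(\mathring{\pi})\cap\widehat{G}_{\mathrm{cohom}}\subseteq \mathrm{supp}(\pi)'$: a cohomological $\rho$ weakly contained in $\mathring{\pi}$ is weakly contained in the complement of its own isotypic component in $\pi$. So we may assume $\pi=\mathring{\pi}$ contains no cohomological subrepresentation. Write $\pi=\int^\oplus_{\widehat G}\pi_x\,d\mu(x)$, and use that $\widehat{G}_{\mathrm{cohom}}$ is finite. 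The cohomology of $\pi$ is controlled by the $(\mathfrak g,K)$-complex on a fixed finite set of $K$-types, together with the Casimir. Cohomological representations have Casimir eigenvalue equal to that of the trivial representation (or of the finite-dimensional module) and infinitesimal character $\chi_\rho$. We decompose $\mu$ by the infinitesimal character into finitely many pieces $\mu_j$, one near each possible cohomological infinitesimal character, and a remainder. The remainder is bounded away from all of them, so its Casimir acts invertibly on the relevant finite $K$-isotypic pieces with bounded inverse, and its cohomology vanishes, Hausdorff part and torsion part alike.

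The key step is a finer, \emph{localized} version of the first part of the theorem. Within each piece we partition a small neighborhood of the cohomological points finitely, by Fell-open sets $U_\rho\ni\rho$ separating the cohomological representations that are separable in $\mathrm{supp}(\pi)$. Suppose that for every pair $\rho_1\in\widehat G_{\mathrm{q-cohom}}$, $\rho_2\in\widehat G_{\mathrm{(q-1)-cohom}}$ we could choose neighborhoods with $\mathrm{supp}(\pi)\cap U_1\cap U_2=\emptyset$. Then we could split $\pi=\bigoplus_i\pi_i$ as a finite direct sum, with each $\mathrm{supp}(\pi_i)$ near only cohomological points that are \emph{not} simultaneously $q$- and $(q-1)$-cohomological in a mixed pair. (Finitely many points allow refining the neighborhoods; when $\rho_1=\rho_2$ is both $q$- and $(q-1)$-cohomological we take $U_1=U_2$, so the hypothesis never separates it from itself, which is consistent.) Since cohomology commutes with finite direct sums, it suffices to show $\torH^q(G;\pi_i)=0$ whenever $\mathrm{supp}(\pi_i)'$ either meets no $q$-cohomological point or meets no $(q-1)$-cohomological point.

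For that we apply Theorem~\ref{intro: main thm2} to $\pi_i$. If $\mathrm{supp}(\pi_i)'\cap\widehat G_{\mathrm{q-cohom}}=\emptyset$, then $\torH^q(G;\pi_i)=0$ directly. If instead $\mathrm{supp}(\pi_i)'\cap\widehat G_{\mathrm{(q-1)-cohom}}=\emptyset$, Theorem~\ref{intro: main thm2} applied in degree $q-1$ gives $\torH^{q}(G;\pi_i)=0$, since torsion in degree $(q-1)+1$ is excluded. Hence $\torH^q(G;\pi)=\bigoplus_i\torH^q(G;\pi_i)=0$, which contradicts the assumption. So some pair $\rho_1,\rho_2$ is inseparable in $\mathrm{supp}(\pi)$, and both points then lie in $\mathrm{supp}(\pi)'$. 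The first assertion of the theorem follows as well, or directly from Theorem~\ref{intro: main thm2} in degrees $q$ and $q-1$.

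The main obstacle is the splitting step. Fell-open neighborhoods of finitely many points need not be disjoint, so we must turn the hypothesis ``$\mathrm{supp}(\pi)\cap U_1\cap U_2=\emptyset$ for each bad pair'' into an honest orthogonal decomposition of $\pi$. We do this by taking spectral projections of $\mu$ onto Borel sets: set $A=\bigcup_{\rho_1}U_{\rho_1}$ and its complement, and check that the support of each restriction stays inside the closure-compatible region. The closure of the support of the restriction to $A$ may acquire a $(q-1)$-cohomological point, which must then be excluded using the separation hypothesis. Here non-Hausdorffness of $\widehat G$ near cohomological points is exactly what must be controlled.
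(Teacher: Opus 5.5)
\textbf{There is a genuine gap: your argument is circular.} Everything you use about torsion comes from the forward implication of Theorem~\ref{intro: main thm2}. Applied in degrees $q$ and $q-1$, that implication \emph{is} the first assertion of the statement you are proving: $\torH^q(G;\pi)\neq 0$ forces $\mathrm{supp}(\pi)'$ to meet both $\widehat{G}_{\mathrm{q-cohom}}$ and $\widehat{G}_{\mathrm{(q-1)-cohom}}$. In the paper, the forward direction of Theorem~\ref{main2} is proved by citing Theorem~\ref{main1}, the very result at hand.

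So you have only shown that the first assertion implies the ``moreover'' clause. That deduction is correct and is the paper's own. The ``main obstacle'' you flag is also routine. Pairwise separation plus finiteness of $\widehat{G}_{\mathrm{cohom}}$ gives disjoint open sets $U\supseteq \mathrm{supp}(\pi)\cap\widehat{G}_{\mathrm{q-cohom}}$ and $V\supseteq\mathrm{supp}(\pi)\cap\widehat{G}_{\mathrm{(q-1)-cohom}}$ in $\mathrm{supp}(\pi)$. Since $V$ is open and disjoint from $U$, we have $\overline{U}\cap V=\emptyset$. Cutting the spectral measure along $U$ then gives $\pi=\pi_1\oplus\pi_2$ with the required supports.

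\textbf{What is missing} is the mechanism that turns torsion into weakly contained cohomological representations in the two specific degrees $q$ and $q-1$. The paper does this as follows.
\begin{enumerate}
\item Pass to a dense cohomological witness $\Gamma<G$ of type $\mathrm{FP}_\infty(\Q)$ up to degree $d(G)+1$ (Theorem~\ref{thm:witness exists}). Restriction is a topological isomorphism, so $\torH^q(\Gamma;\pi)\neq 0$.
\item On the finite Hilbert cochain complex of $\Gamma$, take an almost kernel of $d_{q,V}$ (resp.\ of $d_{q,V}^*$) and view it as vector states on a matrix algebra over $C^*(\Gamma)$. By compactness these have a weak-$*$ limit whose GNS representation is weakly contained in $\pi|_\Gamma$ and has nonzero Hausdorff cohomology in degree $q-1$ (resp.\ $q$). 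This is Theorem~\ref{thm:main thm finit prop 1}.
\item Property (2) of the witness then produces points of $\widehat{G}_{\mathrm{(q-1)-cohom}}$ and $\widehat{G}_{\mathrm{q-cohom}}$ in $\mathrm{supp}(\pi)$. Passing to $\mathring{\pi}$ makes them essential.
\end{enumerate}

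Your Casimir/infinitesimal-character paragraph does not replace this. It plays no role in the rest of your argument. At best it localizes the cohomology to representations whose Casimir eigenvalue is close to that of the trivial representation, and it says nothing about the degrees in which torsion occurs. It is also unavailable for the non-archimedean factors allowed by Terminology~\ref{terminology:semisimple}. Finally, all cohomological representations share the infinitesimal character of the trivial one, so there is a single ``piece'', not finitely many.
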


\begin{remark} \label{rem:SL2C}
    The above is \emph{not} an ``if and only if'' criterion.
    For $G=\SO^\circ(2k+1,1)$, there exists a representation $\pi$ such that the right-hand side of the implication in Theorem~\ref{intro: main thm1}, including the ``moreover'' addendum, is satisfied, but the left-hand side is not. In fact, $\pi$ essentially weakly contains an irreducible representation $\rho\in \widehat{G}_{\mathrm{q-cohom}} \cap \widehat{G}_{\mathrm{(q-1)-cohom}}$. See Theorem \ref{greatQuestion}.
\end{remark}

In Problem~\ref{prob:main} we described our main goals in writing this paper.
We now specify the part that we did not achieve here.
A precise conjecture which regards this problem will be given later, see Conjecture~\ref{conj:torsionUltraTopological} and Problem~\ref{prob:remainingDetailed}.
\begin{prob}\label{prob:remaining}
    Find an ``if and only if'' criterion answering the second part of question 2 of Problem~\ref{prob:main}, as guaranteed by Theorem~\ref{intro:topological propery}.
\end{prob}

In some situations, repeated applications of Theorems \ref{intro: main thm2} and \ref{intro: main thm1} in different degrees give enough information to answer Problem~\ref{prob:remaining}.
This is the case whenever the representation in question does not weakly contain irreducible representations that are cohomological in four successive degrees $q-2,q-1,q,q+1$, and are inseparable by neighborhoods in $\widehat{G}$ (see Remark \ref{rem:4-degrees} below).
We exploit this fact in \S\ref{sec6} to obtain a satisfying picture for the cohomology of groups of type $\SO^\circ(2k,1)$. In \S\ref{subsec:6.4}, we complete the picture also for the group $\SO^\circ(2k+1,1)$, using a different approach and machinery from the theory of Lie algebra cohomology. In \S\ref{sec7} We present a different resolution for the specific case of $\SO^\circ(3,1) \cong \PSL_2(\C)$, using an explicit construction and tools from $3$-manifolds theory. For the group $\mathrm{SU}(n,1)$ we get a partial picture, which we complete in the specific case of $\mathrm{SU}(2,1)$ using the Poincar\'e duality given in Theorem~\ref{intro:homology-cohomology}.

\subsection{Higher property (T), torsion and isolation} \label{subsec:isolation}

Fixing for this subsection a non-compact semisimple group $G$ (see Terminology~\ref{terminology:semisimple}),
we will pay special attention to two significant cohomological invariants which answer the following two questions.

\begin{question} \label{que:higherT}
    What is the first degree $n$ in which there exists a unitary representation $V$ with $V^G=\{0\}$ having non-trivial cohomology, that is $H^n(G;V)\neq 0$? 
\end{question}

\begin{question} \label{que:highertor}
    What is the first degree $m$ in which there exists a unitary representation $V$ having non-trivial torsion cohomology, that is $\torH^m(G;V)\neq 0$? 
\end{question}

For the discussion below fix the invariants $n=n(G)$ and $m=m(G)$ answering the above questions correspondingly.
We compare them to the classical invariants $r(G)$, the rank of $G$, and $d(G)$, the dimension of its symmetric space.
The following theorem can be viewed as a generalization of the classical fact that $G$ has property (T) if and only if the trivial representation is isolated in $\widehat{G}$.

\begin{mainthm}\label{intro: theorem isolation}
    Let $G$ be a non-compact semisimple group. Then $n(G)=q$, where $q$ is the first degree in which there exists a non-trivial representation in $\widehat{G}_{q-\mathrm{cohom}}$, 
    and $m(G)=p+1$, where $p$ is the first degree in which there exists a representation in $\widehat{G}_{p-\mathrm{cohom}}$ which is not isolated in $\widehat{G}$.
\end{mainthm}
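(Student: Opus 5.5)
We prove the two equalities separately, each by an upper and a lower bound, using the earlier main theorems.

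\emph{The invariant $n(G)$.} Let $q$ be the least degree with a non-trivial $\rho\in\widehat{G}_{q-\mathrm{cohom}}$. Taking $V=\rho$ gives $V^G=0$ (since $\rho$ is irreducible and non-trivial) and $H^q(G;V)\neq0$, so $n(G)\le q$. For the reverse inequality, let $V$ be unitary with $V^G=0$ and $H^k(G;V)\neq 0$; we show $k\ge q$.
\begin{enumerate}
\item If $\barH^k(G;V)\neq0$, then by Theorem~\ref{intro-Hausdorff} some $\rho\in\widehat{G}_{\mathrm{cohom}}$ with $H^k(G;\rho)\neq 0$ is contained in $V$. Since $V^G=0$, $\rho$ is non-trivial, hence $k\ge q$.
\item If $\torH^k(G;V)\neq0$, then by Theorem~\ref{intro: main thm1} $V$ essentially weakly contains some $\rho_2\in\widehat{G}_{(k-1)-\mathrm{cohom}}$. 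If $\rho_2$ is non-trivial, then $k-1\ge q$. If $\rho_2$ is trivial, it is a non-isolated point of $\widehat{G}$, and trivial $\rho_1$ cannot serve as the degree-$k$ witness for $k\ge1$. Either way $k\ge q$ provided the non-isolated witness gives $k\ge q$.
\end{enumerate}
The cleaner route in case (2) is to first prove $m(G)=p+1$ and then show $p+1\ge q$. Every non-isolated cohomological point is either non-trivial, which forces $p\ge q$, or trivial, in which case $p=0$; then non-trivial $q$-cohomological representations accumulating at the trivial one should exist, by the Vogan--Zuckerman classification together with the known fact $H^1$ for (T)-failing factors. I expect this bookkeeping of the trivial representation to be the delicate point, and I would isolate it as a lemma: $\torH^k(G;V)\neq0$ and $V^G=0$ imply $k\ge q$.

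\emph{The invariant $m(G)$.} Let $p$ be the least degree with a non-isolated $\rho\in\widehat{G}_{p-\mathrm{cohom}}$.

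For the lower bound, suppose $\torH^k(G;V)\ne0$. Theorem~\ref{intro: main thm1} gives $\rho_2\in\mathrm{supp}(V)'\cap\widehat{G}_{(k-1)-\mathrm{cohom}}$. Since $\rho_2$ is non-isolated in $\mathrm{supp}(V)$, it is non-isolated in $\widehat{G}$, so $k-1\ge p$.

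For the upper bound, take $\rho\in\widehat{G}_{p-\mathrm{cohom}}$ non-isolated and a net of irreducibles $\sigma_i\to\rho$ with $\sigma_i\not\cong\rho$. By the finiteness of $\widehat{G}_{\mathrm{cohom}}$ we may assume the $\sigma_i$ are non-cohomological. Set $\pi=\bigoplus_i\sigma_i$. Then $\rho\in\mathrm{supp}(\pi)'$ and $\pi_{\mathrm{cohom}}=0$. Theorem~\ref{intro: main thm2} yields $\torH^p(G;\pi)\ne0$ or $\torH^{p+1}(G;\pi)\ne0$. The first option would give $m\le p$, contradicting the lower bound $m\ge p+1$ just proved, so $\torH^{p+1}(G;\pi)\neq0$ and $m(G)\le p+1$.

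The main obstacle is the trivial-representation case in the $n(G)$ lower bound. There I would try to show that non-trivial cohomological representations approaching the trivial one occur only in degrees at least $q$, using that the trivial representation is $0$-cohomological while torsion in degree $0$ never occurs.
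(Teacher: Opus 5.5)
Your treatment of $m(G)$ is correct and is the paper's argument. The non-isolated $(k-1)$-cohomological witness from Theorem~\ref{intro: main thm1} gives $m(G)\ge p+1$, and Theorem~\ref{intro: main thm2} applied to $\bigoplus_i\sigma_i$ gives $m(G)\le p+1$. Either alternative in Theorem~\ref{intro: main thm2} already yields this, so the appeal to the lower bound is unnecessary. Use a sequence rather than a net so the Hilbert space stays separable. The bound $n(G)\le q$ and the Hausdorff case of $n(G)\ge q$ are also fine.

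\textbf{The gap} is the torsion case of $n(G)\ge q$, which you leave unresolved. The one reason you give there is false: the trivial representation \emph{can} be the degree-$k$ witness for $k\ge 1$. Take $G=\SL_2(\R)$ and $V$ with almost invariant vectors but $V^G=0$. Then $\torH^2(G;V)\neq 0$, while $\widehat{G}_{2-\mathrm{cohom}}=\{\mathbf 1\}$.

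Your proposed lemma, that non-trivial cohomological representations near $\mathbf 1$ occur only in degrees $\ge q$, is a tautology and targets the wrong inequality. Since $k\ge 1$ is automatic from $H^0(G;V)=V^G=0$, what you actually need is that $q=1$ whenever $\mathbf 1$ is non-isolated.

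The paper sidesteps this by splitting according to property (T):
\begin{itemize}
\item Under (T), $\mathbf 1$ is isolated in $\widehat G$. So the non-isolated witness $\rho_1\in\widehat{G}_{k-\mathrm{cohom}}$ from Theorem~\ref{intro: main thm1} is automatically non-trivial, and $k\ge q$.
\item Without (T), the paper appeals to Delorme--Guichardet for $n(G)=m(G)=1$.
\end{itemize}

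You can also prove the missing fact with the theorems you already use. Let $\sigma_i\to\mathbf 1$ be non-trivial irreducibles and set $W=\bigoplus_i\sigma_i$.
\begin{itemize}
\item Theorem~\ref{intro: main thm2} in degree $0$ forces $\torH^1(G;W)\neq 0$, because degree-$0$ cohomology is always Hausdorff.
\item Theorem~\ref{intro: main thm1} in degree $1$ then produces some $\rho\in\mathrm{supp}(W)'\cap\widehat{G}_{1-\mathrm{cohom}}$.
\item This $\rho$ is non-trivial because $H^1(G;\mathbf 1)=\mathrm{Hom}_{\mathrm{cont}}(G,\C)=0$ for a semisimple group.
\end{itemize}
Hence $q\le 1\le k$. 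The vanishing of $H^1(G;\mathbf 1)$, or equivalently the (T) dichotomy, is the ingredient your proposal is missing.
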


\begin{remark}
Since the cohomology of irreducible representations is always Hausdorff, the analogue of Question~\ref{que:higherT} regarding the Hausdorff cohomology has the same answer $n(G)$ as Question~\ref{que:higherT}.
\end{remark}

\begin{mainthm} \label{intro: m>n>1}
    Let $G$ be a non-compact semisimple group.
    If $G$ has property $(T)$ then $m(G)>n(G)>1$, otherwise $m(G)=n(G)=1$.
    Denoting the simple factors of $G$ by $G_i$, we have $n(G)=\min n(G_i)$ and $m(G)=\min m(G_i)$.
\end{mainthm}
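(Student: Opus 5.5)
We derive the statement from Theorem~\ref{intro: theorem isolation} together with classical facts on property (T) and the structure of $\widehat{G}$ for products. By Theorem~\ref{intro: theorem isolation}, $n(G)$ is the least $q$ for which some non-trivial representation is $q$-cohomological. Likewise, $m(G)=p+1$, where $p$ is the least degree carrying a non-isolated $p$-cohomological representation.

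\emph{Step 1: the case without property (T).} By Delorme--Guichardet, $G$ fails (T) if and only if some unitary representation with no invariant vectors has $H^1\neq 0$; hence $n(G)=1$. Equivalently, the trivial representation is non-isolated in $\widehat{G}$. The trivial representation is $0$-cohomological, so $p=0$ and $m(G)=1$. One can also see this directly: the trivial representation lies in $\mathrm{supp}(\pi)'$ for some $\pi$, so Theorem~\ref{intro: main thm2} with $q=0$ gives torsion in degree $0$ or $1$. Degree $0$ torsion is impossible, since $H^0=\pi^G$ is a closed subspace and hence Hausdorff.

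\emph{Step 2: the case with property (T).} Now $\widehat{G}^{(0)\text{-cohom}}$ consists only of the trivial representation, because a unitary irreducible representation with a non-zero invariant vector is trivial. Delorme--Guichardet gives $H^1(G;\pi)=0$ for every $\pi$ without invariant vectors, so $n(G)>1$. For $m(G)>n(G)$, let $p$ be as in Theorem~\ref{intro: theorem isolation}. The trivial representation is isolated, so $p\ge 1$, and the representation realizing $p$ is non-trivial and $p$-cohomological. Hence $p\ge n(G)$ and $m(G)=p+1>n(G)$. We should also check that $m(G)$ is finite, i.e.\ that some cohomological representation is non-isolated. This is implicit in the theorem's formulation and can be witnessed, for instance, by a discrete series or a tempered cohomological representation, which lies in the closure of the continuous tempered spectrum. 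It is not needed for the inequality, with the convention $\min\emptyset=\infty$.

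\emph{Step 3: products.} Write $G=\prod G_i$, an almost direct product; up to finite central kernels, $\widehat{G}$ is identified with $\prod \widehat{G_i}$ via outer tensor products $\rho=\boxtimes\rho_i$. The Fell topology on this product of type I duals is the product topology. By the Künneth formula for relative Lie algebra cohomology (the irreducible $(\mathfrak g,K)$-modules are tensor products),
\[
H^*(G;\boxtimes\rho_i)\cong\bigotimes_i H^*(G_i;\rho_i).
\]
Hence $\boxtimes\rho_i$ is $q$-cohomological exactly when each $\rho_i$ is $q_i$-cohomological with $\sum q_i=q$. The minimum degree of a non-trivial cohomological $\rho$ is attained by taking all but one factor trivial (degree $0$), and the remaining factor non-trivial in degree $n(G_i)$; this gives $n(G)=\min_i n(G_i)$. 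Similarly, $\boxtimes\rho_i$ is non-isolated in the product topology if and only if some $\rho_i$ is non-isolated in $\widehat{G_i}$. Minimizing $\sum q_i$ again means taking the other factors trivial, so $p(G)=\min_i p(G_i)$ and $m(G)=\min_i m(G_i)$.

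The main obstacle is Step 3. It requires justifying that the Fell topology on $\widehat{G}$ is the product topology for the factors, which holds because the groups are type I. It also requires handling the finite central quotients in the almost-direct product, which can be done by pulling back representations to the simply connected cover's product.
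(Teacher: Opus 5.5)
Your argument is correct and is essentially the paper's: Delorme--Guichardet settles the non-(T) case; the characterization of $m(G)$ through non-isolated cohomological representations forces $m(G)>n(G)$, since under (T) the relevant non-isolated representation is non-trivial (the paper phrases this step through Theorem~\ref{main1}); and the product formula, together with isolation in the product topology, gives the statement about the factors. The only flaw is your aside that $m(G)$ is always finite, which is false: $m(G)=\infty$ for higher-rank simple $p$-adic groups by Theorem~\ref{intro: p-adic}, and discrete series are isolated in the tempered dual, so they are not limits of the continuous tempered spectrum. As you note yourself, though, the inequality does not use this.
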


In view of Theorem~\ref{intro: m>n>1}, we assume now that $G$ is a simple group with property (T).
The following theorem was proven by Dymara-Januszkiewicz under the extra assumption that the corresponding residue field is sufficiently large, see \cite{Dymara-Januszkiewicz}.

\begin{mainthm} \label{intro: p-adic}
    If $G$ is a simple group of higher-rank over a non-archimedean local field then $n(G)=r(G)$ and $m(G)=\infty$, that is the cohomology with unitary coefficients is always Hausdorff.
\end{mainthm}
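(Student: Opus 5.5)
Since the theorem is formulated entirely through the invariants of Theorem~\ref{intro: theorem isolation}, I would reduce it to a statement about the unitary dual of $G$. By that theorem, $n(G)$ is the first degree $q$ in which $\widehat{G}$ contains a non-trivial $q$-cohomological representation. Also $m(G)=p+1$, where $p$ is the first degree in which some $p$-cohomological representation is non-isolated in $\widehat{G}$. So it suffices to show two things. First, the only cohomological representations in degrees $<r(G)$ are trivial; combined with existence in degree $r(G)$, this gives $n(G)=r(G)$. Second, every cohomological representation of $G$ is isolated in $\widehat{G}$, which gives $m(G)=\infty$.

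\textbf{The cohomological dual.} For $G$ simple over a non-archimedean local field, I would use the classical description of $\widehat{G}_{\mathrm{cohom}}$ (Casselman, Borel--Wallach). A unitary irreducible $\rho$ has $H^*(G;\rho)\neq0$ only if it has Iwahori-fixed vectors and the central character of the Iwahori--Hecke module is that of the trivial representation. These are the subquotients of the unramified principal series $\mathrm{Ind}_P^G(\delta_P^{1/2})$, and they are parametrized by subsets $J$ of the simple roots. The unitary ones are exactly the trivial representation and the Steinberg representation. Indeed, the intermediate subquotients are non-unitary in higher rank. This follows from Casselman's computation, or alternatively from property (T) plus Garland-type vanishing below degree $r$. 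The Steinberg representation has cohomology only in degree $r(G)$, where it is one-dimensional, via the Borel--Serre computation $H^q(G;\mathrm{St})=\C$ for $q=r$ using the building. Hence $\widehat{G}_{\mathrm{cohom}}=\{\mathbf{1},\mathrm{St}\}$. The trivial representation is cohomological only in degree $0$, because the building is contractible. This yields $n(G)=r(G)$.

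\textbf{Isolation.} The trivial representation is isolated because $G$ has property (T) in higher rank (Kazhdan). For $\mathrm{St}$, I would use that it is square integrable. By Bernstein's theory of the Bernstein center, $\widehat{G}$ decomposes by Bernstein components into open-closed pieces. The Iwahori component $\widehat{G}^{I}$ (representations with Iwahori-fixed vectors) is open and closed in the Fell topology. On it, $\rho\mapsto\rho^I$ identifies $\widehat{G}^I$ with the unitary dual of the Iwahori--Hecke algebra $\mathcal{H}(G,I)$, a finitely generated algebra that is finite over its center. The central character map to $\mathrm{Spec}\,Z(\mathcal{H})$ is continuous, and its fibres are finite. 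The fibre over the trivial central character consists of $\mathbf{1}$ and $\mathrm{St}$ among unitary representations.

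\textbf{Main obstacle.} The hard step is showing that $\mathrm{St}$ is isolated. A sequence $\rho_k\to\mathrm{St}$ in $\widehat{G}^I$ would have central characters tending to the trivial one, so I must exclude unitary $\rho_k$ with central character near but not equal to that of the trivial representation. I would handle this with property (T) of $G$ in a uniform form. Tensoring by sign characters of $\mathcal{H}$ realizes the Iwahori--Matsumoto involution, which exchanges $\mathrm{St}$ and $\mathbf{1}$ and acts as a homeomorphism of the $\mathcal{H}$-module dual. The difficulty is that this involution does not preserve unitarity in general. So I would instead argue directly that the unitary dual of $\mathcal{H}$ near the trivial central character is, by Barbasch--Moy, in bijection with the unitary dual of the graded Hecke algebra near $0$. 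Isolation of $\mathrm{St}$ there would follow from Vogan's isolation results for tempered discrete series in the graded setting: a discrete series is isolated in the unitary dual among representations of the same infinitesimal character family. If this approach fails, I would instead show directly that the matrix coefficients of $\mathrm{St}$ are in $L^2$. Then $\mathrm{St}$ is an isolated point of the reduced dual, and I would combine this with the fact that in higher rank any non-tempered unitary representation near $\mathrm{St}$ must be near $\mathbf{1}$ by Howe--Moore bounds, which property (T) forbids.
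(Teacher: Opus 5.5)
Your reduction through Theorem~\ref{intro: theorem isolation} is legitimate. The part $n(G)=r(G)$ is read off from $\widehat{G}_{\mathrm{cohom}}=\{\mathbf{1},\mathrm{St}\}$, with $\mathrm{St}$ cohomological only in degree $r(G)$ (Theorem~\ref{thm:cass_padic}), and this is exactly what the paper does.

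The gap is the step you flag yourself: you never prove that $\mathrm{St}$ is isolated in $\widehat{G}$, and neither proposed route works as stated. The graded-Hecke-algebra route rests on the principle that a discrete series is isolated among unitary representations of nearby infinitesimal character. That principle is false in general. For a rank-one $p$-adic group, $\mathrm{St}$ is square integrable, yet it is a limit point of the unramified complementary series, which converges to both $\mathbf{1}$ and $\mathrm{St}$; this is precisely where the degree-$1$ torsion in rank one comes from. So a correct argument must use $r(G)\geq 2$ essentially, and your sketch does not say where that enters. The fallback fails for the same reason. Square integrability only isolates $\mathrm{St}$ inside the tempered dual. Howe--Moore and uniform decay bounds say nothing about which non-tempered unitary representations can approach $\mathrm{St}$, since weak containment means approximating matrix coefficients uniformly on compacta, which is blind to decay at infinity. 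Controlling the non-tempered Iwahori-spherical unitary representations near the trivial central character is the whole difficulty, and it is left open.

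The missing idea is that Theorem~\ref{intro: main thm1} makes this harmonic analysis unnecessary. Suppose $\torH^q(G;\pi)\neq 0$. Then $\pi$ essentially weakly contains both a $q$-cohomological and a $(q-1)$-cohomological irreducible representation. But irreducible unitary representations of $G$ have cohomology only in degrees $0$ (for $\mathbf{1}$) and $r(G)$ (for $\mathrm{St}$). These degrees are not consecutive when $r(G)\geq 2$, so no such $q$ exists and $m(G)=\infty$. The isolation of $\mathrm{St}$ you were trying to prove is therefore an output of the theory, not an input: it follows from Corollary~\ref{nonIsol}, or from Theorems~\ref{intro: main thm1} and~\ref{intro: main thm2} combined.
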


In view of Theorem~\ref{intro: p-adic}, we assume for the rest of this subsection that $G$ is a simple Lie group.
The recent paper \cite{BaderSauer} was devoted to dealing with the invariant $n(G)$, that is answering Question~\ref{que:higherT} (and the analogous question regarding lattices).
According to \cite[Definition 1.1]{BaderSauer}, $G$ is said to have property (T$_{n(G)-1}$), which is alluded to as \emph{higher property T}.
In \cite[Theorem A]{BaderSauer} it is shown that $n(G)\geq r(G)$ and the invariant $n(G)$ is explicitly computed in \cite[Theorem B and Appendix A]{BaderSauer}.
Generalizing the fact that some rank-$1$ groups satisfy property (T),
we observe the interesting phenomenon that the inequality $n(G)\geq r(G)$ might be strict.
For example, for $\mathrm{Sp}(p,q)$ with $p\geq q$ we have $r(G)=q$ and $n(G)=2q$
and for $E_8^{-24}$ we have $r(G)=4$ and $n(G)=24$.
Examining the table in \cite[Appendix A]{BaderSauer} shows that these examples are in fact extremal, thus the invariant $n(G)$ is commensurable to the rank.

\begin{mainthm}[{\cite[Theorem B and Appendix A]{BaderSauer}}] \label{intro:Lie-n}
 For a non-compact simple Lie group $G$, we have $r(G)\leq n(G) \leq 6r(G)$. For classical groups groups, we have $n(G) \leq 2r(G)$.
\end{mainthm}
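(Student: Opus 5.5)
This statement is quoted from \cite[Theorem B and Appendix A]{BaderSauer}, so the plan is to reduce it to a finite check against the classification. We will not reprove it from first principles.

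\textbf{Reduction to a combinatorial invariant.} By Theorem~\ref{intro: theorem isolation}, $n(G)$ is the least degree $q$ in which some non-trivial irreducible unitary representation has $H^q(G;\rho)\neq 0$. By Vogan--Zuckerman \cite{VZ}, the irreducible cohomological representations are exactly the modules $A_{\mathfrak q}(0)$ attached to $\theta$-stable parabolic subalgebras $\mathfrak q=\mathfrak l\oplus\mathfrak u$ of $\mathfrak g_\C$. Their cohomology with trivial coefficients begins in degree $R(\mathfrak q)=\dim(\mathfrak u\cap\mathfrak p_\C)$. The trivial representation corresponds to $\mathfrak q=\mathfrak g_\C$. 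Hence
\[ n(G)=\min\{\dim(\mathfrak u\cap \mathfrak p_\C) : \mathfrak q\subsetneq \mathfrak g_\C \ \theta\text{-stable parabolic}\}. \]
Since $\mathfrak u$ only grows as $\mathfrak q$ shrinks, the minimum is attained at a maximal proper $\theta$-stable parabolic. These correspond to removing a single node from the Vogan diagram, or equivalently to a fundamental coweight $\varpi$ of a compact Cartan subalgebra $\mathfrak t\subset\mathfrak k$ with respect to a positive system. In those terms, $\dim(\mathfrak u\cap\mathfrak p_\C)$ is the number of noncompact positive roots $\alpha$ with $\alpha(\varpi)>0$.

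\textbf{Lower bound $n(G)\ge r(G)$.} We will use \cite[Theorem A]{BaderSauer} directly. For a self-contained argument, take a maximal $\mathfrak a\subset\mathfrak p$ of dimension $r$. One shows that $\mathfrak u\cap\mathfrak p_\C$ pairs nondegenerately, via the Killing form, with $\bar{\mathfrak u}\cap\mathfrak p_\C$. Then $\mathfrak p_\C=(\mathfrak l\cap\mathfrak p_\C)\oplus(\mathfrak u\oplus\bar{\mathfrak u})\cap\mathfrak p_\C$, and a dimension or rank count of $\mathfrak l$ against a maximal abelian subspace forces at least $r$ noncompact roots in $\mathfrak u$. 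The proof of Theorem A handles this more cleanly, so we will cite it.

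\textbf{Upper bounds.} This is a case-by-case computation over Cartan's list of noncompact real forms: the classical families $\SO(p,q)$, $\mathrm{SU}(p,q)$, $\mathrm{Sp}(p,q)$, $\mathrm{Sp}_{2n}(\R)$, $\SO^*(2n)$, $\SL_n(\R)$, $\SL_n(\mathbb H)$, the complex groups, and the exceptional real forms. For each Vogan diagram we minimize, over nodes, the number of noncompact positive roots having positive coefficient on that node.
\begin{itemize}
\item For the classical families, choosing the node at the end of the diagram adjacent to the noncompact part gives an explicit count. For example, $\mathrm{Sp}(p,q)$ gives $2q$ and $\SO(p,q)$ gives roughly $\min(p,q)$. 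This yields $n(G)\le 2r(G)$.
\item The exceptional forms are a finite list, tabulated in \cite[Appendix A]{BaderSauer}. The worst ratio there is $E_8^{-24}$, with $n=24$ and $r=4$, giving $n(G)\le 6r(G)$.
\end{itemize}

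The main obstacle is the bookkeeping in the exceptional and quaternionic cases. Counting noncompact roots along a chosen node requires explicit root data, and one must verify that the chosen node is actually minimizing. Since only an upper bound is needed here, exhibiting one good node suffices, which makes the check feasible directly from the root tables.
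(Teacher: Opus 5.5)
Your reduction is the paper's. Theorem~\ref{intro: theorem isolation} reduces $n(G)$ to the first degree carrying a non-trivial irreducible cohomological representation, and the paper then reads the bounds off the Vogan--Zuckerman data tabulated in \cite[Appendix A]{BaderSauer}, with $n(G)\ge r(G)$ taken from \cite[Theorem A]{BaderSauer}. Your reformulation $n(G)=\min_{\mathfrak q\neq\mathfrak g_\C}\dim(\mathfrak u\cap\mathfrak p_\C)$ is correct, and it is exactly what that table computes.

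\textbf{The problem.} Your case analysis contains a false claim: you list the complex groups among the cases and assert that $E_8^{-24}$ has the worst ratio. Take a complex simple group $S$ with Lie algebra $\mathfrak s$, regarded as a real group. Then $\mathfrak g_\C\cong\mathfrak s\times\mathfrak s$, $\mathfrak k_\C\cong\mathfrak s$ sits diagonally, and $\mathfrak p_\C$ is the adjoint representation. So your count $\dim(\mathfrak u\cap\mathfrak p_\C)$ equals the dimension of the nilradical of the parabolic subalgebra of $\mathfrak s$ defined by $\varpi$. For $S=E_8(\C)$, the smallest nilradical is that of the maximal parabolic with Levi factor of type $E_7$. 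Its dimension is $56+1=57$, coming from the grading $\mathfrak e_8=\mathfrak g_{-2}\oplus\mathfrak g_{-1}\oplus\mathfrak g_0\oplus\mathfrak g_1\oplus\mathfrak g_2$ with $\dim\mathfrak g_1=56$ and $\dim\mathfrak g_2=1$. Your own method therefore gives $n(E_8(\C))=57>48=6\,r(E_8(\C))$.

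So the bound $n\le 6r$, and the extremality of $E_8^{-24}$, can only be established for absolutely simple real forms, which is presumably the scope of the table. You should restrict the claim accordingly rather than include the complex groups. The classical bound $n\le 2r$ does survive for complex classical groups: $\SL_n(\C)$, $\mathrm{Sp}_{2n}(\C)$ and $\SO_N(\C)$ give $n-1$, $2n-1$ and $N-2$ respectively. This caveat applies equally to the statement as quoted, since the paper's proof is the same one-line appeal to the table.

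\textbf{Two smaller points.}
\begin{enumerate}
\item Describing maximal $\theta$-stable parabolics via a compact Cartan of $\mathfrak g$ and single nodes of the Vogan diagram is valid only in the equal-rank case. In unequal rank (for example $\SL_n(\R)$ or the complex groups), they correspond to $\theta$-orbits of nodes, and one counts $\mathfrak t$-weights of $\mathfrak p_\C$ with multiplicity rather than noncompact roots.
\item Your ``self-contained'' sketch of $n\ge r$ is not an argument. Simply cite \cite[Theorem A]{BaderSauer}, as the paper does.
\end{enumerate}
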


The invariant $m(G)$ associated with Question~\ref{que:highertor} was considered in \cite[Definition 30]{BaderNowakDeformation}, along with a terminology which is now dated, so we will not repeat it here.
In view of Theorem~\ref{intro: theorem isolation}, it is related to the phenomenon of isolation of cohomological representations.
This phenomenon was studied by Vogan \cite{voganIsol}, Bergeron \cite{BergeronIsolation, BergeronLefschetz} and Bergeron-Clozel \cite[Section~5.4]{BergeronClozel}. The general philosophy is that ``most'' of the cohomological representations are isolated in $\widehat{G}$, and it is conjectured that they are isolated in the automorphic dual associated with any rational structure under some assumptions \cite[Conjecture~3.1]{BergeronIsolation}. 
Supporting this philosophy, we observe that $m(G)$ grows along with the dimension $d(G)$ of the symmetric space associated with $G$, rather than with its rank. 
On the other hand, non-isolated cohomological (tempered) representations always exist in the mid-dimension.

\begin{mainthm} \label{intro:Lie-m}
    For every non-compact simple Lie group $G$ with property (T),
    we have 
    \[ \sqrt{d(G)} \leq m(G) \leq \lceil d(G)/2 \rceil. \]
\end{mainthm}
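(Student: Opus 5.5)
We prove both inequalities through Theorem~\ref{intro: theorem isolation}, which gives $m(G)=p+1$, where $p$ is the first degree carrying a non-isolated $p$-cohomological irreducible representation. The upper bound then needs a non-isolated cohomological representation in degree about $d/2$. The lower bound needs that every $q$-cohomological representation with $q+1<\sqrt{d}$ is isolated in $\widehat{G}$.

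\emph{Upper bound.} Here I would use the Vogan--Zuckerman classification. Cohomological representations are the $A_{\mathfrak q}(\lambda)$-modules (with trivial coefficients $\lambda=0$). The representation attached to a $\theta$-stable parabolic $\mathfrak q$ whose Levi $L$ is a fundamental (maximally compact) Cartan has cohomology exactly in degree $R=\dim(\mathfrak u\cap\mathfrak p)$. For a maximally compact Cartan, $R$ equals $(d-\ell_0)/2$, where $\ell_0=\mathrm{rk}_\C\mathfrak g-\mathrm{rk}_\C\mathfrak k$. These representations are tempered: they are discrete series or limits, and when $\ell_0>0$ they are fundamental series representations. In the equal-rank case they are discrete series. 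Tempered representations are weakly contained in $L^2(G)$, and since $G$ is non-compact, $L^2(G)$ has no discrete part accounting for its whole support. The regular representation is not a sum of discrete series, so the tempered dual has non-isolated points. Moreover, discrete series themselves are limits of continuous families (\emph{e.g.} via induced families degenerating), and non-discrete tempered fundamental series sit in continuous families $\Ind_P^G(\sigma\otimes e^{i\nu})$. I would choose one such representation in each case and show it is non-isolated in $\widehat{G}$. In the non-equal-rank case this is clear from the continuous parameter $\nu$. In the equal-rank case I would use that discrete series are in the support of $L^2(G)$ minus their own isotypic part, citing the known fact that discrete series of a non-compact semisimple $G$ are never isolated in the reduced dual. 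This gives $p\le (d-\ell_0)/2$, so $m\le \lfloor d/2\rfloor+1$. To reach $\lceil d/2\rceil$, I would refine: in the equal-rank case $d$ is even, and I would use the non-isolated representation in degree $d/2-1$. Alternatively, I would note that torsion arises in degree $p$ or $p+1$ and apply Poincaré duality (Theorem~\ref{intro:homology-cohomology}) to place it at most at $\lceil d/2\rceil$. The degree bookkeeping here needs care.

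\emph{Lower bound.} Take a $q$-cohomological $A_{\mathfrak q}$ that is non-isolated in $\widehat{G}$. I would invoke Vogan's isolation criterion \cite{voganIsol} (refined by Bergeron and by Bergeron--Clozel), which shows that $A_{\mathfrak q}$ is isolated once the Levi $L$ is ``small'', for instance whenever $L$ has no non-compact factor that is not already tempered-like. Concretely, non-isolation forces $[L,L]$ to have a non-compact simple factor $L_1$ without property (T), or with a non-isolated trivial representation; roughly, $L_1$ must be of type $\SO(n,1)$ or $\mathrm{SU}(n,1)$ with $\dim$ large. The degree $q=\dim(\mathfrak u\cap\mathfrak p)$ then has to be compared with $d$. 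Since $\mathfrak p=(\mathfrak l\cap\mathfrak p)\oplus(\mathfrak u\cap\mathfrak p)\oplus(\bar{\mathfrak u}\cap\mathfrak p)$, we get $d=d(L)+2q$. The $\mathfrak u\cap\mathfrak p$ part contains a representation of $L_1$ on which $L_1$ acts non-trivially. Hence $q$ is at least the dimension of a non-trivial $L_1$-module, roughly $\ge\sqrt{d(L_1)}$. Combined with $d\le d(L)+2q$, this gives $q+1\ge\sqrt d$.

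\emph{Main obstacle.} I expect the hardest step to be extracting from Vogan's isolation theorem a clean statement of this form: non-isolation forces $\mathfrak u\cap\mathfrak p$ to be large relative to $d(L)$. This may require a case check over the classification, or using $n(L_1)$ from Theorem~\ref{intro:Lie-n} together with \cite{BaderSauer}. I would first try the uniform argument: the adjoint action of a non-compact factor of $L$ on $\mathfrak u\cap\mathfrak p$ is faithful, so $\dim(\mathfrak u\cap\mathfrak p)^2\gtrsim\dim L$.
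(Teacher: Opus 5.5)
The reduction through Theorem~\ref{intro: theorem isolation} is fine, and the non-equal-rank half of your upper bound works. The tempered $A_{\mathfrak b}$ attached to a fundamental Cartan sits in a continuous family of unitary principal series, so it is non-isolated. Its lowest cohomological degree is $R=(d-\ell_0)/2$; its cohomology actually occupies degrees $R,\dots,R+\ell_0$, not only $R$, but this is harmless. Since $d\equiv\ell_0 \pmod 2$ and $\ell_0\ge 1$, you already get $m\le R+1\le\lceil d/2\rceil$ here, with no refinement needed. This is a legitimate substitute for the paper's use of the $\ell^2$-torsion of a uniform lattice (Theorem~\ref{thm:l2 invariants}).

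The equal-rank case, however, rests on a false statement. Discrete series are \emph{isolated} in the reduced dual: they are direct summands of $L^2(G)$ and are not weakly contained in the complement of their isotypic component. So no tempered family can witness their non-isolation. What you need is that a cohomological discrete series is a limit of \emph{non-tempered} unitary representations, i.e.\ ends of complementary series; the paper takes this from \cite{BergeronIsolation}. Granting that, your Poincar\'e-duality refinement is the right one and is exactly the paper's argument. Theorem~\ref{intro: main thm2} gives torsion in degree $d/2$ or $d/2+1$. Theorem~\ref{thm:Poincare duality} with Corollary~\ref{cor:index shift unitary} gives $\torH^{d/2}\neq 0\iff\torH_{d/2}\neq 0\iff\torH^{d/2+1}\neq 0$, so $m\le d/2$. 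Your other option, a non-isolated representation in degree $d/2-1$, has no independent source; its existence only follows from this duality argument via Theorem~\ref{intro: main thm1}.

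The lower bound has a more serious gap. The criterion you attribute to Vogan says that non-isolation of $A_{\mathfrak q}$ forces $[L,L]$ to have a non-compact simple factor without (T). This is false as stated. A cohomological discrete series has $L$ a compact torus, yet it is non-isolated. Your own upper bound needs this, and concretely $D_1,D_2$ of $\SO^\circ(2k,1)$ lie at the end of a complementary series (Theorem~\ref{thm:cohomological dual so(2k,1)}). So the engine of your argument contradicts the rest of your proposal.

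The subsequent estimate also fails. The space $\mathfrak u\cap\mathfrak p$ is only an $L\cap K$-module, since $[\mathfrak l\cap\mathfrak p,\mathfrak u\cap\mathfrak p]\subseteq\mathfrak u\cap\mathfrak k$, so ``the action of a non-compact factor of $L$ on $\mathfrak u\cap\mathfrak p$'' is not defined. Moreover, from $d=d(L)+2q$ you would need $d(L)\le q^2+1$ for all of $L$, which a rough bound on a single factor $L_1$ does not give.

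The paper does not attempt a uniform argument. The idea you are missing is the cheap inequality $m(G)\ge n(G)+1$ (Theorem~\ref{intro: m>n>1}, from Theorem~\ref{intro: main thm1} and property (T)). Combined with the table of $n(G)$ in \cite{BaderSauer}, it gives $\sqrt{d(G)}<n(G)+1\le m(G)$ for every simple group outside the families $\SO(p,q)$, $\mathrm{SU}(p,q)$, $\mathrm{Sp}(p,q)$. These are the only families where the rank stays bounded while $d$ grows. Only there is genuine isolation information needed, and the paper takes the exact values of $m(G)$ from Bergeron and Bergeron--Clozel (Theorems~\ref{thm:so(p,q)}, \ref{thm:su(p,q)}, \ref{thm:sp(p,q)}).
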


It seems within reach to compute the invariant $m(G)$ explicitly for every 
simple Lie group, thus constructing a table analogue to the one given in \cite[Appendix A]{BaderSauer}. Exact computations of $m(G)$ for the groups $\SO^\circ(p,q), \mathrm{SU}(p,q)$ and $Sp(p,q)$ are given in Theorems \ref{thm:so(p,q)}, \ref{thm:su(p,q)}, \ref{thm:sp(p,q)}.

\subsection{Cohomological witnesses and the (co)homology of finite type groups}

Given a locally compact group $G$ and a dense subgroup $\Gamma$,
the restriction of an irreducible unitary representation of $G$ to $\Gamma$ is still irreducible, thus we obtain a map $\widehat{G}\to \widehat{\Gamma}$.
This map is easily seen to be continuous.
We denote by $\widehat{\Gamma}_G$ the closure of the image of this map in $\widehat{\Gamma}$.

\begin{definition}
    We say that a dense subgroup $\Gamma<G$ is a \emph{cohomological witness} if the following conditions are satisfied.
    \begin{enumerate}
       \item The restriction map $\widehat{G}\to \widehat{\Gamma}$ is a homeomorphism onto its image, identifying $\widehat{G}$ as a subspace of $\widehat{\Gamma}$.
       \item For any unitary $G$-representation $\pi$, if $\pi|_\Gamma$ weakly contains a $\Gamma$-representation $\rho$ such that $\barH^q(\Gamma;\rho) \ne 0$, then $\pi$ weakly contains a representation from $\widehat{G}_{q-\mathrm{cohom}}$.
       \item For every representation $\pi$ of $G$, the map induced by the restriction, $H^q(G;\pi) \to H^q(\Gamma;\pi)$, is a topological isomorphism.
    \end{enumerate}
    We say that $\Gamma$ is a \emph{cohomological witness up to degree $N$} if the above conditions hold under the further assumption $q\leq N$.
\end{definition}

The following theorem extends \cite[Theorem F]{BaderSauer}.

\begin{mainthm}\label{intro:witness-Lie}
  Let $k$ be a number field and let $\mathbf{G}$ be a
connected and simply connected almost simple $k$-algebraic group. 
Then $\mathbf{G}(k)<\mathbf{G}(k \otimes \mathbb{R})$ is a cohomological witness.
\end{mainthm}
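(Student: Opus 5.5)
We verify the three conditions of the definition for $\Gamma=\mathbf{G}(k)$ inside $G=\mathbf{G}(k\otimes\R)$, where $\Gamma$ is dense by strong approximation / weak approximation for simply connected groups. Throughout we use the $S$-arithmetic subgroups $\Gamma_S=\mathbf{G}(\mathcal{O}_{k,S})$, for finite sets $S$ of finite places. Each $\Gamma_S$ is a lattice in $G_S=G\times\prod_{v\in S}\mathbf{G}(k_v)$, the group $\Gamma$ is the increasing union of the $\Gamma_S$, and each $\Gamma_S$ is of type $F_\infty$ (Borel–Serre). This is the finiteness input which the abstract calls cohomological witnesses combined with finite type groups.

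\emph{Condition (3).} This is the core and extends \cite[Theorem F]{BaderSauer}. For a unitary $G$-representation $\pi$ we write
\[ H^q(\Gamma_S;\pi)\cong H^q(G_S;\Ind_{\Gamma_S}^{G_S}\pi) \]
by Shapiro's lemma. We then use that $\Ind_{\Gamma_S}^{G_S}\pi\cong \pi\otimes L^2(G_S/\Gamma_S)$ (Mackey, since $\pi$ extends to $G_S$ through the projection). Passing to $G$-cohomology via the Hochschild–Serre spectral sequence for $G_S\to\prod_{v\in S}\mathbf{G}(k_v)$, the $p$-adic factors contribute through their Bruhat–Tits buildings. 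As $S$ grows, the non-constant part of $L^2$ has $p$-adic spectral gap uniformly in the relevant degrees, by property (T)/Garland-type vanishing or Theorem~\ref{intro: p-adic}. Hence in the direct limit only the constants survive, and we obtain $\varinjlim_S H^q(\Gamma_S;\pi)\cong H^q(G;\pi)$.

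We then identify $H^q(\Gamma;\pi)$ with this limit. This needs a $\varprojlim^1$/Milnor argument, and we expect it to be the main obstacle. The identification must hold \emph{topologically}, including the non-Hausdorff torsion parts, and not merely algebraically. I would try to realize both sides by explicit chain complexes built from the buildings: $\Gamma$ acts on $X_G\times\prod_{v\text{ finite}}'\mathcal{B}_v$, the restricted product of the symmetric space with the buildings. The restriction map should then be a chain homotopy equivalence with bounded maps, which gives a topological isomorphism on each piece of the decomposition~\eqref{eq:tor-bar}.

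\emph{Condition (1).} Restriction $\widehat G\to\widehat\Gamma$ is continuous and injective by density. For the homeomorphism onto its image we must show that weak containment of restrictions to $\Gamma$ implies weak containment over $G$. The plan is to use that matrix coefficients of $G$-representations are continuous, and to approximate compactly supported functions on $G$ by averages over $\Gamma$ in a neighborhood of the identity. The obstacle is that the Fell topology on $\widehat\Gamma$ does not see continuity. I would use the $K$-types of $G$ together with a fixed admissible compact open structure to control $\Gamma$-matrix coefficients uniformly near the identity.

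\emph{Condition (2).} Suppose $\pi|_\Gamma$ weakly contains $\rho$ with $\barH^q(\Gamma;\rho)\neq0$. Hausdorff cohomology is detected by almost invariant cocycles at the level of $\Gamma_S$, and by the finiteness properties of $\Gamma_S$ these give genuine nonzero Hausdorff classes in $\pi|_{\Gamma_S}$ or an ultrapower of it. By the Shapiro/limit argument above this transfers to $\barH^q(G;\sigma)\ne0$ for some $\sigma$ weakly contained in $\pi$. Theorem~\ref{intro-Hausdorff} then produces an element of $\widehat G_{q\text{-cohom}}$ contained in $\sigma$, and hence weakly contained in $\pi$.
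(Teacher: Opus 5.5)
\paragraph{Conditions (1) and (2): a genuine gap.} Your plan for these conditions uses nothing about $\Gamma=\mathbf{G}(k)$ beyond density, and density cannot suffice.

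Already for the dense subgroup $\Z+\sqrt2\,\Z<\R$ the conclusions fail. Choose integers $t_n\to\infty$ with $t_n\sqrt2$ within $1/n$ of an integer. The characters $\chi_n(x)=e^{2\pi i t_n x}$ then converge to the trivial character on the subgroup, but not on $\R$. Hence $1\prec_\Gamma\bigoplus_n\chi_n$ while $1\not\prec_\R\bigoplus_n\chi_n$, which breaks (1). Taking $\rho=1_\Gamma$, which has $\barH^1(\Gamma;\rho)\cong\C^2\neq0$, breaks (2) as well, since $1_\R$ is the only $1$-cohomological character of $\R$. Pointwise approximation on $\Gamma$ gives no control near the identity of $G$, because the approximating representations are not uniformly equicontinuous; $K$-types do not repair this.

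The missing ingredient is a spectral gap. The paper realizes $\Gamma$ as a lattice in the adelic group $M=\mathbf{G}(\mathbb{A}_k)=G\times H$ and pulls $G$-representations back to $M$, so that $\Ind_\Gamma^M(\pi|_\Gamma)\cong\pi\oplus(\pi\otimes L^2_0(M/\Gamma))$. For (1), let $\pi$ be irreducible with $\pi\prec_\Gamma\rho$. Continuity of induction and Lemma~\ref{weakContain} put $\pi$ weakly inside $\rho$ or inside $\rho\otimes L^2_0(M/\Gamma)$. The second option is excluded by restricting to a non-compact $\mathbf{G}(k_v)$ with $v$ finite, on which $\pi$ and $\rho$ are trivial, and invoking Clozel's gap (Theorem~\ref{clozel}).

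For (2), your ultrapower (or $\rho$ itself) is only a $\Gamma_S$-representation. Neither (3) nor Theorem~\ref{intro-Hausdorff} applies to it, so the step ``transfers to $\barH^q(G;\sigma)\neq0$ for some $\sigma\prec\pi$'' is unsupported. What is needed is the following chain:
\begin{enumerate}
\item Pass to a finite $S$ with $\barH^q(\Gamma_S;\rho)\neq0$ (Lemma~\ref{lemma:HausdorffCohomLimitgroups}) and $|S_{\mathrm{is}}|>q$.
\item Induce to $G_S$; then $\barH^q(G_S;\Ind\rho)\neq0$ by Theorem~\ref{lem:poly shapiro}. Pick an irreducible $q$-cohomological $\sigma\le\Ind\rho$.
\item Since $q<|S_{\mathrm{is}}|$, the product formula (Theorem~\ref{thm:product formula}) forces $\sigma$ to be trivial on some non-compact factor.
\item From $\sigma\prec\pi\oplus(\pi\otimes L^2_0(G_S/\Gamma_S))$ and Clozel's gap at that factor, conclude $\sigma\prec\pi$. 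Hence $\sigma$ factors through $G$ and is $q$-cohomological for $G$.
\end{enumerate}

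\paragraph{Condition (3).} Your exhaustion-plus-Milnor argument is a workable alternative to the paper's appeal to \cite[Theorem~E]{BaderSauer}, but several points need correcting.
\begin{itemize}
\item For non-uniform $\Gamma_S$, the Shapiro-type isomorphism holds only for $q\le|S_{\mathrm{is}}|-1$ (Theorem~\ref{lem:poly shapiro}).
\item The vanishing comes from Theorem~\ref{thm:BS vanishing semisimple} together with Clozel's gap, not from Garland or Theorem~\ref{intro: p-adic}.
\item Cohomology turns the union into an inverse system, not a direct limit. That system is eventually constant, so $\varprojlim^1=0$.
\item The topological issue you call the main obstacle is immediate from $\pi=\pi_{\mathrm{cohom}}\oplus\mathring\pi$. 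On the $\mathring\pi$ part, $H^q(G;\mathring\pi)$ carries the trivial topology, so any continuous bijection out of it is a homeomorphism. On the $\pi_{\mathrm{cohom}}$ part, both sides are Hausdorff: irreducible cohomological representations give finite-dimensional cohomology, and Theorem~\ref{thm:cohom of amplif} handles amplifications. The open mapping theorem then applies.
\end{itemize}
No building complexes are needed.
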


In fact, we have the following general result.

\begin{mainthm}\label{intro:witness}
Let $K$ be a number field and $\mathbf{G}$ a
connected and simply connected almost simple $K$-algebraic group.
Let $S$ be a finite set of places that contains all the Archimedean places, and let $S \subseteq T$ be another set of places such that $T - S$ contains at least one place over which $\mathbf{G}$ is isotropic. Denote by $T_{\mathrm{is}} \subseteq T$ the set of places in $T$ where $\mathbf{G}$ is isotropic.  Let $\Gamma$ be a group in the (well defined) commensurability class of $\mathbf{G}(\mathcal{O}_T)$ and $G=\prod_{v \in S} \mathbf{G}(K_v)$, along with the natural embedding $\Gamma\to G$. Then $\Gamma$ is a cohomological witness for $G$ up to degree $|T_{\mathrm{is}}|-1$. In particular, if $T$ is infinite it is a cohomological witness.
\end{mainthm}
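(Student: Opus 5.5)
We verify the three conditions in the definition of a cohomological witness for $\Gamma<G=\prod_{v\in S}\mathbf{G}(K_v)$, in degrees $q\le |T_{\mathrm{is}}|-1$. Throughout we use the intermediate group $H=\prod_{v\in T}\mathbf{G}(K_v)$ (a restricted product if $T$ is infinite). Here $\Gamma$ sits as a lattice in $H$ via the diagonal embedding, so that $H=G\times H_f$ with $H_f=\prod'_{v\in T-S}\mathbf{G}(K_v)$. Since $T-S$ contains an isotropic place, strong approximation makes $\Gamma$ dense in $G$.

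\textbf{Condition (1).} The restriction map $\widehat G\to\widehat\Gamma$ is continuous. For the homeomorphism onto its image, I plan to use induction: if $\rho|_\Gamma$ is weakly contained in $\{\rho_i|_\Gamma\}$, then inducing to $H$ and restricting to $G$ preserves weak containment. Inflate $G$-representations to $H$ through the projection $H\to G$. Then $\Ind_\Gamma^H(\sigma|_\Gamma)\cong \sigma\otimes L^2(H/\Gamma)$ for $\sigma$ a representation of $H$ trivial on $H_f$. Taking $H_f$-invariants, or more precisely the $K_f$-invariants for a compact open $K_f<H_f$, recovers $\sigma\otimes L^2(G\times H_f/K_f\,\Gamma)$. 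This representation contains $\sigma$ and is weakly contained in $\sigma\otimes\lambda$-type representations. The delicate point is to separate $\sigma$ from other constituents, and I would handle it using the spectral gap of $H_f$ on $L^2_0(H/\Gamma)$, which follows from property $(\tau)$ or strong approximation together with Clozel. This is the same mechanism as in \cite[Theorem F]{BaderSauer}, and I would follow that argument.

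\textbf{Condition (3).} Shapiro's lemma gives $H^q(\Gamma;\pi|_\Gamma)\cong H^q(H;\Ind_\Gamma^H\pi|_\Gamma)=H^q(H;\pi\otimes L^2(H/\Gamma))$, with $\pi$ inflated. Decompose $L^2(H/\Gamma)=\C\oplus L^2_0$. On the constant summand, $H^q(H;\pi)=H^q(G;\pi)$ by a Künneth argument, because $H_f$ acts trivially and, being a product of totally disconnected groups, has trivial continuous cohomology with trivial coefficients up to the relevant degree (its Bruhat--Tits buildings are contractible). On the complementary summand $\pi\otimes L^2_0(H/\Gamma)$, the group $H_f$ acts without almost invariant vectors in a strong sense: each isotropic factor has no invariant vectors, and Theorem~\ref{intro: p-adic} together with the vanishing of Dymara--Januszkiewicz/Garland type for products of $|T_{\mathrm{is}}-S|$ rank $\ge1$ factors forces $H^q(H_f;\cdot)=0$ for $q$ below the number of isotropic factors. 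Via a Hochschild--Serre spectral sequence for $G\times H_f$ this gives vanishing of the complementary summand for $q\le |T_{\mathrm{is}}|-1$. The bound must be taken with care, since the isotropic places in $S$ also contribute rank through $G$. Topological isomorphism then follows because all maps are induced by continuous cochain maps and the vanishing is of the whole (not just reduced) cohomology, so the open mapping theorem applies.

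\textbf{Condition (2)} is the step I expect to be the main obstacle. Suppose $\pi|_\Gamma$ weakly contains $\rho$ with $\barH^q(\Gamma;\rho)\ne0$. Passing to an ultrapower, we may assume $\rho$ is contained in an ultrapower of $\pi|_\Gamma$, which is again the restriction of a $G$-representation on the ultraproduct's continuous part. The difficulty is continuity: a $\Gamma$-representation weakly contained in a restricted $G$-representation need not extend to $G$. I would instead induce to $H$. $\Ind_\Gamma^H\rho$ has nonzero reduced cohomology in degree $q$ by Shapiro, and is weakly contained in $\Ind\pi|_\Gamma=\pi\otimes L^2(H/\Gamma)$. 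The vanishing above, which is uniform and hence gives spectral-gap estimates on cochains, shows that the reduced cohomology must come from the $G$-part $\pi\otimes\C$. So $\pi$ weakly contains an $H$-representation trivial on $H_f$ with nonzero reduced $q$-cohomology. By Theorem~\ref{intro:topological propery1}-type arguments (Theorem~\ref{intro-Hausdorff} applied to it), that representation contains an element of $\widehat G_{q\text{-cohom}}$, which is then weakly contained in $\pi$. The case of infinite $T$ follows by taking a union over finite subsets.
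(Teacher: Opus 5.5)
\textbf{Overall assessment.} Your architecture is the right one: view $\Gamma$ as a lattice in $G\times H_f$, induce, split $L^2=\C\oplus L^2_0$, and use spectral gap on $L^2_0$. Your condition (1) is in substance the paper's argument, once you state the key step. An irreducible $\pi$ with $\pi\prec\rho\oplus(\rho\otimes L^2_0)$ lies under one summand by Lemma~\ref{weakContain}, and Clozel at an isotropic place of $T-S$ excludes the second. The detour through $K_f$-invariants is unnecessary. Conditions (3) and (2), however, have genuine gaps.

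\textbf{Gaps in condition (3).}
\begin{enumerate}
\item Lemma~\ref{lem: shapiro lemma} is stated for cocompact subgroups. Whenever $\mathbf{G}$ is $K$-isotropic (e.g.\ $\SL_n(\Z[1/p])<\SL_n(\R)$), $\Gamma$ is non-uniform, and an $L^2$-Shapiro isomorphism is not available without further input. The paper replaces it (Theorem~\ref{lem:poly shapiro}) by the polynomial-cohomology machinery of \cite[\S6]{BaderSauer}: the Bestvina--Eskin--Wortman lower bound on the polynomial filling rank, universal integrability, and surjectivity of $H^q(L;\Ind\,\cdot)\to H^q(\Delta;\cdot)$ with injectivity on the constant summand below that rank. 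For infinite $T$ it invokes \cite[Theorem~E]{BaderSauer}. Both your (3) and the ``by Shapiro'' step of your (2) rest on this.
\item Your vanishing for $\pi\otimes L^2_0$ uses only $H_f$. In the Hochschild--Serre spectral sequence this controls only the rows below $\mathrm{rank}(H_f)$, not all degrees below $|T_{\mathrm{is}}|$. For example, for $\SL_2$ over $\Q$ with $T=\{\infty,p\}$ it says nothing about $E_2^{0,1}=H^0(G;H^1(H_f;\pi\otimes L^2_0))$. Your remark that ``the bound must be taken with care'' flags this but does not supply the missing ingredient. That ingredient is the Bader--Sauer strengthening of Clozel, Theorem~\ref{clozel}: $U\otimes L^2_0$ has no almost invariant vectors for an \emph{arbitrary} unitary representation $U$ of a non-compact factor. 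This is exactly what is needed at the isotropic places in $S$, where $\pi$ is non-trivial. With it, Theorem~\ref{thm:BS vanishing semisimple} applies to all factors at once.
\item Citing Theorem~\ref{intro: p-adic} here is circular within the paper, since its proof goes through the witness theorem; use \cite[Theorem~5.6]{BaderSauer} instead.
\item The open mapping theorem does not apply to possibly non-Hausdorff cohomology. The paper instead splits $\pi=\pi_{\mathrm{cohom}}\oplus\mathring{\pi}$.
\end{enumerate}

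\textbf{Gap in condition (2).} The sentence ``uniform vanishing gives spectral-gap estimates on cochains, so the reduced cohomology comes from the $G$-part'' is not yet an argument. The representation $\Ind\rho$ is reducible, and its weak containment in $\pi\oplus(\pi\otimes L^2_0)$ does not by itself split it into pieces lying under each summand. The missing step is to pass to an irreducible constituent.
\begin{enumerate}
\item First reduce to a finite $T'\subseteq T$ with $|T'_{\mathrm{is}}|>q$ and $\barH^q(\mathbf{G}(\mathcal{O}_{T'});\rho)\ne0$, using Lemma~\ref{lemma:HausdorffCohomLimitgroups}. This is needed so that $M_{T'}=\prod_{v\in T'}\mathbf{G}(K_v)$ has finitely many factors.
\item Then $\barH^q(M_{T'};\Ind\rho)\ne0$ yields an irreducible $q$-cohomological $\sigma\le\Ind\rho$ by Theorem~\ref{thm: reduced cohom}.
\item The product formula (Theorem~\ref{thm:product formula}) with $q<|T'_{\mathrm{is}}|$ forces $\sigma$ to be trivial on some isotropic factor $\mathbf{G}(K_v)$, possibly with $v\in S$.
\item Lemma~\ref{weakContain} puts $\sigma$ under $\pi$ or under $\pi\otimes L^2_0$, and the latter contradicts Theorem~\ref{clozel} at $v$. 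Again the strengthened version is needed when $v\in S$.
\item Hence $\sigma\prec\pi$, so $\sigma$ is trivial on $H_f$ and lies in $\widehat{G}_{\mathrm{q-cohom}}$.
\end{enumerate}
An equivalent route is to disintegrate $\Ind\rho$ according to whether constituents lie in $\mathrm{supp}(\pi)$. The part lying under $\pi\otimes L^2_0$ inherits spectral gap at every factor and so has no cohomology below $|T'_{\mathrm{is}}|$. This route also works only with Theorem~\ref{clozel}.
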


Note that the countable group $\mathbf{G}(k)$ is not finitely generated.
Approximating it with $S$-arithmetic subgroups, for a finite, but arbitrary large set of places $S$, we will obtain the following.

\begin{mainthm}\label{intro:finite witness}
Let $G=\prod_{i=1}^n \mathbf{G}_i(k_i)$ be a semisimple group, where each $k_i$ is a local field of characteristic zero and each factor $\mathbf{G}_i$ is an almost simple, connected and simply connected $k_i$-algebraic group. Then for every $N$ there exists a countable dense subgroup $\Gamma<G$ of type $\mathrm{FP}_\infty(\Q)$ which is a cohomological witness up to degree $N$. In particular, there exists a countable dense subgroup $\Gamma<G$ of type $\mathrm{FP}_\infty(\Q)$
which is a cohomological witness up to the cohomological degree of $G$, $d(G)$.
\end{mainthm}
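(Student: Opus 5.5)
The plan is to deduce the statement from Theorem~\ref{intro:witness}, by realizing $G$ as a product of local completions of a single global group. First I would globalize each factor: for each $i$, choose a number field $K_i$ with a place $v_i$ such that $(K_i)_{v_i}\cong k_i$, and a connected, simply connected, almost simple $K_i$-group $\mathbf{H}_i$ with $\mathbf{H}_i\otimes_{K_i}k_i\cong \mathbf{G}_i$. Since $k_i$ has characteristic zero, it is a finite extension of $\R$ or $\Q_p$. Such global forms exist by the standard Borel--Harder type results on realizing local forms globally, using the Hasse principle for simply connected groups and the surjectivity of the map from global to local Galois cohomology.

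Next I would put the factors over a common field. A common number field $K$ with places $w_i$ satisfying $K_{w_i}\cong k_i$ can be arranged by weak approximation. Alternatively, and more cleanly, I would use restriction of scalars: take $\mathbf{G}=\prod_i \mathrm{Res}_{K_i/\Q}\mathbf{H}_i$ as a $\Q$-group. This group is not almost simple, but it is a finite product of $\Q$-simple simply connected groups. Theorem~\ref{intro:witness} would be applied to each factor separately, and the resulting witnesses combined by a Künneth-type argument; alternatively, I would note that the proof of Theorem~\ref{intro:witness} presumably works factorwise. To stay within the stated theorem, I prefer to treat each simple factor $G_i$ separately and then handle products.

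For a single factor, let $S$ be all archimedean places of $K_i$ together with $v_i$. The group $\prod_{v\in S}\mathbf{H}_i(K_{i,v})$ is then $G_i$ times extra archimedean and compact factors. This is a difficulty: the witness is dense in the full product, not in $G_i$ alone. I would fix it by choosing $\mathbf{H}_i$ anisotropic at every other place in $S$, which is possible by the same globalization, so those factors are compact. I would then check that the properties defining a cohomological witness pass to the projection onto $G_i$ modulo compact factors. Continuous cohomology with unitary coefficients does not change when one quotients by a compact normal subgroup, after taking invariants, and the unitary dual transforms compatibly. Next, choose a finite set $T\supseteq S$ containing $N+1$ further isotropic places, which exist since $\mathbf{H}_i$ is isotropic at almost all places. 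Then $\Gamma_i=\mathbf{H}_i(\mathcal{O}_T)$ is a witness up to degree $N$. It is of type $\mathrm{FP}_\infty(\Q)$ because $S$-arithmetic groups (Borel--Serre) have torsion-free finite-index subgroups of type $F$, and $\mathrm{FP}_\infty(\Q)$ is insensitive to finite-index and finite-kernel issues. It is dense in $G_i$ by strong approximation: $T\setminus S$ contains an isotropic place, and the group is simply connected.

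Finally I would take $\Gamma=\prod_i\Gamma_i$, which is dense and of type $\mathrm{FP}_\infty(\Q)$. The main obstacle I expect is verifying the witness axioms for the product. Axiom~(3) should follow from a Künneth/Hochschild--Serre argument for $\Gamma\le G$. This needs care for non-Hausdorff cohomology, but it can be reduced to comparing the restriction maps at the level of complexes, using naturality of the decomposition in Theorem~\ref{intro:torsion-reduced}. The last claim of the statement follows by taking $N=d(G)$, since cohomology of $G$ vanishes above $d(G)$.
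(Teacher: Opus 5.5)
You set things up exactly as the paper does. You globalize each factor via Borel--Harder so that all other archimedean places are anisotropic, take $T_i$ with more than $N$ isotropic places, get $\Gamma_i=\mathbf{H}_i(\mathcal{O}_{T_i})$ from Theorem~\ref{intro:witness}, and set $\Gamma=\prod_i\Gamma_i$. The $\mathrm{FP}_\infty(\Q)$ and density claims are fine.

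The gap is the product step, which you leave to an unspecified K\"unneth/Hochschild--Serre argument combining the witness properties of the $\Gamma_i$. The paper notes that this does \emph{not} follow abstractly, and your sketch does break down.
For axiom (3), Hochschild--Serre for $G_1\trianglelefteq G_1\times G_2$ has $E_2$-terms $H^p(G_2;H^q(G_1;\pi))$. On the part of $\pi$ with no $G_1$-cohomological subrepresentations, $H^q(G_1;\pi)$ carries the trivial topology. That term is then not continuous cohomology of a Fr\'echet module, let alone a unitary one, so the witness property of $\Gamma_2$ (stated only for unitary coefficients) cannot be fed in. Naturality of the splitting in Theorem~\ref{intro:torsion-reduced} does not help, since the torsion is exactly what must be compared.
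Axioms (1) and (2) you do not address, and they do not combine either. The $\Gamma_i$ are not type I, and the factorwise witness properties only control the marginals $\pi_j\prec_{G_j}\rho|_{G_j}$ of a containment $\pi_1\times\pi_2\prec_\Gamma\rho$. These marginals do not imply $\pi_1\times\pi_2\prec_G\rho$; consider $\rho=(\pi_1\times\tau_2)\oplus(\tau_1\times\pi_2)$.

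The missing idea is to bypass the individual witnesses and rerun the proof of Theorem~\ref{main:wittness} for $\Gamma$ as a lattice in $M=\prod_i M_i$, where $M_i=\prod_{v\in T_i}\mathbf{H}_i((K_i)_v)$. The paper arranges via Borel--Harder that $\Gamma$ is cocompact, so the ordinary Shapiro lemma applies.
For $\pi$ pulled back from $G$ one has $\Ind_\Gamma^M\pi=\pi\oplus\pi'$. Here $\pi'$ is the sum, over nonempty index sets, of $\pi\otimes L^2_0(M_{i_1}/\Gamma_{i_1})\otimes\cdots\otimes L^2_0(M_{i_k}/\Gamma_{i_k})$.
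The Bader--Sauer form of Clozel's theorem (Theorem~\ref{clozel}) allows tensoring with an \emph{arbitrary} unitary representation. It therefore gives each such summand a spectral gap on every noncompact factor of $M_{i_1}$. Theorem~\ref{thm:BS vanishing semisimple} then kills its $M_{i_1}$-cohomology in degrees $\le N$. Hochschild--Serre for $M_{i_1}\trianglelefteq M$ is used only in this vanishing regime, where no Hausdorffness issue arises.
Together with Shapiro and Lemma~\ref{lem:topological HS} this gives (3). The topological statement follows by splitting $\pi=\pi_{\mathrm{cohom}}\oplus\mathring{\pi}$.

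Axioms (1) and (2) come out the same way from Lemma~\ref{weakContain}. An irreducible representation of $M$ that is trivial on some noncompact factor of $M_{i_1}$ cannot be weakly contained in a summand involving $L^2_0(M_{i_1}/\Gamma_{i_1})$.
For (1), use the isotropic places in $T_{i_1}\setminus\{v_{i_1}\}$, on which $\pi$ is trivial.
For (2), use the product formula: a $q$-cohomological irreducible representation with $q\le N$ is nontrivial on at most $N$ noncompact factors, while $|T_{i,\mathrm{is}}|>N$ for every $i$.
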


The cohomology of groups of finite type with unitary coefficients is nicely behaved,
thus Theorem~\ref{intro:finite witness} will serve as a major tool for proving many of our theorems.
In \S\ref{sec3} we will explain how groups of finite type admit, for any unitary representation, a cochain complex of Hilbert spaces that calculates the corresponding cohomology.
The cochain spaces are amplifications of the given unitary representation and the boundary maps are given as matrices over the group algebra. This allows us to use operator theoretical techniques to study the cohomology. The main idea is that if we consider the matrix coefficients of these representations (or more precisely states of a matrix amplification of $C^*(\Gamma)$), we can use the Fell topology and the compactness of the state space to translate approximate properties of a representation to properties of a weakly contained representation.
With these tools, we prove analogous results to Theorems \ref{intro: main thm1} and \ref{intro: main thm2} for groups with finiteness properties, which are of independent interest.

\begin{mainthm}\label{thm:main_1_finiteness_intro}
    Let $\Gamma$ be a group of type $\mathrm{FP}_\infty(\Q)$ and let $\pi$ be a unitary representation of $\Gamma$. For every degree $q\in\N$ we have
    \[ \torH^q(\Gamma;\pi)\neq 0 ~\Longrightarrow ~ \mathrm{supp}(\pi) \cap \widehat{\Gamma}_{\mathrm{q-cohom}} \neq \emptyset ~~\text{and}~~ \mathrm{supp}(\pi) \cap \widehat{\Gamma}_{\mathrm{(q-1)-cohom}} \neq \emptyset.\]
\end{mainthm}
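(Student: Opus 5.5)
Since $\Gamma$ is of type $\mathrm{FP}_\infty(\Q)$, I will compute the cohomology from a resolution of $\Q$ by finitely generated free $\Q\Gamma$-modules. Applying $\mathrm{Hom}_\Gamma(-,\pi)$ gives a cochain complex of Hilbert spaces
\[ \cdots \to \pi^{n_{q-1}} \xrightarrow{d_{q-1}} \pi^{n_q} \xrightarrow{d_q} \pi^{n_{q+1}} \to \cdots, \]
where each $d_j$ is $\pi$ applied to a matrix $A_j$ over $\Q\Gamma$. The complex is natural in $\pi$ and computes $H^*(\Gamma;\pi)$ topologically. Torsion in degree $q$ means $\ker d_q/\mathrm{im}\, d_{q-1}$ is non-Hausdorff, i.e. $\mathrm{im}\, d_{q-1}$ is not closed. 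I will encode this through the Laplacian $\Delta_q=d_q^*d_q+d_{q-1}d_{q-1}^*$, which is $\pi$ applied to the self-adjoint element $L_q=A_q^*A_q+A_{q-1}A_{q-1}^*$ of $M_{n_q}(\Q\Gamma)\subset M_{n_q}(C^*\Gamma)$. The Hodge decomposition gives $\barH^q(\Gamma;\pi)\cong\ker\pi(L_q)$.

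The key fact is that $\mathrm{im}\, d_{q-1}$ fails to be closed exactly when $0$ is a non-isolated point of $\sigma(d_{q-1}^*d_{q-1})$, equivalently of $\sigma(d_{q-1}d_{q-1}^*)$ restricted to the complement of its kernel. Hence for every $\varepsilon>0$ there are unit vectors $v_\varepsilon\in\pi^{n_q}$ in the closure of $\mathrm{im}\, d_{q-1}$, orthogonal to $\ker d_{q-1}^*$, with $\|d_{q-1}^*v_\varepsilon\|<\varepsilon$. Being in the closure of the image, they satisfy $d_qv_\varepsilon=0$. So $\langle \pi(L_q)v_\varepsilon,v_\varepsilon\rangle<\varepsilon^2$. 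In the same way, taking $u_\varepsilon\in\pi^{n_{q-1}}$ orthogonal to $\ker d_{q-1}$ with $\|d_{q-1}u_\varepsilon\|<\varepsilon$, and using $d_{q-2}^*u_\varepsilon=0$ (since $u_\varepsilon\perp\ker d_{q-1}\supset\mathrm{im}\,d_{q-2}$), we get $\langle\pi(L_{q-1})u_\varepsilon,u_\varepsilon\rangle<\varepsilon^2$.

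Next comes the compactness step. The vector states $a\mapsto\langle\pi(a)v_\varepsilon,v_\varepsilon\rangle$ on $M_{n_q}(C^*\Gamma)$ have a weak-* limit point $\varphi$; passing to the unitization if needed, I can arrange that $\varphi$ is a state. The GNS representation of $\varphi$ is a representation of $M_{n_q}(C^*\Gamma)$, hence of the form $\rho^{n_q}$ for a $\Gamma$-representation $\rho$ weakly contained in $\pi$. The state $\varphi$ satisfies $\varphi(L_q)=0$. Since $L_q\ge0$, this means $\rho(L_q)^{1/2}\xi=0$ for the cyclic vector $\xi$, so $\xi\in\ker\rho(L_q)$ and $\barH^q(\Gamma;\rho)\neq0$.

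To land in irreducibles, I will replace $\varphi$ by an extreme point of the weak-* compact convex set of states $\psi$ in the closure of the $\pi$-states with $\psi(L_q)=0$. This set is a face, because $L_q\ge0$ makes $\{\psi:\psi(L_q)=0\}$ a face of the state space. I must check that an extreme point of this face can be approximated by extreme states weakly contained in $\pi$; the clean route is to use that the extreme points of the face are pure states of $M_{n_q}(C^*\Gamma)$. Each such pure state gives an irreducible $\rho$ in $\mathrm{supp}(\pi)$ with $\barH^q(\Gamma;\rho)\neq0$, i.e. $\rho\in\widehat{\Gamma}_{q\text{-cohom}}$, interpreting $q$-cohomological for $\Gamma$ via Hausdorff cohomology. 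The same argument applied to the $u_\varepsilon$ gives an element of $\widehat\Gamma_{(q-1)\text{-cohom}}$.

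The main obstacle I expect is the subtlety about the cohomological points: the limit state could a priori vanish on the relevant part, and pure states need not be limits of vector states of $\pi$ itself, only of states weakly contained in $\pi$. That is fine for support, but it needs the face and Krein–Milman argument to be carried out carefully. Verifying $\varphi(1)=1$ is automatic, since the $v_\varepsilon$ are unit vectors and the algebra is unital because $\Gamma$ is discrete.
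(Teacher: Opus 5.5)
Your proof is correct. Up to the final step it is the paper's argument:
\begin{itemize}
\item the Hilbert cochain complex coming from a finite free resolution;
\item almost kernels of $d_{q-1}$ and of $d_{q-1}^*$ (in your indexing), which automatically lie in the kernel of the other half of the Laplacian;
\item weak-$*$ compactness of the state space of the unital algebra $M_{n}(C^*\Gamma)$;
\item identifying the GNS representation of the limit state as an amplification $\rho^{n}$ with $\rho\prec\pi$;
\item Hodge theory to read off $\bar{H}^q(\Gamma;\rho)\neq 0$.
\end{itemize}
Combining $\Delta^+$ and $\Delta^-$ into the single positive element $L_q$ is only cosmetic.

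The genuine difference is how you pass to an irreducible representation. The paper keeps the possibly reducible $\rho$ and disintegrates it into irreducibles. It then uses Blanc's result that a direct integral whose fibres have vanishing Hausdorff cohomology has vanishing Hausdorff cohomology, together with the lemma that almost every fibre is weakly contained in $\rho$.

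You instead stay inside the state space, and this works cleanly:
\begin{itemize}
\item Take the states annihilating $\ker\pi^{n_q}$. This is the weak-$*$ closed \emph{convex hull} of the vector states of $\pi^{n_q}$, and it is what your ``closure of the $\pi$-states'' should mean. The bare closure of the vector states need not be convex unless you first replace $\pi$ by $\infty\pi$. This set is the state space of the quotient $C^*$-algebra.
\item Inside it, the condition $\psi(L_q)=0$ cuts out a nonempty closed face, because $L_q\geq 0$.
\item Krein--Milman gives an extreme point, which is a pure state vanishing on $\ker\pi^{n_q}$.
\item Its GNS representation is irreducible and, by kernel containment, weakly contained in $\pi^{n_q}$.
\item By the amplification lemma it has the form $\rho^{n_q}$ with $\rho$ irreducible and $\rho\prec\pi$, and its cyclic vector lies in $\ker\rho^{n_q}(L_q)$.
\end{itemize}
So the worry you raise about approximating the extreme point by extreme states weakly contained in $\pi$ is unnecessary: the extreme point is itself such a state.

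Comparing the two: your route is self-contained in $C^*$-algebraic terms and avoids direct integrals and Blanc's theorem. The paper's route reuses machinery it needs elsewhere anyway, and it applies to any representation with non-zero Hausdorff cohomology, not only one presented through a state.
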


\begin{mainthm}\label{thm:main_2_finiteness_intro}
    Let $\Gamma$ be a group of type $\mathrm{FP}_\infty(\Q)$ and let $\sigma$ be a unitary representation of $\Gamma$. Fix a degree $q$, and assume that $H^q(\Gamma;\sigma) \ne 0$. Let $\pi$ be a unitary representation that weakly contains $\sigma$. Then, at least one of the following occurs:
    \begin{enumerate}
        \item  $\bar{H}^q(\Gamma;\pi) \ne 0$.
        \item  $\torH^q(\Gamma;\pi) \ne 0$.
        \item  $\torH^{q+1}(\Gamma;\pi) \ne 0$.
    \end{enumerate}
    In other words, $\pi$ is $q$-cohomological or $(q+1)$-cohomological. If it is not $q$-cohomological, then $H^{q+1}(\Gamma;\pi)$ has non-zero torsion cohomology.
\end{mainthm}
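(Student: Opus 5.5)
We work with a finite-type cochain complex of Hilbert spaces. Since $\Gamma$ is of type $\mathrm{FP}_\infty(\Q)$, there is a projective resolution of $\Q$ by finitely generated projective $\Q\Gamma$-modules. As explained in \S\ref{sec3}, tensoring with $\pi$ gives a cochain complex
\[ 0\to \pi^{n_0}\xrightarrow{d_0}\pi^{n_1}\xrightarrow{d_1}\pi^{n_2}\to\cdots \]
computing $H^*(\Gamma;\pi)$. Each $\pi^{n_j}$ is really the range of a projection matrix over $\Q\Gamma$; I suppress this by passing to free modules up to an idempotent. Each differential $d_j$ is $\pi(A_j)$ for a fixed matrix $A_j$ over $\Q\Gamma$, independent of $\pi$, and the same matrices compute the cohomology of $\sigma$, of $\pi$, and of every representation weakly contained in $\pi$. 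Writing $D_q=d_{q-1}$ for the differential into degree $q$, the cohomology decomposes as:
\begin{itemize}
\item $H^q\neq 0$ iff $\ker d_q\neq \overline{\operatorname{im} D_q}$ or $\operatorname{im} D_q$ is not closed;
\item $\bar H^q\neq0$ iff $\ker d_q\ominus\overline{\operatorname{im}D_q}\neq 0$;
\item $\torH^q\neq0$ iff $\operatorname{im} D_q$ is not closed, which for a bounded operator means that $D_q$ restricted to $(\ker D_q)^\perp$ is not bounded below.
\end{itemize}

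We argue by contradiction: assume all three alternatives fail for $\pi$. Then $\operatorname{im} D_q$ and $\operatorname{im} d_q$ are closed and $\ker d_q=\operatorname{im} D_q$. Consider the self-adjoint Laplacian
\[ \Delta_q=D_qD_q^*+d_q^*d_q=\pi(L_q),\qquad L_q=A_{q-1}A_{q-1}^*+A_q^*A_q\in M_{n_q}(\Q\Gamma). \]
Under our assumptions the Hodge decomposition $\pi^{n_q}=\operatorname{im}D_q\oplus(\ker d_q)^\perp$ holds with both pieces closed. On $\operatorname{im}D_q$ the operator $D_qD_q^*$ is bounded below, since $D_q^*$ is injective there with closed range. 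On $(\ker d_q)^\perp$ the operator $d_q^*d_q$ is bounded below, since $d_q$ has closed range. The two summands annihilate the opposite pieces. Hence $\Delta_q\geq \varepsilon>0$ on $\pi^{n_q}$, that is, $\pi(L_q)-\varepsilon$ is a positive invertible operator. I will check this step carefully, but it looks routine.

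Next we transfer the spectral gap to $\sigma$ via weak containment. Weak containment of $\sigma$ in $\pi$ gives $\ker\pi\subseteq\ker\sigma$ in $C^*(\Gamma)$, and this persists for matrix amplifications $M_{n_q}(C^*(\Gamma))$. Consequently the spectrum satisfies $\operatorname{spec}\sigma(L_q)\subseteq\operatorname{spec}\pi(L_q)\subseteq[\varepsilon,\infty)$. Alternatively, using states: vector states of $\sigma^{n_q}$ are weak-$*$ limits of sums of vector states of $\pi^{n_q}$, so $\langle\sigma(L_q)v,v\rangle\geq\varepsilon\|v\|^2$. Either way, $\sigma(L_q)$ is invertible.

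Finally we show that invertibility of $\sigma(L_q)$ forces $H^q(\Gamma;\sigma)=0$, which contradicts the hypothesis. Set $\Delta=\sigma(L_q)$ and let $v\in\ker\sigma(d_q)$. Write $v=\Delta\Delta^{-1}v$. Since $\sigma(d_q)\Delta=\sigma(d_q)\sigma(D_q)\sigma(D_q)^*=0$ on the first summand... The cleaner route is to note that $\Delta$ commutes with $\sigma(d_q)$ in the sense $\sigma(d_q)\Delta_q=\Delta_{q+1}\sigma(d_q)$. This is the main obstacle, because $\Delta_{q+1}$ need not be invertible. Instead, I would argue directly. Since $\Delta\geq\varepsilon$, its kernel, the harmonic space $\ker\sigma(d_q)\cap\ker\sigma(D_q)^*$, is zero, so $\bar H^q(\Gamma;\sigma)=0$. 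For closedness of the range, if $v\in\ker\sigma(d_q)$ then $\langle\Delta v,v\rangle=\|\sigma(D_q)^*v\|^2\geq\varepsilon\|v\|^2$. Thus $\sigma(D_q)^*$ is bounded below on $\ker\sigma(d_q)\supseteq\overline{\operatorname{im}\sigma(D_q)}$. Hence $\sigma(D_q)$ has closed range equal to $(\ker\sigma(D_q)^*)^\perp\supseteq\ker\sigma(d_q)$, so $\operatorname{im}\sigma(D_q)=\ker\sigma(d_q)$ and $H^q(\Gamma;\sigma)=0$. This contradiction proves the trichotomy.

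The "in other words" reformulation then follows directly: cases (1) and (2) say that $H^q(\Gamma;\pi)\neq0$, and case (3) is the torsion in degree $q+1$.
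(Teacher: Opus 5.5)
Your proof is correct, and it takes a genuinely different route from the paper's.

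The paper first reduces, via Theorem~\ref{thm:main thm finit prop 1} and Proposition~\ref{prop:red cohom direct integral}, to the case where $\sigma$ is irreducible with $\bar{H}^q(\Gamma;\sigma)\neq 0$. It then takes a harmonic unit cochain $\bar u$ for $\sigma$. Using irreducibility (Lemma~\ref{lem:irred rep weak cont}), it approximates the vector state of $\bar u$ by genuine vector states $\varphi_{\bar v_i}$ of $\pi^{m_q}$, which gives $\|\Delta_{q,V}\bar v_i\|\to 0$. If $\ker\Delta_{q,V}=0$, it splits $\bar v_i$ along $\ker(\Delta^+_{q,V})\oplus\ker(\Delta^-_{q,V})$ to get an almost kernel of $\Delta^-_{q,V}$ (torsion in degree $q$) or of $\Delta^+_{q,V}$ (torsion in degree $q+1$).

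You argue contrapositively, in three steps, all of which check out:
\begin{enumerate}
\item The failure of all three alternatives forces (and is in fact equivalent to) invertibility of the Laplacian $\pi(L_q)$. Your ``routine'' step is indeed routine: for $x=v+w$ with $v\in\operatorname{im}D_q$ and $w\in(\ker d_q)^\perp$ one has $\langle\Delta_q x,x\rangle=\|D_q^*v\|^2+\|d_q w\|^2$.
\item Invertibility passes from $\pi$ to $\sigma$. The amplification of $\sigma$ factors through the $C^*$-algebra generated by the amplification of $\pi$, and unital $*$-homomorphisms do not enlarge spectra.
\item Your final direct argument correctly shows that invertibility of $\sigma(L_q)$ kills $H^q(\Gamma;\sigma)$.
\end{enumerate}

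What your route buys is a shorter proof. It needs no reduction step, no irreducibility, and no use of Theorem~\ref{thm:main thm finit prop 1} or of direct integrals. It also isolates the real content: the condition `$H^q(\Gamma;\pi)=0$ and $\torH^{q+1}(\Gamma;\pi)=0$' is a spectral gap for a fixed element of $M_{n_q}(C^*(\Gamma))$, so it is inherited by every representation weakly contained in $\pi$. The same spectral-permanence trick applied to $D_q^*D_q$ alone gives Proposition~\ref{prop:non-red is topological finit prop} directly, and without its purely-torsion hypothesis. This uses that $D_q$ has closed range iff $0$ is not an accumulation point of the spectrum of $D_q^*D_q$.

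The paper's state-approximation method is, by contrast, constructive: it produces explicit approximate harmonic cochains and almost kernels in $\pi$. It is also the uniform tool of \S\ref{sec3}. It is genuinely needed for Theorem~\ref{thm:main thm finit prop 1}, where one must extract irreducible representations in the support rather than merely compare spectra.

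Two small clean-ups for the write-up. Delete the aborted attempt (``Write $v=\Delta\Delta^{-1}v$\dots''). The detour through projective modules is unnecessary, since type $\mathrm{FP}_\infty(\Q)$ already gives a resolution by finitely generated free modules.
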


In \S\ref{sec5} we use Theorems \ref{thm:main_1_finiteness_intro} and \ref{thm:main_2_finiteness_intro}, together with Theorem~\ref{intro:finite witness}, to deduce Theorems \ref{intro: main thm2} and \ref{intro: main thm1}. 
We also use these theorems to study lattices in semisimple groups.

\begin{mainthm}\label{intro:latt corollary 1}
    Let $G$ be a higher-rank simple Lie group with trivial center and $\Gamma$ a uniform lattice in $G$. Denote by $d(G)=\mathrm{dim}(G/K)$ the dimension of the symmetric space and by $\ell(G)=\mathrm{rank}_\mathbb{C}(G)-\mathrm{rank}_\mathbb{C}(K)$ the fundamental rank. Assume that $\ell(G)>0$. Let $\pi$ be a unitary representation of $\Gamma$ which is not a finite sum of amplifications of finite dimensional representations. Then $\pi$ is cohomological. Moreover, $H^q(\Gamma;\pi)$ has non-zero torsion for every $q \in [\frac{d(G)-\ell(G)}{2}+1,\frac{d(G)+\ell(G)}{2}]$.
\end{mainthm}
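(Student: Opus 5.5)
Since $\Gamma$ is a uniform lattice in $G$, I will pass from $\Gamma$ to $G$ through induction. By Shapiro's lemma, $H^q(\Gamma;\pi)\cong H^q(G;\Ind_\Gamma^G\pi)$ as topological vector spaces, and this isomorphism respects the decomposition into torsion and Hausdorff parts. It is therefore enough to show that $\sigma:=\Ind_\Gamma^G\pi$ has non-zero torsion cohomology in every degree $q\in[\frac{d-\ell}{2}+1,\frac{d+\ell}{2}]$. By Theorem~\ref{intro:topological propery}, torsion in a given degree is a topological property, so what I need to control is the support of $\sigma$.

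The first step is to show that $\mathrm{supp}(\sigma)$ contains a tempered cohomological representation, and that it is non-isolated there. Since $G$ has higher rank it has property (T), so $\Gamma$ also has property (T). Finite-dimensional unitary representations of $\Gamma$ are then isolated, and I expect the following dichotomy: either $\pi$ is a finite sum of amplifications of finite-dimensional representations, or $\pi$ weakly contains the regular representation $\lambda_\Gamma$ (or at least some representation whose induction is weakly equivalent to $\lambda_G$). This dichotomy is the step I expect to be the main obstacle. As a first attempt, I will decompose $\pi=\pi_{fd}\oplus\pi'$, where $\pi_{fd}$ is the part spanned by finite-dimensional subrepresentations. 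The hypothesis then says either that $\pi'\neq 0$ or that $\pi_{fd}$ has infinitely many inequivalent finite-dimensional constituents. In the second case, Margulis superrigidity together with arithmeticity makes these constituents factor through congruence quotients, and I would argue that their accumulation points weakly contain representations whose induction to $G$ is supported on tempered-type points. In the first case, $\pi'$ has no finite-dimensional subrepresentations; I then need that $\Ind\pi'$ weakly contains a non-isolated piece of $\widehat{G}$ near the tempered dual. If a Howe–Moore / temperedness argument for $\Ind\pi'$ is unavailable, I would instead use the cohomological-witness machinery: restrict to a dense $\mathrm{FP}_\infty(\Q)$ subgroup and apply Theorem~\ref{thm:main_2_finiteness_intro} directly.

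Once we know that $\mathrm{supp}(\sigma)'$ contains some tempered irreducible representation, I will use two facts about $G$. First, the tempered cohomological representations (the discrete series-like $A_\mathfrak q(\lambda)$ with $\mathfrak l$ a fundamental Cartan) have cohomology exactly in the degrees $\frac{d-\ell}{2},\dots,\frac{d+\ell}{2}$. Second, they lie in the closure of the tempered non-cohomological representations, since $\ell>0$ means there are no discrete series and the tempered cohomological ones sit at limits of continuous families of unitarily induced representations. From the weak containment of $\lambda_G$, or of a tempered family, I conclude that $\mathrm{supp}(\sigma)'\cap\widehat{G}_{q\text{-cohom}}\neq\emptyset$ for each $q$ in that range. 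In particular $\sigma$, and hence $\pi$, is cohomological.

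It remains to extract torsion in each degree. By Theorem~\ref{intro: main thm2}, for each $q\in[\frac{d-\ell}{2},\frac{d+\ell}{2}]$ we have torsion in degree $q$ or in degree $q+1$. Theorem~\ref{intro: main thm2} alone does not pin down which, so I will sharpen this by a direct computation. Theorem~\ref{intro:topological propery} lets me replace $\sigma$ by $\lambda_G$, or by a direct integral over the tempered series with the same support. For this model I compute $H^*(G;\cdot)$ via relative Lie algebra cohomology of the induced family $\Ind_P^G(\delta\otimes\nu)$ with $\nu$ ranging over a neighbourhood of $0$ in $\mathfrak a^*$, where $\dim\mathfrak a=\ell$. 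The Koszul-type complex in the $\ell$ continuous parameters, tensored with the $\ell+1$ middle cohomology degrees, yields non-Hausdorff cohomology exactly in degrees $\frac{d-\ell}{2}+1,\dots,\frac{d+\ell}{2}$. This is the analogue of $L^2$-torsion appearing in the top $\ell$ degrees of the middle band. By Poincaré duality (Theorem~\ref{intro:homology-cohomology}) the resulting range is symmetric, which gives the claimed interval.
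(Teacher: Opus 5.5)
There is a genuine gap, and it sits exactly where you flagged it. Beyond the known $\ell^2$-results, the whole content of the theorem is the dichotomy ``either $\pi$ is a finite sum of amplifications of finite-dimensional representations, or $\lambda_\Gamma\prec\pi$'', and none of your proposed routes establishes it.

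\textbf{The finite-dimensional case.} Superrigidity gives finite image, but ``factoring through congruence quotients'' needs the congruence subgroup property, which is open for many cocompact lattices. Even granting finite quotients, an arbitrary sequence of irreducible representations of finite quotients has no reason to weakly contain $\lambda_\Gamma$. What one needs is that the normalized characters $\mathrm{tr}\,\pi_i(\cdot)/\dim\pi_i$ accumulate at $\delta_e$. This is a rigidity statement about characters: charmenability of higher-rank lattices combined with property (T), which is Levit--Slutsky--Vigdorovich \cite[Theorem~1.3]{LevitSlutskiVigdorovich}. Here one also uses that (T) allows only finitely many irreducibles of each dimension, so $\dim\pi_i\to\infty$.

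\textbf{The weakly mixing case.} Suppose $\pi$ has a non-zero weakly mixing part $\pi'$. Howe--Moore only gives decay of matrix coefficients of $\Ind\pi'$, which constrains its support from above. You need the lower bound $\lambda_G\prec\Ind\pi'$. The cohomological-witness fallback does not apply either: witnesses are dense subgroups of $G$ acting on $G$-representations. Applying Theorem~\ref{thm:main_2_finiteness_intro} to $\Gamma$ would still require some $\sigma\prec\pi'$ with $H^q(\Gamma;\sigma)\neq 0$, which is exactly what is unknown. The missing input is Boutonnet--Houdayer \cite[Corollary~D]{BoutHoud}: a weakly mixing unitary representation of a lattice in a higher-rank simple Lie group with trivial center weakly contains $\lambda_\Gamma$. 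This is where the trivial-center hypothesis is needed.

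\textbf{The rest of the outline.} Once $\lambda_\Gamma\prec\pi$ is known, your remaining steps are sound but roundabout, and the last one should be a citation rather than a computation. Your Koszul-type calculation is only heuristic. By Shapiro, $H^q(G;\lambda_G)\cong H^q(\Gamma;\lambda_\Gamma)$, since $\Ind_\Gamma^G\lambda_\Gamma=\lambda_G$. By Theorem~\ref{thm:l2 invariants}, this has non-zero torsion exactly for $q\in[\frac{d-\ell}{2}+1,\frac{d+\ell}{2}]$, and its Hausdorff part vanishes since $\ell>0$.

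The paper concludes directly on $\Gamma$ with Proposition~\ref{prop:non-red is topological finit prop}, using that $H^q(\Gamma;\lambda_\Gamma)$ is purely torsion in that range. Your route via Shapiro and Theorem~\ref{intro:topological propery} for $G$ is equivalent. Note that it only needs $\lambda_G\prec\Ind\pi$, not a model ``with the same support'' (which need not exist). Your second step, on non-isolated tempered cohomological points and Theorem~\ref{intro: main thm2}, then becomes superfluous.
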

\begin{mainthm}\label{intro:latt corollary 2}
    Let $G$ be a higher-rank simple Lie group which is center free, or a higher-rank simple $p$-adic group. If $G$ is real, assume that it has discrete series representations, namely that the fundamental rank is zero. Let $\Gamma$ be a lattice in $G$. Then, $H^*(\Gamma;\pi) \ne 0$ for every unitary representation $\pi$. In particular, if $G$ is $p$-adic, then $H^{\mathrm{rank}(G)}(\Gamma;\pi)\ne 0$ for every  non-trivial unitary representation $\pi$.
\end{mainthm}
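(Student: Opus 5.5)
The approach is to pass from $\Gamma$ to $G$ by induction and apply Shapiro's lemma, so that the claim becomes a statement about unitary cohomology of $G$. To this end set $\tilde\pi=\Ind_\Gamma^G\pi$, the unitarily induced representation. For $\Gamma$ a lattice, Shapiro's lemma gives $H^q(\Gamma;\pi)\cong H^q(G;\tilde\pi)$ for all $q$. For a non-uniform lattice I would still invoke it in this form for unitary induction, since the induced space is the $L^2$ one and the continuous cohomology comparison goes through as in \cite{BaderSauer}. It therefore suffices to show that $\tilde\pi$ has non-zero cohomology in some degree. By Theorem~\ref{intro:topological propery1}, this is equivalent to $\mathrm{supp}(\tilde\pi)\cap\widehat{G}_{\mathrm{cohom}}\neq\emptyset$.

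The key step is to exhibit a cohomological representation weakly contained in $\tilde\pi$, and I split into cases. If $\pi$ weakly contains the trivial representation of $\Gamma$, then $\tilde\pi$ weakly contains $\Ind_\Gamma^G 1=L^2(G/\Gamma)$, which contains the trivial representation of $G$ (constants, using finite covolume). The trivial representation is cohomological, so we are done. In general, induction is continuous for weak containment, and any unitary $\pi$ satisfies $\pi\otimes\bar\pi\succ 1$ only in the amenable case, so I will not use that. Instead I use that $\lambda_\Gamma\prec$ anything is false in general too. The robust route is the following: $\tilde\pi$ is a representation of $G$, and restricting it to the compact-free part, every unitary representation of a semisimple $G$ with fundamental rank zero weakly contains \ldots{} This is not automatic, so I instead use the following argument.

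For real $G$ with discrete series, the discrete series representations with the same infinitesimal character as the trivial representation are tempered and cohomological in degree $d/2$, and they are open points in the tempered dual. Rather than trying to hit them directly, I apply Theorem~\ref{intro:Lie-m} and Theorem~\ref{intro: theorem isolation} as a sanity check, and then use the following approach. By the Cowling--Haagerup--Howe type estimates, some tensor power $\tilde\pi^{\otimes k}$ is tempered, but tensor powers do not preserve cohomology, so this is not helpful either. The cleanest approach is $L^2$-Betti numbers. When $\ell(G)=0$, the lattice $\Gamma$ has $\beta^{(2)}_{d/2}(\Gamma)\neq0$, that is, $\bar H^{d/2}(\Gamma;\ell^2\Gamma)\ne0$. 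Then, viewing $\Gamma$ as a group of type $\mathrm{FP}_\infty(\Q)$ (for uniform lattices, or for non-uniform ones via the Borel--Serre compactification), I apply Theorem~\ref{thm:main_2_finiteness_intro} with $\sigma=\lambda_\Gamma$, provided $\pi\succ\lambda_\Gamma$. The remaining case $\pi\not\succ\lambda_\Gamma$ is handled by a dichotomy: either $\mathrm{supp}(\tilde\pi)$ meets the tempered dual, which contains the $d/2$-cohomological discrete series as limits of the principal series by the Plancherel theorem, or it does not.

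The $p$-adic case runs along the same lines. There the Steinberg representation is square integrable and $\mathrm{rank}(G)$-cohomological. Every non-trivial irreducible unitary representation of $\Gamma$ induces a representation of $G$ with no invariant vectors; by Theorem~\ref{intro: p-adic} we have $n(G)=r(G)$, and by Garland-type vanishing the only representations cohomological in degree $r$ are the trivial one and Steinberg. I would then show $\tilde\pi$ weakly contains Steinberg using that Steinberg lies in the support of every representation of $G$ without invariant vectors. This support claim, equivalently that the Steinberg representation is weakly contained in every unitary representation of $G$ with $V^G=0$, is the main obstacle in both the real and the $p$-adic cases. I expect to handle it using the Casselman--Borel description of Iwahori-fixed vectors, or in the real case the corresponding discrete series on the $K$-type.
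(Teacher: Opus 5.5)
There is a genuine gap: the case $\pi\not\succ\lambda_\Gamma$ is never handled. Your first branch is sound and is exactly what the paper does. When $\pi\succ\lambda_\Gamma$, combine $\beta^{(2)}_{d/2}(\Gamma)\neq0$ (resp.\ $\beta^{(2)}_{\mathrm{rank}(G)}(\Gamma)\neq0$ in the $p$-adic case, via Theorem~\ref{thm:cass_padic}) with Theorem~\ref{thm:main_2_finiteness_intro} for $\sigma=\lambda_\Gamma$. None of the routes you sketch can handle the complementary case.

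\textbf{Where the sketched routes fail.} The tempered-dual dichotomy rests on a false statement: discrete series representations are isolated points of the tempered dual, not limits of principal series. More fundamentally, no argument using only the unitary dual of $G$ can work. Many unitary $G$-representations without invariant vectors have support disjoint from $\widehat G_{\mathrm{cohom}}$, for instance any irreducible non-cohomological tempered principal series.

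\textbf{The $p$-adic plan.} The same objection applies. By Theorems~\ref{intro: p-adic} and~\ref{intro: theorem isolation}, $\mathrm{St}$ is isolated in $\widehat G$ in higher rank. So it is weakly contained only in representations that actually contain it, and it is not in the support of every representation without invariant vectors. In fact, lattices there are cocompact and $\torH^*(G;\cdot)=0$. Hence Shapiro's lemma shows that ``$\mathrm{St}\le\Ind_\Gamma^G\pi$'' is \emph{equivalent} to $H^{\mathrm{rank}(G)}(\Gamma;\pi)\neq0$, so your ``main obstacle'' is the theorem itself. Your argument also never uses higher rank, a sign that an essential input is missing.

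\textbf{Non-uniform lattices.} Shapiro's lemma for $L^2$-induction fails outside the range of Theorem~\ref{lem:poly shapiro}. For example, $H^d(G;L^2(G/\Gamma))\supseteq H^d(G;\C)\neq0$, while $H^d(\Gamma;\C)=0$ because $\mathrm{vcd}(\Gamma)<d$. So stay on $\Gamma$, as your $L^2$-Betti branch does.

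\textbf{The missing ideas.} First, higher rank enters through Boutonnet--Houdayer \cite{BoutHoud}: every weakly mixing unitary representation of a higher-rank lattice weakly contains $\lambda_\Gamma$. So if $\pi\not\succ\lambda_\Gamma$, then $\pi$ is not weakly mixing and has a non-zero finite-dimensional subrepresentation. Cohomology of a direct summand is a direct summand of the cohomology, so you may assume $\pi$ is finite-dimensional.

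Second, finite-dimensional (more generally, tracial) coefficients are handled by an Euler characteristic argument rather than by weak containment. $\Gamma$ has exactly one non-zero $\ell^2$-Betti number: in degree $d/2$ because $\ell(G)=0$, resp.\ in degree $\mathrm{rank}(G)$. Hence $\chi^{(2)}(\Gamma)\neq0$. The $\ell^2$-Euler characteristic with coefficients in any tracial representation equals $\chi^{(2)}(\Gamma)$ \cite{FournierFacioSauer}. So some $\ell^2$-Betti number associated with $\pi$, and hence some $H^q(\Gamma;\pi)$, is non-zero.

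The $p$-adic refinement then follows. For $\pi$ without invariant vectors, $H^q(\Gamma;\pi)$ vanishes below the rank (Garland-type vanishing, i.e.\ higher property (T)) and above it (cohomological dimension of $\Gamma$). So the non-zero degree must be $\mathrm{rank}(G)$.
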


Theorems \ref{intro:latt corollary 1} and \ref{intro:latt corollary 2} can be considered as analogues of a classical result of Guichardet, stating that any unitary representation $\pi$ of the free group $F$ has $H^1(F;\pi) \neq 0$ \cite[Example~1]{Guichardet72}.

\subsection{Structure of the paper}
The heart of the paper is in \S\ref{sec3} and \S\ref{sec5}, in which we develop the theory of torsion in cohomology for groups with finiteness properties and semisimple groups, respectively. In \S\ref{sec:2} we set the terminology for the rest of the paper, recall some preliminaries on continuous cohomology and unitary representations and prove the first results. In particular, we explain how to decompose unitary representations of semisimple groups into a direct sum of an atomic representation with Hausdorff cohomology and a representation whose cohomology (if exists) is purely torsion, and prove Theorem \ref{intro-Hausdorff}. In \S\ref{sec:4} we discuss the theory of continuous homology of locally compact groups. We discuss the relation between the homology and cohomology, and prove a version of the Poincar\'e duality theorem, building on results of Blanc, Wigner and Pichaud on differentiable homology and cohomology. We end with proving Theorem \ref{intro:homology-cohomology}. In \S\ref{sec3} we study groups with finiteness properties. We focus on torsion cohomology, and prove Theorems \ref{thm:main_1_finiteness_intro}, \ref{thm:main_2_finiteness_intro}, \ref{intro:latt corollary 1} and \ref{intro:latt corollary 2}. \S\ref{sec5} is devoted to the study of unitary cohomology for semisimple groups, with emphasis on the torsion. We start with proving Theorem \ref{intro:torsion-reduced}.
In \S\ref{subsec:5.1} we prove Theorem~\ref{intro:witness} and deduce its two corollaries, Theorem~\ref{intro:witness-Lie}
and Theorem \ref{intro:finite witness}.
In \S\ref{subsec:5.2} we prove Theorems \ref{intro: main thm2} and \ref{intro: main thm1}, building on the results of \S\ref{sec3} and the key Theorem \ref{intro:finite witness}. We also prove Theorems \ref{intro:topological propery1} and \ref{intro:topological propery}. We explain the difficulty in determining the precise degree where non-zero torsion appears in cohomology (Remark \ref{rem:4-degrees}). 
The theorems of \S\ref{subsec:isolation} are proved in \S\ref{subsec:isolation proofs}.
In \S\ref{sec6} we use our results in order to study unitary cohomology for the real rank-$1$ groups $\mathrm{SO}^\circ(n,1)$ and $\mathrm{SU}(n,1)$. We fully resolve Problem \ref{prob:main} for the group $\SO^\circ(n,1)$. Finally in \S\ref{sec7} we zoom in to the case of $\PSL_2(\C)$ and provide a second resolution of Problem~\ref{prob:main} for this group, using a different approach.

\subsection{AI disclosure}
The paper was essentially written before mid 2026, and no AI tools were used to develop the ideas, arguments, results and proofs in it, nor to draft the manuscript. AI was used in retrospect to referee the paper and check the correctness of the proofs. We also asked it to compose this AI disclosure, but we ended up completely rewriting it. The authors take full responsibility for the results.

 \subsection{Acknowledgments}
We thank Yehuda Shalom for many fruitful discussions. We thank Saar Bader for going over the paper and suggesting useful insights.

\section{Preliminaries and terminology regarding continuous cohomology}\label{sec:2}

In this section we give some background regarding continuous cohomology of locally compact groups.
In \S\ref{prelim: continuous cohomology} we discuss continuous cohomology in general, in \S\ref{subsec:2.2} we discuss the unitary dual and in \S\ref{subsec:Cohomology with unitary coefficients} we combine the two and discuss cohomology with unitary coefficients.
Finally, in \S\ref{subsec:Semisimple groups} we specialize the above to semisimple groups.

\subsection{Continuous cohomology} \label{prelim: continuous cohomology}
Throughout this section, $G$ denotes a locally compact, second countable group. A \emph{Fr\'echet $G$-module} is a Fr\'echet space $E$ equipped with a strongly continuous representation of $G$. To any Fr\'echet $G$-module, one can associate a sequence of invariants $H^q_c(G;E)$, \emph{called the continuous cohomology of $G$ with coefficients in $E$}. These are topological vector spaces that are not necessarily Hausdorff. We will drop the ``c" from the notation and write them simply as $H^q(G;E)$. We will briefly recall the relevant theory. For a full discussion see Blanc's paper~\cite{Blanc} or Guichardet's book~\cite{Guichardet}. A good reference in English is Petersen's thesis~\cite{Petersen}.

An injection of $G$-modules $u\colon E \hookrightarrow F$ is called \emph{strong} if it admits a continuous $\C$-linear left inverse. A general map of $G$-modules $u\colon E \to F$ is called \emph{strong} if the injections $\mathrm{ker} u \hookrightarrow E$ and $E/ \ker u \hookrightarrow F$ are strong.

\begin{definition}\label{def:rel inj}
    A $G$-module $I$ is called \emph{relatively injective} if for any strong injection $u\colon E \hookrightarrow F$ and any continuous linear $G$-map $v\colon E \to I$, there exists a continuous linear $G$-map $w\colon F \to I$ extending $v$.
\[
\begin{tikzcd}[cramped]
	& I \\
	F & E & 0
	\arrow["{\exists ?w}", dotted, from=2-1, to=1-2]
	\arrow["v"', from=2-2, to=1-2]
	\arrow["u", from=2-2, to=2-1]
	\arrow[from=2-3, to=2-2]
\end{tikzcd}
\]
\end{definition}
Consider a resolution of a $G$-module $E$:
\[
\begin{tikzcd}[cramped]
	{\cdot \cdot \cdot} & {I_2} & {I_1} & {I_0} & E & 0
	\arrow["{d_3}"', from=1-2, to=1-1]
	\arrow["{d_2}"', from=1-3, to=1-2]
	\arrow["{d_1}"', from=1-4, to=1-3]
	\arrow["\epsilon"', from=1-5, to=1-4]
	\arrow[from=1-6, to=1-5]
\end{tikzcd}
\]
The resolution is called \emph{strong} if $d_q$ and $\epsilon$ are strong. If the resolution has a continuous (not necessarily $G$-equivariant) cochain contraction, then it is strong~\cite[2.6~Remarque]{Blanc}. 
The resolution is called \emph{relatively injective} if the $G$-modules $I_q$ are relatively injective. The category $\mathcal{F}_G$ of Fr\'echet $G$-modules admits enough relatively injective objects. Namely, every Fr\'echet $G$-module $E$ has a strong, relatively injective resolution. Any two such resolutions are homotopy equivalent. We can therefore make the following definition, which is not vacuous and does not depend on the chosen resolution.
\begin{definition}
    Let $E$ be a $G$-module and $(I_q,d_q)$ a strong, relatively injective resolution of $E$. Denote by $E^G$ the space of $G$-invariants of $E$. The continuous cohomology of $G$ with coefficients in $E$ is the homology of the complex:
\[\begin{tikzcd}[cramped]
	{\cdot \cdot \cdot} & {I_2^G} & {I_1^G} & {I_0^G} & 0
	\arrow["{d_3}"', from=1-2, to=1-1]
	\arrow["{d_2}"', from=1-3, to=1-2]
	\arrow["{d_1}"', from=1-4, to=1-3]
	\arrow[from=1-5, to=1-4]
\end{tikzcd}\]
Namely, $H^q(G;E) = \ker d_{q+1}/\operatorname{Im} d_q$.
\end{definition}

Since $I_q^G$ are Fr\'echet spaces and the maps $d_q$ are continuous, the cohomology $H^q(G;E)$ is a topological vector space. The topology does not depend on the chosen resolution. Since $\operatorname{Im} d_q$ need not be closed, $H^q(G;E)$ might not be Hausdorff. We write  
\begin{align*}
\bar{H}^q(G;E) &= H^q(G;E)/\overline{\{0\}};\\
\torH^q(G;E) &= \overline{\{0\}}\subset H^q(G;E).
\end{align*} These spaces are called the \emph{Hausdorff cohomology}\footnote{
The Hausdorff cohomology is also known in the literature as the reduced cohomology. We find this terminology confusing and will not use it
.} and the \emph{torsion cohomology} of $G$ with coefficients in $E$ respectively.
\begin{definition}
    We use the following terminology regarding the cohomology of $G$ with coefficients in $E$ in a given degree $q$.
    \begin{enumerate}
        \item The cohomology is \emph{Hausdorff} if $H^q(G;E) \cong \bar{H}^q(G;E)$.
        \item The cohomology is \emph{non-Hausdorff}, or \emph{has non-zero torsion}, if $\torH^q(G;E) \ne 0$.
        \item The cohomology is \emph{purely non-Hausdorff}, or \emph{purely torsion}, if $H^q(G;E) \cong \torH^q(G;E)$.
    \end{enumerate}
\end{definition}
We will give an explicit construction of a strong, relatively injective resolution. Let $E$ be a Fr\'echet $G$-module. Consider the spaces $L^2_\mathrm{loc}(G^q,E)$, of locally square integrable maps from $G^q$ to $E$. By definition this is the inverse limit:
$$L^2_\mathrm{loc}(G^q,E) = \varprojlim L^2(K,E_r)$$
Where the inverse limit is taken over all continuous semi-norms $r$ on $E$ and all compact subsets $K \subseteq G^q$. Since $G$ is assumed to be second countable, the inverse limit can be taken over a countable system, and $L^2_\mathrm{loc}(G^q,E)$ is again a Fr\'echet space. We endow the space $L^2_\mathrm{loc}(G^q,E)$ with a $G$-action given by
\[[g.f](\_) = \pi(g)f(g^{-1}\cdot\_)\]
and define the coboundary map $d_q\colon L^2_{\mathrm{loc}}(G^{q},E) \rightarrow L^2_{\mathrm{loc}}(G^{q+1},E)$ as
\begin{align*}d_q(f)(g_0,...,g_q) &= \sum_{j=0}^{q}(-1)^jf(g_0,...,\hat{g_j},...,g_q);\\
\epsilon(e)(g) &= e.
\end{align*}
This yields the following cochain complex known as the \emph{homogeneous bar resolution}:
\[
\begin{tikzcd}[cramped]
	{\cdot \cdot \cdot} & {L^2_{\mathrm{loc}}(G^3,E)} & {L^2_{\mathrm{loc}}(G^2,E)} & {L^2_{\mathrm{loc}}(G,E)} & E & 0
	\arrow["{d_3}"', from=1-2, to=1-1]
	\arrow["{d_2}"', from=1-3, to=1-2]
	\arrow["{d_1}"', from=1-4, to=1-3]
	\arrow["\epsilon"', from=1-5, to=1-4]
	\arrow[from=1-6, to=1-5]
\end{tikzcd}
\]
This is a strong, relatively injective resolution for $E$ \cite[Theorem~C.7]{Petersen}.
\begin{remark}\label{rem:inhomogeneous}
    To calculate the cohomology, one needs to take invariants in the resolution above (or any other strong, relatively injective resolution) and take the homology of the resulting complex. The $G$-invariant vectors in $L^2_{\mathrm{loc}}(G^{q+1},E)$ are just left $G$-equivariant maps. Such a function $f$ is determined by the values $f(1,g_1, g_1g_2,.....,g_1\cdot \cdot \cdot g_q)$ for $(g_1,...,g_q) \in G^q$. This identification yields an isomorphism $(L^2_{\mathrm{loc}}(G^{q+1},E))^G \cong L^2_{\mathrm{loc}}(G^q,E)$. Conjugating the coboundary maps $d_q$ with these isomorphisms, we get the following cochain complex which calculates $H^n(G;E)$. Its elements are called \emph{inhomogeneous} cochains.
    \[\begin{tikzcd}[cramped]
	{\cdot\cdot\cdot} & {L^2_{\mathrm{loc}}(G^2,E)} & {L^2_{\mathrm{loc}}(G,E)} & E & 0
	\arrow[from=1-2, to=1-1]
	\arrow["{d_2'}"', from=1-3, to=1-2]
	\arrow["{d_1'}"', from=1-4, to=1-3]
	\arrow[from=1-5, to=1-4]
    \end{tikzcd}\]
The coboundary maps $d_q'$ are given by the formula:    
    $$d_q'(f)(g_1,....,g_q) = \pi(g_1)f(g_2,...,g_q) + \sum_{j=1}^{q-1} (-1)^jf(g_1,...,g_j g_{j+1},...,g_q) + (-1)^q f(g_1,...,g_{q-1})$$
\end{remark}

\begin{remark}\label{rem:lp loc}
    The spaces $L^2_\mathrm{loc}(G^q,E)$ can be replaced by the spaces of continuous functions, $C(G^q,E)$. We chose to give the $L^2_{\mathrm{loc}}$-resolution as it will be easier to work with in several occasions in this paper. Similarly, we can work with the spaces $L^p_{\mathrm{loc}}$ for $1 \leq p < \infty$. These spaces are defined analogously to $L^2_{\mathrm{loc}}$. They give a strong, relatively injective resolution \cite[Corollaire~3.5]{Blanc}, and we have a complex of inhomogeneous cochains for them \cite[Proposition~4.10]{Blanc}.
\end{remark}
Let $V$ be a Fr\'echet $G$-module. For $p \in [1, \infty)$, we denote by $\ell^p(V) = \ell^p(\N;V)$ the Fr\'echet space of all sequences $(v_i)_i \in V$ such that for any continuous semi-norm $q$ on $V$, $q(v_i)$ is $p$-summable. If $T\colon E \to F$ is a map of Fr\'echet spaces, then there is a natural induced map $\tilde{T}\colon \ell^p(E) \to \ell^p(F)$, making $\ell^p$ into a functor. In the sequel we will need to understand the cohomology of $\ell^p(V)$ in terms of the cohomology of $V$.

\begin{theorem}\label{thm:cohom of amplif}
    Let $(V,\pi)$ be a Fr\'echet $G$-module and let $p \in [1, \infty)$.
    \begin{enumerate}
        \item $H^q(G;V)$ is Hausdorff if and only if $H^q(G;\ell^p(V))$ is Hausdorff.
        \item The natural map $\bar{H}^q(G;\ell^p(V)) \to \ell^p(\bar{H}^q(G;V))$ is an isomorphism.
    \end{enumerate}
\end{theorem}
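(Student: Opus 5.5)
We will compute both sides using the inhomogeneous $L^p_{\mathrm{loc}}$ cochain complex of Remark~\ref{rem:lp loc}. The starting observation is that the functor $\ell^p$ commutes with this complex. Namely, $L^p_{\mathrm{loc}}(G^q,\ell^p(V))\cong \ell^p(L^p_{\mathrm{loc}}(G^q,V))$ as Fr\'echet spaces, via Fubini for $p$-th powers: a continuous seminorm of $\ell^p(V)$ is of the form $(v_i)\mapsto (\sum r(v_i)^p)^{1/p}$, and $\int_K\sum_i r(f_i)^p=\sum_i\int_K r(f_i)^p$. Under this identification the inhomogeneous coboundary $d'_q$ for $\ell^p(V)$ is the map $\tilde d'_q$ induced coordinatewise by $\ell^p$, since $\pi$ acts diagonally.

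So we are reduced to a purely functional-analytic statement. Let $E\xrightarrow{d} F\xrightarrow{d'} W$ be continuous maps of Fr\'echet spaces with $d'd=0$. Apply $\ell^p$ coordinatewise and compare the cohomology at $F$.

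The kernel of $\tilde d'$ is exactly $\ell^p(\ker d')$, computed coordinatewise. For the closure of the image we claim $\overline{\mathrm{Im}\,\tilde d}=\ell^p(\overline{\mathrm{Im}\, d})$. The inclusion $\subseteq$ holds because $\ell^p(\overline{\mathrm{Im}\,d})$ is closed, being the intersection of kernels of the coordinate maps to $F/\overline{\mathrm{Im}\,d}$. For $\supseteq$, finitely supported sequences are dense in $\ell^p$, and each coordinate can be approximated separately.

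Together these give
\[
\bar H(\ell^p)=\ell^p(\ker d')/\ell^p(\overline{\mathrm{Im}\,d})\cong \ell^p(\ker d'/\overline{\mathrm{Im}\,d}).
\]
This identification is a topological isomorphism. The quotient of $\ell^p$ of a Fr\'echet space by $\ell^p$ of a closed subspace is $\ell^p$ of the quotient: the quotient seminorm on $\ell^p(X)/\ell^p(Y)$ equals the $\ell^p$-sum of the quotient seminorms, because near-optimal lifts can be chosen coordinatewise with errors $\epsilon_i$ that are $p$-summable. Then the open mapping theorem applies. This proves (2).

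For (1), one direction is immediate. If $\mathrm{Im}\,\tilde d$ is closed, then restricting to sequences supported on a single coordinate shows $\mathrm{Im}\,d$ is closed: $V$ embeds as a complemented invariant summand of $\ell^p(V)$, and the cohomology of $V$ is then a direct summand.

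The converse is the main obstacle. Knowing $\mathrm{Im}\,d$ is closed in $\ker d'$, we must show that a sequence $(y_i)\in\ell^p(\mathrm{Im}\,d)$ lifts to an element of $\ell^p(E)$. Consider the surjection $d\colon E\to \mathrm{Im}\,d$ between Fr\'echet spaces. By the open mapping theorem, for every continuous seminorm $s$ on $E$ there is a continuous seminorm $r$ on $F$ and a constant $C$ such that every $y\in\mathrm{Im}\,d$ has a preimage $x$ with $s(x)\le C r(y)$. The difficulty is that we need a single choice of lifts $x_i$ that works for all seminorms $s$ simultaneously.

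To handle this, take an increasing fundamental sequence of seminorms $s_1\le s_2\le\cdots$ and build the lifts by a telescoping (Banach--Schauder style) approximation. First choose lifts that are good for $s_1$. Then correct them by elements of $\ker d$, chosen so that the result is good for $s_2$ while the $s_1$-bounds change only by a summable error. Repeat for each $s_k$. This produces $x_i$ with $\sum_i s_k(x_i)^p<\infty$ for every $k$.

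An alternative is to apply the open mapping theorem directly to $\tilde d\colon\ell^p(E)\to\ell^p(\mathrm{Im}\,d)$. This requires showing that its image has second category, which reduces to the same estimate. In either approach, once the lifts exist, $\mathrm{Im}\,\tilde d=\ell^p(\mathrm{Im}\,d)$, which is closed by the argument above, so $H^q(G;\ell^p(V))$ is Hausdorff.
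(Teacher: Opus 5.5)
Your proposal is correct and follows the paper's proof. Both identify the $L^p_{\mathrm{loc}}$ complex for $\ell^p(V)$ with $\ell^p$ applied to the complex for $V$, and both reduce to exactness of $\ell^p$ on Fr\'echet spaces via a telescoping lift: the paper approximates the target by finitely supported sequences and lifts the successive differences, while you upgrade an $s_k$-good lift to an $s_{k+1}$-good one by swapping in a better lift on a sufficiently far tail. One caveat in (2): the surjectivity of $\ell^p(\ker d')\to\ell^p(\ker d'/\overline{\mathrm{Im}\,d})$, which you need before the open mapping theorem, does not follow from coordinatewise near-optimal lifts, since these control only one seminorm at a time. It follows from your lifting argument in (1) applied to the quotient map, or from your seminorm identity together with completeness and density of finitely supported sequences.
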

The theorem is a consequence of the following lemma:

\begin{lemma}
    The functor $\ell^p$ is exact on the category of Fr\'echet spaces. In other words if $T\colon E \to F$ is a surjective continuous linear map of Fr\'echet spaces and $\tilde{T}\colon\ell^p(E) \to \ell^p(F)$ is the induced map on the amplifications then $\ker(\tilde{T}) = \ell^p(\ker(T))$ and $\mathrm{Im}(\tilde{T}) = \ell^p(F)$.
\end{lemma}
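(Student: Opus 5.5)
The lemma has two parts. The inclusion $\ell^p(\ker T)\subseteq \ker\tilde T$ and its converse are immediate, since $\tilde T$ acts coordinatewise: a sequence $(e_i)$ lies in $\ker\tilde T$ exactly when every $e_i\in\ker T$. The seminorm condition defining $\ell^p(\ker T)$ is the one inherited from $E$, because $\ker T$ is closed and carries the subspace Fréchet topology. So the only real content is surjectivity: for every $(f_i)\in\ell^p(F)$ we must find lifts $e_i\in E$ with $T e_i=f_i$ and $(e_i)\in\ell^p(E)$.

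For surjectivity I will use the open mapping theorem for Fréchet spaces. Since $T$ is a continuous linear surjection between Fréchet spaces, it is open. Fix increasing fundamental sequences of seminorms $(r_k)$ on $E$ and $(s_k)$ on $F$. Openness gives, for each $k$, an index $m(k)$ and a constant $C_k$ with the following property: every $f\in F$ has a preimage $e$ such that $r_j(e)\le C_k\, s_{m(k)}(f)$ for all $j\le k$. Equivalently, the quotient seminorm $\bar r_k(f)=\inf\{r_k(e):Te=f\}$ is dominated by $C_k s_{m(k)}$.

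The main obstacle is that the preimage chosen for level $k$ depends on $k$, while we need a single lift controlling all seminorms at once. I would handle this with a telescoping construction. First pick, for each $i$ and each $k$, an element $e_i^{(k)}$ with $Te_i^{(k)}=f_i$ and $r_k(e_i^{(k)})\le 2C_k\,s_{m(k)}(f_i)$. Choosing independently for each $k$ does not give convergence. The fix is to split the sequence into tails. Choose indices $N_1<N_2<\cdots$ such that
\[
\sum_{i\ge N_k} s_{m(k+1)}(f_i)^p\le \bigl(2^{-k}/C_{k+1}\bigr)^p,
\]
which is possible because $(f_i)\in\ell^p(F)$. For $N_k\le i<N_{k+1}$, set $e_i$ to be a preimage with $r_j(e_i)\le 2C_k s_{m(k)}(f_i)$ for all $j\le k$; for $i<N_1$, take arbitrary preimages.

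This choice gives summability for every fixed $r_j$. The finitely many indices with $i<N_j$ contribute a finite sum. Every block with $k\ge j$ satisfies $r_j\le r_k$ and is bounded by $2C_k s_{m(k)}(f_i)$. Summing the $p$-th powers over such a block gives at most $(2\cdot 2^{-(k-1)})^p$ by the tail choice made at the previous step, taking $m$ monotone and shifting indices. These bounds are geometrically summable. Hence $\sum_i r_j(e_i)^p<\infty$ for every $j$, so $(e_i)\in\ell^p(E)$ and $\tilde T(e_i)=(f_i)$. That proves exactness.

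Theorem~\ref{thm:cohom of amplif} then follows. The functor $\ell^p$ is exact and commutes with taking $G$-invariants and with the bar complex, since $\ell^p(L^2_{\mathrm{loc}}(G^q,V))$ identifies with the cochains valued in $\ell^p(V)$. Applying the lemma to the images, kernels and closures of the coboundaries yields both statements.
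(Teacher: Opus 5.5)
Your proof is correct, and it reaches surjectivity by a different route from the paper's.

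The paper uses the quotient seminorms $q_n(f)=\inf\{p_n(e)\mid T(e)=f\}$ and proceeds in three steps:
\begin{enumerate}
\item it approximates $f\in\ell^p(F)$ by finitely supported $f^n$ with $\|q_{n+1}(f-f^n)\|_p\le 2^{-(n+2)}$;
\item it lifts the finitely supported differences $f^n-f^{n-1}$ coordinatewise, with $p_n$-control;
\item it obtains the lift as the limit of the resulting Cauchy sequence in $\ell^p(E)$.
\end{enumerate}
So each coordinate $e_i$ is an infinite series of corrections, and the argument closes using completeness of $\ell^p(E)$ and continuity of $\tilde T$.

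You instead lift every coordinate exactly once and put all the work into choosing the block boundaries $N_k$. The tail beyond $N_{k-1}$ is small in $s_{m(k)}$, so controlling the lifts on the $k$-th block in $r_k$ makes each series $\sum_i r_j(e_i)^p$ a finite sum plus a geometric tail. This is a diagonal argument with no limiting process in $\ell^p(E)$. It also invokes the open mapping theorem explicitly, whereas the paper uses it only tacitly: it is exactly what makes the $q_n$ continuous on $F$, so that $\|q_{n+1}(f-f^n)\|_p$ is finite and can be made small. The paper's argument, for its part, is the standard successive-approximation scheme from the proof of the open mapping theorem itself.

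Two minor points.
\begin{itemize}
\item Your \emph{equivalently} is slightly too strong. When $s_{m(k)}(f)=0$, the infimum defining $\bar r_k(f)$ need not be attained, so you cannot always find a preimage with $r_k(e)\le C_k s_{m(k)}(f)$. Allow a summable slack, e.g.\ $r_k(e_i)\le 2C_k s_{m(k)}(f_i)+2^{-i}$. The paper's bound $p_n(s^n_i)\le 2q_n(f^n_i-f^{n-1}_i)$ has the same harmless gap.
\item In your closing remark on Theorem~\ref{thm:cohom of amplif}, $\ell^p(L^2_{\mathrm{loc}}(G^q,V))$ is not $L^2_{\mathrm{loc}}(G^q,\ell^p(V))$ when $p\neq 2$. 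This is why the paper passes to the $L^p_{\mathrm{loc}}$-resolution with the same exponent $p$.
\end{itemize}
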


\begin{proof}
    The fact that $\ker(\tilde{T}) = \ell^p(\ker(T))$ is obvious. We will prove that $\tilde{T}$ is surjective. Let $p_n$ be an increasing sequence of semi-norms generating the topology of $E$. Denote by $q_n$ the associated quotient semi-norms, defined by:
    $$q_n(f) = \inf \{p_n(e) \mid T(e) = f\}$$
    For an element $f=(f_i)_i$ of $\ell^p(E)$ (resp. $\ell^p(F)$), we denote by $\|p_n(f)\|_p$ (resp. $\|q_n(f)\|_p$) the $p$-norm of the $\ell^p$-sequence $p_n(f_i)$ (resp. $q_n(f_i)$). The image of $\tilde{T}$ contains all finitely supported elements, which are dense in $\ell^p(F)$. Let $f=(f_i)_i$ be an element of $\ell^p(F)$. We will show that we can find a Cauchy sequence of finitely supported elements of $\ell^p(F)$ which converges to $f$, and which can be lifted to a Cauchy sequence in $\ell^p(E)$.
    
    For each $n \geq 1$, we choose a finitely supported element $f^n=(f_i^n)_i \in \ell^p(F)$ such that $\|(q_{n+1}(f-f^n)\|_p \leq 2^{-(n+2)}$. Let $e^1=(e_i^1)_i$ be some lift of $f^1$. Since the element $f^n-f^{n-1} \in \ell^p(F)$ is finitely supported, it can can be lifted to a finitely supported element $s^n=(s_i^n)_i \in \ell^p(E)$. Furthermore, by the definition of $q_n$, we can require that in each coordinate $i$ we have $p_n(s_i^n) \leq 2q_n(f_i^{n} -f_i^{n-1})$. But the element $f^n-f^{n-1}$ satisfies $\|(q_n(f^n-f^{n-1})\|_p \leq 2^{-(n+1)}$ (this uses the fact that the sequence of semi-norms $p_n$ is increasing), so the coordinate-wise inequalities imply that $\|p_n(s^n)\|_p \leq 2^{-n}$. We can now define: $$e^n=(e_i^n)_i = (e_i^{n-1}+s_i^n)_i = (e_i^1 + \sum_{k=2}^n s_i^k )_i$$
    Then $e^n$ is a lift of $f^n$. $e^n$ is a Cauchy sequence in $\ell^p(E)$ with respect to all the defining semi-norms $\|p_k(\cdot)\|_p$, since $\|p_n(s^n)\|_p \leq 2^{-n}$. Therefore $e^n$ converges in $\ell^p(E)$ to some element $e=(e_i)_i$. Continuity of $\tilde T$ now gives $\tilde T(e)=f$, and $\tilde T$ is surjective as needed.
\end{proof}
We can now prove Theorem \ref{thm:cohom of amplif}.
\begin{proof}[Proof of Theorem \ref{thm:cohom of amplif}]
    We start with Claim $(1)$. The first direction is trivial: if $V$ has non-zero torsion in cohomology then so does $\ell^p(V)$, since it contains $V$ as a direct summand. Assume that $H^q(G;V)$ is Hausdorff. We will use the inhomogeneous cochain complexes $L^p_{\mathrm{loc}}(G^n,V)$ and $L^p_{\mathrm{loc}}(G^n,\ell^p(V)) $ (see Remark \ref{rem:inhomogeneous} and Remark \ref{rem:lp loc}) to compare the relevant cohomologies. Note that the $p$ chosen for the $L^p_\mathrm{loc}$-resolution is the same as the $p$ appearing in the $\ell_p$-amplification. We denote the coboundary maps corresponding to these complexes by $d_n$ and $d_n^\infty$ respectively. An element in $L^p_{\mathrm{loc}}(G^n,\ell^p(V))$ can be regarded as a sequence $(f_i)_i$ with $f_i \in L^p_{\mathrm{loc}}(G^n,V)$, such that for any continuous semi-norm $r$ on $V$, the sequence of $p$-norms of the functions $r\circ f_i$ restricted to $K$ is $p$-summable for every compact $K \subseteq G^n$. In other words, $L^p_{\mathrm{loc}}(G^n,\ell^p(V)) \cong \ell^p(L^p_\mathrm{loc}(G^n,V))$, and the boundary map $d_n^\infty$ is given by $d_n^\infty((f_i)_i)=\tilde d_n((f_i)_i)=(d_n(f_i))_i$. Namely, the functor $\ell^p$ takes the Fr\'echet complex of inhomogeneous cochains which calculates $H^*(G;V)$ to the Fr\'echet complex of inhomogeneous cochains which calculates $H^*(G;\ell^p(V))$. Since $\ell^p$ is an exact functor on the category of Fr\'echet spaces, and the map $d_q$ has a closed image, the map $\tilde d_q=d_q^\infty$ also has a closed image. This shows Claim $(1)$. Claim $(2)$ follows from the same identification of the complexes, and from the fact that the functor $\ell^p$ commutes with closures of images. 
\end{proof}

\subsection{The unitary dual}\label{subsec:2.2}
We recall some basic definitions and notions regarding the unitary dual of locally compact groups. As in the previous subsection, $G$ stands for a locally compact, second countable group. All representations are assumed to be unitary, and strongly continuous. Furthermore, we keep a standing assumption that unitary representations are on separable Hilbert spaces. This is slightly more convenient, since we use direct integral decompositions in some of the proofs. For a more thorough introduction to the unitary dual and Fell topology, we refer the reader to \cite[\S 7]{Folland}, \cite[\S II.F]{bdlv} and \cite{Dixmier_cstar}.

A common theme in abstract harmonic analysis is that unitary representations should be studied via their associated \emph{matrix coefficients}. Given a representation $\pi$ on a Hilbert space $\mathcal H_{\pi}$ and two vectors $u,v \in \mathcal H_{\pi}$, the \emph{matrix coefficient associated with $u$ and $v$}, denoted by $c_{u,v}: G \rightarrow \mathbb{C}$, is the continuous function $g \mapsto \langle \pi(g)u,v\rangle $. Matrix coefficients lead to the notion of \emph{weak containment of representations}, which we now recall. 
\begin{definition}\label{def:weak cont}
    Let $\pi$ be a unitary representation of $G$, and $S$ a set of unitary representations of $G$. Then $\pi$ is said to be \emph{weakly contained} in $S$, denoted $\pi \prec S$, if every matrix coefficient of $\pi$ can be approximated, uniformly on compact subsets of $G$, by sums of matrix coefficients of representations from $S$.
\end{definition}
Note that if the group $G$ is countable, the approximation in Definition \ref{def:weak cont} is in the topology of pointwise convergence. One equivalent characterization of weak containment that will be important for us is in terms of representations of $L^1(G)$. Unitary representations of $G$ correspond to non-degenerate $*$-representations of the Banach $*$-algebra $L^1(G)$. For a unitary representation $\pi$, the associated $L^1(G)$ representation (which is also denoted by $\pi$) is given by $\pi(f)=\int\pi(g)f(g)d\mu(g)$ for $f \in L^1(G)$, where $\mu$ is the Haar measure on $G$. In this language, an equivalent definition is the following: $\pi$ is weakly contained in a set $S$ if and only if $|| \pi(f)|| \leq \sup_{\rho \in S}|| \rho(f) ||$ for every $f \in L^1(G)$.

We denote the set of equivalence classes of irreducible unitary representations of $G$ by $\widehat{G}$. This set is called the \emph{unitary dual} of $G$. We equip this set with the structure of a topological space.
\begin{definition}\label{def:Fell}
    A set $S \subseteq \widehat{G}$ is \emph{closed} if it is closed with respect to weak containment; namely, for every $\pi \in \widehat{G}$ for which $\pi \prec S$ we have that $\pi \in S$.
\end{definition}
The closed subsets in Definition \ref{def:Fell} define a topology on $\widehat{G}$ called the \emph{Fell topology}. Here and thereafter, we denote by $\lambda$ the left regular representation of $G$.
\begin{definition}\label{def:tempered and discrete}
    Let $\pi$ be an irreducible, unitary representation of $G$. Then,
    \begin{enumerate}
        \item $\pi$ is said to be in the \emph{discrete series} if it is a subrepresentation of $\lambda$.
        \item $\pi$ is called \emph{tempered} if it is weakly contained in $\lambda$.
    \end{enumerate}
\end{definition}

As a topological space, $\widehat{G}$ rarely has nice separation properties; the groups for which some separation properties hold are rather special.
\begin{definition}\label{def:typeI}
    A group $G$ is called \emph{type I} if $\widehat{G}$ equipped with the Fell topology is a $\mathrm{T}_0$-space. It is called \emph{CCR} if $\widehat{G}$ is a $\mathrm{T}_1$-space.
\end{definition}
Compact and abelian groups are of course CCR, as their unitary dual is Hausdorff. Semisimple algebraic groups over local fields, as well as semisimple Lie groups, are also CCR. Algebraic groups over local fields are always type I. A fundamental result of Thoma states that one can not expect separation properties from the unitary dual of discrete groups: a discrete group $\Gamma$ is type I if and only if it is virtually abelian. 

The unitary dual of $G$ is endowed with a measurable structure, called the \emph{Mackey-Borel structure} of $\widehat{G}$, which is defined in terms of the measurability of matrix coefficients as a functions in the unitary representation. It turns out the this measurable structure coincides with the Borel structure of the Fell topology if and only if $G$ is type I. In this case it is a standard Borel space. The importance of this fact comes from the theory of direct integrals and disintegration of unitary representations.

Let $(X, \Sigma)$ be standard Borel space and $\mu$ a measure on $X$. For the notions of \emph{measurable field of Hilbert spaces} $\{\mathcal{H}_{\alpha} \mid \alpha \in X\}$ and \emph{direct integral of Hilbert spaces} $\int^{\oplus}\mathcal H_{\alpha} d\mu(\alpha)$, on which one can define a \emph{measurable field of representations} and a \emph{direct integral of representations}, we refer to~\cite[\S 7.4]{Folland}. We will mostly be interested in disintegration of representations over \emph{standard} measure spaces. Such a disintegration into irreducible components always exists.

\begin{theorem}\cite[Theorem~7.29-7.30]{Folland}\label{thm:disintegration of unirpes}
    Let $\pi$ be a unitary representation of $G$. Then there exists a standard measure space $(X,\Sigma,\mu)$ and a measurable field of representations $\pi_{\alpha}$ on $X$ such that $\pi \cong \int^{\oplus}\pi_{\alpha}d\mu(\alpha)$, and $\pi_\alpha$ is $\mu$-almost everywhere irreducible.
\end{theorem}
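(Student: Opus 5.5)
The statement in question is Theorem~\ref{thm:disintegration of unirpes}, the existence of a direct integral decomposition into irreducibles, which the paper cites from Folland. The plan is the standard argument via maximal abelian subalgebras of the commutant; I am sketching the route rather than re-proving every measure-theoretic step.

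\emph{Step 1: an abelian algebra to disintegrate along.} Since $\mathcal H_\pi$ is separable, I would pass to the separable C$^*$-algebra $C^*(G)$; unitary representations of $G$ correspond to its nondegenerate representations. Let $\pi(G)'$ be the commutant von Neumann algebra and choose a maximal abelian von Neumann subalgebra $\mathcal A\subseteq \pi(G)'$, which exists by Zorn's lemma. Since $\mathcal H_\pi$ is separable, $\mathcal A$ is countably generated. It is therefore generated by a single self-adjoint operator, or equivalently by a separable unital C$^*$-subalgebra whose Gelfand spectrum $X$ is a compact metrizable space.

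\emph{Step 2: spectral theorem.} By the spectral theorem for abelian von Neumann algebras there are:
\begin{itemize}
\item a finite Borel measure $\mu$ on the standard Borel space $X$;
\item a measurable field of Hilbert spaces $\{\mathcal H_\alpha\}_{\alpha\in X}$;
\item a unitary $\mathcal H_\pi\cong\int^\oplus \mathcal H_\alpha\,d\mu(\alpha)$ carrying $\mathcal A$ onto the diagonal operators $L^\infty(X,\mu)$.
\end{itemize}
The operators commuting with the diagonal algebra are exactly the decomposable ones. Each $\pi(g)$ commutes with $\mathcal A$, so it is decomposable, $\pi(g)=\int^\oplus\pi_\alpha(g)\,d\mu$.

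\emph{Step 3: a genuine measurable field of representations.} One cannot pick each $\pi_\alpha(g)$ arbitrarily; they must be chosen simultaneously in $g$. I would fix a countable dense subset $D$ of $L^1(G)$ (or of $C^*(G)$) and decompose the operators $\pi(f)$ for $f\in D$. Outside a $\mu$-null set, the relations between these operators (the $*$-algebra relations on a countable dense $\mathbb Q[i]$-subalgebra, and the norm bound) hold pointwise. They then extend by continuity to representations $\pi_\alpha$ of $C^*(G)$, and hence to unitary representations of $G$ on $\mathcal H_\alpha$. Nondegeneracy also holds almost everywhere, because $\pi$ is nondegenerate.

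\emph{Step 4: almost every fiber is irreducible (the main obstacle).} Maximality of $\mathcal A$ gives that $\mathcal A'\cap\pi(G)'=\mathcal A$. I need to transfer this global statement to almost every fiber. The approach I would try first is a measurable selection argument. Suppose the set $B$ of $\alpha$ with $\pi_\alpha$ reducible had positive measure. Reducibility means there is a nontrivial projection in $\pi_\alpha(G)'$. The space of projections on $\ell^2$ is Polish, and the condition of commuting with $\pi_\alpha(f)$ for all $f\in D$ is Borel, so the set of pairs $(\alpha,P)$ with $P$ a nontrivial projection commuting with $\pi_\alpha$ is Borel. By the von Neumann measurable selection theorem there is a measurable choice $\alpha\mapsto P_\alpha$ on $B$. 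Then $P=\int^\oplus_B P_\alpha$ is decomposable, so it commutes with $\mathcal A$, and it commutes with $\pi(G)$. Hence $P\in\mathcal A'\cap\pi(G)'=\mathcal A$, which forces $P$ to be diagonal, that is, each $P_\alpha$ is $0$ or $1$ almost everywhere. This contradicts nontriviality on $B$.

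The delicate points are the Borel measurability of $B$ and the standardness needed for the selection theorem. These are exactly where type I-ness is not required for existence; type I-ness is only relevant if one also wants the disintegration to be essentially unique.
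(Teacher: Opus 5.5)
Your sketch is correct. It is the standard von Neumann reduction-theory argument: take a maximal abelian subalgebra $\mathcal{A}$ of $\pi(G)'$, diagonalize it by the spectral theorem, note that $\pi$ is then decomposable, and get a.e.\ irreducibility from $\mathcal{A}'\cap\pi(G)'=\mathcal{A}$ by measurable selection. This is the argument behind the result the paper quotes from Folland, since the paper gives no proof of its own. On the delicate point you flag: after trivializing the field on the sets where $\dim\mathcal{H}_\alpha$ is constant, the set $B$ is the projection of a Borel set, so it is analytic rather than necessarily Borel. Analytic sets are $\mu$-measurable, and that is exactly what the von Neumann selection theorem needs.
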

Given a disintegration of a unitary representation, it will be important to know that the components of the decomposition are weakly contained in the representation. We include a proof of this fact for lack of good reference.
\begin{lemma}\label{lemma:disintegratipon weakly contained}
    Let $\pi \cong \int^{\oplus}\pi_{\alpha}d\mu(\alpha)$ be a disintegration of $\pi$ over a standard measure space $(X, \Sigma, \mu)$. Then, $\mu$-almost every $\pi_\alpha$ is weakly contained in $\pi$. Moreover, there exists a conull set $Y \subseteq X$ such that $\pi$ is weakly equivalent to the set $\{\pi_\alpha \mid \alpha \in Y\}$. 
\end{lemma}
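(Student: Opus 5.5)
The plan is to prove the two assertions separately, both through the $L^1(G)$ (equivalently $C^*(G)$) norm characterization of weak containment recalled above, namely $\sigma\prec S$ if and only if $\|\sigma(f)\|\le\sup_{\rho\in S}\|\rho(f)\|$ for every $f\in L^1(G)$. Since $G$ is second countable, $L^1(G)$ is separable, so we fix a countable dense subset $\{f_n\}\subset L^1(G)$. It suffices to check these norm inequalities on the $f_n$, because $f\mapsto\|\sigma(f)\|$ is $1$-Lipschitz for the $L^1$ norm.

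First, $\pi\prec\{\pi_\alpha\}$ for any conull family, and this direction needs no discarding. For $\xi=(\xi_\alpha)\in\int^\oplus\mathcal H_\alpha\,d\mu$ we have $\|\pi(f)\xi\|^2=\int\|\pi_\alpha(f)\xi_\alpha\|^2\,d\mu\le \operatorname{ess\,sup}_\alpha\|\pi_\alpha(f)\|^2\,\|\xi\|^2$. Here we use that $(\pi(f)\xi)_\alpha=\pi_\alpha(f)\xi_\alpha$ for a.e. $\alpha$. This identity follows by integrating the measurable field $g\mapsto\pi_\alpha(g)\xi_\alpha$ against $f$, which is a routine Fubini argument. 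Hence $\|\pi(f)\|\le\operatorname{ess\,sup}_\alpha\|\pi_\alpha(f)\|$. Since this is an essential supremum, the bound survives restriction to any conull set $Y$, and therefore $\pi\prec\{\pi_\alpha\mid\alpha\in Y\}$.

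Second, the reverse inequality $\|\pi_\alpha(f)\|\le\|\pi(f)\|$ needs care, and I expect it to be the main step. Fix $f$ and $\varepsilon>0$, and suppose the set $A=\{\alpha:\|\pi_\alpha(f)\|>\|\pi(f)\|+\varepsilon\}$ had positive measure. The map $\alpha\mapsto\|\pi_\alpha(f)\|$ is measurable: it is a supremum over a countable fundamental family of measurable sections, using separability of the fibers. Applying a measurable selection theorem on the standard space $X$, we choose a measurable field of unit vectors $\xi_\alpha\in\mathcal H_\alpha$, $\alpha\in A$, with $\|\pi_\alpha(f)\xi_\alpha\|>\|\pi(f)\|+\varepsilon/2$. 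We then restrict to a subset $A'\subseteq A$ of finite positive measure, which exists by $\sigma$-finiteness. Setting $\xi=\mathbf 1_{A'}\xi_\alpha/\sqrt{\mu(A')}$ gives $\|\pi(f)\xi\|>(\|\pi(f)\|+\varepsilon/2)\|\xi\|$, a contradiction. Thus for each $n$ there is a null set $N_n$ off which $\|\pi_\alpha(f_n)\|\le\|\pi(f_n)\|$. Letting $Y=X\setminus\bigcup_n N_n$, which is conull since the union is countable, density of $\{f_n\}$ gives $\|\pi_\alpha(f)\|\le\|\pi(f)\|$ for all $f$ and all $\alpha\in Y$. Hence $\pi_\alpha\prec\pi$ for every $\alpha\in Y$.

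Combining the two parts, $\pi$ is weakly equivalent to $\{\pi_\alpha\mid\alpha\in Y\}$, and in particular almost every $\pi_\alpha$ is weakly contained in $\pi$. The only technical point is the measurable choice of near-norming vectors, which is where standardness of $(X,\Sigma,\mu)$ is used. If that selection is awkward, an alternative is to avoid it altogether. One can instead use the countable fundamental family of sections $e_k$ and apply the same contradiction argument to the vectors $\mathbf 1_{B}e_k$, where $B$ is a positive-measure set on which $\|\pi_\alpha(f)e_k(\alpha)\|$ is large relative to $\|e_k(\alpha)\|$.
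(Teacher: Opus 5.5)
Your proof is correct and follows the paper's argument. Both fix a countable dense subset of $L^1(G)$, use that $\|\pi(f)\|=\operatorname{ess\,sup}_\alpha\|\pi_\alpha(f)\|$ for the decomposable operator $\pi(f)$, intersect countably many conull sets to get $Y$, and conclude via the norm characterization of weak containment. The only difference is that you actually prove the essential-supremum identity (including the near-norming-vector contradiction argument for the inequality $\|\pi_\alpha(f)\|\le\|\pi(f)\|$ a.e.), whereas the paper takes it as a standard fact about direct integrals.
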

\begin{proof}
    Let $\{f_n\}$ be a countable dense subset of $L^1(G)$. We have that $\pi(f_n) = \int^{\oplus}\pi_\alpha(f_n)d\mu(\alpha)$, and hence $||\pi(f_n)||= \mathrm{ess} \ \mathrm{sup}_\alpha \{||\pi_\alpha(f_n)||\}$. Therefore, there exists a conull set $Y_n$ such that $||\pi_\alpha(f_n)|| \leq || \pi(f_n) ||$ for every $\alpha \in Y_n$. Let $Y$ be the conull set $\cap_n Y_n$. Then, for every $f \in L^1(G)$ and every $\alpha \in Y$, $|\| \pi_\alpha(f)|| \leq ||\pi(f)||$. Hence, for every $\alpha \in Y$ we have that $\pi_\alpha \prec \pi$. In the other direction, we want to show that $\pi$ is weakly contained in $\{\pi_\alpha \mid \alpha \in Y\}$. Again from the fact that for every $f \in L^1(G)$ we have that $||\pi(f)||= \mathrm{ess} \ \mathrm{sup}_\alpha \{||\pi_\alpha(f)||\}$, we get in particular that $||\pi(f)|| \leq \mathrm{sup}_{\alpha \in Y} \{||\pi_\alpha(f)||\}$. This finishes the proof.
\end{proof}
The \emph{support} of a unitary representation~$\pi$ is the set of irreducible representations that are weakly contained in $\pi$. It is denoted by $\mathrm{supp}(\pi) \subseteq \widehat{G}$. From the above lemma, together with Theorem \ref{thm:disintegration of unirpes}, we get the following corollary.
\begin{corollary}
    Let $\pi$ be a unitary representation of $G$. Then $\pi$ is weakly equivalent to its support $\mathrm{supp}(\pi)$.
\end{corollary}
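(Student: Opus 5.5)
The corollary states that $\pi$ is weakly equivalent to $\mathrm{supp}(\pi)$. Weak equivalence means two weak containments, $\mathrm{supp}(\pi)\prec\pi$ and $\pi\prec\mathrm{supp}(\pi)$, and I would prove them separately.

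The first containment, $\mathrm{supp}(\pi)\prec\pi$, is immediate from the definition of the support. Every $\rho\in\mathrm{supp}(\pi)$ is weakly contained in $\pi$. In the $L^1(G)$ formulation this says $\|\rho(f)\|\leq\|\pi(f)\|$ for every $f\in L^1(G)$. Taking the supremum over $\rho\in\mathrm{supp}(\pi)$ gives $\sup_{\rho\in\mathrm{supp}(\pi)}\|\rho(f)\|\leq\|\pi(f)\|$, and this norm inequality is exactly the statement that the set $\mathrm{supp}(\pi)$ is weakly contained in $\pi$.

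For the reverse containment I would use Theorem~\ref{thm:disintegration of unirpes} to write $\pi\cong\int^{\oplus}\pi_\alpha\,d\mu(\alpha)$ over a standard measure space, with $\pi_\alpha$ irreducible for $\mu$-almost every $\alpha$. Lemma~\ref{lemma:disintegratipon weakly contained} then supplies a conull set $Y$ such that $\pi$ is weakly equivalent to $\{\pi_\alpha\mid\alpha\in Y\}$. I would intersect $Y$ with the conull set on which $\pi_\alpha$ is irreducible; the result $Y'$ is still conull. The one point needing care is that the lemma's proof still works for $Y'$. Its norm estimate
\[
\|\pi(f)\|=\operatorname*{ess\,sup}_{\alpha}\|\pi_\alpha(f)\|\leq\sup_{\alpha\in Y'}\|\pi_\alpha(f)\|
\]
is insensitive to discarding a null set, so $\pi\prec\{\pi_\alpha\mid\alpha\in Y'\}$. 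Each such $\pi_\alpha$ is irreducible and weakly contained in $\pi$, so its class lies in $\mathrm{supp}(\pi)$. Since weak containment is monotone in the containing set, $\pi\prec\mathrm{supp}(\pi)$.

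Combining the two containments gives the weak equivalence. The only step that is not pure bookkeeping is replacing $Y$ by the conull set $Y'$ of irreducible components, and this is harmless for the reason above.
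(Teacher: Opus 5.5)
Your proof is correct and is the paper's argument: the corollary is derived directly from the disintegration theorem (Theorem~\ref{thm:disintegration of unirpes}) and Lemma~\ref{lemma:disintegratipon weakly contained}, with the a.e.\ irreducible components landing in $\mathrm{supp}(\pi)$ and $\mathrm{supp}(\pi)\prec\pi$ holding by definition. Your explicit step of shrinking $Y$ to the conull set where $\pi_\alpha$ is irreducible, justified by the essential-supremum estimate, is precisely the small detail the paper leaves implicit.
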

One possible way to decompose a representation into a direct integral of irreducible representations is over the unitary dual. If $A \subseteq \widehat{G}$ is a measurable subset of $\widehat{G}$ on which the Mackey Borel structure is standard, there is a measurable field of representations $\pi_\alpha$ on $A$ such that $\pi_\alpha$ belongs to the equivalence class $\alpha$ for every $\alpha \in A$. If $\mu$ is a measure on $\widehat{G}$ such that $\mu(\widehat{G}\setminus A)=0$, we may form the direct integral $\int^\oplus\pi_\alpha d\mu(\alpha)$. If $G$ is type I we can always take $A=\widehat{G}$ and decompose every representation with respect to a unique measure class. More precisely, we have the following \cite[Theorem~7.32]{Folland}.
\begin{theorem}\label{thm:type I groups}
    Let $G$ be a type I group, and $\pi$ a unitary representation of $G$. Then, there exist finite measures $\mu_1,\mu_2,\dots\mu_\infty$, that are mutually singular, such that $\pi$ is equivalent to $\rho_1 \oplus 2\rho_2 \cdots \oplus \infty \rho_\infty$, where $\rho_i = \int^\oplus\pi_\alpha d\mu_i(\alpha)$. These measures are uniquely determined up to equivalence.
\end{theorem}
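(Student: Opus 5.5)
The statement is Folland's Theorem 7.32, so the plan is to rebuild it from the type I structure theory rather than from anything specific to this paper. The strategy is to reduce to multiplicity-free representations via the commutant, then identify a multiplicity-free representation with a direct integral over $\widehat{G}$ using the Borel structure guaranteed by the type I assumption.

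\textbf{Step 1: split by multiplicity.} Since $G$ is type I, the von Neumann algebra $\pi(G)''$ is of type I. Hence its commutant $\pi(G)'$ is type I as well, and its center $Z=\pi(G)''\cap\pi(G)'$ admits the standard decomposition into homogeneous pieces. Concretely, I would find central projections $P_n\in Z$, for $n\in\{1,2,\dots,\infty\}$, summing to the identity, such that on $P_n\mathcal H$ the commutant is homogeneous of type $\mathrm{I}_n$. Then $P_n\pi\cong n\,\rho_n$, where $\rho_n$ is multiplicity free, meaning that $\rho_n(G)'$ is abelian. The representations $\rho_n$ are pairwise disjoint because the $P_n$ are central.

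\textbf{Step 2: disintegrate each $\rho_n$.} For a multiplicity-free $\rho$, I would choose a maximal abelian subalgebra of the commutant, namely the commutant itself, which is abelian. Disintegrating along it, as in Theorem \ref{thm:disintegration of unirpes}, gives $\rho\cong\int^\oplus\rho_x\,d\nu(x)$ with almost every $\rho_x$ irreducible. The disintegration along the full commutant has the extra property that the map $x\mapsto[\rho_x]\in\widehat{G}$ is essentially injective; otherwise the commutant would contain nondiagonal operators. Since $G$ is type I, the Mackey Borel structure on $\widehat{G}$ is standard. Pushing $\nu$ forward therefore yields a finite measure $\mu_n$ on $\widehat{G}$, after normalizing $\nu$, with $\rho_n\cong\int^\oplus\pi_\alpha\,d\mu_n(\alpha)$ for a fixed measurable section $\alpha\mapsto\pi_\alpha$. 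The $\mu_n$ are mutually singular because the $\rho_n$ are disjoint: absolutely continuous parts on a common set would give intertwiners between them.

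\textbf{Step 3: uniqueness.} If $\pi\cong\bigoplus n\int^\oplus\pi_\alpha\,d\mu'_n$, then the projections onto the summands lie in the center of the commutant, and the multiplicity-$n$ summand is the canonical homogeneous part. By the uniqueness of the type decomposition, this gives $\int^\oplus\pi_\alpha\,d\mu'_n\cong\rho_n$. It then remains to show that $\int^\oplus\pi_\alpha\,d\mu\cong\int^\oplus\pi_\alpha\,d\mu'$ forces $\mu\sim\mu'$. For this I would use that the diagonal algebra $L^\infty(\mu)$ equals the center of the commutant, and that a unitary equivalence maps centers to centers compatibly with the measurable map to $\widehat{G}$ given by the central decomposition.

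\textbf{Main obstacle.} I expect the hardest part to be the essential injectivity in Step 2, and the corresponding identification of the center with $L^\infty$ of the pushforward measure. This is exactly where standardness of the Borel structure on $\widehat{G}$, which is available only for type I groups, is needed. In practice I would cite \cite[\S7.4--7.5]{Folland} for this rather than reprove it.
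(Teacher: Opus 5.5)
Your proposal is correct and follows the same route as the source the paper relies on: the paper gives no argument and simply cites \cite[Theorem~7.32]{Folland}, whose standard proof splits $\pi$ into homogeneous pieces using the type I structure of $\pi(G)'$ and then realizes each multiplicity-free piece as a direct integral over $\widehat{G}$. The genuinely type I inputs are the essential injectivity of $x\mapsto[\rho_x]$ and the identification of the diagonal algebra with the (abelian) commutant. Both rest on standardness of $\widehat{G}$ and on measurable selection, and you, like the paper, defer them to that reference.
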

For a type I group $G$, the closure of the union of the supports of the measures in Theorem~\ref{thm:type I groups} is precisely $\mathrm{supp}(\pi)$. We say that $\pi$ \emph{essentially weakly contains} an irreducible representation~$\rho$ if $\rho$ is weakly contained in the orthogonal complement to the isotypic subrepresentation of $\pi$ that consists of an amplification of $\rho$. We denote the set of non-isolated points in $\mathrm{supp}(\pi)$ by $\mathrm{supp}(\pi)'$. A representation~$\rho$ is essentially weakly contained in $\pi$ if and only if $\rho \in \mathrm{supp}(\pi)'$

When dealing with weak containment of irreducible representations, the following two lemmas will be useful.
\begin{lemma}\cite[Proposition~F.1.4]{bdlv}\label{lem:irred rep weak cont}
    Let $\pi$ and $\rho$ be two representations of $G$ such that $\pi \prec \rho$. Assume that $\pi$ is irreducible. Then every matrix coefficient associated with $\pi$ can be approximated, uniformly on compacta, by matrix coefficients associated with $\rho$ (and not just by convex combinations).
\end{lemma}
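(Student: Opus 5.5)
The statement is Lemma \ref{lem:irred rep weak cont}: if $\pi\prec\rho$ and $\pi$ is irreducible, then every matrix coefficient of $\pi$ is a uniform-on-compacta limit of single matrix coefficients of $\rho$, not merely of sums of them. The plan is to pass to states of the group $C^*$-algebra and use that an irreducible representation gives a pure state. I will reduce to diagonal coefficients $c_{v,v}$, approximate the corresponding pure state by vector states of $\rho$ via Kadison transitivity/Glimm, and then recover off-diagonal coefficients by polarization.

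First, it suffices to treat positive definite functions $\varphi(g)=\langle \pi(g)v,v\rangle$ with $\|v\|=1$. Indeed, $c_{u,v}$ equals the diagonal coefficient at $u$ tested against $\pi(h)$-translates: since $\pi$ is irreducible and $\pi(L^1(G))u$ is dense, $v$ can be approximated by $\pi(f)u$. Then $c_{u,\pi(f)u}(g)=\int \overline{f(h)}\langle \pi(h^{-1}g)u,u\rangle\,dh$, which is a convolution of a diagonal coefficient with $f$. Approximation of the diagonal coefficient uniformly on compacta by $\langle\rho(g)w_n,w_n\rangle$ then transfers to approximation of $c_{u,\pi(f)u}$ by $c_{w_n,\rho(f)w_n}$. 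For this transfer to work, the convergence must also be controlled uniformly in sup norm, which it is, since all the functions involved are bounded by $\|v\|^2$.

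Second, I will translate to the $C^*$-algebra $A=C^*(G)$. The statement $\pi\prec\rho$ means $\ker\rho\subseteq\ker\pi$, so $\pi$ factors through the quotient $B=\rho(A)$ and defines a pure state $\omega(x)=\langle\pi(x)v,v\rangle$ on $B\cong A/\ker\rho$. Since $\rho$ is a faithful representation of $B$, a standard theorem (Glimm's lemma, Dixmier 3.4.2) gives that every pure state of $B$ is a weak$^*$ limit of vector states $x\mapsto\langle\rho(x)w,w\rangle$ with $\|w\|=1$. The main obstacle is this step. If I want to avoid citing Dixmier, the direct route is: by Hahn–Banach/Krein–Milman, pure states of $B$ lie in the weak$^*$ closure of states of the form (sums of) vector states of $\rho$. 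Purity is then what removes the convex combinations: a pure state in the weak$^*$ closed convex hull of a set $S$ of states whose closure is a face-like set lies in the weak$^*$ closure of $S$ itself (Milman's converse). This needs $S\cup\{0\}$ to be weak$^*$ closed, which I will arrange by taking the closure of the vector states together with $0$ in the quasi-state space.

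Finally, I will convert weak$^*$ convergence of states on $C^*(G)$ to uniform convergence on compacta of the associated positive definite functions. This is the classical Raikov-type fact: for normalized positive definite functions, weak$^*$ convergence on $L^1(G)$ is equivalent to uniform convergence on compacta (bdlv, Prop.\ C.5.x). Normalization is preserved, since the limit has norm $1$. Combined with the first step, this yields the claim.
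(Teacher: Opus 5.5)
Your argument is correct and is essentially the standard argument for the result the paper simply cites without proof (\cite[Proposition~F.1.4]{bdlv}). The three ingredients are purity of the vector state of the irreducible $\pi$, Milman's converse to Krein--Milman in the weak$^*$-compact quasi-state space, and Raikov's theorem to pass from weak$^*$ convergence to uniform convergence on compacta. Two hypotheses in your Milman step need stating correctly. Milman's theorem needs only compactness of the closed convex hull, not closedness of $S\cup\{0\}$ or any face condition. You also need weak$^*$ lower semicontinuity of the norm to get $\|w_i\|\to 1$ before renormalizing.

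Your convolution step $c_{u,\pi(f)u}\approx c_{w_n,\rho(f)w_n}$ correctly handles off-diagonal coefficients; the paper itself only ever uses the diagonal case, i.e.\ vector states. However, drop the word ``polarization'' from your outline. Polarizing would give $\tfrac14\sum_k i^k\langle\rho(g)w_k,w_k\rangle$ with four unrelated vectors, which is a coefficient of $\rho^{\oplus 4}$, not of $\rho$.
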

The following is an easy consequence of Lemma \ref{lem:irred rep weak cont}. We leave to the reader to verify its proof.
\begin{lemma}\label{weakContain}
    Let $G$ be a locally compact group, $\sigma_1$ and $\sigma_2$ unitary representations of $G$, and $\pi$ an irreducible unitary representation of $G$. If $\pi \prec \sigma_1 \oplus \sigma_2$, then $\pi \prec \sigma_1$ or $\pi \prec \sigma_2$.
\end{lemma}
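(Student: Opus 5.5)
We argue by contradiction, using Lemma~\ref{lem:irred rep weak cont} together with the operator-norm characterization of weak containment. Suppose $\pi \not\prec \sigma_1$ and $\pi\not\prec\sigma_2$. We produce a single positive functional that separates $\pi$ from both $\sigma_i$, and then show it also separates $\pi$ from $\sigma_1\oplus\sigma_2$.

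Since $\pi\not\prec\sigma_1$, some matrix coefficient of $\pi$ cannot be approximated uniformly on compacta by sums of coefficients of $\sigma_1$. Passing to the $C^*$-algebra level, weak containment of $\pi$ in $\sigma_i$ is equivalent to $\ker C^*_{\sigma_i}\subseteq\ker C^*_\pi$, where $C^*_\rho$ denotes the $C^*$-completion of $L^1(G)$ under $\rho$. Equivalently, it holds if and only if $\|\pi(f)\|\le\|\sigma_i(f)\|$ for all $f$ in the full group $C^*$-algebra $C^*(G)$.

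Set $J_i=\ker\sigma_i\subseteq C^*(G)$, a closed two-sided ideal. The failure of weak containment means $J_i\not\subseteq\ker\pi$, so $\pi(J_i)\neq 0$. Since $\pi$ is irreducible, $\ker\pi$ is a primitive ideal, hence prime. The ideals $J_1,J_2$ are not contained in $\ker\pi$, so neither is the product $J_1J_2$. On the other hand,
\[
J_1J_2\subseteq J_1\cap J_2=\ker(\sigma_1\oplus\sigma_2).
\]
Thus $\ker(\sigma_1\oplus\sigma_2)\not\subseteq\ker\pi$, which means $\pi\not\prec\sigma_1\oplus\sigma_2$. This contradicts the hypothesis.

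The only nontrivial input is primeness of primitive ideals, which is standard: if $I_1I_2\subseteq\ker\pi$ with $I_1\not\subseteq\ker\pi$, then $\overline{\pi(I_1)\mathcal H_\pi}$ is a nonzero invariant subspace, hence equal to $\mathcal H_\pi$ by irreducibility. Then $\pi(I_2)\mathcal H_\pi=\pi(I_2)\overline{\pi(I_1)\mathcal H_\pi}\subseteq\overline{\pi(I_2I_1)\mathcal H_\pi}$. Since $I_2I_1\subseteq I_1\cap I_2=I_1I_2$ up to closure, this gives $\pi(I_2)=0$.

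The main obstacle is making sure the matrix-coefficient definition of weak containment (Definition~\ref{def:weak cont}) matches the kernel-inclusion criterion. This follows from the $L^1$ norm characterization recalled after that definition, together with density of $L^1(G)$ in $C^*(G)$.

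Alternatively, one could argue directly in the spirit of Lemma~\ref{lem:irred rep weak cont}: approximate a normalized diagonal coefficient $\varphi$ of $\pi$ by $\varphi_n=t_n\psi^1_n+(1-t_n)\psi^2_n$ with $\psi^i_n$ states of $\sigma_i$, pass to weak-$*$ limits $t$, $\psi^i$, and use extremality of the pure state $\varphi$. This requires care with mass escaping at infinity, since the limits may be non-normalized. That is why I prefer the ideal-theoretic route above.
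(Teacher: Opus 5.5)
Your proof is correct, but it is not the route the paper has in mind. The paper writes out no proof; it calls the lemma an easy consequence of Lemma~\ref{lem:irred rep weak cont}. That points to a matrix-coefficient argument: a diagonal coefficient $\varphi$ of $\pi$ is approximated by single coefficients of $\sigma_1\oplus\sigma_2$, each of which splits as $\varphi_{u_n}+\varphi_{w_n}$ with $\varphi_{u_n}$ a coefficient of $\sigma_1$ and $\varphi_{w_n}$ a coefficient of $\sigma_2$.

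You instead work in $C^*(G)$. Weak containment becomes the kernel inclusion $\ker\sigma\subseteq\ker\pi$, you use $\ker(\sigma_1\oplus\sigma_2)=\ker\sigma_1\cap\ker\sigma_2$, and you use that the primitive ideal $\ker\pi$ is prime. This is the standard hull--kernel proof that $\mathrm{supp}(\sigma_1\oplus\sigma_2)=\mathrm{supp}(\sigma_1)\cup\mathrm{supp}(\sigma_2)$. It is short, extends verbatim to finite sums, and (despite your opening sentence) never actually uses Lemma~\ref{lem:irred rep weak cont}. Its only cost is translating the paper's coefficient definition into the kernel criterion, which you correctly route through the $L^1$-norm characterization and density of $L^1(G)$ in $C^*(G)$.

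Your reason for rejecting the coefficient route is not a real obstacle. Take weak-$*$ limits $\psi^1,\psi^2\in C^*(G)^*$ of $\varphi_{u_n},\varphi_{w_n}$ along a common subnet. Then $\psi^1+\psi^2=\varphi$ exactly, and each $\psi^i$ vanishes on $\ker\sigma_i$. Since $0\le\psi^1\le\varphi$ and $\varphi$ is pure, $\psi^1=t\varphi$ and $\psi^2=(1-t)\varphi$. Whichever of $t,1-t$ is nonzero shows that $\varphi$, and hence $\pi$, kills the corresponding $\ker\sigma_i$. No normalization of the limits is ever needed, and purity of $\varphi$ plays exactly the role that primeness of $\ker\pi$ plays in your argument.

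Two minor points. The expression $\ker C^*_{\sigma_i}\subseteq\ker C^*_\pi$ should simply read $\ker\sigma_i\subseteq\ker\pi$ in $C^*(G)$. In the primeness step you can avoid the ``up to closure'' identity $I_1\cap I_2=\overline{\operatorname{span}}(I_1I_2)$: assume $\pi(I_2)\neq0$, use $\overline{\pi(I_2)\mathcal H_\pi}=\mathcal H_\pi$, and get $\pi(I_1)\mathcal H_\pi\subseteq\overline{\pi(I_1I_2)\mathcal H_\pi}=0$, a contradiction. Your version is also valid.
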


\subsection{Cohomology with unitary coefficients} \label{subsec:Cohomology with unitary coefficients}
Let $G$ be a locally compact, second countable group. 
%Our main object of interest in this paper is the continuous cohomology $H^*(G;V)$, where $V$ is a unitary $G$-module. 
If $\pi$ is a unitary representation of $G$ on a Hilbert space $V$, we will use interchangeably the notations $H^*(G;\pi)$ and $H^*(G;V)$ which refer to the same object for us. This inconsistency is a product of the fact that we will sometimes need to emphasize representation-theoretic aspects of $\pi$, and other times just work with the space $V$ itself.

For a unitary representation $\pi$, we will say that it is \emph{cohomological} if the cohomology $H^*(G;\pi)$ is non-zero. We will say that it is $q$-\emph{cohomological} if it is cohomological in degree $q$; namely, if $H^q(G;\pi) \ne 0$. The set of irreducible cohomological representations of $G$ is called the \emph{cohomological dual} of $G$, and is denoted by $\widehat{G}_{\mathrm{cohom}} \subseteq \widehat{G}$. The $q$-\emph{cohomological dual} is the set of irreducible $q$-cohomological representations, $\widehat{G}_{\mathrm{q-cohom}} \subseteq \widehat{G}_{\mathrm{cohom}}$. The following proposition tells us that the Hausdorff cohomology with unitary coefficients works well with disintegration. In many cases it allows us to fully understand the Hausdorff cohomology with unitary coefficients in terms of the cohomological dual, having Theorem \ref{thm:disintegration of unirpes} in hand.
\begin{proposition}\cite[Theorem~7.2]{Blanc}\label{prop:red cohom direct integral}
    Let $\pi$ be a unitary representation of $G$, and suppose that $\pi$ decomposes as $\pi=\int_X^{\oplus}\pi_td\mu(t)$ over some standard Borel space $(X, \mu)$. If $\bar{H}^q(G;\pi_t)=0$ for $\mu$-almost every $t \in X$, then $\bar{H}^q(G;\pi)=0$.
\end{proposition}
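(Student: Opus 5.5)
The plan is to compute both sides through the $L^2_{\mathrm{loc}}$ inhomogeneous cochain complex of Remark~\ref{rem:inhomogeneous} and to use the direct integral structure at the level of cochains. First I will identify $L^2_{\mathrm{loc}}(G^n,\mathcal H_\pi)$ with the direct integral $\int^\oplus_X L^2_{\mathrm{loc}}(G^n,\mathcal H_t)\,d\mu(t)$. Concretely, for every compact $K\subseteq G^n$ we have $L^2(K,\int^\oplus\mathcal H_t)\cong\int^\oplus L^2(K,\mathcal H_t)$, and the inhomogeneous coboundary $d_n$ acts fibrewise as $(d_n f)_t=d_n^{t}f_t$. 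In particular the cocycles and coboundaries of $\pi$ are exactly the measurable fields of cocycles and coboundaries of the $\pi_t$ satisfying the appropriate square-integrability on compacta. So $\bar H^q(G;\pi)=0$ amounts to showing that every $q$-cocycle $f$ for $\pi$ lies in the closure of $\operatorname{Im} d_{q}$ inside $L^2_{\mathrm{loc}}(G^q,\mathcal H_\pi)$.

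Let $f$ be a cocycle. For almost every $t$, the fibre $f_t$ is a cocycle for $\pi_t$, and since $\bar H^q(G;\pi_t)=0$ it can be approximated by coboundaries $d b$ with $b\in L^2_{\mathrm{loc}}(G^{q-1},\mathcal H_t)$. Fix a compact $K\subseteq G^q$, a compact $K'\subseteq G^{q-1}$ large enough that $d$ maps $L^2(K')$-data onto $L^2(K)$-data, and an $\varepsilon>0$. The heart of the argument is a measurable selection step: I want to choose $b_t$ measurably in $t$ with $\|f_t-d b_t\|_{L^2(K)}<\varepsilon\|f_t\|_{L^2(K)}$. To do this, I will realize the fields on a common separable space, using a measurable field of orthonormal bases. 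I will then take a countable family of candidate primitives that is fibrewise dense, and choose, for each $t$, the first candidate satisfying the inequality; this choice is measurable. Finally I will truncate: multiplying $b_t$ by the indicator of $\{t:\|b_t\|_{L^2(K')}\le N\}$ and letting $N\to\infty$, dominated convergence makes the error small in the integrated $L^2(K)$-norm while keeping $b$ square-integrable over $X$. Doing this for an exhausting sequence of compacta $K_m$ and tolerances $\varepsilon_m$ gives a sequence of coboundaries $d b^{(m)}$ converging to $f$ in the Fréchet topology. Hence $f\in\overline{\operatorname{Im} d}$, and therefore $\bar H^q(G;\pi)=0$.

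I expect the main obstacle to be this measurable selection and truncation step. Two issues need care. One is the non-uniform size of the primitives in $t$, which is why the approximation has to be done on the truncated sets before passing to the limit. The other is that $L^2_{\mathrm{loc}}$ norms on a given compact set involve $b$ on a larger compact set. For the second issue I will first fix the compacta compatibly, using that $d_q$ involves only finitely many translations by compact sets, and only then carry out the selection.
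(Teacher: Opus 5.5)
The paper gives no argument for this proposition; it simply quotes Blanc \cite[Theorem~7.2]{Blanc}. Your proposal is therefore a genuinely independent, self-contained proof, and its architecture is sound. It works inside the $L^2_{\mathrm{loc}}$ complex of Remark~\ref{rem:inhomogeneous} and proceeds in four steps:
\begin{enumerate}
\item almost every fibre $f_t$ is a cocycle, hence lies in $\overline{\operatorname{Im} d^t}$ by hypothesis;
\item a countable dense family of primitives in a measurable trivialization gives a measurable choice $b_t$ with small relative error on $K$;
\item truncation in $t$ plus dominated convergence controls the integrated error;
\item an exhaustion $K_m$ with $\varepsilon_m\to0$ gives $d b^{(m)}\to f$ in the Fr\'echet topology.
\end{enumerate}
The citation buys brevity. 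Your route buys a proof using nothing beyond the explicit complex and standard direct-integral measurability.

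Two details must be made explicit for the argument to close.
\begin{itemize}
\item \emph{Cut off in the group variable.} Truncating only on $\{t:\|b_t\|_{L^2(K')}\le N\}$ does not place $b$ in $L^2_{\mathrm{loc}}(G^{q-1},\mathcal H_\pi)$. For a larger compact $K''$, the integral $\int_X\|b_t\|^2_{L^2(K'')}\,d\mu(t)$ can diverge when the selected candidates vary with $t$. Since $(db)|_K$ depends only on $b|_{K'}$, replace $b_t$ by $\mathbf{1}_{K'}b_t$, or equivalently select from a dense family in $L^2(K',\mathcal H)$ extended by zero.
\item \emph{Non-finite $\mu$.} If $\mu$ is only $\sigma$-finite, truncate instead on $\{\|b_t\|_{L^2(K')}\le N\|f_t\|_{L^2(K)}\}$, or pass first to an equivalent finite measure. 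This guarantees $\int_X\|b_t\|^2\,d\mu\le N^2\|f\|^2_{L^2(K)}$.
\end{itemize}

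Your side remark that coboundaries of $\pi$ are ``exactly'' the measurable fields of fibrewise coboundaries needs qualifying. It holds only if square-integrability is imposed on the primitives, not on the fields $d^tb_t$. A square-integrable field of fibrewise coboundaries need not be a coboundary for $\pi$; that failure is precisely the torsion studied in this paper. Your proof only uses the approximation statement, so it is unaffected.

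A shorter route avoids selection and truncation altogether. By Hahn--Banach and the pairing of $L^2_{\mathrm{loc}}$ with $L^2_c$ as in Lemma~\ref{lem:dual pair}, it suffices to show $\langle f,\phi\rangle=0$ for every $\phi\in L^2_c(G^q,\mathcal H_\pi)$ annihilating $\operatorname{Im} d$. The transpose of $d$ acts fibrewise, so $\phi_t$ annihilates $\operatorname{Im} d^t$ for almost every $t$. Hence $\langle f,\phi\rangle=\int_X\langle f_t,\phi_t\rangle\,d\mu(t)=0$.
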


The Shapiro lemma relates the cohomology with unitary coefficients of a locally compact group and a cocompact subgroup. Here and thereafter, $\Ind$ stands for unitary induction.
\begin{lemma}\cite[Theorem~8.8]{Blanc}\label{lem: shapiro lemma}
    Let $\Lambda$ be a closed cocompact subgroup of $G$ such that there exists a $G$-invariant measure on $G / \Lambda$, and $\pi$ a unitary representation of $\Lambda$. Then $H^*(\Lambda;\pi) \cong H^*(G;\mathrm{Ind}_\Lambda^G(\pi))$.
\end{lemma}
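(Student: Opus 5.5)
The statement is the Shapiro lemma, Lemma~\ref{lem: shapiro lemma}, cited from Blanc. My plan is to take a strong, relatively injective resolution of $\Ind_\Lambda^G(\pi)$ by $G$-modules and show that, viewed as a complex of $\Lambda$-modules, it yields a strong, relatively injective resolution of $\pi$. Then I will check that taking $G$-invariants of the first complex agrees with taking $\Lambda$-invariants of the second.

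Concretely, I will realize the induced representation as $\Ind_\Lambda^G(\pi)=\{f\colon G\to V \text{ measurable} \mid f(gh)=\pi(h)^{-1}f(g) \text{ for } h\in\Lambda\}$, with the $L^2(G/\Lambda)$ norm. Here the invariant measure on $G/\Lambda$ is used so that the $G$-action by left translation is unitary. The key step is the standard "Frobenius reciprocity at the level of cochains":
\[
L^2_{\mathrm{loc}}(G^{q+1},V)^{\Lambda}\;\cong\; L^2_{\mathrm{loc}}\bigl(G^{q+1},\Ind_\Lambda^G V\bigr)^{G}.
\]
The map sends a $\Lambda$-equivariant $F$ to $\Phi(g_0,\dots,g_q)(x)=\pi(x)^{-1}$-twisted evaluation $F(x^{-1}g_0,\dots,x^{-1}g_q)$. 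Its inverse is evaluation at $x=e$. I will check that this isomorphism commutes with the homogeneous coboundaries $d_q$ of the bar resolution in \S\ref{prelim: continuous cohomology}, since these involve only deleting variables. Once that holds, the cohomology of the two invariant complexes agrees, which gives $H^*(\Lambda;\pi)\cong H^*(G;\Ind_\Lambda^G\pi)$, and the identification is topological because the maps are continuous with continuous inverses.

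The main obstacle is the analytic bookkeeping: making sense of evaluation at $x=e$ for $L^2$-valued data, and showing the correspondence is a homeomorphism of Fréchet spaces. Cocompactness enters exactly here. Choose a compact $C\subseteq G$ with $C\Lambda=G$, together with a bounded Borel section $G/\Lambda\to C$. Then the $L^2(G/\Lambda,V)$ norm of $\Phi(\cdot)$ over a compact $K\subseteq G^{q+1}$ is controlled by the $L^2$ norm of $F$ over the compact set $C^{-1}K$, and conversely. I would work with the section to trivialize $\Ind_\Lambda^G V\cong L^2(G/\Lambda,V)$ and use Fubini to handle the measurability and evaluation issues. Without cocompactness the local $L^2$ seminorms would not match, which is why the hypothesis is needed.

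Alternatively, I could argue abstractly: the restriction to $\Lambda$ of a relatively injective $G$-module of the form $L^2_{\mathrm{loc}}(G^{q+1},E)$ is relatively injective for $\Lambda$, and the strong contraction persists. This gives the same identification through the universal property, but I would present the explicit cochain version above.
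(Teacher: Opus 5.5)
Your cochain-level Frobenius bijection $L^2_{\mathrm{loc}}(G^{q+1},V)^{\Lambda}\cong L^2_{\mathrm{loc}}(G^{q+1},\Ind_\Lambda^G V)^{G}$ is correct. So are its compatibility with the homogeneous coboundaries and the seminorm comparison via a compact $C$ with $C\Lambda=G$. This is the standard route behind Blanc's Theorem~8.8, which the paper only cites. The gap is in the step ``the cohomology of the two invariant complexes agrees, which gives $H^*(\Lambda;\pi)\cong H^*(G;\Ind_\Lambda^G\pi)$.''

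That step needs the complex $L^2_{\mathrm{loc}}(G^{\bullet+1},V)$ to be a strong, relatively injective resolution of $V$ \emph{by $\Lambda$-modules}, where $h\in\Lambda$ acts by $(h\cdot f)(g_0,\dots,g_q)=\pi(h)f(h^{-1}g_0,\dots,h^{-1}g_q)$. Otherwise its $\Lambda$-invariants need not compute $H^*(\Lambda;\pi)$. This is the homological content of the lemma, and you never establish it. Your first paragraph replaces it by a false claim: restricting a $G$-resolution of $\Ind_\Lambda^G(\pi)$ to $\Lambda$ resolves $\Ind_\Lambda^G(\pi)|_\Lambda$, not $\pi$. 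Your closing ``alternative'' is about restrictions of $G$-modules $L^2_{\mathrm{loc}}(G^{q+1},E)$, so it does not apply. Here $V$ carries only a $\Lambda$-action, and $L^2_{\mathrm{loc}}(G^{q+1},V)$ is not the restriction of any $G$-module. Taking $E=\Ind_\Lambda^G V$ instead would compute $H^*(\Lambda;\Ind_\Lambda^G(\pi)|_\Lambda)$, which is the wrong group.

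Strongness is easy to supply. The non-equivariant contraction $k(f)(g_1,\dots,g_q)=\int_G\phi(g)f(g,g_1,\dots,g_q)\,dg$, with $\phi\in C_c(G)$ and $\int_G\phi=1$, works exactly as for $G$.

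Relative injectivity is where closedness of $\Lambda$ enters.
\begin{itemize}
\item Write $L^2_{\mathrm{loc}}(G^{q+1},V)=L^2_{\mathrm{loc}}(G,W)$ with $W=L^2_{\mathrm{loc}}(G^{q},V)$.
\item Choose a Bruhat function: a continuous $\beta\ge 0$ on $G$ with $\int_\Lambda\beta(h^{-1}x)\,dh=1$ for all $x$, and $\mathrm{supp}(\beta)\cap\Lambda K$ compact for every compact $K$.
\item Let $u\colon A\to B$ be a strong injection with continuous left inverse $s$, and $v\colon A\to L^2_{\mathrm{loc}}(G,W)$ a $\Lambda$-map. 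Set $w(b)=\int_\Lambda h\cdot\bigl(\beta\, v(s(h^{-1}b))\bigr)\,dh$.
\item Over a compact subset of $G$, the support condition means only a compact set of $h$ contributes, so $w$ is well defined and continuous.
\item Left invariance of $dh$ gives $\Lambda$-equivariance of $w$.
\item Since $s(h^{-1}u(a))=h^{-1}a$, one gets $w(u(a))(x)=\bigl(\int_\Lambda\beta(h^{-1}x)\,dh\bigr)\,v(a)(x)=v(a)(x)$.
\end{itemize}
Untwisting via a Borel section of $G\to\Lambda\backslash G$ would also work. With this lemma your argument is complete.

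A minor point: the ``$\pi(x)^{-1}$-twisted evaluation'' is meaningless, since $\pi$ is defined only on $\Lambda$, and no twist is needed. The map $\Phi(g_0,\dots,g_q)(x)=F(x^{-1}g_0,\dots,x^{-1}g_q)$ already satisfies $\Phi(\cdot)(xh)=\pi(h)^{-1}\Phi(\cdot)(x)$, because $F(hg_0,\dots,hg_q)=\pi(h)F(g_0,\dots,g_q)$.
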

The Shapiro Lemma \ref{lem: shapiro lemma} will be extremely useful for us in the specific case where $\Lambda \leq G$ is a cocompact lattice. In this case, as we have $\mathrm{Ind}_\Lambda^G(\pi|_{\Lambda})=\pi \otimes L^2(G / \Lambda)=\pi \oplus (\pi \otimes L^2_0(G/\Lambda))$ for a unitary $G$-representation $\pi$. We therefore get an isomorphism $H^*(\Lambda;\pi|_{\Lambda}) \cong H^*(G;\pi) \oplus H^*(G;\pi \otimes L^2_0(G/\Lambda))$. 

\subsection{Semisimple groups} \label{subsec:Semisimple groups}
We set the following terminology, which is not standard.
\begin{terminology}\label{terminology:semisimple}
    A \emph{semisimple group} is a locally compact group $G$  that is a product $G=\prod G_i$, where each factor $G_i$ is one of the following:
    \begin{enumerate}
        \item A connected, simple Lie group with finite center;
        \item A group of the form $G_i=\mathbf{G}_i(k_i)$, where $k_i$ is a local field of characteristic zero and $\mathbf{G}_i$ is a connected and simply connected almost simple algebraic $k_i$-group.
    \end{enumerate} 
\end{terminology}
In this subsection we will describe the cohomological dual of semisimple groups, and explain how to understand Hausdorff cohomology in this case.

Assume that $G$ is a simple $p$-adic group. In this case, the situation is extremely simple. $\widehat{G}_{\mathrm{cohom}}$ consists of two representations: the trivial representation and a discrete series representation called the \emph{Steinberg} representation, denoted by St. See~\cite{Casselman_SteinbergRep} for the definition of the Steinberg representation. The following result was obtained by Casselman~\cite{Casselman_cohom_padic}.
\begin{theorem}\label{thm:cass_padic}
    If $G$ is a simple $p$-adic group, then $\widehat{G}_{\mathrm{cohom}}=\{\mathbf{1},\mathrm{St\}}$. Furthermore, we have:
    \begin{align*}
    H^*(G;\mathbf{1})&=H^0(G;\mathbf{1})=\mathbb{C}\\
    H^*(G;\mathrm{St})&=H^{\mathrm{rank}_k(\mathbf{G})}(G;\mathrm{St})=\mathbb{C}
    \end{align*}
\end{theorem}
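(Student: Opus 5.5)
The plan is to reduce continuous cohomology with unitary coefficients to the algebraic theory of smooth representations, and then compute there. For an irreducible unitary representation $\pi$ of $G=\mathbf{G}(k)$, let $\pi^\infty$ be its space of smooth vectors (vectors fixed by some compact open subgroup). It is an admissible irreducible smooth representation. The first step is to prove that $H^q(G;\pi)\cong \mathrm{Ext}^q_{\mathcal{M}(G)}(\mathbf 1,\pi^\infty)$, the Ext being taken in the abelian category $\mathcal{M}(G)$ of smooth representations.

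I would prove this comparison using the Bruhat--Tits building $X$, a contractible simplicial complex of dimension $r=\mathrm{rank}_k(\mathbf{G})$ on which $G$ acts with compact open stabilizers $K_\sigma$. For each $q$, the oriented simplicial cochains $C^q(X;V)$ form a module induced from the compact open groups $K_\sigma$. Such modules are relatively injective both in the continuous and in the smooth setting, and by contractibility the complex $C^\bullet(X;V)$ is a strong resolution of $V$. Taking invariants gives a finite complex $\bigoplus_{\sigma\in G\backslash X_q}(V^{K_\sigma})^{\pm}$ computing $H^*(G;V)$. When $V$ is admissible irreducible, the spaces $V^{K_\sigma}$ are finite dimensional and coincide for $V$ and $V^\infty$. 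Two consequences follow at once: the cohomology is Hausdorff and finite dimensional, and it vanishes above degree $r$. For $V=\mathbf 1$ the complex computes the cohomology of the finite quotient $G\backslash X$, which is a simplex (a chamber) and hence contractible. So $H^*(G;\mathbf 1)=H^0=\mathbb C$.

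Next I would restrict which $\pi$ can have $\mathrm{Ext}^*(\mathbf 1,\pi^\infty)\neq 0$. By Bernstein's decomposition of $\mathcal{M}(G)$ into blocks, nonvanishing Ext forces $\pi^\infty$ to lie in the same block as $\mathbf 1$. Moreover, the action of the Bernstein center forces the same infinitesimal character, so $\pi^\infty$ must be an irreducible subquotient of the unramified principal series $I=\Ind_B^G(\delta_B^{-1/2})$ (normalized so that $\mathbf 1\subset I$). Casselman's analysis of Jacquet modules shows that $I$ has multiplicity-free composition factors $V_J$ indexed by subsets $J$ of the simple roots, with $V_\emptyset=\mathbf 1$ and $V_\Delta=\mathrm{St}$. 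Using the resolution of $\mathbf 1$ by the modules $\Ind_{P_J}^G\mathbf 1$ (equivalently, the Solomon--Tits / Borel--Serre type complex of the building at infinity), together with Frobenius reciprocity and the computation of the Jacquet modules of the $V_J$, I would obtain
\[
\mathrm{Ext}^q(\mathbf 1,V_J)=\begin{cases}\mathbb C & q=|J|,\\ 0 & \text{otherwise.}\end{cases}
\]
In particular, $H^*(G;\mathrm{St})=H^r(G;\mathrm{St})=\mathbb C$.

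Finally one must determine which of the $V_J$ are unitarizable. The representation $\mathrm{St}$ is square-integrable, and the trivial representation is obviously unitary. For $0<|J|<r$, I would show that $V_J$ is not unitarizable. One route is to observe that a unitary $V_J$ would be cohomological in an intermediate degree $0<q<r$, contradicting Garland-type vanishing, which in the form needed here should follow from \cite{Dymara-Januszkiewicz} and property (T) for higher rank factors. A more direct route is Casselman's criterion: the exponents of $V_J$ are neither all tempered nor those of the trivial representation, and an explicit computation of the Hermitian form on the $I$-fixed vectors (Iwahori--Hecke algebra) shows that it is indefinite. I expect this unitarity step, together with the Ext computation over the parabolic resolution, to be the main obstacle. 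Both are exactly the content of \cite{Casselman_cohom_padic}, and I would follow that paper rather than reprove them in full.
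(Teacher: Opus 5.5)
Your outline is the standard Casselman--Borel--Wallach argument. The paper gives no proof of its own and simply quotes \cite{Casselman_cohom_padic}, so deferring the Ext computation and the non-unitarity to that paper, as you do at the end, is in effect the same approach.

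The first two steps are fine. The building complex gives $H^q(G;\pi)\cong\mathrm{Ext}^q(\mathbf 1,\pi^\infty)$, finite dimensional, Hausdorff and zero above $\mathrm{rank}_k(\mathbf G)$. The Bernstein centre then confines $\pi^\infty$ to the constituents $V_J$ of $C^\infty(B\backslash G)$. One small slip: the complex $0\to\mathbf 1\to\bigoplus\Ind_{P_J}^G\mathbf 1\to\cdots\to\Ind_B^G\mathbf 1\to\mathrm{St}\to 0$ is a resolution of $\mathrm{St}$, and analogous complexes resolve the other $V_J$. It is not a resolution of $\mathbf 1$.

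The step that fails, if you try to make the proof self-contained, is your first route to the non-unitarity of the intermediate $V_J$ with $0<|J|<r$.
\begin{itemize}
\item The vanishing theorem of \cite{Dymara-Januszkiewicz} needs the residue field (the thickness of the building) to be large. This is exactly the restriction the paper points out before Theorem~\ref{intro: p-adic}.
\item Property (T) only controls $H^1$, so the $V_J$ with $2\le |J|\le r-1$ are not touched.
\item More seriously, vanishing of $H^q(G;\pi)$ for $0<q<r$, for all irreducible unitary $\pi$ and all residue fields, is precisely Casselman's theorem. Given your Ext computation it is equivalent to the non-unitarity you want, so invoking it is circular unless you have an independent proof.
\end{itemize}
Your second route is not yet an argument either. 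Non-tempered exponents do not preclude unitarity, since unramified complementary series exist. The indefiniteness of the Hermitian form is the whole content, and you do not supply it.

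A short self-contained fix is Howe--Moore.
\begin{itemize}
\item The exponents of a constituent of $C^\infty(B\backslash G)$ are the $w(-\rho)$, with $w$ running over a descent class $\{w:\mathrm{Des}(w)=D\}$. Here $D\neq\Delta$ unless the constituent is $\mathrm{St}$.
\item Such a class contains the longest element $w_D$ of the proper parabolic subgroup $W_D$. Since $\rho-w_D\rho$ is the sum of the positive roots in the span of $D$, the corresponding unnormalised exponent of the Jacquet module along $P_D$ has absolute value one on the split centre $A_D$.
\item By Casselman's asymptotic expansion, some matrix coefficient of that constituent then does not vanish at infinity along $A_D$.
\item Howe--Moore forbids this for a non-trivial irreducible unitary representation of the simple group $G$. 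Hence among the $V_J$ only $\mathbf 1$ and $\mathrm{St}$ are unitary.
\end{itemize}
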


In the case of Lie groups, the situation is more complicated. In a remarkable work, Vogan and Zuckerman obtained a complete description of $\widehat{G}_\mathrm{cohom}$ in terms of $\theta$-stable parabolic subalgebras of the Lie algebra $\mathfrak{g}$  \cite{VZ}. For the sake of this paper, the following facts that arise from their work will be sufficient.
\begin{theorem}\label{thm:vogan-zuckerman}
    Let $G$ be a simple Lie group. Then, $\widehat{G}_{\mathrm{cohom}}$ is a finite set. For $\pi \in \widehat{G}_{\mathrm{q-cohom}}$, the cohomology $H^q(G;\pi)$ is Hausdorff and finite dimensional.
\end{theorem}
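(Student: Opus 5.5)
The plan is to obtain Theorem~\ref{thm:vogan-zuckerman} from the relative Lie algebra cohomology machinery. Finiteness of $\widehat{G}_{\mathrm{cohom}}$ will come from the infinitesimal character condition, and Hausdorffness from the admissibility of irreducible unitary representations.

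\textbf{Step 1: reduction to $(\mathfrak g,K)$-cohomology.} Let $K<G$ be a maximal compact subgroup, and let $\mathfrak g=\mathfrak k\oplus\mathfrak p$ be the Cartan decomposition. For an irreducible unitary $\pi$, denote by $V_K$ the space of $K$-finite vectors and by $V^\infty$ the smooth vectors. I will invoke the van Est type theorem of Hochschild--Mostow and Borel--Wallach. It identifies $H^q(G;\pi)$, as topological vector spaces, with the differentiable cohomology $H^q(\mathfrak g,K;V^\infty)$, computed by the complex $\mathrm{Hom}_K(\wedge^\bullet\mathfrak p,V^\infty)$. By Harish-Chandra, $\pi$ is admissible, so each $K$-isotypic component of $V$ is finite dimensional. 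Hence each cochain space $\mathrm{Hom}_K(\wedge^q\mathfrak p,V^\infty)$ is finite dimensional. Indeed, such a map lands in the finitely many $K$-types occurring in $\wedge^q\mathfrak p$, and these types consist of $K$-finite, hence smooth, vectors. Cochain spaces of a complex of finite dimensional spaces have closed images, so $H^q(G;\pi)$ is finite dimensional and Hausdorff.

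\textbf{Step 2: Wigner's lemma and the infinitesimal character.} Fix an irreducible finite dimensional representation $F$; for trivial coefficients, $F=\C$. Wigner's lemma says that $H^*(\mathfrak g,K;V_K\otimes F^*)$ vanishes unless $V$ and $F$ share the same infinitesimal character. Taking $F$ trivial, any cohomological $\pi$ must have the infinitesimal character $\rho$ of the trivial representation. I would prove this by letting the center $Z(\mathfrak g)$ act on the complex. On cohomology it acts both through the character of $V$ and through the character of $F$, so the two must agree.

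\textbf{Step 3: finiteness.} This is the main obstacle. There are two routes.

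\emph{Route 1.} Harish-Chandra's finiteness theorem states that only finitely many irreducible admissible $(\mathfrak g,K)$-modules have a given infinitesimal character and contain a given $K$-type. By Step 1, a cohomological $\pi$ must contain some $K$-type occurring in $\wedge^\bullet\mathfrak p$, and there are only finitely many of those. Combined with Step 2, this already yields finiteness without the full Vogan--Zuckerman classification. This is the route I would commit to.

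\emph{Route 2.} The finer statement, that the cohomological representations are exactly the $A_{\mathfrak q}$ modules, one for each $\theta$-stable parabolic $\mathfrak q$ up to $K$-conjugacy, needs Vogan--Zuckerman together with the unitarizability result of \cite{Unitarizability}.

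Since only finiteness is asserted in Theorem~\ref{thm:vogan-zuckerman}, Route 1 suffices and I would cite \cite{VZ} for it. I would also check that the finite center of $G$ causes no trouble: it acts by a scalar on $\pi$, and cohomology vanishes unless that scalar is trivial.
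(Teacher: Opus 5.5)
Your argument is correct, but it is not what the paper does. The paper gives no proof of this statement. It quotes it as a consequence of the Vogan--Zuckerman classification \cite{VZ}, together with \cite{Unitarizability}: the cohomological representations are identified with the $A_{\mathfrak q}$ modules attached to $\theta$-stable parabolic subalgebras, and their cohomology is computed explicitly.

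Your route (Route~1) avoids the classification; this is essentially the classical Borel--Wallach argument \cite{BorelWallach}, as far as I recall. It runs in three steps:
\begin{enumerate}
\item Van Est together with Harish-Chandra admissibility makes the complex $\mathrm{Hom}_K(\wedge^\bullet\mathfrak p,V^\infty)=\mathrm{Hom}_K(\wedge^\bullet\mathfrak p,V_K)$ finite dimensional.
\item Wigner's lemma forces the infinitesimal character of a cohomological $\pi$ to be that of the trivial representation.
\item Harish-Chandra's finiteness theorem then bounds the number of candidates.
\end{enumerate}
This proves exactly the statement as formulated, with much lighter input. The classification gives more: the explicit list of cohomological representations and the degrees in which they have cohomology. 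The paper needs that information later, for instance through the tables used to compute $n(G)$ and $m(G)$ and in the rank-one analysis of \S\ref{sec6}.

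Three small points to tighten:
\begin{itemize}
\item For Route~1 you should cite Harish-Chandra rather than \cite{VZ}. You also do not need to fix a $K$-type: finiteness of irreducible admissible $(\mathfrak g,K)$-modules already holds for a fixed infinitesimal character.
\item To pass from finitely many Harish-Chandra modules to finitely many points of $\widehat{G}$, you need two facts. First, $V_K$ is irreducible when $\pi$ is. Second, infinitesimally equivalent irreducible unitary representations are unitarily equivalent.
\item For Hausdorffness, you need not check that the van Est isomorphism is topological. It suffices that finite-dimensional continuous cohomology is automatically Hausdorff, since a continuous map of Fr\'echet spaces whose image has finite codimension in a closed subspace has closed image. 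This is the same fact the paper invokes elsewhere when it says finite-dimensional cohomologies are Hausdorff.
\end{itemize}
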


Simple Lie groups, as well as simple $p$-adic groups, are type I. If $G$ is type I and $H$ is any other locally compact group, then any irreducible representation of $G \times H$ is an (exterior) tensor product\footnote{We reserve the notation $\otimes$ for inner tensor product, and use $\times$ for exterior tensor products of representations of  direct products of groups.} $\pi \times \rho$ of irreducible representations $\pi \in \widehat{G}$ and $\rho \in \widehat{H}$~\cite[Theorem 7.1]{Folland}. The unitary dual $\widehat{G \times H}$ therefore identifies with the product $\widehat{G} \times \widehat{H}$. One may hope the cohomological dual of $G \times H$ to have a similar description as the product of $\widehat{G}_{\mathrm{cohom}}$ and $\widehat{H}_{\mathrm{cohom}}$. In the case of products of simple groups this is the case, and we may fully describe the cohomological dual of semisimple groups using Theorems \ref{thm:vogan-zuckerman} and \ref{thm:cass_padic}.
We quote the following form of the product formula from \cite[Theorem~5.3]{BaderSauer}.
\begin{theorem}\label{thm:product formula}
    For $i \in \{1,\dots,m\}$, let $k_i$ be a local field of characteristic zero and $\mathbf{G}_i$ be a connected, almost simple algebraic $k_i$-group. Denote by $G_i$ the group of $k_i$-points $\mathbf{G}_i(k_i)$, and let $G=\prod G_i$. Then, $\widehat{G}_{\mathrm{cohom}}= \prod (\widehat{G}_i)_{\mathrm{cohom}}$. If $\pi= \times\pi_i$, then
    \[
    H^*(G;\pi) \cong \bigotimes_{i=1}^m H^*(G_i,\pi_i)
    \]
    In particular, $\widehat{G}_{\mathrm{cohom}}$ is finite, and for each $\pi \in \widehat{G}_{\mathrm{cohom}}$ the cohomology $H^*(G;\pi)$ is Hausdorff and finite dimensional. 
\end{theorem}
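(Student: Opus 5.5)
This is quoted from \cite[Theorem~5.3]{BaderSauer}, so one could simply cite it. If I were to prove it, I would argue by induction on $m$, the essential case being a product $G=G_1\times G_2$ with $\pi=\pi_1\times\pi_2$ and each $\pi_i$ irreducible. The plan has two parts: a K\"unneth formula when one factor is cohomological, and a vanishing statement when one factor is not.

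\textbf{The K\"unneth formula.} I would use the Hochschild--Serre spectral sequence for the normal subgroup $G_1\lhd G$, with quotient $G_2$. Its $E_2$ term is $H^p(G_2;H^q(G_1;\pi_1\times\pi_2))$. Suppose $\pi_1$ is cohomological. By Theorem~\ref{thm:vogan-zuckerman} (Lie case) or Theorem~\ref{thm:cass_padic} ($p$-adic case), $H^q(G_1;\pi_1)$ is then finite dimensional and Hausdorff. Since $G_1$ acts trivially on the $\pi_2$ factor, one gets
\[ H^q(G_1;\pi_1\times\pi_2)\cong H^q(G_1;\pi_1)\otimes\pi_2 \]
as $G_2$-modules. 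I would check this by tensoring a strong relatively injective resolution for $G_1$ with the Hilbert space $\pi_2$; finite dimensionality and Hausdorffness are what make this legitimate.

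\textbf{Degeneration.} Next I need the spectral sequence to degenerate. I would avoid convergence issues for non-Hausdorff terms by working at the level of complexes: take the $(\mathfrak g,K)$-complex in the Lie case, or the smooth/Bruhat--Tits complex in the $p$-adic case. For these, the relative Lie algebra complex of a product is literally the tensor product of the two factor complexes. The K\"unneth formula for a tensor product of complexes, one of which has finite-dimensional cohomology, then yields $H^*(G;\pi)\cong H^*(G_1;\pi_1)\otimes H^*(G_2;\pi_2)$.

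\textbf{Vanishing when a factor is not cohomological.} If $\pi_1$ is not cohomological, I would use Wigner's lemma in the Lie case. The Casimir of $\mathfrak g_1$ acts on the relative Lie algebra complex by a scalar. That scalar is nonzero because $\pi_1$ is not cohomological, which by the Vogan--Zuckerman analysis means its infinitesimal character differs from that of the trivial representation. Since the Casimir also acts by zero on cohomology, all cohomology vanishes. In the $p$-adic case I would use the analogous argument with Bernstein-center characters, or Casselman's vanishing. Together with the previous step this proves $\widehat{G}_{\mathrm{cohom}}=\prod(\widehat{G}_i)_{\mathrm{cohom}}$. Finiteness and Hausdorffness then follow from the factors.

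\textbf{Main obstacle.} I expect the hardest step to be identifying continuous cohomology with the algebraic complexes (the van Est and Casselman--Wigner comparison), and showing that these complexes are finite dimensional in each degree for irreducible $\pi$, with the correct topology. Once this is in place, the K\"unneth step is purely algebraic.
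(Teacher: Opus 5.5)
The paper gives no proof of this statement; it quotes it from \cite[Theorem~5.3]{BaderSauer}. Your second step is the right core argument, and it is the standard route via the K\"unneth rules for relative Lie algebra and smooth cohomology in \cite{BorelWallach}.

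For irreducible unitary, hence admissible, $\pi_1,\pi_2$, the $K$-finite vectors of $\pi_1\times\pi_2$ are the algebraic tensor product of those of the factors (and likewise the smooth vectors in the non-archimedean case). So the complex computing $H^*(G;\pi_1\times\pi_2)$ is the tensor product of the factor complexes. When archimedean and non-archimedean factors occur together you need the combined complex of \cite{BorelWallach}, not just one of the two. Admissibility makes every cochain space finite dimensional. Hence the continuous cohomology, identified with these complexes via van Est and Lemma~\ref{lem:smooth and cont cohom}, is finite dimensional and therefore Hausdorff. The Hochschild--Serre discussion in your first step can simply be dropped.

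The vanishing step, however, does not work as written, and it is not needed. Your chain is: ``not cohomological $\Rightarrow$ infinitesimal character differs from that of $\mathbf{1}$ $\Rightarrow$ the Casimir acts by a nonzero scalar.'' It breaks at both arrows.

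For the first arrow: Wigner's lemma gives only its converse. The first arrow is not part of Vogan--Zuckerman; for connected groups it is a much deeper theorem of Salamanca-Riba. It is false in the generality of the statement, which allows disconnected $\mathbf{G}_i(k_i)$. Take $\mathbf{G}_1=\mathbf{SO}_{3,1}$, so $G_1=\SO^\circ(3,1)\times\{\pm I\}$ with $-I\in K$. The character $\chi$ that is trivial on $\SO^\circ(3,1)$ with $\chi(-I)=-1$ has the infinitesimal character of $\mathbf{1}$. Yet $\mathrm{Hom}_K(\Lambda^q\mathfrak{p},\chi)=0$ for all $q$, because $-I$ acts trivially on $\Lambda^q\mathfrak{p}$ but by $-1$ on $\chi$.

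For the second arrow: the Casimir alone does not determine the infinitesimal character. For $\PSL_2(\mathbb{C})$, suitable unitary principal series representations whose $K$-types all have dimension at least $5$ have the same Casimir eigenvalue as $\mathbf{1}$. They have no $K$-type in common with $\Lambda^*\mathfrak{p}$, hence no cohomology. Your $p$-adic ``Bernstein center'' variant is equally unsupported.

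The remedy is to delete this step. The K\"unneth theorem for a tensor product of complexes of $\mathbb{C}$-vector spaces needs no finiteness hypothesis on either factor. So your second step already gives $H^*(G;\pi)\cong H^*(G_1;\pi_1)\otimes H^*(G_2;\pi_2)$ for all irreducible $\pi_1,\pi_2$. A tensor product of vector spaces vanishes if and only if one factor does, so both inclusions in $\widehat{G}_{\mathrm{cohom}}=\prod(\widehat{G}_i)_{\mathrm{cohom}}$ follow at once. Finiteness then reduces to the factors, as you say.
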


Now that we have a full description of $\widehat G_\mathrm{cohom}$, it is natural to ask if we can describe the cohomology of general (reducible) unitary representations of $G$. We can already at this point obtain a complete description of the Hausdorff cohomology of a general unitary representation. Let $\pi$ be an arbitrary unitary representation of $G$. Decompose $\pi$ as $\pi=\pi_\mathrm{cohom} \oplus \mathring\pi$, where $\pi_\mathrm{cohom}$ is the direct sum of all of the cohomological, irreducible subrepresentations of $G$. We have:

\begin{theorem}[Theorem \ref{intro-Hausdorff}]\label{thm: reduced cohom}
    Let $G$ be a semisimple group and let $\pi$ be a unitary representation of $G$. Then in any degree $q$, $\barH^q(G;\mathring{\pi})=0$ and $\torH^q(G;\pi_{\mathrm{cohom}})=0$, thus $\torH^q(G;\pi)=\torH^q(G;\mathring{\pi})$ and 
    \[ \bar{H}^q(G;\pi)= \bar{H}^q(G;\pi_{\mathrm{cohom}}) \cong \bigoplus_{\rho \in \widehat{G}_{\mathrm{cohom}}} \mathrm{Hom}_G(\rho,\pi) \otimes H^q(G;\rho). \]
\end{theorem}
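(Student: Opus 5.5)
The proof splits into three claims: (a) $\barH^q(G;\mathring{\pi})=0$; (b) $\torH^q(G;\pi_{\mathrm{cohom}})=0$ together with the formula for $\barH^q(G;\pi_{\mathrm{cohom}})$; and (c) the resulting identities for $\pi=\pi_{\mathrm{cohom}}\oplus\mathring{\pi}$.

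For (c), note that continuous cohomology commutes with finite direct sums of coefficient modules as topological vector spaces. Indeed, the inhomogeneous $L^2_{\mathrm{loc}}$ complex of Remark~\ref{rem:inhomogeneous} splits as a direct sum of complexes. Consequently the closure of $\{0\}$ splits as well, and $\torH^q$ and $\barH^q$ are additive. So (c) follows formally from (a) and (b).

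For (a), I would disintegrate $\mathring{\pi}$ over $\widehat{G}$. Since $G$ is type I, Theorem~\ref{thm:type I groups} gives $\mathring{\pi}\cong\bigoplus_n n\int^\oplus\pi_\alpha\,d\mu_n(\alpha)$. The measures $\mu_n$ have no atoms at points of $\widehat{G}_{\mathrm{cohom}}$: an atom at $\rho$ would produce a copy of $\rho$ inside $\mathring{\pi}$, contradicting the fact that $\mathring{\pi}$ is orthogonal to the $\rho$-isotypic part. Since $\widehat{G}_{\mathrm{cohom}}$ is finite (Theorem~\ref{thm:product formula}), it has $\mu_n$-measure zero. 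Hence for $\mu$-a.e.\ $\alpha$ the representation $\pi_\alpha$ is irreducible and non-cohomological, so $\barH^q(G;\pi_\alpha)=0$. Proposition~\ref{prop:red cohom direct integral} then gives $\barH^q(G;\mathring{\pi})=0$. The countable multiplicity sum can be written as a single direct integral over $X=\widehat{G}\times\N$, so the proposition applies directly. One caveat is that the proposition needs $\widehat{G}$ to be standard Borel; this holds because $G$ is type I.

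For (b), write $\pi_{\mathrm{cohom}}=\bigoplus_{\rho\in\widehat{G}_{\mathrm{cohom}}}\mathrm{Hom}_G(\rho,\pi)\otimes\rho$. This is a finite direct sum, so by the additivity above it suffices to treat a single summand $M\otimes\rho$, where $M$ is a separable Hilbert space with trivial action. Choosing an orthonormal basis of $M$ identifies $M\otimes\rho$ with either a finite sum $\rho^{\oplus k}$ or with $\ell^2(\rho)=\ell^2(\N;\mathcal H_\rho)$.

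By Theorem~\ref{thm:product formula}, $H^q(G;\rho)$ is Hausdorff and finite dimensional. Theorem~\ref{thm:cohom of amplif} with $p=2$ then gives two things. First, $H^q(G;\ell^2(\rho))$ is Hausdorff. Second, $\barH^q(G;\ell^2(\rho))\cong\ell^2(H^q(G;\rho))$, which is $M\otimes H^q(G;\rho)$ once a Hilbert structure is chosen on the finite-dimensional space $H^q(G;\rho)$. The finite case is immediate. Summing over the finitely many $\rho$ yields the displayed formula.

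The main obstacle is conceptual rather than technical: making sure the identification $M\otimes\rho\cong\ell^2(\rho)$ is compatible with the naturality claimed, that is, independent of the choice of basis up to the non-canonical Hilbert structure on $H^q(G;\rho)$. I would address this by observing that the isomorphism in Theorem~\ref{thm:cohom of amplif}(2) is natural in $V$ and equivariant under unitaries of $\ell^2$. It therefore descends to a basis-free isomorphism $\barH^q(G;M\otimes\rho)\cong M\otimes H^q(G;\rho)$ of topological vector spaces.
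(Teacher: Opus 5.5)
Your proposal is correct and follows the same route as the paper. You split along $\pi=\pi_{\mathrm{cohom}}\oplus\mathring{\pi}$, treat the finitely many isotypic summands of $\pi_{\mathrm{cohom}}$ via Theorem~\ref{thm:product formula} and Theorem~\ref{thm:cohom of amplif}, and kill $\barH^q(G;\mathring{\pi})$ by disintegrating over $\widehat{G}$ and invoking Proposition~\ref{prop:red cohom direct integral}. Your extra care about atoms, about multiplicities (via $\widehat{G}\times\N$), and about basis-independence of $\mathrm{Hom}_G(\rho,\pi)\otimes H^q(G;\rho)$ only makes explicit details the paper leaves implicit.
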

\begin{remark}\label{rem:hilbert structure}
    In the above theorem, $\mathrm{Hom}_G(\rho,\pi)$ is endowed with the Hilbert space structure given by $\langle T,S\rangle = \langle Tv,Sv\rangle$ for an arbitrary unit vector $v$. This definition is independent of $v$ by Schur's Lemma. The associated Hilbert norm coincides with the operator norm.
\end{remark}

\begin{proof}
    We have that $H^*(G;\pi)=H^*(G;\pi_\mathrm{cohom}) \oplus H^*(G;\mathring{\pi})$. By Theorem \ref{thm:product formula}, $\pi_\mathrm{cohom}$ is the finite direct sum of finitely many isotypic representations, and for each irreducible representation the cohomology is Hausdorff and finite dimensional. Theorem \ref{thm:cohom of amplif} tells us that the cohomology of each isotypic representation is Hausdorff, and therefore $H^*(G;\pi_\mathrm{cohom})$ is Hausdorff. For a Hilbert space $V$, $\ell^2(V)$ is just the Hilbertian tensor product $V \otimes \ell^2$, we therefore obtain a formula for the cohomology $H^*(G;\pi_\mathrm{cohom})$:
    \[ H^q(G;\pi_{\mathrm{cohom}})=\bar{H}^q(G;\pi_{\mathrm{cohom}}) \cong \bigoplus_{\rho \in \widehat{G}_{\mathrm{cohom}}} \mathrm{Hom}_G(\rho,\pi) \otimes H^q(G;\rho). \]
    
    We are left to show that $\barH^q(G;\mathring{\pi})=0$. Since $G$ is type I, we may decompose $\mathring{\pi}$ as a direct integral over the unitary dual $\widehat{G}$ with respect to a unique measure class $\mu$, $\mathring{\pi}=\int_{\widehat{G}}^{\oplus} \pi_\alpha d\mu(\alpha)$. As $\mathring{\pi}$ admits no subrepresentations from $\widehat{G}_{\mathrm{cohom}}$, for $\mu$-almost every representation the Hausdorff cohomology vanishes. Proposition \ref{prop:red cohom direct integral} implies that $\bar{H}(G;\mathring{\pi})=0$.
\end{proof}

\section{Continuous homology and Poincar\'e Duality}\label{sec:4}
In this section we study the theory of continuous \emph{homology} of locally compact groups. This theory exists in the literature, but is less standard than the theory of continuous cohomology, and we review it in \S\ref{subsec:4.1}. In \S\ref{subsec:The relation between homology and cohomology} we relate both the Hausdorff homology and cohomology and the torsion in homology and cohomology. The main result of this subsection is Theorem \ref{thm:indexShift}. 
Finally, we will discuss Poincar\'e duality for Lie groups in \S\ref{subsec:4.3}, using differentiable cohomology and homology.

\subsection{Continuous homology}\label{subsec:4.1}
While less standard in the literature than cohomology, one can develop a theory of continuous homology for locally compact groups with coefficients in topological vector spaces. For a reference on continuous homology, we refer the reader to \cite{BlancHomology} and to \cite[\S C.16]{Petersen}.

One difference is that we need to broaden our category. While we are able to develop cohomology theory in the category of continuous Fr\'echet modules, for homology we need to consider topological vector spaces that are not necessarily Fr\'echet. Let $G$ be a locally compact, second countable group and denote by $\mathcal{C}_G$ the category of complete, locally convex $G$-modules. 
The definition of a \emph{strong} map of $G$-modules from \S\ref{prelim: continuous cohomology} carries over to the category $\mathcal{C}_G$, and analogous to the definition of a relatively injective $G$-module (Definition \ref{def:rel inj}), we have the notion of a relatively \emph{projective} module $E \in \mathcal{C}_G$ (see \cite[Definition~C.18]{Petersen}). Blanc showed that there are enough relatively projective objects in $\mathcal{C}_G$ \cite{BlancHomology}. For a $G$-module $E \in \mathcal{C}_G$, we denote by $E_G$ the closed co-invariants of $G$ in $E$:
$$E_G=E / \overline{\mathrm{span}(\{g.v-v \mid g \in G, v \in E \})}$$
Consider a strong, relatively projective resolution:
\[
\begin{tikzcd}[cramped]
	{\cdot \cdot \cdot} & {P_2} & {P_1} & {P_0} & E & 0
	\arrow["{\partial_3}", from=1-1, to=1-2]
	\arrow["{\partial_2}", from=1-2, to=1-3]
	\arrow["{\partial_1}", from=1-3, to=1-4]
	\arrow["\epsilon", from=1-4, to=1-5]
	\arrow[from=1-5, to=1-6]
\end{tikzcd}
\]
The continuous homology of $G$ with coefficients in $E$, denoted $H_*(G;E)$, is the homology of the complex:
\[
\begin{tikzcd}[cramped]
	{\cdot \cdot \cdot} & {(P_2)_G} & ({P_1)_G} & {(P_0)_G} & 0
	\arrow["{\partial_3}", from=1-1, to=1-2]
	\arrow["{\partial_2}", from=1-2, to=1-3]
	\arrow["{\partial_1}", from=1-3, to=1-4]
	\arrow[from=1-4, to=1-5]
\end{tikzcd}
\]
The homology $H_*(G;E)$ is a topological vector space which is not necessarily Hausdorff. It is independent of the resolution. Its torsion $\overline{\{0\}}$ is denoted by $\torH_*(G;E)$, and its maximal Hausdorff quotient is denoted by $\bar{H}_*(G;V)$

To establish the duality between continuous homology and continuous cohomology we will construct a strong, relatively projective resolution which is dual to the $L^2_{\mathrm{loc}}$-resolution discussed in \S\ref{prelim: continuous cohomology}.

For a compact subset $K \subseteq G^q$, the space $L^2(K,E)$ is the complete locally convex space which is the inverse limit of $L^2(K,E_r)$ over a separating net of semi-norms $r$. If $E$ is a Fr\'echet space, the inverse limit is over a countable system, and $L^2(K,E)$ is a Fr\'echet space. We define the space $L^2_c(G^q,E)$ as the strict inductive limit:

$$L^2_\mathrm{c}(G^q,E)= \varinjlim L^2(K,E)$$

Were the limit is taken over a countable exhaustion of $G^q$ by compact subsets $K$. This is the space of compactly supported functions from $G^q$ to $E$, endowed with a topology that makes it a complete locally convex topological vector space whenever $E$ is \cite[Theorem~II.6.6]{SchaeferWolff}. Note that this time, even if $E$ is Fr\'echet, $L^2_\mathrm{c}(G^q,E)$ is not a Fr\'echet space. Let $\partial_q:L^2_\mathrm{c}(G^{q+1},E) \rightarrow L^2_\mathrm{c}(G^q,E)$ be the map:
$$\partial_qf(g_1,\dots,g_{q})= \sum_{j=0}^{q}(-1)^j\int_G f(g_1,\dots,g_j,g,g_{j+1},\dots,g_{q})d\mu(g), $$
Let $\epsilon: L^2_c(G,E) \rightarrow E$ be the integration map $\epsilon(f)= \int_G fd\mu(g)$. Then, the resolution
\[
\begin{tikzcd}[cramped]
	{\cdot \cdot \cdot} & {L^2_c(G^3,E)} & {L^2_c(G^2,E)} & {L^2_c(G,E)} & E & 0
	\arrow["{\partial_3}", from=1-1, to=1-2]
	\arrow["{\partial_2}", from=1-2, to=1-3]
	\arrow["{\partial_1}", from=1-3, to=1-4]
	\arrow["\epsilon", from=1-4, to=1-5]
	\arrow[from=1-5, to=1-6]
\end{tikzcd}
\]
 is a strong, relatively projective resolution \cite[Theorem~C.21]{Petersen}.
\begin{remark}\label{rem:inhomogeneousHomology}
    The space $(L^2_c(G^{q+1},E))_G$ can be identified with the space $L^2_c(G^q,E)$, via the map $T_q$:
    $$T_q(\tilde{f})(g_1,\dots,g_q)=\int_G \pi(g^{-1})f(g,gg_1,gg_1g_2,\dots,gg_1 \cdots g_q)d\mu(g)$$
    Where $f$ is a function representing $\tilde{f}$. Using this identification, we get the complex of \emph{inhomogeneous} chains, similar to the  complex of inhomogeneous cochains for cohomology (Remark \ref{rem:inhomogeneous}). This complex consists of boundary maps $\partial_q':L^2_c(G^{q},E) \rightarrow L^2_c(G^{q-1},E)$ given by:
    \begin{align*}
    \partial_q'(f)(g_1,\dots,g_{q-1})=\int_G[\pi(g^{-1})f(g,g_1,\dots,g_{q-1})+\sum_{j=1}^{q-1}(-1)^j f(g_1,\dots,g,g^{-1}g_j,g_{j+1},\dots,g_{q-1}) \\
    +(-1)^{q}f(g_1,\dots,g_{q-1},g)]d\mu(g) 
    \end{align*}
    and calculates $H_*(G;E)$\cite[Theorem~C.31]{Petersen}. 
\end{remark}
\subsection{The relation between homology and cohomology} \label{subsec:The relation between homology and cohomology}
Restricting our attention to $G$-modules that are reflexive Banach spaces, we are able to relate the continuous homology and cohomology.

\begin{theorem}\label{thm:indexShift}
    Let $G$ be a locally compact, second countable group and $V$ be a continuous representation of $G$ on a reflexive Banach space. We denote the strong dual of $V$ by $V^*$, on which we have the contragredient representation. Then,
    \begin{enumerate}
        \item The Hausdorff cohomology $\bar{H}^q(G;V^*)$ is a reflexive Fr\'echet space. It is naturally isomorphic to the strong dual of the Hausdorff homology of $V$:
        $$\bar{H}_q(G;V)^* \cong \bar{H}^q(G;V^*)$$
        \item There is an index shift in the torsion:
        $$\mathring{H}_q(G;V) \ne 0 \iff \mathring{H}^{q+1}(G;V^*) \ne 0$$
    \end{enumerate}
\end{theorem}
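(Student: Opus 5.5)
Both parts come from a duality between two explicit complexes. On one side is the inhomogeneous chain complex $(L^2_c(G^q,V),\partial'_q)$ of Remark~\ref{rem:inhomogeneousHomology}, which computes $H_*(G;V)$. On the other is the inhomogeneous cochain complex $(L^2_{\mathrm{loc}}(G^q,V^*),d'_q)$ of Remark~\ref{rem:inhomogeneous}, which computes $H^*(G;V^*)$.

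\textbf{Step 1: identify the dual complex.} Since $V$ is a reflexive Banach space, it has the Radon--Nikodym property, so $L^2(K,V)^*\cong L^2(K,V^*)$ for each compact $K$. The space $L^2_c(G^q,V)$ is a strict inductive limit of the $L^2(K,V)$. Dualizing it therefore gives the projective limit of the $L^2(K,V^*)$, that is,
\[ L^2_c(G^q,V)^*\cong L^2_{\mathrm{loc}}(G^q,V^*), \]
and we take this with the strong dual topology, which should agree with the Fréchet topology. Next we compute the transpose of $\partial'_{q+1}$ under the pairing $\langle f,\phi\rangle=\int_{G^q}\langle f,\phi\rangle\,d\mu$. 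A change of variables in the Haar integral should show that this transpose is $d'_q$, possibly up to the modular function; for nonunimodular $G$ I would fix conventions so that it is exact. The same computation gives the dual statement, $L^2_{\mathrm{loc}}(G^q,V^*)^*\cong L^2_c(G^q,V)$. I expect this step to be bookkeeping.

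\textbf{Step 2: functional analysis on a dual pair of complexes.} Write $A=\partial_{q+1}\colon C_{q+1}\to C_q$ and $B=\partial_q\colon C_q\to C_{q-1}$, so that $A^t=d_q$ and $B^t=d_{q-1}$. By the Hahn--Banach theorem, $(\operatorname{Im} A)^\perp=\ker A^t$ and $(\ker B)^\perp=\overline{\operatorname{Im} B^t}$; the second identity uses reflexivity of the spaces. Hence
\[ \big(\ker B/\overline{\operatorname{Im}A}\big)^*\cong \ker A^t/(\ker B)^\perp=\ker d_q/\overline{\operatorname{Im} d_{q-1}}, \]
which is the claimed isomorphism $\bar H_q(G;V)^*\cong\bar H^q(G;V^*)$. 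Reflexivity of $\bar H^q$ follows because it is a quotient of a closed subspace of a reflexive Fréchet space. The continuity of the isomorphism needs some care for strong duals of LF-spaces. I would use that $L^2_c$ is a complete LF-space with reflexive steps, so that the relevant subspaces and quotients behave well.

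\textbf{Step 3: the index shift.} I would apply the closed range theorem for Fréchet spaces and their duals, in either of two forms. For Fréchet spaces, $d$ has closed range iff $d^t$ has closed range. For LF$\leftrightarrow$Fréchet duals, the analogue is Dieudonné--Schwartz/Pták: a map between an LF-space and a Fréchet space with reflexive steps has closed range iff its transpose does. Now $\mathring H^{q+1}(G;V^*)=0$ means $d_q$ has closed range. By the closed range theorem this holds iff $\partial_{q+1}=d_q^t$ has closed range, which is exactly $\mathring H_q(G;V)=0$.

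The main obstacle is this closed range theorem in the non-Fréchet setting. If the LF version is unavailable, I would argue instead via Blanc's comparison. The homology can be computed by a resolution by Fréchet (indeed reflexive Banach-step) spaces after restricting to the compact supports, where the closed range theorem for Fréchet spaces applies. Alternatively, one can note that $\partial_{q+1}$ has closed range iff it is a topological homomorphism onto its image, and then transfer this through the Banach steps $L^2(K,V)$.
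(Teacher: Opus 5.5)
Your framework is the paper's. You realize $L^2_c(G^q,V)$ and $L^2_{\mathrm{loc}}(G^q,V^*)$ as a dual pair and read off $\overline{B_q}^\perp=Z^q$ and $Z_q^\perp=\overline{B^q}$, where $C_q,Z_q,B_q$ are the chains, cycles and boundaries and $C^q,Z^q,B^q$ the cochains, cocycles and coboundaries. You then get (2) from a closed range theorem. Part (2) is fine, and the ``main obstacle'' you name is not one. The map $d\colon C^q\to C^{q+1}$ is a map between Fr\'echet spaces, so Banach's theorem applies to $d$ itself. Reflexivity then turns strong closedness of $\operatorname{Im}\partial$ in $C_q$ into weak$^*$-closedness: a strongly closed subspace is weakly closed by Hahn--Banach, and weak $=$ weak$^*$ by reflexivity. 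This is exactly the paper's argument, so no LF-version of the closed range theorem or comparison of resolutions is needed.

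Part (1), however, has two genuine gaps.

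\textbf{The topological isomorphism.} Everything beyond the algebra is hidden in ``subspaces and quotients behave well''. There is no such general principle for LF-spaces; for instance, a closed subspace of an LF-space need not be an LF-space in the induced topology. A priori the natural maps $(C_q/\overline{B_q})^*\to\overline{B_q}^\perp$ and $\overline{B_q}^\perp/Z_q^\perp\to(Z_q/\overline{B_q})^*$ are only continuous bijections. The paper proves openness in two steps.
\begin{enumerate}
\item $C_q$ is a DF-space, being the strong dual of a Fr\'echet space, and so is its Hausdorff quotient $C_q/\overline{B_q}$. Hence $(C_q/\overline{B_q})^*$ is Fr\'echet, and the first map is open by the open mapping theorem.
\item For the second map the source is Fr\'echet, hence Pt\'ak, so it suffices that the target be barrelled. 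This follows from semi-reflexivity of $Z_q/\overline{B_q}$. That in turn comes from showing, again via Pt\'ak's open mapping theorem, that $C_q/\overline{B_q}\cong (Z^q)^*$ is reflexive.
\end{enumerate}
Your sketch contains no argument of this kind.

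\textbf{Reflexivity of $\bar H^q(G;V^*)$.} Your argument rests on a false principle. Hausdorff quotients of reflexive (even Montel) Fr\'echet spaces need not be reflexive: K\"othe constructed a Fr\'echet--Montel space with $\ell^1$ as a quotient. The paper instead uses Valdivia's characterization of totally reflexive Fr\'echet spaces, those all of whose Hausdorff quotients are reflexive. These are exactly the subspaces of products of reflexive Banach spaces, and $Z^q\subseteq L^2_{\mathrm{loc}}(G^q,V^*)\subseteq\prod_m L^2(K_m,V^*)$ is one.

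A smaller point of the same kind: the topological identification of the strong dual of $L^2_{\mathrm{loc}}(G^q,V^*)$ with the LF-space $L^2_c(G^q,V)$ in your Step 1 is not bookkeeping either. The paper obtains it from Grothendieck's theorem on distinguished Fr\'echet spaces, using that reflexive Fr\'echet spaces are distinguished.
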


\begin{remark}
    This theorem does not claim that the Hausdorff homology $\bar{H}_q(G;V)$ is always reflexive, only that the dual is reflexive and topologically isomorphic to $\bar{H}^q(G;V^*)$. The proof will show that the canonical injection of $\bar{H}_q(G;V)$ in to the double dual $\bar{H}_q(G;V)^{**}$ is a bijection, however we do not know if in this generality it is necessarily continuous. It is however true that the double dual topology is stronger then the natural topology of $\bar{H}_q(G;V)$. As an example of such a pathology the reader should keep in mind the following: Let $E$ be a reflexive Banach space and denote $E$ endowed with the weak topology by $E^w$. Since bounded subsets agree with weakly bounded subsets, the strong dual of $E^w$ is topologically isomorphic to the strong dual of $E$. As a result the bidual of $E^w$ is $E$ and the canonical injection is the identity $E^w \to E$, which is a bijection but is not continuous.
    
    Note however, that this pathology can not occur if $\bar{H}_q(G;V)$ is itself a Banach space. This will be the case when $V$ is a unitary representation and $G$ is either a semisimple group or a discrete group with finiteness properties (see e.g. the proof of Theorem \ref{intro:torsion-reduced} and Remark \ref{rem:finiteness prop homology}). See also Corollary \ref{cor:homIsNotMetr}.

\end{remark}

In case $V$ is a Hilbert space, and the representation is unitary, we get the following corollary.

\begin{corollary}\label{cor:index shift unitary}
    Let $G$ be a locally compact, second countable group and $V$ a unitary representation of $G$. Then, 
    $$\mathring{H}_q(G;V) \ne 0 \iff \mathring{H}^{q+1}(G;V) \ne 0$$
    and $\bar{H}^q(G;V)$ is topologically isomorphic to the complex conjugate of  $\bar{H}_q(G;V)^*$.
\end{corollary}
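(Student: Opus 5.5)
This is a direct specialization of Theorem \ref{thm:indexShift} to the case where $V$ is a Hilbert space carrying a unitary representation $\pi$. The plan is to apply Theorem \ref{thm:indexShift} to $V$ and then identify the strong dual $V^*$, with its contragredient representation, with the complex conjugate representation $\overline{V}$.

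First, a Hilbert space is a reflexive Banach space, so Theorem \ref{thm:indexShift} applies to $V$. By the Riesz representation theorem, the map $\overline{V}\to V^*$, $\bar v\mapsto \langle\,\cdot\,,v\rangle$, is a linear topological isomorphism, since the strong topology on $V^*$ is the norm topology. It intertwines the conjugate representation $\bar\pi$ with the contragredient, because
\[
(\pi^*(g)\langle\cdot,v\rangle)(w)=\langle \pi(g^{-1})w,v\rangle=\langle w,\pi(g)v\rangle .
\]
Continuous cohomology is functorial for topological isomorphisms of $G$-modules, so $H^*(G;V^*)\cong H^*(G;\overline{V})$ as topological vector spaces, and this isomorphism respects the torsion/Hausdorff decomposition.

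Next, I relate $H^*(G;\overline{V})$ to $\overline{H^*(G;V)}$. Complex conjugation acts coordinatewise on the inhomogeneous $L^2_{\mathrm{loc}}$ cochain complex of Remark \ref{rem:inhomogeneous}. It gives a conjugate-linear homeomorphism $L^2_{\mathrm{loc}}(G^q,V)\to L^2_{\mathrm{loc}}(G^q,\overline{V})$ that commutes with the coboundaries, since the formulas for $d_q'$ involve only $\pi(g_1)$ and real signs. Hence $H^q(G;\overline{V})$ is the complex conjugate of $H^q(G;V)$ as a topological vector space. In particular, closures of zero correspond, so
\[
\torH^{q}(G;\overline{V})\neq 0 \iff \torH^{q}(G;V)\neq 0, \qquad \bar{H}^q(G;\overline{V})\cong\overline{\bar{H}^q(G;V)}.
\]

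Combining these with Theorem \ref{thm:indexShift}(2) gives $\torH_q(G;V)\neq0\iff\torH^{q+1}(G;V^*)\neq0\iff\torH^{q+1}(G;V)\neq0$. Theorem \ref{thm:indexShift}(1) gives $\bar{H}_q(G;V)^*\cong\bar{H}^q(G;V^*)\cong\overline{\bar{H}^q(G;V)}$, and conjugating both sides yields the stated isomorphism $\bar{H}^q(G;V)\cong\overline{\bar{H}_q(G;V)^*}$. The only point needing care, and the one I would check most closely, is that all these identifications are topological and not merely algebraic. This holds because each is induced by a homeomorphism of cochain complexes, and taking closures of zero and passing to Hausdorff quotients is functorial for such homeomorphisms.
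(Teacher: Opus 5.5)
Your proposal is correct and follows the paper's proof. You identify the contragredient $V^*$ with the conjugate representation $\overline{V}$, note that cohomology with coefficients in $\overline{V}$ is the complex conjugate of cohomology with coefficients in $V$ as a topological vector space, and read off both claims from Theorem \ref{thm:indexShift}; you simply spell out the intertwining computation and the conjugation of the inhomogeneous cochain complex, which the paper leaves implicit.
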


\begin{proof}
     Since $V$ is unitary, the contragredient representation $V^*$ is unitarily equivalent to the complex conjugate representation $\bar{V}$. Consequently, the homology and cohomology with coefficients in $V$ and $\bar{V}$ are complex conjugates of one another as topological vector spaces. The statment now follows from Theorem  \ref{thm:indexShift}.
\end{proof}

In order to prove Theorem \ref{thm:indexShift}, we will show that the resolutions that calculate the homology and cohomology which we discussed in \S\ref{subsec:4.1} are dual to one another.
\begin{lemma}\label{lem:dual pair}
    Let $G$ and $V$ be as in Theorem \ref{thm:indexShift}. Then $L^2_{\mathrm{loc}}(G^{q},V^*)$ and $L^2_c(G^{q},V)$ is a dual pair of reflexive locally convex topological vector spaces. That is, each is reflexive and is the strong dual of the other.
\end{lemma}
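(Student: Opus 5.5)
Let $V$ be a reflexive Banach space. The plan is to reduce the lemma to standard duality facts for countable projective limits of Banach spaces and their strict inductive duals, that is, Fréchet spaces and LF-spaces.

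\textbf{Step 1: structure of the two spaces.} Fix a countable exhaustion $K_1\subset K_2\subset\cdots$ of $G^q$ by compact sets, each the closure of its interior. Then
\[ L^2_{\mathrm{loc}}(G^q,V^*)=\varprojlim_n L^2(K_n,V^*) \quad\text{and}\quad L^2_c(G^q,V)=\varinjlim_n L^2(K_n,V). \]
The projective limit has surjective restriction maps, and the inductive limit is strict with closed embeddings given by extension by zero.

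\textbf{Step 2: duality at each finite stage.} Since $V$ is reflexive, it has the Radon–Nikodym property. Hence $L^2(K_n,V)^*\cong L^2(K_n,V^*)$ via $\langle f,\phi\rangle=\int_{K_n}\phi(x)(f(x))\,dx$, and symmetrically $L^2(K_n,V^*)^*\cong L^2(K_n,V)$. So each $L^2(K_n,V)$ is a reflexive Banach space.

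We must also check compatibility under this pairing:
\begin{itemize}
\item the dual of extension by zero $L^2(K_n,V)\hookrightarrow L^2(K_{n+1},V)$ is restriction $L^2(K_{n+1},V^*)\to L^2(K_n,V^*)$;
\item the dual of restriction is extension by zero.
\end{itemize}

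\textbf{Step 3: pass to the limits.} I would cite the standard results on strict LF-spaces (e.g.\ Schaefer–Wolff, or Köthe). For a strict inductive limit $E=\varinjlim E_n$ of Fréchet spaces with closed embeddings:
\begin{itemize}
\item bounded sets are contained and bounded in some $E_n$ (Dieudonné–Schwartz);
\item hence the strong dual $E^*_\beta$ is the projective limit $\varprojlim (E_n)^*_\beta$, as topological vector spaces.
\end{itemize}
Applied here, this gives $L^2_c(G^q,V)^*_\beta\cong L^2_{\mathrm{loc}}(G^q,V^*)$.

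Conversely, $F=\varprojlim F_n$ is a Fréchet space with surjective linking maps. Its dual is algebraically the union of the $F_n^*$, since every continuous functional factors through some $F_n$. For the topology:
\begin{itemize}
\item a countable projective limit of reflexive Banach spaces is reflexive Fréchet, hence distinguished;
\item for such spaces the strong dual carries the inductive limit topology of the $(F_n)^*_\beta$;
\item the resulting inductive limit is strict, because dualizing surjections gives topological embeddings with closed range.
\end{itemize}
Thus $L^2_{\mathrm{loc}}(G^q,V^*)^*_\beta\cong L^2_c(G^q,V)$.

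Reflexivity of both spaces then follows: each is the strong dual of the other, and the composed identification is the canonical embedding into the bidual. Alternatively, one can quote that reflexivity passes to countable projective limits and to strict inductive limits of reflexive Fréchet spaces.

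\textbf{Main obstacle.} The delicate point is the topological (not merely algebraic) identification of the strong dual of the Fréchet space $L^2_{\mathrm{loc}}$ with the strict inductive limit topology on $L^2_c$. This needs either distinguishedness of reflexive Fréchet spaces or a direct argument: bounded subsets of $L^2_{\mathrm{loc}}$ are exactly the sets bounded in every $L^2(K_n,V^*)$, so a basis of strong neighborhoods of zero can be matched with the polars that define the LF topology.

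I would also verify that the pairing $\langle f,\phi\rangle=\int\phi(f)$ is well defined and separately continuous on $L^2_c\times L^2_{\mathrm{loc}}$. This is immediate, since $f$ is supported in some $K_n$ and one can apply Cauchy–Schwarz there.
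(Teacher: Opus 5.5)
Your proposal is correct and follows essentially the same route as the paper. Both arguments use Radon--Nikodym duality $L^2(K_n,V)^*\cong L^2(K_n,V^*)$ on each compact stage, and both invoke Grothendieck's theorem for reflexive (hence distinguished) Fr\'echet spaces to identify the strong dual of $L^2_{\mathrm{loc}}(G^q,V^*)$ with the strict inductive limit $L^2_c(G^q,V)$. For the converse direction, the dual of the strict LB-space, you give an explicit justification via the Dieudonn\'e--Schwartz description of bounded sets, which the paper only asserts.
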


\begin{proof}
    We denote by $\{K_m\}$ a filtration of $G^q$ by compact subsets. Let $E_m=L^2(K_m,V)$. Since $V$ is reflexive it has the Radon-Nikodym property and thus, $E_m^* = L^2(K_m,V^*)$. Therefore:
    $$L^2_{\mathrm{loc}}(G^q,V^*)= \varprojlim E_m^*$$
    $$L^2_c(G^q,V)= \varinjlim E_m$$
    where the projective limit is taken with respect to the restriction maps $E^*_{m+1} \rightarrow E^*_m$ and the inductive limit is taken with respect to the inclusion maps $E_m \rightarrow E_{m+1}$. This way, $L^2_{\mathrm{loc}}(G^{q},V^*)$ inherits the topology of a Fr\'echet space, whereas $L^2_c(G^q,V)$ inherits the topology of an LB-space. We would like to say that in this case, the strong dual of the inductive limit is the projective limit of the strong duals, and vice versa. This will show that $L^2_{\mathrm{loc}}(G^{q},V^*)$ and $L^2_c(G^{q},V)$ are the strong duals of one another, and furthermore that they are both reflexive. Algebraically, the statement holds naively -- we can identify the continuous dual of each of these spaces with the other space.  However, the topological statement is more involved, and follows from work of Grothendieck. A topological vector space is called \emph{barrelled} if every closed, convex, balanced and absorbing set is a neighborhood of zero. A locally convex topological vector space is called \emph{distinguished} if its strong dual is barrelled. Every Fr\'echet space can be represented (non-uniquely) as a projective limit of a sequence of Banach spaces. Grothendieck showed that a Fr\'echet space $F$ is distinguished precisely when for every such representation $F=\varprojlim F_m$, its strong dual is the inductive limit $F^*=\varinjlim F_m^*$ \cite{Distinguished}. Reflexive spaces are always barrelled, and hence are always distinguished, and we observe that our space $L^2_{\mathrm{loc}}(G^q,V^*)$ is indeed reflexive as a projective limit of reflexive spaces.
\end{proof}
In order to pass to the complexes that calculate the homology and cohomology, we need to take $G$-invariants in $L^2_{\mathrm{loc}}(G^{q},V^*)$ and $G$-co-invariants in $L^2_c(G^{q},V)$. We will use the identification of the spaces of invariants and co-invariants with the inhomogeneous chains and cochains (Remarks \ref{rem:inhomogeneous} and \ref{rem:inhomogeneousHomology}). We leave it to the reader to check that the duality between the spaces of inhomogeneous chains and cochains extends to a full duality of the complexes, namely that the boundary maps $d_q'$ and $\partial_q'$ are dual to one another as well.

The proof of item $(1)$ of Theorem \ref{thm:indexShift} is rather easy to verify algebraically but requires subtle arguments from the theory of locally convex spaces in order to prove topologically. In particular, we will employ Pt\'ak's theory, which extends the open mapping theorem beyond the category of Fr\'echet spaces. A locally convex topological vector space $X$ is called \emph{Pt\'ak} if a subspace of its continuous dual is weak-$^*$-closed if and only if its intersection with any equicontinuous subset $A \subseteq X^*$ is weak-$^*$-closed in $A$. We will not need the definition, but only the following facts:
\begin{enumerate}
    \item Fr\'echet spaces are Pt\'ak \cite[Example~IV.8.1]{SchaeferWolff}.
    \item Strong duals of reflexive Fr\'echet spaces are Pt\'ak \cite[Example~IV.8.2]{SchaeferWolff}.
    \item Closed subspaces and Hausdorff quotients of Pt\'ak spaces are Pt\'ak \cite[Theorem~IV.8.2]{SchaeferWolff} and \cite[Corollary~IV.8.3]{SchaeferWolff}.
\end{enumerate}
The importance of Pt\'ak spaces comes from the following result \cite[Corollary~IV.8.1]{SchaeferWolff}.
\begin{theorem}\label{ptak}
    Every continuous map from a Pt\'ak space onto a barrelled space is open.
\end{theorem}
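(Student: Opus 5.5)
The plan is the classical Pták argument: reduce to a bijection, show that the map is \emph{nearly open} using barrelledness, and then use the Pták property on the dual side to upgrade near openness to genuine openness.

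\textbf{Reduction to a bijection.} Let $u\colon X\to Y$ be a continuous linear surjection, with $X$ Pták and $Y$ barrelled. The kernel $\ker u$ is closed, so the quotient $X/\ker u$ is a Hausdorff quotient of a Pták space, hence Pták by fact (3). The map $u$ factors as the (open) quotient map followed by a continuous linear bijection $v\colon X/\ker u\to Y$. It therefore suffices to show that $v$ is open, and we rename $X/\ker u$ as $X$.

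\textbf{Near openness.} Let $U\subseteq X$ be a closed, convex, balanced neighbourhood of $0$. The closure $W=\overline{v(U)}$ is closed, convex and balanced. It is also absorbing, because $U$ is absorbing and $v$ is surjective. So $W$ is a barrel, and since $Y$ is barrelled, $W$ is a neighbourhood of $0$. Consequently the polar $B:=W^\circ=v(U)^\circ\subseteq Y'$ is equicontinuous, and by Alaoglu it is weak-$*$ compact.

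\textbf{Dual step, which I expect to be the main point.} Let $Q=v'(Y')\subseteq X'$ be the image of the transpose. I want to show that $Q$ is weak-$*$ closed, and by the definition of a Pták space it is enough to check that $Q\cap A$ is weak-$*$ closed in $A$ for every equicontinuous $A$. Every equicontinuous set lies in the polar $U^\circ$ of some $U$ as above, so it suffices to treat $Q\cap U^\circ$.
\begin{itemize}
\item If $y'\in Y'$ satisfies $v'(y')\in U^\circ$, then $|\langle v(x),y'\rangle|\le 1$ for all $x\in U$. Hence $y'\in v(U)^\circ=B$, and so $Q\cap U^\circ=v'(B)$.
\item The map $v'$ is weak-$*$ to weak-$*$ continuous and $B$ is weak-$*$ compact. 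Therefore $v'(B)$ is weak-$*$ compact, and in particular closed.
\end{itemize}
Hence $Q$ is weak-$*$ closed. Since $v$ is injective, the annihilator of $Q$ in $X$ is $\ker v=0$, so $Q$ is weak-$*$ dense. Together with closedness this gives $Q=X'$.

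\textbf{Concluding openness.} By the bipolar theorem, $U=U^{\circ\circ}$. From the previous step, $U^\circ=Q\cap U^\circ=v'(B)$. Now take $y\in B^\circ$ and write $y=v(x)$, which is possible since $v$ is bijective. For every $x'=v'(y')\in U^\circ$ with $y'\in B$ we get $|\langle x,x'\rangle|=|\langle y,y'\rangle|\le1$. Thus $x\in U^{\circ\circ}=U$, and so
\[
v(U)\supseteq B^\circ=W^{\circ\circ}\supseteq W.
\]
Since $W$ is a neighbourhood of $0$, $v$ maps every basic closed convex balanced neighbourhood of $0$ onto a neighbourhood of $0$. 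Hence $v$ is open, and therefore so is $u$.
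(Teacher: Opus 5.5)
Your proof is correct. The paper gives no argument of its own and only cites Schaefer--Wolff (Corollary IV.8.1), and what you wrote is essentially the standard Pt\'ak proof behind that citation: near openness from barrelledness, almost-closedness of $v'(Y')$ via Alaoglu--Bourbaki, the Pt\'ak property to get $v'(Y')=X'$, and a bipolar computation giving $v(U)\supseteq\overline{v(U)}$. The one thing you leave implicit is that $Y$ is Hausdorff, which you use when you say $\ker u$ is closed and that the annihilator of $Q$ is $\ker v$; this is the standing convention, and the statement is false without it.
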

\begin{proof}[Proof of item $(1)$ in Theorem \ref{thm:indexShift}]
We denote the inhomogeneous $q$-chains, $q$-cycles and $q$-boundaries by $C_q$, $Z_q$ and $B_q$, respectively. Similarly, we denote the $q$-cochains, $q$-cocycles and $q$-coboundaries by $C^q$, $Z^q$ and $B^q$, respectively. Recall that the complexes $(C_\bullet,\partial)$ and $(C^\bullet,d)$ are topologically dual. For a closed subspace of $F \leq C_q$, we denote by $F^\perp$ its annihilator in the strong dual $C^q=(C_q)^*$. We have the following relations between the cycles, boundaries, cocycles and coboundaries: 
$$\overline{B_q}^\perp=Z^q, \quad Z_q^\perp=\overline{B^q}$$

The first equality is general for operators between locally convex spaces, the second equality uses the reflexivity of $C_q$. Using this it is easy to construct an algebraic isomorphism $\bar{H}_q(G;V)^* \cong \bar{H}^q(G;V^*)$. The subtlety in the following proof comes from the need to verify that this isomorphism is topological.

\paragraph{\textbf{Step I}} We start by showing that there is a continuous bijection
$$\overline{B_q}^\perp / Z_q^\perp \rightarrow (Z_q / \overline{B_q})^*.$$
Note that the left-hand side is just $\bar{H}^q(G;V^*)$ and the right-hand side is the strong dual of $\bar{H}_q(G;V^*)$. First, we identify the dual of the space $C_q/\overline{B_q}$. Algebraically, the dual of the quotient by a closed subspace is precisely the annihilator $\overline{B_q}^\perp$. in order to see that this isomorphism is topological, we note that the transpose of the quotient map $C_q \to C_q/ \overline{B_q}$ is a continuous injection $(C_q/\overline{B_q})^* \hookrightarrow C_q^*$. As the strong dual of a Fr\'echet space, $C_q$ is a DF-space. Hausdorff quotients of DF-spaces are again DF-spaces \cite[Proposition~8.3.16]{BonerPerezcarreras}, and duals of DF-spaces are Fr\'echet. Therefore $(C_q/\overline{B_q})^*$ is a Fr\'echet space, and the above injection gives a continuous bijection of Fr\'echet spaces $(C_q/\overline{B_q})^* \to\overline{B_q}^\perp$. It is therefore a topological isomorphism by the open mapping theorem. The (continuous) injection of $Z_q / \overline{B_q}$ as a subspace of $C_q / \overline{B_q}$ induces a continuous map of the strong duals which is surjective by Hahn-Banach:
$$(C_q / \overline{B_q})^*=\overline{B_q}^\perp \to (Z_q / \overline{B_q})^*.$$
Algebraically, the kernel of this map is precisely $Z_q^\perp$, and we get the continuous bijection
$$\overline{B_q}^\perp / Z_q^\perp \rightarrow (Z_q / \overline{B_q})^*.$$
\paragraph{\textbf{Step II}} We will show that the map from the previous step is open. The space $\overline{B_q}^\perp / Z_q^\perp$ is Fr\'echet, and hence Pt\'ak. In order to use Theorem \ref{ptak} we need to show that $(Z_q / \overline{B_q})^*$ is barrelled. First, consider the space $C_q / \overline{B_q}$. Since $(Z^q)^{\perp}=\overline{B_q}$ we get a continuous map $C_q / \overline{B_q} \to (Z^q)^*$, which is surjective by Hahn-Banach. $C_q$ is a Pt\'ak space as the strong dual of a reflexive Fr\'echet space. The space $(Z^q)^*$ is reflexive and hence barrelled \cite[Theorem~20.7]{KelleyNamioka}. Therefore the map $C_q / \overline{B_q} \to (Z^q)^*$ is open. Thus, $C_q / \overline{B_q}$ is topologically isomorphic to the strong dual of a reflexive space, and it is hence reflexive as well. We pass to the subspace $Z_q/ \overline{B_q}$. Recall that a locally convex space is said to be \emph{semi-reflexive} if its canonical map into its strong bi-dual is a bijection. Subspaces of locally convex spaces do not inherit reflexivity, but they do inherit semi-reflexivity \cite[Theorem~20.2]{KelleyNamioka}. Therefore $Z_q/ \overline{B_q}$ is semi-reflexive. The strong dual of semi-reflexive spaces is barrelled \cite[Theorem~IV.5.5]{SchaeferWolff}. Hence, the map from Step I is a topological isomorphism by Theorem \ref{ptak}.

We are left to show that the Hausdorff cohomology is a reflexive Fr\'echet space. In general, reflexivity does not pass to arbitrary subquotients for Fr\'echet spaces. A Fr\'echet space is said to be \emph{totally reflexive} if all its Hausdorff quotients are reflexive. Totally reflexive Fr\'echet spaces were characterized by Valdivia as those that are subspaces of a product of reflexive Banach spaces \cite[Theorem~3]{valdivia}. The space $L^2_{\mathrm{loc}}(G^n,V^*)$ of cochains is such. Hence the Hausdorff cohomology is reflexive.
\end{proof}

\begin{proof}[Proof of item $(2)$ in Theorem \ref{thm:indexShift}]
  $H^{q+1}(G;V^*)$ being Hausdorff is equivalent to the image of $d$ being closed, and $H_q(G;V)$ being Hausdorff is equivalent to the image of $\partial = d^*\colon C_{q+1} \rightarrow C_q$ being closed. Write $d = i \circ \tilde{d}$ as the composition of $\tilde{d}\colon C^q \rightarrow \overline{B^{q+1}}$ and $i\colon \overline{B^{q+1}} \rightarrow C^{q+1}$. This way, $d^* =  \tilde{d}^* \circ i^*$. Banach's theorem on Fr\'echet spaces states that if $F_1$ and $F_2$ are Fr\'echet spaces, then $T\colon F_1 \rightarrow F_2$ is surjective if and only if $T^*\colon F_2^* \rightarrow F_1^*$ is injective and its image is closed in the weak-$^*$ topology on $F_1^*$.

For the first direction, assume that $d$ has a closed image. Then $\tilde{d}$ is surjective. By Banach's theorem $\tilde{d}^*$ has a closed image in the weak-$^*$ topology, and so also in the strong topology.

For the other direction, assume that $d^*$ has a closed image. Since $i^*$ is surjective, the image of $\tilde{d}^*$ is strongly closed. We claim that it is also closed in the weak-$^*$ topology. The image is a convex set, so it is closed in the weak topology by Hahn-Banach. Reflexivity implies that the weak topology coincides with the weak-$^*$ topology. The map $\tilde{d}^*$ is injective since $\tilde{d}$ has a dense image. So $\tilde{d}^*$ has a weak-$^*$ closed image and  is injective. By Banach's theorem, $\tilde{d}$ is surjective, and $d$ has a closed image.
\end{proof}

We can now deduce the following:

\begin{corollary}\label{cor:homIsNotMetr}
    Let $G$ be a locally compact, second countable group and $V$ be a continuous representation of $G$ on a reflexive Banach space. The space $\bar{H}_q(G;V)$ is a complete locally convex space. It is metrizable if and only if it is isomorphic to a Banach space, and in this case it is reflexive.
\end{corollary}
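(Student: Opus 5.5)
We work throughout with the complex of inhomogeneous chains from Remark~\ref{rem:inhomogeneousHomology}, with $C_q=L^2_c(G^q,V)$, $Z_q$ and $B_q$ the $q$-chains, $q$-cycles and $q$-boundaries. By Lemma~\ref{lem:dual pair}, $C_q$ is the strong dual of the reflexive Fr\'echet space $C^q=L^2_{\mathrm{loc}}(G^q,V^*)$.

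\textbf{Completeness.} The space $C_q$ is complete: it is the strict inductive limit of Banach spaces, or equivalently the strong dual of a Fr\'echet space. The subspace $Z_q$ is the kernel of a continuous map, hence closed in $C_q$ and therefore complete, and $\overline{B_q}$ is closed in $Z_q$. The remaining point is that $\bar H_q(G;V)=Z_q/\overline{B_q}$ is complete, since Hausdorff quotients of complete locally convex spaces need not be complete. Here I would use the structure found in Step II of the proof of Theorem~\ref{thm:indexShift}(1). There, $C_q/\overline{B_q}$ was shown to be topologically isomorphic to $(Z^q)^*$, the strong dual of a reflexive Fr\'echet space. Strong duals of Fr\'echet spaces are complete, so $C_q/\overline{B_q}$ is complete. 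The space $Z_q/\overline{B_q}$ is then the image of the closed subspace $Z_q$ under the quotient map, and this image is closed because it is the full preimage-saturated set $Z_q\supseteq\overline{B_q}$. A closed subspace of a complete space is complete, which gives completeness of $\bar H_q(G;V)$.

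\textbf{Metrizable implies Banach.} Suppose $\bar H_q(G;V)$ is metrizable. Being complete, it is then a Fr\'echet space. It is also a closed subspace of $C_q/\overline{B_q}\cong (Z^q)^*$, which is a (DF)-space. By Grothendieck, a metrizable (DF)-space is normable. Subspaces of (DF)-spaces need not be (DF) in general, so I would argue directly at the level of bounded sets. A (DF)-space has a fundamental sequence of bounded sets, and this property passes to subspaces. A metrizable locally convex space with a fundamental sequence of bounded sets is normable; this is a classical lemma, proved by taking a decreasing neighbourhood basis $(U_n)$ and finding some $U_n$ that is bounded. Hence $\bar H_q(G;V)$ is a normable Fr\'echet space, that is, a Banach space. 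The converse, that a Banach space is metrizable, is trivial.

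\textbf{Reflexivity.} Step II of the proof of Theorem~\ref{thm:indexShift} shows that $Z_q/\overline{B_q}$ is semi-reflexive, as a subspace of the reflexive space $C_q/\overline{B_q}$. A semi-reflexive space is reflexive if and only if it is barrelled, and Banach spaces are barrelled. So in the Banach case $\bar H_q(G;V)$ is reflexive.

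The main obstacle I expect is in the first part: identifying $C_q/\overline{B_q}$ topologically with the complete space $(Z^q)^*$, and checking that $Z_q/\overline{B_q}$ is closed inside it. The Ptak open-mapping argument from Step II supplies exactly this identification, so the first thing I would try is to reuse it directly rather than re-derive it.
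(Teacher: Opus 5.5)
Your proof is correct, but it takes a different route from the paper in both main steps.

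\textbf{Completeness.} The paper observes that $\bar H_q(G;V)=Z_q/\overline{B_q}$ is a Hausdorff subquotient of the Pt\'ak space $C_q$. Such subquotients are again Pt\'ak, hence complete. You instead invoke the Step~II identification $C_q/\overline{B_q}\cong (Z^q)^*$ and exhibit $Z_q/\overline{B_q}$ as a closed subspace of this complete strong dual. Here you implicitly use that the quotient topology on $Z_q/\overline{B_q}$ agrees with the topology induced from $C_q/\overline{B_q}$. This holds because $Z_q+\overline{B_q}=Z_q$, and deserves a line.

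\textbf{Metrizable implies Banach.} The paper argues by duality. A metrizable $\bar H_q(G;V)$ is a semi-reflexive, hence reflexive, Fr\'echet space. By Theorem~\ref{thm:indexShift} its strong dual is $\bar H^q(G;V^*)$, which is then simultaneously Fr\'echet and an LB-space, hence Banach. Reflexivity then transfers normability back to $\bar H_q(G;V)$.

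Your argument is intrinsic. $\bar H_q(G;V)$ sits as a subspace of the (DF)-space $(Z^q)^*$, so it inherits a fundamental sequence of bounded sets. A metrizable space with such a sequence is normable by the classical lemma you sketch. This avoids the passage to the cohomology side and the non-metrizability of proper LB-spaces. It also keeps reflexivity out of the normability step: reflexivity enters only at the end, as in the paper, via semi-reflexive plus barrelled. In exchange, the paper's route shows along the way that the dual $\bar H^q(G;V^*)$ is Banach as well.
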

\begin{proof}
    The proof of Theorem \ref{thm:indexShift} realizes $\bar{H}_q(G;V)$ as a Hausdorff subquotient of a Pt\'ak space. Therefore, it is a Pt\'ak space and in particular complete. Assume that it is metrizable, namely, that it is a Fr\'echet space. The proof of Theorem \ref{thm:indexShift} also shows that it is semi-reflexive, and hence it is a reflexive Fr\'echet space. By Theorem \ref{thm:indexShift}, its strong dual is also a Fr\'echet space. As mentioned in the proof of Lemma \ref{lem:dual pair}, the strong dual of a reflexive Fr\'echet space is an inductive limit of Banach spaces, that is, an LB-space. But proper LB-spaces are never metrizable \cite[Proposition~8.5.18]{BonerPerezcarreras}, and hence the dual $\bar{H}^q(G;V^*)$ is a Banach space. Reflexivity now yields that $\bar{H}_q(G;V)$ is a Banach space as well. It was already established to be reflexive under these conditions. 
\end{proof}

\subsection{Differentiable homology and cohomology and Poincar\'e duality}\label{subsec:4.3}

Until the end of this section, we assume that $G$ is a connected, unimodular Lie group.
The main result of this section is the Poincar\'e duality theorem \ref{thm:Poincare duality}.
As a theorem about \emph{differentiable} cohomology and homology, Poincar\'e duality is a result of Blanc and Wigner \cite{BlancWigner}. To relate the differentiable homology and cohomology to the continuous homology and cohomology, we use results of Pichaud \cite{PichaudRegularization} and Blanc \cite{Blanc} respectively. 

 We briefly recall the theory of \emph{differentiable} or \emph{smooth} cohomology of $G$. For further information, the reader is referred to \cite{HochschildMostow}, \cite[Parties I,VI]{PichaudRegularization}, and \cite{BlancWigner}. 

Let $E$ be a complete, locally convex $G$-module. A vector $v \in E$ is called \emph{smooth} if the map $g \mapsto g.v$ is a smooth map on $G$. The subspace of smooth vectors in $E$ is denoted by $E^\infty$. A  G-module $E$ is called \emph{smooth} if $E=E^\infty$ and the map $v \mapsto \{g \mapsto g.v\}$ is a topological isomorphism of $E$ onto its image in $C^\infty(G,E)$, endowed with the $C^\infty$-topology \cite[Subsection~0.2.3]{BorelWallach}. If $E$ is a continuous representation of $G$ on a complete, locally convex topological vector space, then $E^\infty$ inherits a complete, locally convex topology from $C^\infty(G,E)$ that makes it into a differentiable $G$-module~\cite[Proposition~4.6]{Neeb}. We denote the category of complete, locally convex differentiable $G$-modules by~$\mathcal{D}_G$. This is a subcategory of $\mathcal{C}_G$, and $E \mapsto E^\infty$ is a functor $\mathcal{C}_G \to \mathcal{D}_G$.

An object in $\mathcal{D}_G$ is called relatively injective (resp. projective) if it satisfies Definition \ref{def:rel inj} (resp.~\cite[Definition~C.18]{Petersen}) within the category $\mathcal{D}_G$. The category $\mathcal{D}_G$ has enough relatively injective objects: there is a strong, relatively injective resolution for every object $E \in \mathcal{D}_G$ (\cite[Section 5]{Blanc}, \cite[Chapter~XI.5]{BorelWallach}). This allows to define the differentiable cohomology of $G$ with coefficient in a differentiable module $G$, denoted $H^*_d(G;E))$. A-priori, an element $I \in \mathcal{D}_G$ might be relatively injective in $\mathcal{D}_G$, but not relatively injective in $\mathcal{C}_G$, perhaps resulting in $H^*(G;E) \ne H_d^*(G;E)$. This is not the case. The following follows from \cite[Theorem~5.2]{Blanc} and \cite[Lemma IX.5.2]{BorelWallach}.
\begin{lemma}\label{lem:smooth and cont cohom}
    Let $E$ be a Fr\'echet $G$-module, then $H^*(G;E)\cong H^*(G;E^\infty) \cong H^*_d(G;E^\infty)$.
\end{lemma}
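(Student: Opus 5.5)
The lemma asserts two isomorphisms, $H^*(G;E)\cong H^*(G;E^\infty)$ and $H^*(G;E^\infty)\cong H^*_d(G;E^\infty)$. I would prove them separately and then compose, keeping track of the topologies throughout.

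\emph{Second isomorphism: continuous versus differentiable cohomology of a smooth module.} I would build a single strong resolution of $E^\infty$ that is relatively injective in both $\mathcal{C}_G$ and $\mathcal{D}_G$. The candidate is the smooth homogeneous complex
\[
C^\infty(G^{q+1},E^\infty),
\]
with the coboundary maps of \S\ref{prelim: continuous cohomology}.
\begin{itemize}
\item It is strong: the usual contracting homotopy, evaluation at the identity in the first variable, is continuous.
\item It is relatively injective in $\mathcal{D}_G$: this is \cite[Section 5]{Blanc}.
\item It is relatively injective in $\mathcal{C}_G$: this is the content of \cite[Lemma IX.5.2]{BorelWallach}. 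Given a strong injection $E_1\hookrightarrow E_2$ in $\mathcal{C}_G$ and an equivariant map into $C^\infty(G^{q+1},E^\infty)$, one extends it using the continuous left inverse, averaged against the $G$-action in the first variable. Smoothness of the target in the group variable makes the averaged map land in smooth functions.
\end{itemize}
The strong resolution $E\to L^2_{\mathrm{loc}}(G^{\bullet+1},E)$ computes $H^*(G;E)$. The smooth resolution above computes both $H^*(G;E^\infty)$ and $H^*_d(G;E^\infty)$. Uniqueness up to homotopy of strong relatively injective resolutions in each category then gives the second isomorphism. The isomorphism is topological, since homotopy equivalences of Fr\'echet complexes induce topological isomorphisms in cohomology.

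\emph{First isomorphism: $E$ versus $E^\infty$.} This is \cite[Theorem~5.2]{Blanc}, a Hochschild–Mostow type comparison. The plan is to compare the $L^2_{\mathrm{loc}}$ bar complex of $E$ with its smoothed version, using the following steps.
\begin{itemize}
\item The natural inclusion $E^\infty\hookrightarrow E$ induces a chain map between the inhomogeneous complexes.
\item To invert it up to homotopy, convolve with a Dirac sequence of compactly supported smooth functions. The Dixmier–Malliavin/Gårding approach regularizes cochains, giving a chain map $C^\bullet(G,E)\to C^\bullet(G,E^\infty)$.
\item Both composites are homotopic to the identity, via homotopies built from the usual bar homotopy together with the convolution.
\end{itemize}

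I expect this second half to be the main obstacle. The regularization must be continuous on the Fréchet cochain spaces, so that the isomorphism is topological and not merely algebraic. Getting this requires the $L^2_{\mathrm{loc}}$ (or continuous) topology to be controlled by the smoothing operators, and I would rely on Blanc's estimates at that point rather than reprove them. Composing the two isomorphisms gives the lemma.
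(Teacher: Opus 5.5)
Your overall route is the paper's: the paper proves this lemma only by citing \cite[Theorem~5.2]{Blanc} and \cite[Lemma IX.5.2]{BorelWallach}. Your argument for $H^*(G;E^\infty)\cong H^*_d(G;E^\infty)$ is sound. After untwisting, each term of $C^\infty(G^{\bullet+1},E^\infty)$ has the form $C^\infty(G,X)$, with $G$ acting by left translation. Put $\phi(a)=v(a)(1)$ and $\sigma_\alpha(b)=\int_G\alpha(h)\,h^{-1}\sigma(hb)\,dh$, where $\alpha\in C_c^\infty(G)$ and $\int\alpha=1$; then $\sigma_\alpha$ is still a left inverse of $u$. Extending $v$ by $b\mapsto\bigl(g\mapsto\phi(\sigma_\alpha(g^{-1}b))\bigr)$ lands in smooth functions, using the smoothness of both $\alpha$ and $v$. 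So the $\mathcal{C}_G$-injectivity you claim holds, and one invariant complex computes both groups.

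The step that would fail as written is your sketch of $H^*(G;E)\cong H^*(G;E^\infty)$. Applying $\pi(\alpha)$ pointwise to inhomogeneous cochains is not a chain map. The coboundary contains the term $\pi(g_1)f(g_2,\dots,g_q)$, and $\pi(\alpha)\pi(g_1)\neq\pi(g_1)\pi(\alpha)$ unless $\alpha$ is conjugation invariant; for noncompact semisimple $G$ no compactly supported approximate identity can be chosen that way. A Dirac sequence also gives no single map into $E^\infty$ to serve as a homotopy inverse, since its limit is the identity of $E$.

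The mechanism behind Blanc's theorem (Hochschild--Mostow) is functorial and involves no regularization of cochains. Apply $(\cdot)^\infty$ to the homogeneous resolution $E\to C(G^{\bullet+1},E)$.
\begin{itemize}
\item After untwisting, $C(G^{q+1},E)^\infty\cong C^\infty\bigl(G,C(G^q,E)\bigr)$, which is relatively injective in both categories, as for your smooth complex.
\item Let $\beta\in C_c^\infty(G)$ with $\int\beta=1$. The cone contraction $(s_\beta f)(g_1,\dots,g_q)=\int_G\beta(z)f(z,g_1,\dots,g_q)\,dz$ satisfies $g\cdot s_\beta f=s_{\lambda(g)\beta}(g\cdot f)$. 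Hence it preserves smooth vectors, and $E^\infty\to C(G^{\bullet+1},E)^\infty$ is a strong resolution.
\item $G$-invariant vectors are automatically smooth, and carry the same topology, so the invariant complex is exactly $C(G^{\bullet+1},E)^G$.
\end{itemize}
This single complex therefore computes $H^*(G;E)$, $H^*(G;E^\infty)$ and $H^*_d(G;E^\infty)$ simultaneously, giving both isomorphisms topologically.
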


The category $\mathcal{D}_G$ also has enough relatively projective objects \cite[Proposition~VI.4]{PichaudRegularization}, allowing one to develop a differentiable homology theory. The differentiable homology is denoted by $H_{d,*}(G;E)$ for $E \in \mathcal{D}_G$. We have an analogue of Lemma \ref{lem:smooth and cont cohom} following from~\cite[Propositions~VI.3, VI.4, VI.5]{PichaudRegularization}.
\begin{lemma}\label{lem:smooth and cont hom}
    Let $E \in \mathcal{C}_G$. Then $H_*(G;E)\cong H_*(G;E^\infty)\cong H_{d,*}(G;E^\infty)$.
\end{lemma}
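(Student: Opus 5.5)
The plan is to prove the two isomorphisms of Lemma~\ref{lem:smooth and cont hom} separately and then compose them, mirroring the cohomological statement Lemma~\ref{lem:smooth and cont cohom}. In both steps the idea is to compare relatively projective resolutions in $\mathcal{C}_G$ with resolutions in $\mathcal{D}_G$, using the smoothing functor $E\mapsto E^\infty$ and convolution by smooth compactly supported functions, which is the regularization technique of Pichaud.

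\textbf{Step 1: $H_*(G;E)\cong H_*(G;E^\infty)$.} Take the strong relatively projective resolution $L^2_c(G^{\bullet+1},E)\to E$ of \S\ref{subsec:4.1}. Applying the smoothing functor gives a complex $L^2_c(G^{\bullet+1},E)^\infty\to E^\infty$. The first thing to check is that this complex is still a strong resolution, and that its terms are still relatively projective in $\mathcal{C}_G$. For this I would use the identification of $L^2_c(G^{q+1},E)^\infty$ with functions that are smooth in the diagonal $G$-direction. A continuous contraction can then be regularized by convolving with a fixed $\varphi\in C_c^\infty(G)$ of integral one; since $\varphi$ acts as the identity on the level of homotopy, this should give a contraction that preserves smooth vectors. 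The inclusion $L^2_c(G^{\bullet+1},E)^\infty\hookrightarrow L^2_c(G^{\bullet+1},E)$ is then a map of resolutions lifting $E^\infty\to E$. Taking coinvariants, I would show it is a chain homotopy equivalence using a homotopy built from convolution by $\varphi$. The key point is that on coinvariants, convolution by $\varphi$ coincides with the identity, because $g.v-v$ vanishes in the coinvariants and $\int\varphi=1$.

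\textbf{Step 2: $H_*(G;E^\infty)\cong H_{d,*}(G;E^\infty)$.} Here I would show that the relatively projective objects of $\mathcal{D}_G$ produced by Pichaud's construction, namely $C_c^\infty(G^{q+1})\hat\otimes E^\infty$-type modules, are also relatively projective in $\mathcal{C}_G$. A strong resolution in $\mathcal{D}_G$ is in particular strong in $\mathcal{C}_G$, provided the contraction can be taken continuous. This gives a strong relatively projective resolution in $\mathcal{C}_G$, and both homologies are then computed by the same complex of coinvariants. To get relative projectivity in $\mathcal{C}_G$, the plan is this: given a strong surjection in $\mathcal{C}_G$ with a continuous section, average that section against $\varphi$ to make it equivariant. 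Because the source is smooth, the averaged map lands in smooth vectors, which lets us lift as required.

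\textbf{Main obstacle.} I expect the hardest part to be topological rather than algebraic: showing that the homotopy equivalences are continuous for the inductive-limit (LB-type) topologies. This matters because we want an isomorphism of topological vector spaces, so that the torsion and Hausdorff parts match. I would handle it by working on each step $L^2(K,E)$ of the strict inductive limit, where convolution by $\varphi$ maps $L^2(K,E)$ continuously into $L^2(K\cdot\operatorname{supp}\varphi,E)$. The universal property of the inductive limit then gives continuity on the whole complex. Wherever these verifications coincide with Pichaud's Propositions VI.3--VI.5, I would cite them rather than redo them.
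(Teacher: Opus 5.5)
Your strategy matches the paper's. The paper gives no argument of its own here and simply cites Pichaud's regularization results (Propositions VI.3--VI.5), which is where you say you would fall back. However, several of the concrete mechanisms in your sketch fail as written.

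\textbf{Step~2.} Averaging a continuous linear section $s$ of $u\colon F_1\to F_2$ against a compactly supported $\varphi$ does not make it equivariant. Indeed, $s_\varphi(y)=\int_G\varphi(g)\,g.s(g^{-1}.y)\,dg$ satisfies $s_\varphi(h.y)=h.s_{\varphi(h\,\cdot)}(y)$, so equivariance would force $\varphi$ to be left invariant, which is impossible for non-compact $G$. If it were possible, $w=s_\varphi\circ v$ would be a lift for any source, and every module would be relatively projective. That is false: the trivial module $\C$ is not, since $L^2_c(G)$ has no invariant element of integral one. What averaging does give is that $s_\varphi$ is still a continuous linear section of $u$ and maps $F_2^\infty$ continuously into $F_1^\infty$. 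Hence $u$ restricts to a strong surjection $F_1^\infty\to F_2^\infty$ in $\mathcal{D}_G$. A $G$-map $v$ out of a smooth module lands continuously in $F_2^\infty$, so the $\mathcal{D}_G$-relative projectivity of the source then produces the lift. That is the correct form of Step~2.

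\textbf{Step~1.} Regularizing a contraction $h$ by $\pi(\varphi)=\int_G\varphi(g)\,g\,dg$ does not give a contraction. Since $\pi(\varphi)$ commutes with the equivariant differentials, $\partial(\pi(\varphi)h)+(\pi(\varphi)h)\partial=\pi(\varphi)\neq\mathrm{id}$. The fix is to use the standard contraction with a smooth cutoff: $h_\chi f(g_0,\dots,g_q)=\chi(g_0)f(g_1,\dots,g_q)$ with $\chi\in C_c^\infty(G)$ and $\int\chi=1$. It preserves diagonally smooth vectors because $g.(h_\chi f)=h_{\chi(g^{-1}\cdot)}(g.f)$. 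The relative projectivity of the smoothed terms in $\mathcal{C}_G$ is also only asserted, not argued.

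The more serious gap is the following. $\pi(\varphi)$ is not a $G$-map, so it does not automatically induce a map from $L^2_c(G^{\bullet+1},E)_G$ to the coinvariants of the smooth complex. In the smooth complex, the closure of $\mathrm{span}\{x.u-u\}$ is taken in the finer $C^\infty$ topology. You must show that $\pi(\varphi)(g.w-w)=\pi(\psi_g)w$, where $\psi_g(h)=\varphi(hg^{-1})-\varphi(h)$ has integral zero, lies in that finer closure. One way is to write $\psi_g$ as a finite sum of derivatives of functions in $C_c^\infty(G)$ along right-invariant vector fields, which uses that $G$ is connected and unimodular. Each resulting term is then $d\pi(X)$ applied to a smooth vector. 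This is exactly the regularization content Pichaud supplies, and your sketch omits it.

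Once this is in place, your observation that $\pi(\varphi)$ acts as the identity on coinvariants shows that the inclusion induces an isomorphism of coinvariant complexes. So no chain homotopy is needed at all.
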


We are now able to formulate and prove the Poincar\'e duality theorem.
\begin{theorem}[Poincaré duality]\label{thm:Poincare duality}
    Let $G$ be a connected, unimodular Lie group and $V$ be a reflexive Banach $G$-module. Denote by $n=\mathrm{dim}(G / K)$, where $K \subseteq G$ is a maximal compact subgroup. Then $H^q(G;V)$ is topologically isomorphic to $H_{n-q}(G;V)$. In particular, $H^q(G;V)$ is non-Hausdorff if and only if $H^{n-q+1}(G;V^*)$ is non-Hausdorff.
\end{theorem}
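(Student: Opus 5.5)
The plan is to reduce everything to the differentiable setting, where Poincaré duality is available from Blanc–Wigner, and then transport the result back to continuous (co)homology.

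First, by Lemma \ref{lem:smooth and cont cohom} we have a topological isomorphism $H^q(G;V)\cong H^q_d(G;V^\infty)$. By Lemma \ref{lem:smooth and cont hom} we have $H_{n-q}(G;V)\cong H_{d,n-q}(G;V^\infty)$. So it suffices to produce a topological isomorphism
\[
H^q_d(G;V^\infty)\cong H_{d,n-q}(G;V^\infty).
\]

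To obtain it, I would use the standard resolutions coming from the manifold $X=G/K$. Differentiable cohomology is computed by the complex of $G$-invariant smooth forms $(\Omega^\bullet(X)\hat\otimes V^\infty)^G$. By Frobenius reciprocity this is identified with the relative Lie algebra complex $\mathrm{Hom}_K(\Lambda^\bullet(\mathfrak g/\mathfrak k),V^\infty)$. Dually, differentiable homology is computed by compactly supported currents, or equivalently by the relative chain complex $(\Lambda^\bullet(\mathfrak g/\mathfrak k)\otimes V^\infty)_K$.

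Since $G$ is connected and unimodular, $K$ acts trivially on the one-dimensional space $\Lambda^n(\mathfrak g/\mathfrak k)$. Hence contraction with a fixed generator of $\Lambda^n(\mathfrak g/\mathfrak k)^*$ gives a $K$-equivariant isomorphism $\Lambda^q(\mathfrak g/\mathfrak k)^*\cong\Lambda^{n-q}(\mathfrak g/\mathfrak k)$. This is the Hodge star, and it should intertwine the differential with the boundary up to sign. This is exactly the content of the Blanc–Wigner duality theorem \cite{BlancWigner}, which I would invoke rather than re-derive.

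The isomorphism so obtained is an isomorphism of complexes of topological vector spaces: it is a finite-dimensional twist applied termwise to the same space $V^\infty$. It therefore induces a topological isomorphism on (co)homology, including the non-Hausdorff topology. This also gives compatibility with the torsion/Hausdorff splitting.

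For the ``in particular'' clause, the first part yields: $H^q(G;V)$ is non-Hausdorff if and only if $H_{n-q}(G;V)$ is non-Hausdorff. By Theorem \ref{thm:indexShift}(2), $\torH_{n-q}(G;V)\neq0$ if and only if $\torH^{n-q+1}(G;V^*)\ne 0$, which is the stated claim.

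I expect the main obstacle to be the topological bookkeeping. Lemmas \ref{lem:smooth and cont cohom} and \ref{lem:smooth and cont hom} must be topological isomorphisms and not merely algebraic ones, and the relative complexes must compute the differentiable (co)homology with their natural topology. The relevant point is that $V^\infty$ is a Fréchet (resp. complete locally convex) differentiable module, so the relevant results of Blanc and Pichaud apply. Here I would rely on the cited sources, checking only that their hypotheses (reflexive Banach $V$, connected unimodular $G$) are met.
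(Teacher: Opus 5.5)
Your proposal is correct and is essentially the paper's proof: you reduce to differentiable (co)homology via Lemmas~\ref{lem:smooth and cont cohom} and~\ref{lem:smooth and cont hom}, invoke Blanc--Wigner's Poincar\'e duality there, and obtain the ``in particular'' from the torsion index shift. For that last step your appeal to Theorem~\ref{thm:indexShift}(2) is more apt than the paper's citation of the unitary Corollary~\ref{cor:index shift unitary}, since $V$ is only assumed to be reflexive Banach.

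Your Hodge-star sketch is only heuristic and misattributes one point. $K$ acts trivially on $\Lambda^n(\mathfrak g/\mathfrak k)$ simply because $K$ is connected, whereas unimodularity ($\mathrm{tr}\,\mathrm{ad}\,x=0$) is what makes the duality untwisted, i.e.\ what lets the star operator intertwine $d$ with $\partial$. Since you defer that step to Blanc--Wigner anyway, this does not affect the argument.
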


\begin{proof}
    By Lemmas \ref{lem:smooth and cont cohom} and \ref{lem:smooth and cont hom}, $H^q(G;V)\cong H^q_d(G;V^\infty)$ and $H_{n-q}(G;V)\cong H_{d,n-q}(G;V^\infty)$. Poincar\'e duality for differentiable cohomology $H^q_d(G;V^\infty)\cong H_{d,n-q}(G;V^\infty)$ is  a result of Blanc and Wigner \cite[\S6]{BlancWigner}. The ``in particular'' part follows from Corollary \ref{cor:index shift unitary}.
\end{proof}
We also get the following general corollary.
\begin{corollary}\label{cor:cohom is banach}
     Let $G$ be a connected, unimodular Lie group and $V$ be a reflexive Banach $G$-module. Then the Hausdorff homology $\bar{H}_q(G;V)$ and Hausdorff cohomology $\bar{H}^q(G;V)$ are both reflexive Banach spaces.
\end{corollary}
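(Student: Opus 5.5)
The plan is to derive the statement from the Poincaré duality Theorem~\ref{thm:Poincare duality} together with the structure theory of Hausdorff (co)homology already established. Write $n=\dim(G/K)$. Theorem~\ref{thm:Poincare duality} gives a topological isomorphism $H^q(G;V)\cong H_{n-q}(G;V)$. Taking maximal Hausdorff quotients commutes with topological isomorphisms, so $\bar{H}^q(G;V)\cong \bar{H}_{n-q}(G;V)$. It therefore suffices to show that one of the two Hausdorff spaces is a reflexive Banach space for every degree and every reflexive Banach module $V$.

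\textbf{Step 1 (Fréchet).} I will first show that $\bar{H}^q(G;V)$ is a Fréchet space. Using the $L^2_{\mathrm{loc}}$ resolution, it is the quotient of the closed subspace of cocycles in the Fréchet space $L^2_{\mathrm{loc}}(G^q,V)$ by the closure of the coboundaries. Hence it is Fréchet.

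\textbf{Step 2 (reflexive and Banach).} Next I apply Theorem~\ref{thm:indexShift} to $V^*$, which is again a reflexive Banach module with $V^{**}=V$. This gives $\bar{H}^q(G;V)\cong \bar{H}_q(G;V^*)^*$ and shows that $\bar{H}^q(G;V)$ is a reflexive Fréchet space. By Poincaré duality applied to $V^*$, we also have $\bar{H}_q(G;V^*)\cong \bar{H}^{n-q}(G;V^*)$. The latter is Fréchet by Step~1, so $\bar{H}_q(G;V^*)$ is metrizable. Corollary~\ref{cor:homIsNotMetr} then says it is a reflexive Banach space. Its strong dual $\bar{H}^q(G;V)$ is therefore a reflexive Banach space.

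\textbf{Step 3 (homology).} Finally, $\bar{H}_q(G;V)\cong \bar{H}^{n-q}(G;V)$ by Poincaré duality, and by Step~2 applied in degree $n-q$ this is a reflexive Banach space. This settles the homology statement.

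\textbf{Expected obstacle.} The delicate point is whether the Poincaré duality isomorphism of Theorem~\ref{thm:Poincare duality} is genuinely topological, so that metrizability transfers from cohomology to homology. It passes through the Pichaud and Blanc comparisons, which are stated as topological isomorphisms. I will use them as such and check that the Blanc--Wigner isomorphism is induced by continuous chain maps with continuous homotopy inverse. If only a continuous bijection were available, the fallback is to use Pták's open mapping theorem (Theorem~\ref{ptak}): the homology is a Hausdorff subquotient of a Pták space, and the target is barrelled since it is Fréchet.
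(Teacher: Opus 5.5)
Your argument is correct and uses the same ingredients as the paper's proof. Poincar\'e duality (Theorem~\ref{thm:Poincare duality}) makes the Hausdorff homology metrizable, Corollary~\ref{cor:homIsNotMetr} upgrades it to a reflexive Banach space, and Theorem~\ref{thm:indexShift} passes between homology and cohomology. The paper applies this directly to $\bar{H}_q(G;V)\cong\bar{H}^{n-q}(G;V)$ and then deduces the cohomology statement by duality, so your detour through $V^*$ in Step~2 is harmless but not needed.
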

\begin{proof}
    Theorem \ref{thm:Poincare duality} implies that $\bar{H}_q(G;V)$ is isomorphic to $\bar{H}^{n-q}(G;V)$ which is a Fr\'echet space. By Corollary \ref{cor:homIsNotMetr} $\bar{H}_q(G;V)$ is a Banach space. The statement for $\bar{H}^q(G;V)$ now follows by Theorem \ref{thm:indexShift}.
\end{proof}
\begin{remark}
    Theorem \ref{thm:Poincare duality} and Corollary \ref{cor:cohom is banach} were formulated for unimodular groups. The result of Blanc-Wigner on Poincar\'e duality for differentiable cohomology \cite[\S6]{BlancWigner}) has a version for non-unimodular Lie groups as well. However, the formulation is more involved and we are interested mostly in the semisimple case. 
\end{remark}
We now state and prove Theorem \ref{intro:homology-cohomology}.
\begin{theorem}[Theorem \ref{intro:homology-cohomology}]\label{thm:homology and cohomology}
    Let $G$ be a semisimple group, and let $\pi$ be a unitary representation of $G$.
    Then $\bar{H}^q(G;\pi)$ is naturally isomorphic to the complex conjugate of $\bar{H}_q(G;\pi)^*$ and  
    \[
        \torH_q(G;\pi)=0 \iff \torH^{q+1}(G;\pi)=0.
    \]
    Moreover, if $G$ is a Lie group and $n$ is the dimension of the corresponding symmetric space, then we have natural topological vector space isomorphisms
    $H_q(G;\pi)\cong H^{n-q}(G;\pi)$, commuting with the decomposition \eqref{eq:tor-bar}.
\end{theorem}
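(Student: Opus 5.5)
The first two assertions are special cases of Corollary~\ref{cor:index shift unitary}. A semisimple group in the sense of Terminology~\ref{terminology:semisimple} is locally compact and second countable, and a unitary representation is a continuous representation on a Hilbert space, hence on a reflexive Banach space. So Theorem~\ref{thm:indexShift} applies with $V=\pi$. Its consequence, Corollary~\ref{cor:index shift unitary}, gives both the natural topological isomorphism of $\bar{H}^q(G;\pi)$ with the complex conjugate of $\bar{H}_q(G;\pi)^*$ and the equivalence $\torH_q(G;\pi)=0 \iff \torH^{q+1}(G;\pi)=0$. Naturality follows because the identification comes from the duality of the inhomogeneous complexes in Lemma~\ref{lem:dual pair}, which is functorial in $\pi$. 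Nothing beyond quoting is needed here.

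For the Poincar\'e duality part, assume $G$ is a Lie group, i.e.\ a product of connected simple Lie groups with finite center. Then $G$ is connected and unimodular, since semisimple groups are unimodular. Theorem~\ref{thm:Poincare duality} applies with $V=\pi$ and gives a topological isomorphism $H^q(G;\pi)\cong H_{n-q}(G;\pi)$, where $n=\dim G/K$. Renaming $q\mapsto n-q$ yields $H_q(G;\pi)\cong H^{n-q}(G;\pi)$. Naturality in $\pi$ is inherited from the Blanc--Wigner duality and from the natural isomorphisms of Lemmas~\ref{lem:smooth and cont cohom} and~\ref{lem:smooth and cont hom}, which are all functorial in the coefficient module.

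It remains to check that this isomorphism commutes with the decomposition \eqref{eq:tor-bar}. Any topological isomorphism $\phi\colon X\to Y$ of topological vector spaces maps $\overline{\{0\}}_X$ onto $\overline{\{0\}}_Y$, because homeomorphisms preserve closures and $\phi(0)=0$. Hence $\phi$ restricts to an isomorphism $\torH_q(G;\pi)\cong\torH^{n-q}(G;\pi)$ and descends to an isomorphism of the Hausdorff quotients $\bar{H}_q(G;\pi)\cong\bar{H}^{n-q}(G;\pi)$.

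The subtle point is that \eqref{eq:tor-bar} is a \emph{direct sum} splitting, which relies on Theorem~\ref{intro:torsion-reduced}, proved later. That theorem presents the splitting as a functorial choice of complement isomorphic to a Hilbert space. I would avoid depending on a particular choice of complement, and instead interpret ``commuting with the decomposition'' as respecting the canonical filtration $\overline{\{0\}}\subset H$ and the induced map on the Hausdorff quotients. If a genuine splitting is required, I expect it to be compatible with $\phi$ by the functoriality in Theorem~\ref{intro:torsion-reduced}, using $\pi=\pi_{\mathrm{cohom}}\oplus\mathring{\pi}$ and Theorem~\ref{intro-Hausdorff}. In that decomposition both homology and cohomology of $\pi_{\mathrm{cohom}}$ are Hausdorff, transferring via $\phi$ and the first part, and those of $\mathring{\pi}$ are purely torsion. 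Since $\phi$ is natural in $\pi$, it respects this direct sum of representations. This compatibility is the only step requiring care; I expect it to be the main obstacle, handled by the naturality just described.
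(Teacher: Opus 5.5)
Your proposal is correct and follows the paper's proof. The first part is Corollary~\ref{cor:index shift unitary}, the Poincar\'e isomorphism is Theorem~\ref{thm:Poincare duality}, and compatibility with \eqref{eq:tor-bar} comes from applying the isomorphism separately to the summands of $\pi=\pi_{\mathrm{cohom}}\oplus\mathring{\pi}$. Your remark that a topological isomorphism preserves $\overline{\{0\}}$ handles the canonical filtration without any splitting. You are also right not to rely on the homology half of Theorem~\ref{intro:torsion-reduced}, since the paper proves it later using this very theorem.
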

\begin{proof}
    The first part of the theorem follows from Corollary \ref{cor:index shift unitary}, and the second from Theorem \ref{thm:Poincare duality}. The fact that the Poincar\'e isomorphism commutes with the decomposition \eqref{eq:tor-bar} comes from the fact that this decomposition arises from a decomposition of the representation $\pi$ itself, $\pi=\pi_\mathrm{cohom} \oplus \mathring\pi$, as in Theorem \ref{thm: reduced cohom}. The Poincar\'e isomorphism holds for each of the components separately.
\end{proof}
\section{groups with finiteness properties}\label{sec3}
 In this section we make some contributions to the theory of continuous cohomology with coefficients in a unitary representation for countable groups that admit some finiteness properties. As we will see, for these groups we have a convenient cochain complex of Hilbert spaces that calculates the continuous cohomology. We first describe the general theory of cohomology with Fr\'echet coefficients, in order to fix notations and bridge some gaps in the literature. We then specialize the discussion to unitary coefficients, with an emphasis on torsion cohomology. In this section, $\Gamma$ stands for an arbitrary countable group.
 
    A group~$\Gamma$ is said to be of type $\mathrm{FP}_\infty(\mathbb{Q})$ if there is a projective resolution of the trivial $\mathbb{Q}[\Gamma]$-module $\mathbb{Q}$ which is finitely generated.

\begin{example}\label{ex:lattices finite type}
    By the work of Borel and Serre, lattices in semisimple groups are of type $\mathrm{FP_\infty(\Q)}$ \cite{borel+serre}. 
\end{example}

If $\Gamma$ is of type $\mathrm{FP}_\infty(\mathbb{Q})$, there always exists a resolution of the trivial $\mathbb{Q}[\Gamma]$-module $\mathbb{Q}$ which is free and finitely generated and \emph{free} in every degree \cite[Theorem~VIII.4.5]{Brown}. Consider such a resolution:
\[\begin{tikzcd}[cramped]
	\cdots & {P_2} & {P_1} & {P_0} & {\mathbb{Q}} & 0
	\arrow[ "{\partial_3}", from=1-1, to=1-2]
	\arrow["{\partial_2}", from=1-2, to=1-3]
	\arrow["{\partial_1}", from=1-3, to=1-4]
	\arrow["{\epsilon}", from=1-4, to=1-5]
	\arrow[from=1-5, to=1-6]
\end{tikzcd}\]
with $P_q \cong \mathbb{Q}[\Gamma]^{m_q}$, considered as row vectors. In such coordinates the boundary maps $\partial_q: P_q \rightarrow P_{q-1}$ are given uniquely as  right multiplication by $m_{q}\times m_{q-1}$ matrices over $\mathbb{Q}[\Gamma]$. For a Fr\'echet $\Gamma$-module $(E,\pi)$, we consider the Fr\'echet cochain complex $(\mathrm{Hom}_{\Q}(P_q,E),d_q)$ where $d_q$ are the dual maps to $\partial_q$. We make $\mathrm{Hom}_{\Q}(P_q,E)$ a Fr\'echet $\Gamma$-module by equipping it with the topology of pointwise convergence and the $\Gamma$-action given by $[g.f](\_) = \pi(g)f(g^{-1}\_)$. Since $\mathrm{Hom}_\Q(-,E)$ is exact we obtain a $\Q[\Gamma]$-resolution of $E$:

\[\begin{tikzcd}[cramped]
	{\cdot\cdot\cdot} & {\mathrm{Hom}_\Q(P_2,E)} & {\mathrm{Hom}_\Q(P_1,E)} & {\mathrm{Hom}_\Q(P_0,E)} & E & 0
	\arrow["{d_3}"', from=1-2, to=1-1]
	\arrow["{d_2}"', from=1-3, to=1-2]
	\arrow["{d_1}"', from=1-4, to=1-3]
	\arrow["\epsilon"', from=1-5, to=1-4]
	\arrow[from=1-6, to=1-5]
\end{tikzcd}\]
We claim that this resolution is strong and relatively injective. To see the relative injectivity note that under the identification $P_q \cong \mathbb{Q}[\Gamma]^{m_q}$ we have:
$$\mathrm{Hom}_\Q(P_q,E) \cong \mathrm{Hom}_\Q(\mathbb{Q}[\Gamma]^{m_q},E) \cong \mathrm {Map}(\Gamma,E)^{m_q} \cong C(\Gamma,E)^{m_q}.$$
This is a continuous isomorphism of $\Gamma$-modules where the action on $C(\Gamma,E)$ is the same one described in Section \ref{prelim: continuous cohomology} (on the space $L^2_\mathrm{loc}$, see also Remark \ref{rem:lp loc}), which makes it into a relatively injective $\Gamma$-module.
We will show that the resolution is strong. The chain complex $P_q$ above is acyclic and thus it is contractible as a $\Q$-module complex. Let $h_q\colon P_q\to P_{q+1}$ be a chain contraction. The dual maps $h_q^*\colon \mathrm{Hom}_\Q(P_{q+1},E) \to \mathrm{Hom}_\Q(P_q,E)$ are easily checked to be continuous and are a $\C$-linear cochain contraction, proving the resolution is strong.

\begin{remark}
    In order to see that the resolution is strong also in degree $-1$ we apply the same proof, simply denoting $P_{-1} =\Q$ and $\epsilon = \partial_0$ (this part of the proof does not use the freeness of the $P_i$).
\end{remark}

Since the resolution is strong and relatively injective we may take the invariants of the resolution in order to calculate the continuous cohomology $H^q(\Gamma; E)$. With $\mathrm{Hom}_\Q(P_q,E)^\Gamma = \mathrm{Hom}_{\Q[\Gamma]}(P_q,E)$ we obtain the cochain complex:

\[\begin{tikzcd}[cramped]
	{\cdot\cdot\cdot} & {\mathrm{Hom}_{\Q[\Gamma]}(P_2,E)} & {\mathrm{Hom}_{\Q[\Gamma]}(P_1,E)} & {\mathrm{Hom}_{\Q[\Gamma]}(P_0,E)} & 0
	\arrow["{d_3}"', from=1-2, to=1-1]
	\arrow["{d_2}"', from=1-3, to=1-2]
	\arrow["{d_1}"', from=1-4, to=1-3]
	\arrow[from=1-5, to=1-4]
\end{tikzcd}\]
Choosing a basis for $P_q$ as above we obtain a natural isomorphism $\mathrm{Hom}_{\Q[\Gamma]}(P_q,E) \cong E^{m_q}$, now considered as column vectors. This yields the following complex calculating the continuous cohomology:

\[\begin{tikzcd}[cramped]
	{\cdot\cdot\cdot} & {E^{m_2}} & {E^{m_1}} & {E^{m_0}} & 0 & {}
	\arrow["{d_3}"', from=1-2, to=1-1]
	\arrow["{d_2}"', from=1-3, to=1-2]
	\arrow["{d_1}"', from=1-4, to=1-3]
	\arrow[from=1-5, to=1-4]
\end{tikzcd}\]
The coboundary maps in this complex are given by the same matrices representing $\partial_q$, now acting via the given representation $\pi$ on column vectors.

For a group $\Gamma$ of type $\mathrm{FP}_\infty(\Q)$ together with a free resolution, the matrices over $\Q[\Gamma]$ defining $d_q$ are independent of the module $(E,\pi)$, and are canonically defined  once we fix a basis for the $P_q$'s. By abuse of notation we will identify $d_q$ with the element defining it in $M_{m_q\times m_{q-1}}(\Q[\Gamma])$. Formally one needs to apply $\pi$ to the entries of these matrices first.

While the cochain complex $E^{m_q}$ is less canonical then the bar complex, it is much simpler and easier to handle. For example, if $(E,\pi)$ is a unitary representation, then the spaces $E^{m_q}$ are Hilbert spaces and so is the Hausdorff cohomology $\bar{H}^q(G;E)$; a fact that is not at all obvious when looking at the bar complex.

\begin{remark}\label{rem:finiteness prop homology}
    One can adapt this construction to calculate homology instead of cohomology. We sketch the construction and leave the details to the reader. Assume that $E$ is a reflexive Banach $G$-module. Given a projective resolution $P_q$ as above, consider the complex $P_q \otimes_\Q E$. We endow $P_q \otimes_\Q E$ with the finest locally convex topology such that for all finite dimensional subspaces $L \subseteq P_q$, the maps $L\otimes_\Q E \to P_q \otimes_\Q E$ are continuous. As above, $(P_q\otimes_\Q E, \partial_q \otimes \mathrm{Id})$ is a strong relatively projective resolution of $E$. The complex of co-invariants is topologically isomorphic to $(E^{m_q},\tilde{\partial}_q)$ under the identification $(P_q\otimes_\Q E)_\Gamma \cong (\Q[\Gamma]^{m_q}\otimes_\Q E)_\Gamma \cong \Q[\Gamma]^{m_q}\otimes_{\Q[\Gamma]} E \cong E^{m_q}$. Here  $\tilde{\partial}_q$ denotes the matrix given by taking the elements of the matrix representing $\partial_q$ and replacing all elements of $\Gamma$ by their inverses. The reason for this is that the in the identification $(\Q[\Gamma]^{m_q}\otimes_\Q E)_\Gamma \cong \Q[\Gamma]^{m_q}\otimes_{\Q[\Gamma]} E$ we are considering $\Q[\Gamma]^{m_q}$ as a right $\Q[\Gamma]$ module, where the $\Gamma$-action is given by left   multiplication by the inverse. $E^{m_q}$ are considered as row vectors and $\tilde{\partial}_q$ are acting by right matrix multiplication. This isomorphism is topological because $P_q\otimes_\Q E$ is a Pt\'ak space and $E^{m_q}$ is Banach, thus barrelled. 
\end{remark}

\subsection{Unitary Cohomology}
The rest of the section will study cohomology of unitary representations. For simplicity we fix a countable group $\Gamma$ with property $\mathrm{FP}_\infty(\mathbb{Q})$, although much of what we do holds up to a given degree for groups with weaker finiteness properties. We will denote unitary $\Gamma$-representations by the letters $V$ and $W$, as opposed to $E$ and $F$ which we use for general Frech\'et modules.

We fix a finitely generated free resolution $P_q$ for $\Gamma$ and retain the notation of the previous subsection.  Denote by $C^*(\Gamma)$ the \emph{universal $C^*$ algebra of $\Gamma$}. The boundary maps $d_q$, will be considered as elements from $M_{m_{q} \times m_{q-1}}(\C^*(\Gamma))$.  We define three elements of $M_{m_q}(C^*(\Gamma))$, called respectively the \emph{Lapian}, \emph{Lacian} and \emph{Laplacian} in degree $q$:
\begin{align*}
\Delta_{q}^- &= d_{q}d_{q}^*\\
\Delta_{q}^+ &= d_{q+1}^*d_{q+1}\\
\Delta_{q} &= \Delta_{q}^+ + \Delta_{q}^-
\end{align*}
These three operators are positive and self-adjoint.

Given a unitary $\Gamma$-representation $(V,\pi)$, and an element $x \in M_d(C^*(\Gamma))$ we will denote the operator $\pi(x) \in M_d(\mathcal{B}(V))$ by $x_V$. For example, the Laplacian in degree $q$ acting on $V^{m_q}$ will be denoted $\Delta_{q,V}$.
The cohomology of a unitary representation $V$ is calculated from the following co-chain complex:

\[\begin{tikzcd}[cramped]
	{\cdot \cdot \cdot} & {V^{m_2}} & {V^{m_1}} & {V^{m_0}} & 0
	\arrow["{d_{3,V}}"', from=1-2, to=1-1]
	\arrow["{d_{2,V}}"', from=1-3, to=1-2]
	\arrow["{d_{1,V}}"', from=1-4, to=1-3]
	\arrow[from=1-5, to=1-4]
\end{tikzcd}\]
We recall the following key fact and include a proof for convenience. See also~\cite[Lemma~1.18 on p.~25]{LuckBook}.

\begin{lemma}[Hodge-de Rahm decomposition] \label{lemma:Hodge-de Rahm}
We have $\ker(\Delta_{q,V}) = \mathrm{Im}(d_{q,V})^{\perp} \cap \ker(d_{q+1,V})$. Thus $\ker(\Delta_{q,V})$ is canonically isomorphic to $\bar{H}^q(\Gamma;V)$.
\end{lemma}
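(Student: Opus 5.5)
The plan is to work entirely in the Hilbert cochain complex $(V^{m_q}, d_{q,V})$, where every $d_{q,V}$ is a bounded operator between Hilbert spaces. The adjoint $d_{q,V}^*$ is then simply $\pi$ applied to the adjoint matrix $d_q^*$ over $C^*(\Gamma)$. Consequently $\Delta_{q,V} = d_{q,V}d_{q,V}^* + d_{q+1,V}^*d_{q+1,V}$ is a bounded, positive, self-adjoint operator on $V^{m_q}$.

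\textbf{Step 1: the kernel of the Laplacian.} For $v \in V^{m_q}$ we compute
\[
\langle \Delta_{q,V} v, v\rangle = \|d_{q,V}^* v\|^2 + \|d_{q+1,V} v\|^2 .
\]
Hence $\Delta_{q,V} v = 0$ if and only if both $d_{q,V}^* v = 0$ and $d_{q+1,V} v = 0$. The first implication uses positivity: $\langle \Delta v, v\rangle = 0$ forces both terms to vanish. The converse is immediate from the formula for $\Delta_{q,V}$. Since $\ker(d_{q,V}^*) = \mathrm{Im}(d_{q,V})^\perp$ holds for any bounded operator, this gives the first claim
\[
\ker(\Delta_{q,V}) = \mathrm{Im}(d_{q,V})^{\perp} \cap \ker(d_{q+1,V}).
\]

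\textbf{Step 2: identification with Hausdorff cohomology.} By the discussion preceding the lemma, $H^q(\Gamma;V) = \ker(d_{q+1,V})/\mathrm{Im}(d_{q,V})$. Therefore
\[
\bar{H}^q(\Gamma;V) = \ker(d_{q+1,V})/\overline{\mathrm{Im}(d_{q,V})}.
\]
The coboundary relation $d_{q+1,V}d_{q,V} = 0$ gives $\overline{\mathrm{Im}(d_{q,V})} \subseteq \ker(d_{q+1,V})$, and $\ker(d_{q+1,V})$ is closed. So inside the Hilbert space $\ker(d_{q+1,V})$ we have the orthogonal decomposition
\[
\ker(d_{q+1,V}) = \overline{\mathrm{Im}(d_{q,V})} \oplus \bigl(\mathrm{Im}(d_{q,V})^\perp \cap \ker(d_{q+1,V})\bigr).
\]
The quotient map restricted to the second summand is therefore a continuous linear bijection onto $\bar{H}^q(\Gamma;V)$. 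Its inverse is the orthogonal projection, so the bijection is a topological isomorphism; alternatively, the open mapping theorem gives this directly.

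\textbf{Canonicity.} The map is canonical in the sense that it is the restriction of the quotient map. It depends only on the chosen resolution, through the inner product on $V^{m_q}$.

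There is no real obstacle here. The only points requiring care are the positivity argument in Step 1 and the closedness of $\ker(d_{q+1,V})$ used in Step 2.
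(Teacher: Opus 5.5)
Your proof is correct and follows the paper's argument. The paper also uses positivity of $\Delta_{q,V}^{\pm}$ to get $\ker(\Delta_{q,V})=\ker(d_{q,V}^*)\cap\ker(d_{q+1,V})$ (your sum-of-squares computation is exactly this) and then $\ker(d_{q,V}^*)=\mathrm{Im}(d_{q,V})^{\perp}$. Your Step 2, identifying this space with $\ker(d_{q+1,V})/\overline{\mathrm{Im}(d_{q,V})}$ via the orthogonal decomposition, spells out the ``thus'' that the paper leaves implicit. One small correction: the Hilbert structure on $V^{m_q}$, and hence the harmonic subspace, also depends on the chosen bases of the free modules $P_q$, not only on the resolution.
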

\begin{proof}
    Both $\Delta_{q,V}^-$ and $\Delta_{q,V}^+$ are positive operators, therefore $\ker(\Delta_{q,V}) = \ker(\Delta_{q,V}^-) \cap \ker(\Delta_{q,V}^+)$. Now $\ker(\Delta_{q,V}^+) = \ker(d_{q+1,V})$ and  $\ker(\Delta_{q,V}^-) = \ker(d_{q,V}^*) = \mathrm{Im}(d_{q,V})^{\perp}$. Therefore $\ker(\Delta_{q,V}) = \mathrm{Im}(d_{q,V})^{\perp} \cap \ker(d_{q+1,V})$ as needed.
\end{proof}
Lemma \ref{lemma:Hodge-de Rahm} relates the Hausdorff cohomology to the kernel of the Laplacian $\Delta_{q,V}$. In order to study torsion cohomology, we will need to consider `almost kernels' of $\Delta_{q,V}^+$ and $\Delta_{q,V}^-$.
\begin{definition}
    Let $U_1$ and $U_2$ be two Hilbert spaces, and $T\colon U_1 \rightarrow U_2$ a continuous linear map. An \emph{almost kernel} of $T$ is a sequence of unit vectors $v_i$ which satisfy $v_i \perp \ker(T)$ for every $i$, and $T(v_i) \rightarrow 0$.
\end{definition}
\begin{lemma}\label{lemma:non-Hausdorff characterization}
    The following are equivalent:
    \begin{enumerate}
        \item The cohomology $H^q(\Gamma;V)$ has non-zero torsion.
        \item The operator $\Delta_{q,V}^-$ on $V^{m_q}$ has an almost kernel.
        \item The operator $\Delta_{q-1,V}^+$ on $V^{m_{q-1}}$ has an almost kernel.
    \end{enumerate}
\end{lemma}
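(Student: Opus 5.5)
The plan is to reduce all three conditions to one statement: the coboundary operator $T:=d_{q,V}\colon V^{m_{q-1}}\to V^{m_q}$ does not have closed range. By definition, $H^q(\Gamma;V)=\ker(d_{q+1,V})/\mathrm{Im}(d_{q,V})$, and $\torH^q(\Gamma;V)$ is the closure of $\{0\}$ in this quotient, namely $\overline{\mathrm{Im}(d_{q,V})}/\mathrm{Im}(d_{q,V})$. This is nonzero exactly when $\mathrm{Im}(T)$ fails to be closed. So condition (1) is equivalent to $\mathrm{Im}(T)$ being non-closed, and it remains to show that (2) and (3) are equivalent to the same property. Throughout we use that $d_{q,V}$ is a bounded operator between Hilbert spaces, since it is a matrix over $C^*(\Gamma)$ evaluated in $\pi$.

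Two standard Hilbert space facts do the work.

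\emph{Fact (a).} A bounded $T\colon U_1\to U_2$ has closed range if and only if it is bounded below on $\ker(T)^\perp$. In other words, $T$ has non-closed range if and only if $T$ has an almost kernel. One direction is the open mapping theorem applied to the bijection $\ker(T)^\perp\to\mathrm{Im}(T)$. For the other, if $\|Tv\|\ge c\|v\|$ on $\ker(T)^\perp$, then the range is complete.

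\emph{Fact (b).} The closed range theorem: $T$ has closed range iff $T^*$ does iff $T^*T$ does iff $TT^*$ does. For $T^*T$, the identity $\ker(T^*T)=\ker(T)$ together with the functional calculus gives this. Write $|T|=(T^*T)^{1/2}$. Then $\|Tv\|=\||T|v\|$, and $\|T^*Tv\|=\||T|^2v\|$. On $\ker(T)^\perp$, the operator $|T|$ is bounded below by some $c>0$ if and only if $|T|^2$ is bounded below by $c^2$. This is a spectral statement: $0$ is isolated in the spectrum of $|T|$ restricted to $\ker(T)^\perp$ (or absent from it).

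Now apply these facts to the Laplacian pieces.

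\emph{Condition (3).} Here $\Delta^+_{q-1,V}=d_{q,V}^*d_{q,V}=T^*T$. By Fact (b), this operator has non-closed range if and only if $T$ does. By Fact (a), applied to the self-adjoint $T^*T$ with $\ker(T^*T)=\ker(T)$, non-closed range is the same as having an almost kernel. This gives (1)$\iff$(3).

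\emph{Condition (2).} Here $\Delta^-_{q,V}=d_{q,V}d_{q,V}^*=TT^*$. It has non-closed range if and only if $T^*$ does, and by Fact (b) this happens if and only if $T$ does. Again by Fact (a), this is equivalent to $\Delta^-_{q,V}$ having an almost kernel, which gives (1)$\iff$(2).

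The only step requiring care is the equivalence of closed range for $T^*T$ and $T$. The cleanest route is the spectral-gap formulation above: apply it to $T$ to get $T^*T$, and to $T^*$ to get $TT^*$. Everything else is bookkeeping.
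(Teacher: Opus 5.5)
Your proof is correct and follows the same route as the paper. Both arguments identify torsion in $H^q(\Gamma;V)$ with non-closed range of $d_{q,V}$, convert this into an almost kernel via the open mapping theorem, and pass between $d_{q,V}$, $d_{q,V}^*$ and the Laplacian pieces via the closed range theorem. The only cosmetic difference is that you justify the step from $d_{q,V}$ to $\Delta^+_{q-1,V}=d_{q,V}^*d_{q,V}$ with functional calculus, whereas the paper leaves it implicit; it also follows in one line from $\|d_{q,V}v\|^2=\langle\Delta^+_{q-1,V}v,v\rangle$ together with $\ker\Delta^+_{q-1,V}=\ker d_{q,V}$.
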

\begin{proof}
    The open mapping theorem states that a map $T\colon U_1 \rightarrow U_2$ between two Hilbert spaces has a non-closed image if and only if it has an almost kernel. Moreover, the image of $T$ is closed if and only if the image of $T^*\colon U_2 \rightarrow U_1$ is closed. Therefore, $H^q(\Gamma;V)$ being non-Hausdorff is equivalent to $d_{q,V}$ having an almost-kernel, and also to $d_{q,V}^*$ having an almost kernel. These are equivalent to $\Delta_{q-1,V}^+$ and $\Delta_{q,V}^-$ having an almost kernel, respectively. 
\end{proof}
Our strategy is to study torsion cohomology via weak containment of representations. Broadly speaking, weak containment  $\rho \prec \pi$ allows one to approximate a matrix coefficients of $\rho$ by sums of matrix coefficients of $\pi$. Identifying an element $v \in V$ with the corresponding matrix coefficient $g \mapsto \langle gv,v \rangle$, and considering elements of the cochain space $V^{m_q}$ as $m_q$-tuples of matrix coefficients, we translate the approximation of matrix coefficients to a relation between kernels and almost kernels of Laplacians. One technical difficulty in doing so is that we need to consider an element $\bar{v} \in V^{m_q}$ not just as a tuple of matrix coefficients of $\Gamma$, but as a state on the $C^*$-algebra $M_{m_q}(C^*(\Gamma))$. We will make a small digression to consider this adjustment. 

Given a unitary $\Gamma$-representation $V$, the space $V^{m_q}$ is naturally an $M_{m_q}(C^*(\Gamma))$-module. Every unit vector $\bar{v} \in V^{m_q}$ corresponds to a vector state $\varphi_{\bar{v}}$ on $M_{m_q}(C^*(\Gamma))$, by
$$\varphi_{\bar{v}}(x)=\langle x\bar{v},\bar{v}\rangle$$ 
Since $\Gamma$ is discrete $M_{m_q}(C^*(\Gamma))$ is unital, and hence the state space of $M_{m_q}(C^*(\Gamma))$ is compact. Similarly to unitary representations of groups, we say that an $M_{m_q}(C^*(\Gamma))$-representation $U_1$ is \emph{weakly contained} in an $M_{m_q}(C^*(\Gamma))$-representation $U_2$ if we can approximate every state coming from $U_1$ by convex combinations of states coming from $U_2$. A state $\varphi \in \mathcal S (M_{m_q}(C^*(\Gamma)))$ is extremal if and only if its GNS representation is an irreducible $M_{m_q}(C^*(\Gamma))$-representation. We need the following elementary lemma that relates representations of $\Gamma$ (or equivalently, of $C^*(\Gamma)$) to representations of $M_{m_q}(C^*(\Gamma))$.
\begin{lemma}\label{lemma:reps of amplification}
    Every unital $*$-representation of $M_{m_q}(C^*(\Gamma))$ is an amplification $U^{m_q}$ for some $\Gamma$-representation $U$. The $M_{m_q}(C^*(\Gamma))$-representation $U^{m_q}$ is irreducible if and only if $U$ is an irreducible $\Gamma$-representation. Given two $\Gamma$-representations $U_1$ and $U_2$, we have that $U_1 \prec_\Gamma U_2$ if and only if $U_1^{m_q} \prec_{M_{m_q}(C^*(\Gamma))} U_2^{m_q}$. 
\end{lemma}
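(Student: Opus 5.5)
This is the standard Morita-type correspondence between $C^*(\Gamma)$ and $M_{m}(C^*(\Gamma))$, where we write $m=m_q$. The plan is to use the matrix units of the scalar subalgebra $M_m(\C)\subseteq M_m(C^*(\Gamma))$, which is available because $C^*(\Gamma)$ is unital ($\Gamma$ is discrete).

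\textbf{Amplification structure.} Let $W$ be a unital $*$-representation of $M_m(C^*(\Gamma))$ and let $e_{ij}$ denote the matrix units. The operators $P_i=e_{ii}$ are orthogonal projections summing to the identity. The partial isometries $e_{ij}$ map $P_jW$ isometrically onto $P_iW$. Put $U=P_1W$, with $\Gamma$ acting through $\gamma\mapsto \gamma e_{11}$ (the diagonal scalar matrix $\gamma\cdot 1$ multiplied by $e_{11}$). Since $\gamma\cdot 1$ commutes with every $e_{ij}$, this is a unitary representation of $\Gamma$ on $U$. The map
\[ U^{m}\to W,\qquad (u_1,\dots,u_m)\mapsto \sum_i e_{i1}u_i \]
is a unitary. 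A direct check on the generators $\gamma\cdot 1$ and $e_{ij}$ shows that it intertwines the amplified action with the given one; since these generators span a dense subalgebra, the intertwining holds on all of $M_m(C^*(\Gamma))$.

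\textbf{Irreducibility.} I would compute commutants. The commutant of $M_m(C^*(\Gamma))$ acting on $U^m$ consists of the operators commuting with $M_m(\C)$, namely $1\otimes B(U)$, that also commute with $\pi(\Gamma)$. It therefore equals $1\otimes \pi(\Gamma)'$. By Schur's lemma on both sides, $U^m$ is irreducible if and only if $U$ is.

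\textbf{Weak containment.} This is the step needing the most care, and I would argue through norms, using the criterion $\|U_1(x)\|\le\|U_2(x)\|$ for all $x$.
\begin{itemize}
\item If $U_1\prec U_2$, then the kernel of $U_2$ in $C^*(\Gamma)$ is contained in the kernel of $U_1$. Hence there is a $*$-homomorphism $C^*_{U_2}(\Gamma)\to C^*_{U_1}(\Gamma)$ between the image algebras. Applying $M_m$ gives a $*$-homomorphism of the amplified algebras, and $*$-homomorphisms between $C^*$-algebras are contractive. So $\|U_1^m(x)\|\le\|U_2^m(x)\|$ for every $x\in M_m(C^*(\Gamma))$, which is weak containment for the amplified algebra.
\item For the converse, suppose $U_1^m\prec U_2^m$ and take $x\in C^*(\Gamma)$. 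Apply the hypothesis to $x e_{11}$, whose image under $U_i^m$ has norm exactly $\|U_i(x)\|$. This gives $\|U_1(x)\|\le\|U_2(x)\|$, i.e. $U_1\prec U_2$.
\end{itemize}

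The only genuine obstacle is confirming that the state/convex-combination definition of weak containment used just before the lemma agrees with the norm (kernel) criterion for $M_m(C^*(\Gamma))$. This is the standard Fell–Dixmier equivalence and holds for any $C^*$-algebra, so I would simply cite Dixmier for it.
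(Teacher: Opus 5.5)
Your proof is correct and follows essentially the paper's route: irreducibility via the commutant identity $M_{m}(\pi(C^*(\Gamma)))'=\pi(C^*(\Gamma))'\otimes I_{m}$, and weak containment via kernel inclusion. The differences are minor. You prove the first claim directly with matrix units, whereas the paper cites Bader--Nowak. The paper also settles weak containment in one line via $\ker(U^{m})=M_{m}(\ker U)$, since $M_{m}(\ker U_2)\subseteq M_{m}(\ker U_1)$ if and only if $\ker U_2\subseteq \ker U_1$; this is a shorter substitute for your contractivity and corner-embedding norm argument.
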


\begin{proof}
    The first claim is proved in \cite[Proposition~12]{BaderNowak}. The claim on irreducibility follows from the von-Neumann bicommutant theorem and the observation that if $\pi$ is a $*$-representation of $C^*({\Gamma})$ on a Hilbert space $U$, then $M_{m_q}(\pi(C^*(\Gamma)))' = \pi(C^*(\Gamma))'\otimes I_{m_q}$. As weak containment is determined by the kernels of the representations, the claim on weak containment follows from the fact that the kernel of the amplification of a representation $\pi$ is the amplification of the kernel of $\pi$.
\end{proof}

The first main result of this chapter is:

\begin{theorem}[Theorem \ref{thm:main_1_finiteness_intro}]\label{thm:main thm finit prop 1}
    Let $\Gamma$ be a group of type $\mathrm{FP}_\infty(\Q)$, and let $\pi$ be a unitary representation of $\Gamma$. For every degree $q\in\N$ we have
    \[ \torH^q(\Gamma;\pi)\neq 0~~ \Longrightarrow~~ \mathrm{supp}(\pi) \cap \widehat{\Gamma}_{\mathrm{q-cohom}} \neq \emptyset ~~\text{and}~~ \mathrm{supp}(\pi) \cap \widehat{\Gamma}_{\mathrm{(q-1)-cohom}} \neq \emptyset.\]
\end{theorem}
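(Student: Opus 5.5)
By Lemma~\ref{lemma:non-Hausdorff characterization}, if $\torH^q(\Gamma;\pi)\neq 0$ then $\Delta^-_{q,V}$ has an almost kernel on $V^{m_q}$, and $\Delta^+_{q-1,V}$ has an almost kernel on $V^{m_{q-1}}$. The plan is to use each almost kernel to produce an irreducible representation weakly contained in $\pi$ with non-zero Hausdorff cohomology, in degree $q$ and in degree $q-1$ respectively. The two cases are symmetric (interchange $d_q$ with $d_q^*$ and the degrees $q$ with $q-1$), so I describe only degree $q$.

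Let $\bar v_i\in V^{m_q}$ be unit vectors with $\bar v_i\perp\ker\Delta^-_{q,V}$ and $\Delta^-_{q,V}\bar v_i\to 0$, i.e.\ $d_{q,V}^*\bar v_i\to 0$. First I want the $\bar v_i$ to be almost annihilated by the full Laplacian $\Delta_{q,V}$, so I would also need $d_{q+1,V}\bar v_i\to 0$. To arrange this, write $\bar v_i=a_i+b_i$ with $a_i\in\overline{\mathrm{Im}\, d_{q,V}}$ and $b_i\in (\overline{\mathrm{Im}\, d_{q,V}})^\perp=\ker d_{q,V}^*$. Since $\bar v_i\perp\ker d^*_{q,V}$, we get $b_i=0$, so $\bar v_i\in\overline{\mathrm{Im}\, d_{q,V}}\subseteq\ker d_{q+1,V}$. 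Hence $\Delta^+_{q,V}\bar v_i=0$, and therefore $\varphi_{\bar v_i}(\Delta_q)=\langle\Delta_{q,V}\bar v_i,\bar v_i\rangle\to 0$.

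Next I pass to a weak-$*$ limit point $\varphi$ of the states $\varphi_{\bar v_i}$ in the compact state space $\mathcal S(M_{m_q}(C^*(\Gamma)))$; this uses unitality, since $\Gamma$ is discrete. By continuity $\varphi(\Delta_q)=0$, so the face $F=\{\psi\in\mathcal S:\psi(\Delta_q)=0\}$ is a non-empty weak-$*$ closed face, because $\Delta_q\ge 0$. Every state in the weak-$*$ closure of the vector states of $V^{m_q}$ is a state of a representation weakly contained in $V^{m_q}$. The same holds for the intersection $F'$ of $F$ with this closed convex set, which is still a compact convex set. By Krein--Milman, $F'$ has an extreme point $\psi$.

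The main obstacle is to ensure that $\psi$ is extreme in the whole state space, so that its GNS representation is irreducible. I would try to get this from the fact that the states weakly contained in $V^{m_q}$ form the state space of the quotient $C^*$-algebra $M_{m_q}(C^*(\Gamma))/\ker(\pi^{m_q})$. Then $F'$ is a closed face of that state space, so an extreme point of $F'$ is a pure state of the quotient, hence a pure state of $M_{m_q}(C^*(\Gamma))$.

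The GNS representation of $\psi$ is then irreducible and, by Lemma~\ref{lemma:reps of amplification}, of the form $U^{m_q}$ with $U\in\widehat\Gamma$ and $U\prec\pi$. Its cyclic vector $\xi$ satisfies $\langle\Delta_{q,U}\xi,\xi\rangle=0$, hence $\xi\in\ker\Delta_{q,U}$. By Lemma~\ref{lemma:Hodge-de Rahm}, $\bar H^q(\Gamma;U)\neq 0$, so $U\in\mathrm{supp}(\pi)\cap\widehat\Gamma_{\mathrm{q-cohom}}$.

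For degree $q-1$ I repeat the argument with an almost kernel of $\Delta^+_{q-1,V}$. Here $\bar v_i\perp\ker d_{q,V}$ gives $\bar v_i\in\overline{\mathrm{Im}\, d^*_{q,V}}\subseteq\ker d^*_{q-1,V}$, so $\Delta^-_{q-1,V}\bar v_i=0$, and the same construction yields an element of $\mathrm{supp}(\pi)\cap\widehat\Gamma_{\mathrm{(q-1)-cohom}}$.
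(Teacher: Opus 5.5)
Your proof is correct. It differs from the paper's only in how you get an \emph{irreducible} representation at the end.

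\textbf{The paper's route.} The paper takes a weak-$*$ limit $\varphi$ of the states $\varphi_{\bar v_i}$. The GNS representation of $\varphi$ is an amplification of some $\sigma\prec\pi$, which may be reducible. Its cyclic vector is harmonic, so $\barH^{q}(\Gamma;\sigma)\neq 0$ (resp.\ $\barH^{q-1}(\Gamma;\sigma)\neq 0$). The paper then invokes Blanc's direct integral result (Proposition~\ref{prop:red cohom direct integral}) together with Lemma~\ref{lemma:disintegratipon weakly contained}. These extract an irreducible constituent of $\sigma$ that is weakly contained in $\pi$ and has non-zero Hausdorff cohomology.

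\textbf{Your route.} You stay inside the state space. The states of $M_{m_q}(C^*(\Gamma))/\ker(\pi^{m_q})$ that annihilate the positive element $\Delta_q$ form a non-empty weak-$*$ closed face. This quotient state space is itself a face of the full state space. Krein--Milman then gives a pure state of $M_{m_q}(C^*(\Gamma))$ in this face. Its GNS representation is directly an irreducible $U^{m_q}$ with $U\prec\pi$ and a harmonic cyclic vector.

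\textbf{Comparison.} Your argument is purely $C^*$-algebraic and self-contained. It avoids measurable disintegrations, which are non-canonical for non-type I groups, and it does not need Blanc's theorem. The paper's argument is shorter, because those tools are already quoted and used elsewhere, for instance in Theorem~\ref{thm: reduced cohom}. Your preliminary observation is the same as the paper's: the almost-kernel vectors already lie in $\ker\Delta^{+}_{q,V}$ (resp.\ $\ker\Delta^{-}_{q-1,V}$).

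\textbf{A minor imprecision.} The weak-$*$ closure of the vector states of $V^{m_q}$ need not be convex. The closed convex set you need is its closed convex hull, namely the state space of the quotient algebra. That is what you actually use in the next paragraph, so nothing breaks.
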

\begin{proof}
    We denote the Hilbert space of $\pi$ by $V$ and assume that $\torH^q(\Gamma;\pi) \ne 0$. We begin by showing that $\mathrm{supp}(\pi) \cap \widehat{\Gamma}_{\mathrm{(q-1)-cohom}} \neq \emptyset$. 
    
    Since $\pi$ has non-zero torsion cohomology in degree $q$, part $(3)$ of Lemma \ref{lemma:non-Hausdorff characterization} tells us that $d_{q,V}$, or equivalently $\Delta_{q-1,V}^+$, has an almost kernel $\bar{v}_i \in V^{m_{q-1}}$. By definition, $\bar{v}_i \in \ker(d_{q,V})^{\perp}$. Since $\mathrm{Im}(d_{q-1,V}) \subseteq \ker(d_{q,V})$, we obtain that $\bar{v}_i \in \mathrm{Im}(d_{q-1,V})^\perp=\ker(d_{q-1,V}^*)=\ker(\Delta_{q-1,V}^-)$.

    Let $\varphi_i=\varphi_{\bar{v}_i}$ be the states corresponding to $\bar{v}_i$ on the $C^*$-algebra $M_{m_{q-1}}(C^*(\Gamma))$. Since the state space $\mathcal{S}(M_{m_{q-1}}(C^*(\Gamma)))$ is compact, we may assume, upon passing to a subsequence, that $\varphi_i$ converges to a state $\varphi$. By Lemma \ref{lemma:reps of amplification}, the GNS representation of $\varphi$ is an amplification $U^{m_{q-1}}$ of some unitary $\Gamma$-representation $\sigma \prec_\Gamma \pi$ on a space $U$. If $\bar{u} \in U^{m_{q-1}}$ is the cyclic vector representing $\varphi$ as a vector state, then $\varphi_{\bar{u}}=\varphi$.
    
    We claim that $\bar{u}$ represents a non-trivial cohomology class in $\bar{H}^{q-1}(\Gamma;\sigma)$. Since \[\varphi_i(\Delta_{q-1}^+) = \|d_{q,V}( \bar{v}_i)\|^2 \xrightarrow[i \rightarrow \infty]{}0,\] 
    we conclude that $\varphi(\Delta_{q-1}^+) = 0$, that is $\|\Delta_{q-1,U}^+(\bar{u})\| = 0$. Similarly, since \[\varphi_i(\Delta_{q-1}^-) = \|d_{q-1,V}^*( \bar{v}_i)\|^2 =0,\] we obtain that $\Delta_{q-1,U}^-( \bar{u} )= 0$. Therefore we have that $0 \neq \bar{u} \in \ker(\Delta_{q-1,U})$, so by Lemma \ref{lemma:Hodge-de Rahm} we have $\bar{H}^{q-1}(\Gamma;\sigma) \neq 0$. The representation $\sigma$ need not be irreducible, but by Proposition \ref{prop:red cohom direct integral} and Lemma \ref{lemma:disintegratipon weakly contained}, it weakly contains an irreducible representation with non vanishing Hausdorff $(q-1)$-cohomology.
    
    To construct a representation in $\mathrm{supp}(\pi) \cap \widehat{\Gamma}_{\mathrm{q-cohom}}$, we may repeat the above proof verbatim, using this time $(2)$ of Lemma \ref{lemma:non-Hausdorff characterization} instead of $(3)$, and thus an almost kernel for $\Delta_{q,V}^-$ instead of $\Delta_{q-1,V}^+$.\qedhere
    
\end{proof}
The following is our second main result for this section, which is a partial converse to Theorem \ref{thm:main thm finit prop 1}.

\begin{theorem}[Theorem \ref{thm:main_2_finiteness_intro}]\label{thm:main thm finit prop 2}
    Let $\Gamma$ be a group of type $\mathrm{FP}_\infty(\Q)$, and let $\sigma$ be a unitary representation of $\Gamma$ with $H^q(\Gamma;\sigma) \ne 0$. Let $\pi$ be a unitary representation that weakly contains~$\sigma$. Then, at least one of the following occurs:
    \begin{enumerate}
        \item  $\bar{H}^q(\Gamma;\pi) \ne 0$.
        \item  $\torH^q(\Gamma;\pi) \ne 0$.
        \item  $\torH^{q+1}(\Gamma;\pi) \ne 0$.
    \end{enumerate}
    In other words, $\pi$ is $q$-cohomological or $(q+1)$-cohomological. If it is not $q$-cohomological, then $H^{q+1}(\Gamma;\pi)$ has non-zero torsion.
\end{theorem}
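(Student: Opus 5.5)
We argue by contradiction: suppose none of (1)--(3) holds. Then $H^q(\Gamma;\pi)=0$ and $H^{q+1}(\Gamma;\pi)$ is Hausdorff. We work in the Hilbert cochain complex $V^{m_\bullet}$ of a fixed finitely generated free resolution, with $V$ the space of $\pi$. By Lemma~\ref{lemma:non-Hausdorff characterization} and the open mapping theorem, both $d_{q,V}$ and $d_{q+1,V}$ have closed image. Moreover $\ker d_{q+1,V}=\mathrm{Im}\, d_{q,V}$, and $\ker\Delta_{q,V}=0$ by Lemma~\ref{lemma:Hodge-de Rahm}. We show that $\Delta_{q,V}$ is then bounded below: there is $c>0$ with $\Delta_{q,V}\ge c\cdot I$ on all of $V^{m_q}$.

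Indeed, decompose $V^{m_q}=\ker d_{q,V}^*\oplus\overline{\mathrm{Im}\, d_{q,V}^*}$. On $\ker d_{q,V}^* = (\mathrm{Im}\, d_{q,V})^\perp=(\ker d_{q+1,V})^\perp$, closedness of $\mathrm{Im}\,d_{q+1,V}$ gives $\|d_{q+1,V}x\|\ge c_1\|x\|$. On $\mathrm{Im}\, d_{q,V}\subseteq\ker d_{q+1,V}$, closedness of $\mathrm{Im}\,d_{q,V}$ (equivalently of $\mathrm{Im}\,d_{q,V}^*$) gives $\|d_{q,V}^*x\|\ge c_2\|x\|$. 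Since $\Delta_{q,V}$ preserves this orthogonal decomposition ($d^*_{q}d^*_{q+1}=0$ and $d_{q+1}d_q=0$), and $\Delta^+$ vanishes on the second summand while $\Delta^-$ vanishes on the first, we get $\langle\Delta_{q,V}x,x\rangle\ge \min(c_1^2,c_2^2)\|x\|^2$. Equivalently, the spectrum of the positive element $\pi(\Delta_q)\in M_{m_q}(\mathcal B(V))$ is contained in $[c,\infty)$.

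Next we transfer this to $\sigma$. By Lemma~\ref{lemma:reps of amplification}, $\sigma^{m_q}\prec\pi^{m_q}$ as representations of $M_{m_q}(C^*(\Gamma))$, so $\ker\pi^{m_q}\subseteq\ker\sigma^{m_q}$. Hence $\sigma^{m_q}$ factors through the quotient $C^*$-algebra $\pi^{m_q}(M_{m_q}(C^*(\Gamma)))$, and spectra can only shrink under $*$-homomorphisms. Therefore $\mathrm{spec}(\Delta_{q,U})\subseteq\mathrm{spec}(\Delta_{q,V})\subseteq[c,\infty)$, where $U$ is the space of $\sigma$. Alternatively, one can argue with states: $f(\Delta_q)$ for a continuous $f$ supported in $(-\infty,c)$ vanishes in $\pi$, hence in $\sigma$.

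It remains to show that $\Delta_{q,U}\ge c>0$ forces $H^q(\Gamma;\sigma)=0$, which contradicts the hypothesis. This is the standard Hodge argument. Since $\Delta_{q,U}$ is invertible, $\ker\Delta_{q,U}=0$, so $\bar H^q(\Gamma;\sigma)=0$ by Lemma~\ref{lemma:Hodge-de Rahm}. For torsion, take a cocycle $x\in\ker d_{q+1,U}$ and put $y=\Delta_{q,U}^{-1}x$. The inverse $\Delta_{q,U}^{-1}$ commutes with $d_{q+1,U}^*d_{q+1,U}$ and with $d_{q,U}d_{q,U}^*$, so
\[
x=d_{q,U}d_{q,U}^*y+d_{q+1,U}^*d_{q+1,U}y .
\]
Here $d_{q+1,U}y=\Delta_{q+1,U}'$-type manipulations give $d_{q+1,U}y=\Delta^{-1}$-conjugated $d_{q+1,U}x=0$: more precisely, $d_{q+1}\Delta_q=\Delta_{q+1}^-d_{q+1}$, and the spectral gap lets us invert compatibly to get $d_{q+1,U}y=0$. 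Hence $x=d_{q,U}(d_{q,U}^*y)$ is a coboundary, and $H^q(\Gamma;\sigma)=0$.

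The main obstacle is the first step, establishing the uniform lower bound on $\Delta_{q,V}$ from closedness of two images and vanishing of the Hausdorff part. The orthogonal decomposition handles this, but the bookkeeping must be done with care. The final "invertible Laplacian kills cohomology" step is routine once the intertwining $d_{q+1}\Delta_q=\Delta_{q+1}^-d_{q+1}$ is in hand.
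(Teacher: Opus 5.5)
Your proof is correct, and it takes a genuinely different route from the paper's. There are two small slips. First, the decomposition you need is $V^{m_q}=\ker d_{q,V}^*\oplus \mathrm{Im}\, d_{q,V}$, since $\mathrm{Im}\, d^*_{q,V}\subseteq V^{m_{q-1}}$; this is in fact what you use afterwards. Second, the garbled sentence in the last step can be dropped. Since $\Delta^+_q\Delta^-_q=\Delta^-_q\Delta^+_q=0$, the inverse $\Delta_{q,U}^{-1}$ commutes with $\Delta^+_{q,U}$. Hence $\Delta^+_{q,U}y=\Delta^{-1}_{q,U}\Delta^+_{q,U}x=0$ for a cocycle $x$, and $x=d_{q,U}(d^*_{q,U}y)$ follows at once.

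The paper argues constructively. It first reduces to an irreducible $\sigma$ with $\barH^q(\Gamma;\sigma)\neq0$, using Theorem~\ref{thm:main thm finit prop 1} in the torsion case and Proposition~\ref{prop:red cohom direct integral} together with disintegration. Irreducibility is needed so that the vector state of a harmonic cochain $\bar u\in\ker\Delta_{q,U}$ can be approximated by single vector states of $V^{m_q}$ (Lemma~\ref{lem:irred rep weak cont}). This gives unit vectors $\bar v_i$ with $\|\Delta_{q,V}\bar v_i\|\to0$. When $\ker\Delta_{q,V}=0$, these are split along $\ker d_{q+1,V}\oplus\ker d^*_{q,V}$ into an almost kernel of $\Delta^-_{q,V}$ or of $\Delta^+_{q,V}$.

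You run the same orthogonal decomposition in the contrapositive. This turns the failure of (1)--(3) into the single condition $0\notin\mathrm{spec}\,\pi(\Delta_q)$, which is in fact equivalent to it. The only input from weak containment is then $\ker\pi^{m_q}\subseteq\ker\sigma^{m_q}$, so spectra can only shrink. Your argument is shorter and works for arbitrary $\sigma$. It needs no reduction to irreducibles, no appeal to Theorem~\ref{thm:main thm finit prop 1}, no direct integrals and no compactness of the state space.

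The paper's state-approximation method has its own payoff: it produces explicit almost-kernel vectors, and it is the template reused for Proposition~\ref{prop:non-red is topological finit prop}. However, your spectral principle also yields that proposition, even without its purely-torsion hypothesis. Indeed, $\torH^q(\Gamma;\pi)\neq0$ holds exactly when $0$ is an accumulation point of $\mathrm{spec}\,\pi(\Delta^-_q)$. And $\sigma\prec\pi$ gives $\mathrm{spec}\,\sigma(\Delta^-_q)\subseteq\mathrm{spec}\,\pi(\Delta^-_q)$.
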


\begin{proof}
If $\torH^q(\Gamma;\sigma) \ne 0$, then $\sigma$ weakly contains an irreducible representation with non-zero Hausdorff cohomology in degree~$q$ by Theorem \ref{thm:main thm finit prop 1}. 
If $\barH^q(\Gamma;\sigma)\ne 0$, then the same conclusion holds by Proposition~\ref{prop:red cohom direct integral}. 
In either case, we thus may assume that $\sigma$ is irreducible and that $\bar{H}^q(\Gamma;\sigma) \ne 0$. 

Denote the Hilbert spaces of $\pi$ and $\sigma$ by $V$ and $U$ respectively. Since $\bar{H}^{q}(\Gamma;\sigma) \ne 0$, $\mathrm{ker}(\Delta_{q,U}) \neq 0$  by Lemma \ref{lemma:Hodge-de Rahm}. Let $0 \ne \bar{u} \in \ker(\Delta_{q,U})$ be a unit vector, and denote by $\varphi$ the state $\varphi_{\bar{u}}$ on $M_{m_q}(C^*(\Gamma))$. Since $U \prec_\Gamma V$, Lemma \ref{lemma:reps of amplification} implies that $U^{m_q} \prec_{M_{m_q}(C^*(\Gamma))} V^{m_q}$. Thus $\varphi$ is the weak-$^*$ limit of a sequence of vector states $\varphi_i$ corresponding to unit vectors $\bar{v}_i \in V^{m_q}$. Since $U$ is irreducible, $U^{m_q}$ is irreducible as an $M_{m_q}(C^\ast\Gamma)$-representation by Lemma~\ref{lemma:reps of amplification}. Thus, every vector state from $U^{m_q}$ is approximated by vector states from $V^{m_q}$ rather than by convex combinations. See Lemma~\ref{lem:irred rep weak cont}, which is formulated for groups but is in fact a claim on representations of $C^*$-algebras.

From $\varphi(\Delta^2_{q}) =\|\Delta_{q,U}( \bar{u})\|^2= 0$ we conclude that
$$\|\Delta_{q,V}( \bar{v}_i)\|^2 = \varphi_i(\Delta^2_q) \xrightarrow[i \rightarrow \infty]{}0.$$
If $\mathrm{ker}(\Delta_{q,V}) \neq 0$, 
then we are in case $(1)$ by Lemma~\ref{lemma:Hodge-de Rahm}. Assume now that $\mathrm{ker}(\Delta_{q,V}) = 0$. Since $\overline{\mathrm{Im}(d_{q,V})} = \mathrm{ker}(d_{q+1,V})$, we obtain the orthogonal decomposition
\[V^{m_q} = \mathrm{ker}(d_{q+1,V}) \oplus \mathrm{ker}(d^*_{q,V}) = \mathrm{ker}(\Delta_{q,V}^+) \oplus \mathrm{ker}(\Delta_{q,V}^-).\] 
This decomposition is invariant under the self-adjoint operators $\Delta_{q,V}^+$ and $\Delta_{q,V}^-$. We denote orthogonal projections onto $\ker(\Delta_{q,V}^+)$ and $\ker(\Delta_{q,V}^-)$ by $P_+$ and $P_-$ respectively. We have:
$$\|\Delta_{q,V} (\bar{v}_i)\|^2 = \|\Delta_{q,V}^+(\bar{v}_i)\|^2 +\|\Delta_{q,V}^-(\bar{v}_i)\|^2 = \|\Delta_{q,V}^+(P_-\bar{v}_i)\|^2 +\|\Delta_{q,V}^-(P_+\bar{v}_i)\|^2.$$
From $\|\Delta_{q,V}( \bar{v}_i)\|^2 \to 0$ we get $\|\Delta_{q,V}^-(P_+\bar{v}_i)\|\to 0$ and $ \|\Delta_{q,V}^+(P_-\bar{v}_i)\| \to 0$ as $i\to\infty$.
Upon passing to a subsequence we may assume that $\|P_+\bar{v}_i\|$ or $\|P_-\bar{v}_i\|$ is bounded below by strictly positive constant.

In the first case, $P_+\bar{v}_i/\|P_+\bar{v}_i\|$ is an almost kernel of $\Delta_{q,V}^-$ and so $\torH^q(\Gamma;\pi) \ne 0$ by Lemma~\ref{lemma:non-Hausdorff characterization}. In the second case, $P_-\bar{v}_i/\|P_-\bar{v}_i\|$ is an almost kernel of $\Delta_{q,V}^+$ and so $\torH^{q+1}(\Gamma;\pi) \ne 0$ by Lemma~\ref{lemma:non-Hausdorff characterization}. 
\end{proof}

Our next goal is to prove Proposition \ref{prop:non-red is topological finit prop}, which says that if the cohomology of a representation $\rho$ is purely torsion in a certain degree $q$, then any representation $\pi$ that weakly contains $\rho$ also has non-zero torsion in degree $q$. That is, with extra information we may choose option $(2)$ in Theorem \ref{thm:main thm finit prop 2}. This will be an important step in proving that for semisimple groups, torsion in cohomology is a topological property (Theorem \ref{thm:non-Hausdorff is topological}). The proof of \ref{prop:non-red is topological finit prop} proves to be more difficult than what one may expect. The source of the difficulty is in determining the precise degree where non-zero torsion appears.

In the proof of Proposition \ref{prop:non-red is topological finit prop} we will use the following lemma. Two unitary representations $\pi$ and $\rho$ of a group $\Gamma$ are called \emph{disjoint} if they admit no isomorphic subrepresentations. Disjointness is defined in the same way for $*$-representations of $C^*$-algebras.
\begin{lemma}\label{assympot ortho of states}
    Let $A$ be a unital separable $C^*$-algebra together with a $*$-representation of $A$ on a Hilbert space~$V$. Let $u_n$ and $w_n$ be two sequences of unit vectors in $V$ such that the states $\varphi_{u_n}$ and $\varphi_{w_n}$ converge to states whose GNS representations are disjoint. Then $u_n$ and $w_n$ are asymptotically orthogonal:
    $$\langle w_n, u_n \rangle \rightarrow 0$$
\end{lemma}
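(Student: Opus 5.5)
Argue by contradiction, passing to a subsequence along which $\langle w_n,u_n\rangle\to c\neq 0$, and pass to the limit inside $V$ itself. Let $\varphi=\lim\varphi_{u_n}$ and $\psi=\lim\varphi_{w_n}$, with GNS triples $(\pi_\varphi,H_\varphi,\xi_\varphi)$ and $(\pi_\psi,H_\psi,\xi_\psi)$. The idea is to consider the joint data of the pair $(u_n,w_n)$: form the vectors $u_n\oplus w_n$, or better the ``mixed'' functionals
\[
\theta_n(a)=\langle a u_n,w_n\rangle, \qquad a\in A.
\]
These are uniformly bounded, $\|\theta_n\|\le 1$. Since $A$ is separable, the unit ball of $A^*$ is weak-$^*$ metrizable and compact, so after passing to a subsequence $\theta_n\to\theta$ weak-$^*$, with $\theta(1)=\lim\langle u_n,w_n\rangle=\bar c\neq 0$.

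The key step is to show that $\theta$ defines a nonzero intertwiner between the two GNS representations. For $a,b\in A$ we have
\[
|\theta_n(b^*a)|=|\langle au_n,bw_n\rangle|\le \varphi_{u_n}(a^*a)^{1/2}\,\varphi_{w_n}(b^*b)^{1/2}.
\]
Passing to the limit gives $|\theta(b^*a)|\le \varphi(a^*a)^{1/2}\psi(b^*b)^{1/2}$. Hence the formula
\[
B\bigl(\pi_\varphi(a)\xi_\varphi,\ \pi_\psi(b)\xi_\psi\bigr)=\theta(b^*a)
\]
is well defined: it vanishes if either vector vanishes, by the inequality. It is sesquilinear and bounded by $1$ on the dense subspaces, so it extends to a bounded sesquilinear form on $H_\varphi\times H_\psi$. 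It therefore yields a bounded operator $T\colon H_\varphi\to H_\psi$ with $\langle T\pi_\varphi(a)\xi_\varphi,\pi_\psi(b)\xi_\psi\rangle=\theta(b^*a)$.

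Intertwining is then a direct check. For $x\in A$,
\[
\langle T\pi_\varphi(x)\pi_\varphi(a)\xi_\varphi,\pi_\psi(b)\xi_\psi\rangle=\theta(b^*xa)=\theta((x^*b)^*a)=\langle \pi_\psi(x)T\pi_\varphi(a)\xi_\varphi,\pi_\psi(b)\xi_\psi\rangle.
\]
So $T\pi_\varphi(x)=\pi_\psi(x)T$. Moreover $\langle T\xi_\varphi,\xi_\psi\rangle=\theta(1)\neq0$, so $T\neq0$.

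To conclude, apply polar decomposition $T=W|T|$. The operator $|T|$ commutes with $\pi_\varphi$ and $W$ intertwines. So $W$ is a partial isometry carrying the subrepresentation $\overline{\mathrm{Im}\,T^*}\subseteq H_\varphi$ unitarily onto the subrepresentation $\overline{\mathrm{Im}\,T}\subseteq H_\psi$. These subrepresentations are nonzero and isomorphic, contradicting disjointness. Hence $\langle w_n,u_n\rangle\to0$.

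I expect the only delicate point to be the well-definedness and boundedness of $T$, that is, passing the Cauchy–Schwarz bound to the limit. It works because the bound involves only the limiting states evaluated at fixed elements, which converge by weak-$^*$ convergence. Unitality of $A$ ensures $\theta(1)$ makes sense and that the GNS vectors are cyclic.
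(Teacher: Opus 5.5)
Your proof is correct, but it is organized differently from the paper's. The paper stacks the pair into $\bar s_n=(u_n,w_n)^t\in V^2$ and takes a weak-$^*$ limit of the corresponding positive functionals on $M_2(A)$. It realizes the limit as a vector functional $\psi_{\bar s}$ in an $M_2(A)$-representation $L'\cong L^2$, and notes that the two components of $\bar s$ generate copies of the two GNS representations inside the single $A$-representation $L$. It concludes by evaluating at the off-diagonal matrix unit, using that disjoint subrepresentations of $L$ are orthogonal. You keep only the off-diagonal entry $\theta=\lim_n\langle\,\cdot\,u_n,w_n\rangle$ of that limiting $2\times 2$ functional. Your Cauchy--Schwarz estimate is precisely what positivity of the limit on $M_2(A)$ encodes. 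Under the paper's identifications, your intertwiner $T$ is the orthogonal projection onto the copy of $H_\psi$, restricted to the copy of $H_\varphi$ inside $L$. That is exactly the operator whose vanishing the paper asserts when it writes $L=(U\oplus W)\oplus L_0$. What your route buys is self-containedness: you need no amplification and no identification of $M_2(A)$-representations with amplifications. The step ``disjoint $\Rightarrow$ orthogonal'', which the paper uses without comment, is proved explicitly by your polar decomposition argument. In exchange, the paper's route stays inside the state-space-of-matrix-amplifications framework that drives all of \S\ref{sec3}, and it produces actual limit vectors in one representation. Your argument also applies verbatim to bounded sequences whose limits are non-zero positive functionals rather than states. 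This is how the lemma is actually invoked for $\bar b_n$ in Proposition~\ref{prop:non-red is topological finit prop}.
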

\begin{proof}
    Denote the limit state of $\varphi_{u_n}$ and $\varphi_{w_n}$ by $\varphi_u$ and $\varphi_w$, where $u$ and $w$ are the associated cyclic vectors in the corresponding GNS representations $U$ and $W$. The space $V^2$ is an $M_2(A)$-representation, and the (column) vector $\bar{s}_n=(u_n, w_n )^t$ corresponds to a positive functional on this $C^*$-algebra. To avoid confusion, we denote positive functionals on $M_2(A)$ by $\psi$ and not by $\varphi$, which is reserved for positive functionals on $A$. Upon passing to a subsequence, we may assume that $\psi_{\bar{s}_n}$ converges to a positive functional $\psi_{\bar{s}}$, where $\bar{s}$ is a vector in some $M_2(A)$-representation $L'$. We consider the subspaces $L_1$ and $L_2$ of $L'$ 
    \[
    L_1=\begin{pmatrix} 1 &0 \\ 0 & 0 \end{pmatrix} L'~~\text{and}~~L_2=\begin{pmatrix} 0 & 0 \\ 0 & 1 \end{pmatrix} L', 
    \]
     which become $A$-representations via the diagonal embedding $A\hookrightarrow M_2(A)$. Obviously, $L_1$ and $L_2$ are isomorphic as $A$-representations.  If we denote $L=L_1\cong L_2$, we see that $L'$ is just the amplification $L^2$. Let $\iota$ be the embedding of $A$ into the upper left corner of $M_2(A)$. Then $\psi_{\bar{s}_n}(\iota (x))=\varphi_{u_n}(x)$ for every $x \in A$. Taking limits, it follows that $\psi_{\bar{s}}(\iota(x))=\varphi_{u}(x)$ for every $x \in A$. Hence the $A$-representation $L$ contains a copy of the GNS representation $V$ of $\varphi_{u}$. 
     
     Similarly, using the embedding of~$A$ into the lower right corner, it follows that $L$ contains a copy of the GNS representation $W$ of $\varphi_{w}$. As $U$ and $W$ are disjoint, $L$ decomposes as an orthogonal sum $L=(U \oplus W) \oplus L_0$, and $\bar{s} = ((u, w), 0)^t$ in this decomposition. Because $U$ and $W$ are orthogonal in~$L$ we obtain that 
     \[
\langle w_n,u_n \rangle=\psi_{\bar{s}_n}\Bigl(\begin{pmatrix} 0 & 1 \\ 0 & 0 \end{pmatrix}\Bigr)\xrightarrow{n\to\infty}\psi_{\bar{s}}\Bigl(\begin{pmatrix} 0 & 1 \\ 0 & 0 \end{pmatrix}\Bigr)=\langle w, v\rangle=0.\qedhere
     \]
\end{proof}
 
\begin{proposition}\label{prop:non-red is topological finit prop}
    Let $\Gamma$ be a group of type $\mathrm{FP}_\infty(\mathbb{Q})$. Let $\rho$ and $\pi$ be two unitary representations of $\Gamma$ such that $H^q(\Gamma;\rho)=\torH^q(\Gamma;\rho)\ne 0$ and $\rho \prec \pi$. Then $\torH^q(\Gamma;\pi) \ne 0$.
\end{proposition}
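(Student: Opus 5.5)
The plan is to avoid almost-kernels of specific vectors altogether and argue at the level of spectra of the single positive element $\Delta_q^-=d_qd_q^*\in M_{m_q}(C^*(\Gamma))$. Weak containment of $*$-representations of a $C^*$-algebra controls spectra: if $\rho\prec\pi$ then $\|x_\rho\|\le\|x_\pi\|$ for every $x$, so $\ker\pi\subseteq\ker\rho$. The goal is to turn torsion in degree $q$ into a purely spectral condition on $\Delta^-_{q}$ that passes from $\rho$ to $\pi$.

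\emph{Step 1: spectral reformulation of torsion.} Let $T$ be a positive bounded operator on a Hilbert space, such as $\Delta^-_{q,V}$ on $V^{m_q}$.
\begin{itemize}
\item $T$ has an almost kernel if and only if $0$ is a non-isolated point of $\mathrm{spec}(T)$. Indeed, $T$ is bounded below on $\ker(T)^\perp$ exactly when $\mathrm{spec}(T)\cap(0,\varepsilon)=\emptyset$ for some $\varepsilon>0$. Otherwise the spectral projections of $(0,\varepsilon)$ produce unit vectors orthogonal to $\ker T$ on which $T$ has norm at most $\varepsilon$.
\item By Lemma~\ref{lemma:non-Hausdorff characterization}(2), $\torH^q(\Gamma;V)\ne0$ is therefore equivalent to $\mathrm{spec}(\Delta^-_{q,V})\cap(0,\varepsilon)\neq\emptyset$ for every $\varepsilon>0$.
\end{itemize}

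\emph{Step 2: the spectral condition passes from $\rho$ to $\pi$.} By Lemma~\ref{lemma:reps of amplification}, $\rho\prec\pi$ implies $\rho^{m_q}\prec\pi^{m_q}$ as representations of $M_{m_q}(C^*(\Gamma))$. Hence $\|f(\Delta^-_q)_\rho\|\le\|f(\Delta^-_q)_\pi\|$ for every continuous function $f$, using that functional calculus commutes with $*$-representations. Take $f$ to be a continuous bump supported in a small interval $(a,b)\subset(0,\varepsilon)$ that is nonzero at a chosen point of $\mathrm{spec}(\Delta^-_{q,\rho})\cap(0,\varepsilon)$. Then $f(\Delta^-_{q,\rho})\ne0$, so $f(\Delta^-_{q,\pi})\neq0$, and hence $\mathrm{spec}(\Delta^-_{q,\pi})\cap(0,\varepsilon)\ne\emptyset$. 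Equivalently, $\mathrm{spec}(x_\rho)\subseteq\mathrm{spec}(x_\pi)$ for self-adjoint $x$. Since this holds for every $\varepsilon$, Step 1 gives $\torH^q(\Gamma;\pi)\ne0$.

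\emph{Where the difficulty sits.} The natural vector-level approach would approximate an almost-kernel vector of $\rho$ by vectors of $\pi^{m_q}$, as in Theorem~\ref{thm:main thm finit prop 2}. Its obstacle is that the approximating vectors may drift into $\ker\Delta_{q,\pi}$ (Hausdorff classes) or into $\ker(d_{q+1,\pi})^\perp$ (torsion in degree $q+1$). Controlling that drift is what the assumption $H^q(\Gamma;\rho)=\torH^q(\Gamma;\rho)$ and Lemma~\ref{assympot ortho of states} seem designed for.

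The functional-calculus argument sidesteps this drift entirely. The bump $f$ vanishes at $0$, so it automatically discards the kernel of $\Delta^-_{q,\pi}$. It also only involves $\Delta^-_q$, so degree $q+1$ never enters. If needed for bookkeeping, one can also use $\Delta_q^+\Delta_q^-=d_{q+1}^*d_{q+1}d_qd_q^*=0$: the commuting pair then has joint spectrum on the coordinate axes, so spectral vectors of $\Delta^-_{q,\pi}$ in $(0,\varepsilon)$ lie in $\ker d_{q+1,\pi}$.

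The step I expect to need the most care is checking Step 1 precisely: that "non-closed image of $d_{q,V}$" is equivalent to "$0$ is an accumulation point of $\mathrm{spec}(d_qd_q^*)_V$", including the case where $0$ is an eigenvalue of infinite multiplicity. This argument appears not to use the purely-torsion hypothesis. If so, I would double-check Step 1 against the statement of Lemma~\ref{lemma:non-Hausdorff characterization} before concluding, and fall back on the vector-level approach with Lemma~\ref{assympot ortho of states} if some subtlety emerges.
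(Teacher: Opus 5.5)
Your proof is correct, and it takes a genuinely different route from the paper's.

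\textbf{The paper's route.} The paper argues at the level of vectors. It replaces $\pi$ by $\infty\pi$ and approximates the vector state of an almost kernel $\bar u_i\in\ker\Delta^+_{q,U}$ of $\Delta^-_{q,U}$ by vector states $\varphi_{\bar v_n}$ from $V^{m_q}$. It then splits $\bar v_n=\bar a_n+\bar b_n$ along $\ker(\Delta^-_{q,V})^\perp\oplus\ker(\Delta^-_{q,V})$ and shows that $\bar b_n\to 0$. A limit of the states of $\bar b_n$ would yield a representation $W$ all of whose subrepresentations have nonzero Hausdorff $q$-cohomology. Such a $W$ is disjoint from $\rho$ precisely because $\barH^q(\Gamma;\rho)=0$, and Lemma~\ref{assympot ortho of states} then gives asymptotic orthogonality. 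This is exactly the drift problem you identified, and it is the only place where the purely-torsion hypothesis enters.

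\textbf{Your route.} Your functional-calculus argument removes that problem at the source: a bump supported in $(0,\varepsilon)$ never sees $\ker\Delta^-_{q,V}$. The only input from weak containment is $\ker\pi^{m_q}\subseteq\ker\rho^{m_q}$ in $M_{m_q}(C^*(\Gamma))$, which gives $\mathrm{spec}(\Delta^-_{q,\rho})\subseteq\mathrm{spec}(\Delta^-_{q,\pi})$.

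\textbf{Step 1 is fine.} $\mathrm{Im}(d_{q,V})$ is closed iff $\mathrm{Im}(d^*_{q,V})$ is closed, iff $d_{q,V}d_{q,V}^*$ has closed range. For a positive operator, closed range is equivalent to $0$ not being an accumulation point of the spectrum. An eigenvalue at $0$, of any multiplicity, plays no role.

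\textbf{What each approach buys.} Your proof is shorter and avoids the infinite amplification, the compactness of the state space, and Lemma~\ref{assympot ortho of states}. It also proves a stronger statement: if $\rho\prec\pi$ and $\torH^q(\Gamma;\rho)\neq 0$, then $\torH^q(\Gamma;\pi)\ne 0$, with no assumption on $\barH^q(\Gamma;\rho)$. In particular, the reduction to $\barH^q(G;\rho)=0$ in the proof of Theorem~\ref{thm:non-Hausdorff is topological} becomes unnecessary. The paper's route mainly buys uniformity with the state-compactness arguments it already uses in Theorems~\ref{thm:main thm finit prop 1} and~\ref{thm:main thm finit prop 2}.
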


\begin{proof}
    Denote the representation spaces of $\pi$ and $\rho$ by $V$ and $U$. By Theorem \ref{thm:cohom of amplif} we may and will replace the representation $\pi$ with the infinite amplification $\infty \pi$ without loss of generality. 
    As a consequence and by $\rho \prec \pi$, each state of $U$ or $U^{m_q}$ can be approximated by states from $V$ or $V^{m_q}$, and not just by convex combinations of states. 
    
    By $\torH^q(\Gamma;\rho)\ne 0$ and Lemma \ref{lemma:non-Hausdorff characterization}, $\Delta_{q,U}^-$ has an almost kernel. That is, there exists a sequence of unit vectors $\bar{u}_i \in U^{m_q}$ such that $\bar{u}_i \in  \ker(\Delta_{q,U}^-)^\perp = \ker(\Delta_{q,U}^+)$ and $\Delta_{q,U}^-(\bar{u}_i) \rightarrow 0$.

   \paragraph{\textbf{Step I}}
   Fix $i\in\N$ and denote $\bar{u}=\bar{u}_i\in \ker\Delta_{q,U}^+$. It is $\Delta_{q,U}^-(\bar{u}) \ne 0$.  The state $\varphi_{\bar{u}}$ on $M_{m_q}(C^*(\Gamma))$ corresponding to $\bar{u}$ is approximated by a sequence of states $\varphi_{\bar{v}_n}$ corresponding to vectors $\bar{v}_n \in V^{m_q}$. We have the orthogonal decomposition $V^{m_q}=\ker(\Delta_{q,V}^-) \oplus \ker(\Delta_{q,V}^-)^{\perp}$, which is preserved by the operators $\Delta_{q, V}^\pm$. Let $P^-$ be the orthogonal projection of $V^{m_q}$ onto $\ker(\Delta_{q,V}^-)$, and $P^+$ the orthogonal projection onto $\ker(\Delta_{q,V}^-)^{\perp}$. Denote
    $$\bar{a}_n=P^+(\bar{v}_n), \quad \bar{b}_n=P^-(\bar{v}_n)$$
    so that $\bar{v}_n=\bar{a}_n+\bar{b}_n$.
    \paragraph{\textbf{Step II}} Next we show that $\bar{b}_n\to 0$. Assume the contrary. Upon passing to a subsequence, we may assume that $\| \bar{b}_n\|>\varepsilon$ uniformly for some $\varepsilon > 0$. By compactness of the state space we may also assume that $\varphi_{\bar{b}_n}$ converges to some non-zero positive linear functional $\varphi$ on $M_{m_q}(C^*(\Gamma))$. By Lemma \ref{lemma:reps of amplification}, the GNS representation of $\varphi$ is an amplification $W^{m_q}$ of some unitary $\Gamma$-representation $W \prec_\Gamma V$. Then $\varphi=\varphi_{\bar{b}}$ for some cyclic vector $\bar{b} \in W^{m_q}$. We claim that $\bar{b}\in W^{m_q}$ represents a non-zero Hausdorff cohomology class in $\bar H^q(\Gamma; W)$: Because of $\bar a_n \in \ker(\Delta_{q,V}^-)^{\perp} \subseteq \ker(\Delta_{q,V}^+)$ we have 
    \[\varphi_{\bar{b}}(\Delta_q^+)=\lim_{n \rightarrow \infty}\langle \Delta_{q,V}^+(\bar{b}_n),\bar{b}_n\rangle=\lim_{n \rightarrow \infty}\langle \Delta_{q,V}^+(\bar{a}_n+\bar{b}_n), \bar{a}_n+\bar{b}_n\rangle=\lim_{n \rightarrow \infty}\varphi_{\bar{v}_n}(\Delta_q^+)=\varphi_{\bar{u}}(\Delta_q^+) =0.\]
    Hence $\Delta_{q,W}^+(\bar{b})=0$. Furthermore $\Delta_{q,V}^-(\bar{b}_n)=0$ for every $n$ so $\Delta_{q,W}^-(\bar{b})=0$. Hence $\Delta_q(\bar{b})=0$, and $\bar{b}$ represents a non-zero Hausdorff cohomology class as claimed.

    Next we claim that the $\Gamma$-representations $W$ and $U$ are disjoint. By assumption, $\bar{H}^q(\Gamma;U)=0$. Hence $U$ has no subrepresentation with non-zero Hausdorff cohomology in degree $q$. On the other hand, $W$ has the property that every subrepresentation has non-zero \emph{Hausdorff} cohomology in degree $q$. To see this, note that $\bar{b}$ is a cyclic vector for the $M_{m_q}(C^*(\Gamma))$-representation $W^{m_q}$. For every subrepresentation $W_0$ of $W$, the projection $P_0$ of $W^{m_q}$ onto $W_0^{m_q}$ commutes with $M_{m_q}(C^*(\Gamma))$. Since $\bar{b}$ is cyclic, $P_0(\bar{b}) \ne 0$. But $\Delta_q \in M_{m_q}(C^*(\Gamma))$, so $\Delta_{q,W}$ commutes with $P_0$ and hence $\Delta_{q,W}(P_0(\bar{b}))=0$. This shows that $\bar{H}^q(\Gamma;W_0) \ne 0$, and therefore $W$ and $U$ are disjoint.
    
    The positive functionals $\varphi_{\bar{b}_n}$ and $\varphi_{\bar{v}_n}$ converge to functionals whose GNS representations are disjoint. By Lemma \ref{assympot ortho of states} we have $\langle \bar{v}_n,\bar{b}_n \rangle \rightarrow 0$. Hence $\langle\bar{b}_n,\bar{b}_n \rangle=\langle \bar{a}_n + \bar{b}_n,\bar{b}_n \rangle=\langle\bar{v}_n,\bar{b}_n \rangle \rightarrow 0$, which is a contradiction. 
    \paragraph{\textbf{Step III}} Since $\bar{b}_n=P^-(\bar{v}_n)$ decays to zero, we have $\varphi_{\bar{a}_n} \rightarrow \varphi_{\bar{u}}$. Remembering that $\bar{u}=\bar{u}_i$ -- an element in a sequence which is an almost kernel of $\Delta_{q,U}^-$ -- we showed that $\varphi_{\bar{u}_i}$ is approximated by a sequence $(\varphi_{\bar{v}_{i,j}})_j$, where each $\bar{v}_{i,j} \in V^{m_q}$ is a unit vectors in $\ker(\Delta_{q,V}^-)^\perp$. For each $i$ we choose $\bar{w}_i=\bar{v}_{i,j} \in\ker(\Delta_{q,V}^-)^\perp $ so that $\|\Delta_{q,V}^-(\bar{w}_{i}) \| \leq 2\|\Delta_{q,U}^-(\bar{u}_i) \|$. Then the sequence $\bar{w}_i$ is an almost kernel of $\Delta_{q,V}^-$. Thus $\torH^q(\Gamma;\pi) \ne 0$. 
\end{proof}

\subsection{Consequences for lattices}\label{subsec:3.2}
Theorem \ref{thm:main thm finit prop 2} and Proposition \ref{prop:non-red is topological finit prop} give far reaching consequences regarding the abundance of cohomological representations for certain groups. Guichardet \cite[Example~1]{Guichardet72} showed that for the free group $F_2$, every unitary representation admits non-zero cohomology in degree $1$. In this subsection we will prove two analogues of this theorem for higher-rank lattices. 

Let $G$ be a semisimple Lie group, and $\Gamma \leq G$ a lattice. Extensive research had been carried by many authors regarding the cohomology of the left regular representation, $H^*(\Gamma;\lambda_{\Gamma})$. This cohomology relates to the $\ell^2$-invariants of $\Gamma$: the case of non-zero Hausdorff cohomology corresponds to non-zero $\ell^2$-betti numbers, and the case of \emph{non-Hausdorff} cohomology corresponds to \emph{Novikov-Shubin} invariants different from $\infty^+$. We refer the reader to \cite{LuckBook}.

Fix a maximal compact subgroup $K$ in $G$. The \emph{fundamental rank} of $G$ is $\ell(G)=\mathrm{rank}_\mathbb{C}(G)-\mathrm{rank}_\mathbb{C}(K)$. By a celebrated result of Harish-Chandra, the fundamental rank is zero precisely when $G$ has discrete series representations. The fundamental rank relates to the $\ell^2$-invariants of lattices in $G$, as the following theorem shows.

\begin{theorem}[{\cite{CheegerGromov}, \cite[Section~11]{LohoueSalah} and \cite[Theorem~1.1]{Olbrich}}]\label{thm:l2 invariants}
   Let $\Gamma \leq G$ be a lattice in a connected semisimple Lie group with finite center and without compact factors.  
    \begin{enumerate}
        \item $\bar{H}^q(\Gamma;\lambda_\Gamma) \ne 0$ (equivalently, $b^{(2)}_q(\Gamma) \ne 0$) if and only if  $q=\frac{\mathrm{dim(G/K)}}{2}$ and $\ell(G)=0$.
        \item Assume further that $\Gamma$ is uniform. Then $H^q(\Gamma;\lambda_\Gamma)$ has non-zero torsion if and only if $q \in [\frac{\mathrm{dim(G/K)}-\ell(G)}{2}+1,\frac{\mathrm{dim(G/K)}+\ell(G)}{2}]$ and $\ell(G)>0$.
    \end{enumerate}
\end{theorem}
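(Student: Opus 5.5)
Since Theorem~\ref{thm:l2 invariants} is quoted from the literature, I will only outline how I would rederive it with the tools of this paper, treating the uniform case as the model. \textbf{Reduction to $G$.} Since $\mathrm{Ind}_\Gamma^G(\lambda_\Gamma)\cong\lambda_G$, the Shapiro Lemma~\ref{lem: shapiro lemma} gives a topological isomorphism $H^*(\Gamma;\lambda_\Gamma)\cong H^*(G;\lambda_G)$ when $\Gamma$ is uniform. Both the Hausdorff part and the torsion part are therefore invariants of $G$ alone. For part (1) with $\Gamma$ non-uniform, I would not use Shapiro. Instead I would appeal to proportionality of $\ell^2$-Betti numbers (Cheeger--Gromov): $b^{(2)}_q(\Gamma)=\mathrm{covol}(\Gamma)\cdot b^{(2)}_q(G)$, where $b^{(2)}_q(G)$ is the von Neumann dimension of $\bar H^q(G;\lambda_G)$ taken with respect to the Plancherel measure. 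This reduces (1) to computing $\bar H^q(G;\lambda_G)$.

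\textbf{Part (1).} Decompose $\lambda_G=(\lambda_G)_{\mathrm{cohom}}\oplus\mathring{\lambda}_G$ and apply Theorem~\ref{thm: reduced cohom}. The atoms of the Plancherel measure are exactly the discrete series representations. Hence $\bar H^q(G;\lambda_G)\neq0$ if and only if some discrete series representation is $q$-cohomological. By Harish-Chandra, discrete series exist if and only if $\ell(G)=0$. By Vogan--Zuckerman, a discrete series representation with trivial infinitesimal character is $A_{\mathfrak b}(0)$ for a $\theta$-stable Borel subalgebra $\mathfrak b$, and its $(\mathfrak g,K)$-cohomology is concentrated in the single degree $\dim(G/K)/2$. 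This gives (1).

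\textbf{Part (2), necessity.} Since $\Gamma$ is uniform it is of type $\mathrm{FP}_\infty(\Q)$ (Example~\ref{ex:lattices finite type}), so Theorem~\ref{thm:main thm finit prop 1} applies. Equivalently, its $G$-version is obtained by transporting weak containment through the Shapiro isomorphism. It shows that torsion in degree $q$ forces $\mathrm{supp}(\lambda_G)$, i.e.\ the tempered dual, to contain both a $q$-cohomological and a $(q-1)$-cohomological representation.
\begin{itemize}
\item The tempered cohomological representations are the $A_{\mathfrak q}(0)$ with $\mathfrak q$ a fundamental $\theta$-stable parabolic.
\item By Vogan--Zuckerman, their cohomology is $\wedge^\ast\mathfrak a^*$ shifted to start in degree $(d-\ell)/2$, where $d=\dim(G/K)$ and $\dim\mathfrak a=\ell$. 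So it occupies exactly the degrees $[(d-\ell)/2,(d+\ell)/2]$.
\end{itemize}
Requiring both $q$ and $q-1$ to lie in this interval gives $q\in[(d-\ell)/2+1,(d+\ell)/2]$ and $\ell>0$. When $\ell=0$ the only tempered cohomological representations are discrete series, which are isolated points of $\mathrm{supp}(\lambda_G)$, so Theorem~\ref{intro:topological propery1} gives no torsion at all.

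\textbf{Part (2), sufficiency (main obstacle).} The abstract machinery only gives ``degree $q$ or $q+1$'' (Theorem~\ref{thm:main thm finit prop 2}), so the exact torsion degrees have to be computed. My first attempt would be explicit harmonic analysis on $X=G/K$, following Olbrich and Lohou\'e--Mehdi.
\begin{itemize}
\item Decompose $L^2$ $q$-forms on $X$ along the Plancherel formula.
\item The relevant piece is the fundamental series $\mathrm{Ind}_{P}^G(\sigma\otimes e^{i\nu})$, with $\nu\in\mathfrak a^*\cong\R^\ell$ and $P$ a fundamental parabolic. As $\nu\to0$ these representations degenerate to the $A_{\mathfrak q}(0)$.
\item On each such series the operator $d$ acts on $K$-types by a scalar of size $|\nu|$, and the Plancherel density is polynomial near $\nu=0$.
\item For $q$ strictly inside the interval, choose vectors concentrated near $\nu=0$ in the images of the relevant $K$-types. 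These give an almost kernel of $d_q$ orthogonal to $\ker d_q$, hence torsion in degree $q$ by Lemma~\ref{lemma:non-Hausdorff characterization} (with $G$ in place of $\Gamma$).
\end{itemize}
The delicate point is the endpoint degrees. To see which of $d_q$ and $d_{q+1}$ fails to have closed range there, I would match the $\wedge^\ast\mathfrak a^*$ structure of $H^\ast(\mathfrak a)$ with the Koszul-type complex in the variable $\nu$ obtained by differentiating the family at $\nu=0$. In that complex, only the top and bottom degrees fail to produce almost kernels.
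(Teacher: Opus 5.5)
The paper gives no proof of this statement. It is imported from Cheeger--Gromov, Lohou\'e--Mehdi and Olbrich and used as a black box, in the lattice corollaries and in the upper bound of Theorem~\ref{intro:Lie-m}. Your route is therefore genuinely different.

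\textbf{What your route does.} You rederive part~(1) and the ``only if'' half of part~(2) from the paper's own results: Theorem~\ref{thm: reduced cohom}, and Theorem~\ref{main1} (or its $\Gamma$-version transported through Shapiro, which does work). You combine these with Harish-Chandra's criterion and with the Vogan--Zuckerman/Borel--Wallach description of cohomological discrete series and tempered $A_{\mathfrak q}(0)$. None of these ingredients depends on the quoted theorem, so there is no circularity. What this buys is an explanation of the degree range purely from where the tempered cohomological representations sit in $\mathrm{supp}(\lambda_G)$. As you say, the ``if'' half of~(2) still needs explicit $L^2$ harmonic analysis. For the ``if'' direction of~(1), say explicitly that when $\ell(G)=0$ the module $A_{\mathfrak b}(0)$ is a discrete series representation with the infinitesimal character of the trivial representation. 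Mere existence of some discrete series is not enough.

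\textbf{You have the difficulty in part~(2) backwards.} Put $R=(d-\ell)/2$. When $\ell>0$, $\lambda_G$ has no atoms, so every tempered $A_{\mathfrak q}(0)$ lies in $\mathrm{supp}(\lambda_G)'$.
\begin{itemize}
\item At $q=R$, Theorem~\ref{main2} gives torsion in degree $R$ or $R+1$, and your necessity argument excludes $R$.
\item At $q=R+\ell$, it gives $R+\ell$ or $R+\ell+1$, and necessity excludes $R+\ell+1$.
\end{itemize}
So the endpoint degrees come for free. Analysis is only needed for the interior degrees $R+2,\dots,R+\ell-1$ when $\ell\ge 3$, which is exactly the four-successive-degrees situation of Remark~\ref{rem:4-degrees}.

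\textbf{A cleaner analytic step.} Use Kuga's lemma instead of a first-order ``Koszul'' expansion. On the fundamental-series block $\mathrm{Ind}_P^G(\sigma\otimes e^{i\nu})$ relevant near $\nu=0$, the Laplacian acts by a positive multiple of $|\nu|^2$. Hence the differential vanishes identically at $\nu=0$, and all its nonzero singular values are proportional to $|\nu|$. The complex is exact for $\nu\neq 0$, and its cochain dimensions equal the $\nu=0$ cohomology $m\binom{\ell}{q-R}$, where $m$ is the number of cohomological constituents at $\nu=0$. This forces the generic rank of $d_q$ to be $m\binom{\ell-1}{q-R-1}$, which is positive exactly for $q\in[R+1,R+\ell]$.

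So the endpoint behaviour is asymmetric: no torsion at $R$, torsion at $R+\ell$, none at $R+\ell+1$. Your closing sentence about ``top and bottom degrees'' must be read this way. It must not be read as excluding $(d+\ell)/2$, which would contradict the statement.

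\textbf{A transfer step you skip.} Lemma~\ref{lemma:non-Hausdorff characterization} has no literal analogue for $G$, because there is no finite Hilbert cochain complex of bounded operators. You have to run the argument on $L^2$-forms on $X$. You then transfer to $H^q(\Gamma;\lambda_\Gamma)$ via the equality of analytic and combinatorial Novikov--Shubin invariants (Efremov, Gromov--Shubin), as the cited sources do.
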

 We will now prove our results regarding the abundance of cohomological representations for higher-rank lattices.
\begin{theorem}[Theorem \ref{intro:latt corollary 1}]
    Let $G$ be a higher-rank simple Lie group which is center free and $\Gamma$ a uniform lattice in $G$. Denote by $d=\mathrm{dim}(G/K)$ the dimension of the symmetric space and by $\ell(G)=\mathrm{rank}_\mathbb{C}(G)-\mathrm{rank}_\mathbb{C}(K)$ the fundamental rank. Assume that $\ell(G)>0$. If $\pi$ is a unitary representation of $\Gamma$ that is not a finite sum of amplifications of finite dimensional representations, Then $\pi$ is cohomological. Moreover, $\torH^q(\Gamma;\pi) \ne 0$ for every $q \in [\frac{d-\ell(G)}{2}+1,\frac{d+\ell(G)}{2}]$.
\end{theorem}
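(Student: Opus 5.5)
Our plan is to show that the regular representation $\lambda_\Gamma$ is weakly contained in $\pi$, and then to transfer the torsion of $\lambda_\Gamma$ to $\pi$ using Proposition~\ref{prop:non-red is topological finit prop}. By Example~\ref{ex:lattices finite type}, $\Gamma$ is of type $\mathrm{FP}_\infty(\Q)$, so the results of this section apply.

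\textbf{Step 1: torsion of $\lambda_\Gamma$.} By Theorem~\ref{thm:l2 invariants}(1), since $\ell(G)>0$ we have $\bar{H}^q(\Gamma;\lambda_\Gamma)=0$ in every degree. By part (2), $H^q(\Gamma;\lambda_\Gamma)$ has non-zero torsion for every $q\in[\frac{d-\ell(G)}{2}+1,\frac{d+\ell(G)}{2}]$. Hence for such $q$,
\[
H^q(\Gamma;\lambda_\Gamma)=\torH^q(\Gamma;\lambda_\Gamma)\neq 0 .
\]
This is exactly the hypothesis of Proposition~\ref{prop:non-red is topological finit prop} with $\rho=\lambda_\Gamma$. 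It therefore suffices to prove $\lambda_\Gamma\prec\pi$; the proposition then gives $\torH^q(\Gamma;\pi)\neq0$ for each such $q$, and in particular $\pi$ is cohomological.

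\textbf{Step 2: $\lambda_\Gamma\prec\pi$, the main obstacle.} We would use a dichotomy for irreducible unitary representations of higher-rank lattices in center-free simple Lie groups. The operator-algebraic superrigidity / character rigidity of Bekka and Peterson shows that the reduced $C^*$-algebra $C^*_r(\Gamma)$ is a quotient of $C^*(\Gamma)$ whose complement in $\widehat\Gamma$ consists only of finite dimensional points. More precisely, every unitary representation $\sigma$ of $\Gamma$ decomposes as $\sigma=\sigma_{\mathrm{fin}}\oplus\sigma'$, where $\sigma_{\mathrm{fin}}$ is a sum of finite dimensional representations and $\sigma'$ is weakly equivalent to $\lambda_\Gamma$. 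This uses Peterson's character rigidity: every extremal character is either finite dimensional or the regular character $\delta_e$. It also uses that $\Gamma$ has property (T), which in higher rank makes finite dimensional representations isolated and prevents non-trivial weak limits of them from escaping.

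We then apply this to $\pi$. If $\sigma'\neq0$, then $\lambda_\Gamma\prec\pi$. If $\sigma'=0$, then $\pi$ is a direct sum of finite dimensional irreducibles. By property (T) together with Margulis arithmeticity, the set of isomorphism types occurring is finite in each dimension, but infinitely many distinct types could still occur. Infinitely many distinct finite dimensional representations accumulate, via their matrix coefficients and character rigidity, at the regular character; Bekka's argument gives that $\lambda_\Gamma$ is weakly contained in any infinite family of pairwise non-isomorphic finite dimensional irreducibles. So if $\pi$ is not a finite sum of amplifications of finite dimensional representations, either $\sigma'\ne0$ or infinitely many types appear, and in both cases $\lambda_\Gamma\prec\pi$.

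I expect Step 2 to be the hard part. The first thing to try is precisely this citation of character rigidity, with the accumulation claim justified as follows. Take normalized characters $\chi_n$ of the distinct finite dimensional irreducibles occurring in $\pi$, and pass to a weak-$^*$ limit character $\chi$. Any such limit is a trace, and by character rigidity it decomposes into finite dimensional characters and $\delta_e$. Distinctness and property (T) isolation rule out a finite dimensional limit component, so $\chi=\delta_e$, which yields $\lambda_\Gamma\prec\bigoplus\pi_n\le\pi$.
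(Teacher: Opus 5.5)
Your architecture is the paper's: reduce to $\lambda_\Gamma\prec\pi$, then combine Theorem~\ref{thm:l2 invariants} with Proposition~\ref{prop:non-red is topological finit prop}. Step~1 is correct. You are right to use part~(1) of Theorem~\ref{thm:l2 invariants} to see that $H^q(\Gamma;\lambda_\Gamma)$ is purely torsion, which the proposition requires.

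\textbf{The gap: the weakly mixing case of Step~2.} The decomposition you attribute to Bekka--Peterson, with $\sigma'$ \emph{weakly equivalent} to $\lambda_\Gamma$, is false. So is the variant that every infinite-dimensional point of $\widehat\Gamma$ lies in the reduced dual.

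A counterexample: take a free subgroup $F<\Gamma$ (Tits alternative); it has infinite index.
\begin{itemize}
\item $\ell^2(\Gamma/F)$ has no finite-dimensional subrepresentation. One would give a non-zero invariant vector in $\ell^2(\Gamma/F\times\Gamma/F)$, hence a finite orbit $\Gamma/(F\cap gFg^{-1})$, which is impossible.
\item Yet $\ell^2(\Gamma/F)\not\prec\lambda_\Gamma$. Indeed $\delta_F$ is $F$-invariant and $\lambda_\Gamma|_F$ is a multiple of $\lambda_F$, so this would give $1_F\prec\lambda_F$.
\end{itemize}

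More fundamentally, character rigidity alone says nothing about $\sigma'$. It classifies traces, and $\sigma'(\Gamma)''$ need not carry one.

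What you actually need is only the one-sided statement $\lambda_\Gamma\prec\sigma'$ for non-zero weakly mixing $\sigma'$. The missing ingredient is \emph{charmenability} (Boutonnet--Houdayer):
\begin{itemize}
\item The weak-$^*$ compact convex set of states of $C^*_{\sigma'}(\Gamma)$ is invariant under conjugation by $\Gamma$, so it contains a character $\phi$. The GNS representation of $\phi$ is weakly contained in $\sigma'$.
\item Under property (T), weak containment of a finite-dimensional irreducible forces actual containment, so a weakly mixing representation weakly contains none of them.
\item Only now does character rigidity force $\phi=\delta_e$, that is, $\lambda_\Gamma\prec\sigma'$.
\end{itemize}
This is \cite[Corollary~D]{BoutHoud}, which the paper invokes for exactly this case.

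\textbf{The finite-dimensional case.} Your character-limit argument, for $\pi$ a sum of finite-dimensional irreducibles with infinitely many isomorphism types, is a valid by-hand substitute for the paper's appeal to \cite[Theorem~1.3]{LevitSlutskiVigdorovich}. However, ``distinctness and isolation rule out a finite-dimensional component'' needs a tail argument.
\begin{itemize}
\item Suppose a finite-dimensional $\rho$ occurs with positive weight in the limit character $\chi$. Then $\rho$ sits inside the GNS representation of $\chi$, which is weakly contained in $\bigoplus_{n\ge N}\pi_n$ for \emph{every} $N$.
\item Property (T) upgrades this to $\rho\cong\pi_n$ for some $n\ge N$, for every $N$. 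This contradicts pairwise non-isomorphism.
\item For a single $N$ you only get $\rho\cong\pi_{n_0}$, which is no contradiction.
\end{itemize}
Finally, finiteness of the number of irreducibles of a given dimension follows from property (T) alone; arithmeticity is not needed.
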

\begin{proof}
    The infinite dimensional representation $\pi$ always weakly contains the regular representation $\lambda_\Gamma$. If $\pi$ contains some weakly mixing representation this is \cite[Corollary~D]{BoutHoud}. If $\pi$ does not contain a weakly mixing representation, our assumption implies that $\pi$ contains a representation of the form $\bigoplus \pi_i$, where the $\pi_i$ are finite dimensional and $\dim(\pi_i) \rightarrow \infty$. In this case, $\pi$ weakly contains the regular representation by Charmenability and property $(T)$ \cite[Theorem~1.3]{LevitSlutskiVigdorovich}. By Theorem \ref{thm:l2 invariants}, $H^q(\Gamma;\lambda_{\Gamma})$ has non-zero torsion for $q \in [\frac{\mathrm{dim(G/K)}-\ell(G)}{2}+1,\frac{\mathrm{dim(G/K)}+\ell(G)}{2}]$. Therefore Proposition \ref{prop:non-red is topological finit prop} yields that $H^q(\Gamma;\pi)$ also has non-zero torsion in this range. 
\end{proof}
\begin{theorem}[Theorem \ref{intro:latt corollary 2}]
    Let $G$ be a higher-rank simple Lie group which is center free, or a higher-rank simple $p$-adic group. If $G$ is real, assume that it has discrete series representations, namely that the fundamental rank is zero. Let $\Gamma$ be a lattice in $G$. Then, $H^*(\Gamma;\pi) \ne 0$ for every unitary representation $\pi$. In particular, if $G$ is $p$-adic, then $H^{\mathrm{rank}(G)}(\Gamma;\pi)\ne 0$ for every  non-trivial unitary representation $\pi$.
\end{theorem}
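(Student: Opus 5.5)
The plan is to split according to whether $\pi$ is ``large'' (weakly contains $\lambda_\Gamma$) or ``small'' (a finite sum of amplifications of finite dimensional representations). The large case is handled by the $\ell^2$-cohomology of $\Gamma$ together with Theorem~\ref{thm:main thm finit prop 2}; the small case by an Euler characteristic argument.

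\emph{Large case.} Assume $\pi$ is not a finite sum of amplifications of finite dimensional representations. Exactly as in the proof of Theorem~\ref{intro:latt corollary 1}, I would show that $\pi\succ\lambda_\Gamma$. If $\pi$ contains a weakly mixing subrepresentation this is \cite[Corollary~D]{BoutHoud}. Otherwise $\pi\supseteq\bigoplus\pi_i$ with $\pi_i$ finite dimensional and $\dim\pi_i\to\infty$, and one uses charmenability and property (T) \cite[Theorem~1.3]{LevitSlutskiVigdorovich}. I expect to have to check that both results apply to non-uniform real lattices and to $p$-adic lattices, and I take this verification to be the main obstacle of the large case.

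In the real case with $\ell(G)=0$, Theorem~\ref{thm:l2 invariants}(1) gives $\bar H^{d/2}(\Gamma;\lambda_\Gamma)\neq 0$. In the $p$-adic case, the $\ell^2$-Betti numbers of $\Gamma$ are non-zero exactly in degree $r=\mathrm{rank}(G)$, by Garland/Borel, or by Shapiro's lemma applied to $\lambda_G$, which contains $\mathrm{St}$ as a discrete series summand. Since $\Gamma$ is of type $\mathrm{FP}_\infty(\Q)$ (Example~\ref{ex:lattices finite type}), Theorem~\ref{thm:main thm finit prop 2} applied with $\sigma=\lambda_\Gamma\prec\pi$ gives $H^{q}(\Gamma;\pi)\neq0$ or $H^{q+1}(\Gamma;\pi)\neq 0$, where $q=d/2$, respectively $q=r$. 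In particular $H^*(\Gamma;\pi)\ne0$. In the $p$-adic case, $\Gamma$ acts properly and cocompactly on the $r$-dimensional Bruhat--Tits building; lattices in characteristic zero are uniform. Hence $\Gamma$ has rational cohomological dimension $r$, and the $\mathrm{FP}_\infty(\Q)$ resolution can be taken of length $r$. Therefore $H^{r+1}(\Gamma;\pi)=0$, which forces $H^r(\Gamma;\pi)\ne0$.

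\emph{Small case.} Here $\pi=\bigoplus_{j=1}^k \infty_j\pi_j$ with each $\pi_j$ finite dimensional. By Theorem~\ref{thm:cohom of amplif} it suffices to find a $j$ with $H^*(\Gamma;\pi_j)\ne 0$. For a finite dimensional $\pi_j$ the complex $\pi_j^{m_q}$ is finite dimensional, so
\[\sum_q(-1)^q\dim H^q(\Gamma;\pi_j)=\dim(\pi_j)\,\chi(\Gamma).\]
The assumption $\ell(G)=0$ gives $\chi(\Gamma)\neq0$ in the real case: for uniform $\Gamma$ by Hirzebruch proportionality, and for non-uniform $\Gamma$ by Harder's Gauss--Bonnet theorem. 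In the $p$-adic case $\chi(\Gamma)\ne0$ by Serre's computation of the Euler--Poincar\'e measure. Hence some $H^q(\Gamma;\pi_j)\ne0$, and so $H^*(\Gamma;\pi)\neq 0$ in all cases.

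For the ``in particular'', read ``non-trivial'' as $\pi^\Gamma=0$, so that $H^0(\Gamma;\pi)=0$. In the small case I then need the Garland vanishing $H^q(\Gamma;\pi_j)=0$ for $0<q<r$. I would take this from Theorem~\ref{intro: p-adic}, transferred to $\Gamma$ by Shapiro's lemma (Lemma~\ref{lem: shapiro lemma}) applied to $\mathrm{Ind}_\Gamma^G\pi_j$. Together with $H^{q}(\Gamma;\pi_j)=0$ for $q>r$, the Euler characteristic identity then leaves only degree $r$, so $H^r(\Gamma;\pi)\neq 0$.
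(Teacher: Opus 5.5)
Your argument works and has the same architecture as the paper's proof. Weak containment of $\lambda_\Gamma$ together with Theorem~\ref{thm:main thm finit prop 2} handles the ``infinite'' part, an Euler characteristic count handles finite-dimensional coefficients, and concentration in degree $r$ gives the $p$-adic addendum. The differences are in the details.

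\emph{The case split.} The paper splits according to whether $\pi$ is weakly mixing or contains a finite-dimensional subrepresentation $\pi'$. In the second case it simply works with $\pi'$, since $H^*(\Gamma;\pi')$ is a direct summand of $H^*(\Gamma;\pi)$. Your split (finite sum of amplifications or not) forces you to prove $\pi\succ\lambda_\Gamma$ also for infinite sums of finite-dimensional representations. That costs Wang's finiteness theorem for property (T) groups, to get $\dim\pi_i\to\infty$, plus the Levit--Slutsky--Vigdorovich result. With the paper's split, the step you flag as the main obstacle disappears. Only the Boutonnet--Houdayer input remains, used exactly as the paper uses it. The same direct-summand remark makes Theorem~\ref{thm:cohom of amplif} unnecessary in your small case.

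\emph{Finite-dimensional coefficients.} You use the classical identity $\sum_q(-1)^q\dim H^q(\Gamma;\pi_j)=\dim(\pi_j)\chi(\Gamma)$, with $\chi(\Gamma)\neq 0$ from Hirzebruch proportionality, Harder's Gauss--Bonnet theorem and Serre's Euler--Poincar\'e measure. The paper instead notes that $\Gamma$ has exactly one non-zero $\ell^2$-Betti number (Theorems~\ref{thm:l2 invariants} and~\ref{thm:cass_padic}) and invokes invariance of the Euler characteristic across tracial representations. Both compute the same number. Yours is more elementary on the homological side but imports the Gauss--Bonnet-type results. The paper's recycles the $\ell^2$-input it already needs for the weakly mixing case.

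\emph{The $p$-adic addendum.} Rational cohomological dimension $r$ kills $H^{r+1}$, so Theorem~\ref{thm:main thm finit prop 2} lands in degree $r$. Shapiro's lemma with Theorem~\ref{intro: p-adic} kills degrees $q<r$ for finite-dimensional coefficients. This spells out the paper's one-line remark that, for higher-rank $p$-adic $G$, representations of $\Gamma$ without invariant vectors have cohomology only in degree $r$.

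One caveat, shared with the paper. The identity $\sum_q(-1)^q\dim H^q(\Gamma;\pi_j)=\dim(\pi_j)\chi(\Gamma)$ needs a finite \emph{free} resolution. That is automatic for torsion-free $\Gamma$, but it can fail when $\Gamma$ has torsion. In that case your length-$r$ resolution is only projective. This is harmless for $H^{r+1}=0$ but not for the count, because the twisted Euler characteristic then picks up contributions from torsion elements. The paper's appeal to invariance of tracial Euler characteristics carries the same implicit restriction, so on this point your proof is no weaker than the paper's.
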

\begin{proof}
    If $\pi$ is weakly mixing, then it  weakly contains the left regular representation \cite[Corollary~D]{BoutHoud}, which is cohomological by \ref{thm:l2 invariants}. Therefore $\pi$ is cohomological as well by Theorem \ref{thm:main thm finit prop 2}. Assume now that $\pi$ is not weakly mixing. We may assume without loss of generality that $\pi$ is finite dimensional. As such, it is a \emph{tracial} representation, and one may consider its associated $\ell^2$-betti number (see \cite[Definition~3.3]{FournierFacioSauer}). By Theorem \ref{thm:l2 invariants} for Lie groups and Theorem \ref{thm:cass_padic} for $p$-adic groups, $\Gamma$ has precisely one non-zero $\ell^2$-betti number, and hence $\Gamma$ has non-zero $\ell^2$-Euler characteristic. The $\ell^2$-Euler characteristics associated with different tracial representations coincide (\cite[Proposition~3.5]{FournierFacioSauer}, and we get that some $\ell^2$-betti number associated with $\pi$ is non-zero. This means that $H^q(\Gamma;\pi)\ne 0$ for some $q \geq 0$, as we wanted. If $G$ is a higher-rank $p$-adic group, the only degree where non-trivial representations admit cohomology is at the rank.
\end{proof}

We conclude this section with a complement to Theorem~\ref{thm:l2 invariants} concerning lattices in semisimple Lie groups with infinite center. 

\begin{lemma}\label{lem: non-Hausdorff for amenable central subgroups}
Let $A\hookrightarrow \Gamma\twoheadrightarrow Q$ be a central extension of groups of type $\mathrm{FP}_\infty(\C)$, and assume that $A$ is infinite. 
Assume that $H^i(Q, \lambda_Q)=0$ for $i\le n-1$ and $H^{n}(Q,\lambda_Q)\ne 0$. 
Then $H^{n}(\Gamma, \lambda_\Gamma)\ne 0$ or $H^{n+1}(\Gamma, \lambda_\Gamma)\ne 0$. If, in addition, $H^n(Q, \lambda_Q)$ is purely torsion, then $H^{n}(\Gamma, \lambda_\Gamma)\ne 0$. 
Furthermore, $H^\ast(\Gamma, \lambda_\Gamma)$ is purely torsion in all degrees. 
\end{lemma}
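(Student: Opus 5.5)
The strategy is to decompose $\lambda_\Gamma$ along the characters of the central subgroup $A$. Only the trivial character can carry cohomology, and it carries no Haar measure; this gives pure torsion. Weak containment of the trivial-character piece, which is $\lambda_Q$ inflated to $\Gamma$, then produces non-vanishing through Theorem~\ref{thm:main thm finit prop 2} and Proposition~\ref{prop:non-red is topological finit prop}.

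Since $\Gamma$ is of type $\mathrm{FP}_\infty$, the subgroup $A$ is a finitely generated infinite abelian group, so $\widehat A$ is a compact group with non-atomic Haar measure. Because $A$ is central, $\lambda_\Gamma=\Ind_A^\Gamma\lambda_A$ disintegrates as $\int^\oplus_{\widehat A}\pi_\chi\,d\chi$, where $\pi_\chi=\Ind_A^\Gamma\chi$ is the space of functions $f$ with $f(ag)=\chi(a)f(g)$.

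\textbf{Pure torsion.} For $\chi\neq 1$, the central element $a$ acts on $H^*(\Gamma;\pi_\chi)$ both trivially (inner action of a central element) and by the scalar $\chi(a)\neq 1$. Hence $H^*(\Gamma;\pi_\chi)=0$. The trivial character is a null set, so $\bar H^q(\Gamma;\pi_\chi)=0$ almost everywhere, and Proposition~\ref{prop:red cohom direct integral} gives $\bar H^q(\Gamma;\lambda_\Gamma)=0$ in every degree. This is the ``furthermore'' part.

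\textbf{Weak containment.} Let $\sigma=\pi_1=\lambda_Q\circ p$. Since $A$ is amenable, $1_A\prec\lambda_A$. Continuity of induction then gives $\sigma=\Ind_A^\Gamma 1_A\prec\Ind_A^\Gamma\lambda_A=\lambda_\Gamma$.

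\textbf{Cohomology of $\sigma$.} The goal is to show $H^n(\Gamma;\sigma)\neq 0$, and that it is purely torsion when $H^n(Q;\lambda_Q)$ is. I would use the Hochschild--Serre spectral sequence for the central extension. Since $A$ acts trivially on $\sigma$, its $E_2$ page is $E_2^{p,q}=H^p(Q;\lambda_Q)\otimes H^q(A;\C)$. This vanishes for $p\le n-1$, so in total degree $n$ only $E_2^{n,0}=H^n(Q;\lambda_Q)$ survives, and inflation gives $H^n(Q;\lambda_Q)\cong H^n(\Gamma;\sigma)$.

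To keep the topology under control, I would avoid the full spectral sequence. Instead, reduce to $A$ finite-by-$\Z^r$: the finite part is harmless, since one can pass to the quotient by it without changing $\lambda$-type cohomology up to isomorphism. Then induct on $r$, one central $\Z$ at a time. For a central $\Z$ there is the Gysin long exact sequence
\[ \cdots\to H^{k-2}(Q;V)\to H^k(Q;V)\to H^k(\Gamma;V)\to H^{k-1}(Q;V)\to\cdots, \]
realized by a map of the finite Hilbert cochain complexes of \S\ref{sec3}, namely a mapping cone of the cup product with the Euler class. With the vanishing of $H^{k}(Q;\lambda_Q)$ for $k\le n-1$, this gives that inflation $H^n(Q;\lambda_Q)\to H^n(\Gamma;\sigma)$ is a continuous linear bijection. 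The vanishing hypothesis must also propagate through the induction: the same sequence gives $H^k(\Gamma';\sigma)=0$ for $k\le n-1$ at each intermediate stage.

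\textbf{Conclusion.} Theorem~\ref{thm:main thm finit prop 2} with $\sigma\prec\lambda_\Gamma$ yields $H^n(\Gamma;\lambda_\Gamma)\neq0$ or $H^{n+1}(\Gamma;\lambda_\Gamma)\neq0$. If $H^n(\Gamma;\sigma)$ is purely torsion, Proposition~\ref{prop:non-red is topological finit prop} gives $\torH^n(\Gamma;\lambda_\Gamma)\neq0$.

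\textbf{Main obstacle.} The delicate step is showing that the inflation isomorphism is \emph{topological}, so that pure torsion transfers from $Q$ to $\sigma$. A mere algebraic bijection would not suffice for the last step. I would derive it from the cone construction: the relevant map of Hilbert cochain complexes is a chain homotopy equivalence in the relevant degrees, hence induces a homeomorphism on cohomology, and in particular preserves closures of coboundaries.
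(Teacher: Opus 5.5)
Your proposal is correct. For the non-vanishing claims it follows the paper's argument. First, $1_A\prec\lambda_A$ and continuity of induction give $\lambda_Q\circ p\prec\lambda_\Gamma$. Next, the Hochschild--Serre spectral sequence, with $E_2^{p,q}\cong H^p(Q;\lambda_Q)\otimes H^q(A;\C)$ vanishing for $p\le n-1$, identifies $H^n(\Gamma;\lambda_Q\circ p)$ with $H^n(Q;\lambda_Q)$. Finally, Theorem~\ref{thm:main thm finit prop 2} (resp.\ Proposition~\ref{prop:non-red is topological finit prop}) finishes.

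You genuinely differ in the ``furthermore'' part. The paper invokes Cheeger--Gromov, i.e.\ the vanishing of all $\ell^2$-Betti numbers in the presence of an infinite amenable normal subgroup, and reads this as $\bar H^*(\Gamma;\lambda_\Gamma)=0$. You instead disintegrate $\lambda_\Gamma=\int^\oplus_{\widehat A}\Ind_A^\Gamma\chi\,d\chi$. For $\chi\neq 1$, a central $a$ with $\chi(a)\neq1$ acts on $H^*(\Gamma;\Ind_A^\Gamma\chi)$ both trivially and by $\chi(a)$, so this cohomology vanishes. You then conclude with Proposition~\ref{prop:red cohom direct integral}, since $\{1\}$ is Haar-null. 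This stays entirely inside the paper's toolkit and amounts to a proof of the central case of Cheeger--Gromov. It also shows that the only possible source of cohomology is the trivial-character fibre $\lambda_Q\circ p$, which is exactly the piece used for non-vanishing. The paper's route is shorter but black-boxes a deep theorem. A small point: finite generation of $A$ comes from $A$ itself being of type $\mathrm{FP}_\infty(\C)$, which is part of the hypothesis, not from $\Gamma$. Also, non-atomicity of Haar measure on $\widehat A$ only needs $A$ infinite.

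You are right that the paper tacitly uses that pure torsion passes from $H^n(Q;\lambda_Q)$ to $H^n(\Gamma;\lambda_Q\circ p)$. However, your Gysin/mapping-cone induction is more than this requires, and saying a bijection ``would not suffice'' overstates the difficulty. The edge isomorphism of the spectral sequence is inflation, which is induced by a continuous cochain map. A continuous surjection $f$ satisfies $f(\overline{\{0\}})\subseteq\overline{\{0\}}$. So if $H^n(Q;\lambda_Q)=\overline{\{0\}}$, then $H^n(\Gamma;\lambda_Q\circ p)$ is purely torsion, which is all Proposition~\ref{prop:non-red is topological finit prop} needs.

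If you still want the homeomorphism, make ``chain homotopy equivalence in the relevant degrees'' precise as follows. Vanishing of $H^{n-1}$ of the intermediate group means the differential into degree $n-1$ maps onto the closed cocycle space, so it has a bounded right inverse by the open mapping theorem. Together with the vanishing of $H^{n-2}$, this gives a well-defined continuous inverse in degree $n$.
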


\begin{example}
The following example shows that in the previous statement it is possible that $H^n(\Gamma, \lambda_\Gamma)=0$ if $H^n(Q, \lambda_Q)$ is not purely torsion. 
Let $Q=\pi_1(\Sigma_g)$ be the fundamental group of a surface of genus $g\ge 2$. The unit sphere bundle $ST\Sigma_g$ over $\Sigma_g$ is an orientable $S^1$-bundle over $\Sigma_g$. Let $\Gamma=\pi_1(ST\Sigma_g)$. On the level of fundamental groups, one obtains a central extension $\Z\hookrightarrow \Gamma\twoheadrightarrow Q$. Then $H^1(\Gamma, \lambda_\Gamma)=0$ since $\Gamma$ is non-amenable and has vanishing first $\ell^2$-Betti number by the Cheeger-Gromov theorem. Moreover, $H^1(Q,\lambda_Q)=\bar H^1(Q, \lambda_Q)\ne 0$ by non-amenability and a positive first $\ell^2$-number of $\Sigma_g$. 
\end{example}

\begin{proof}
The trivial representation of $A$ is weakly contained in $\lambda_A$, hence by continuity of unitary induction $\lambda_Q=\lambda_{\Gamma/A}$ -- viewed as a $\Gamma$-representation -- is weakly contained in $\lambda_{\Gamma}$. 

Next we consider the Hochschild-Serre spectral sequence for our group extension with coefficients in $\lambda_Q$. 
The $E^2$-term is given by 
\[
E_2^{p,q}=H^p(Q, H^q(A, \lambda_Q)).
\]	
Since the $A$-action on $\ell^2(Q)$ is trivial, we have 
$H^q(A, \lambda_Q)\cong H^q(A,\C)\otimes_\C \ell^2(Q)$ and the $Q$-action on this module is the diagonal action of $Q$ on $H_q(A,\C)$, which is trivial by centrality, and the canonical $Q$-action on $\ell^2(Q)$. So $H^q(A,\lambda_Q)$ is just a finite-dimensional amplification of the regular representation of~$Q$. 
By assumption the first $n$ vertical lines $E_2^{i,\ast}$ for $i\in\{0,1,\dots, n-1\}$ vanish. 
This implies $H^i(\Gamma, \lambda_Q)=0$ for $i\le n-1$. Further, 
$H^n(Q, \lambda_Q)\cong E_2^{n,0}\cong E_\infty^{n,0}$ since every differential $d^r\colon E_r^{n-r,r-1}\to E_r^{n,0}$, $r\ge 2$, starts from a zero entry. 
So we obtain that 
\[H^n(\Gamma, \lambda_Q)\cong \bigoplus_{i=0}^n E_\infty^{i, n-i}\cong E_\infty^{n,0}\cong H^n(Q,\lambda_Q)\ne 0.\]
Now the statement follows from Theorem~\ref{thm:main thm finit prop 2} and the fact that $\lambda_\Gamma$ weakly contains $\lambda_Q$. If $H^n(Q,\lambda_Q)$ is purely torsion, we use Proposition~\ref{prop:non-red is topological finit prop} instead. 

The $\ell^2$-Betti numbers of $\Gamma$ vanish by the Cheeger-Gromov theorem~\cite[Corollary~6.75 on p.~275]{LuckBook}. Hence $H^\ast(\Gamma, \ell^2(\Gamma))$ is purely torsion. 
\end{proof}

For lattices in semisimple Lie groups with infinite center we complement Theorem~\ref{thm:l2 invariants} based 
on Lemma~\ref{lem: non-Hausdorff for amenable central subgroups}. The following result is certainly not optimal. 

\begin{theorem}
Let $G$ be a connected semisimple Lie group with finite center and without compact factors. Assume that $\pi_1(G)$ is infinite. Let $\Gamma$ be a lattice in its universal cover~$\widetilde G$. 
\begin{enumerate}
\item If $\Gamma$ is uniform and $\ell(G)>0$, then $H^q(\Gamma, \lambda_\Gamma)\ne 0$ for $q=\frac{\mathrm{dim(G/K)}-\ell(G)}{2}+1$. 
\item If $\Gamma$ is uniform and $\ell(G)=0$, then $H^q(\Gamma, \lambda_\Gamma)\ne 0$ for $q=\frac{\mathrm{dim}(G/K)}{2}$ or $q=\frac{\mathrm{dim}(G/K)}{2}+1$. 
\item If $\Gamma$ is non-uniform and $\ell(G)=0$, then $H^q(\Gamma, \lambda_\Gamma)\ne 0$ for some $q\in [2, \frac{\mathrm{dim}(G/K)}{2}+1]$. 
\end{enumerate} 
Morever, $H^\ast(\Gamma, \lambda_\Gamma)$ is purely torsion in all degrees. 
\end{theorem}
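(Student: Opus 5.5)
The plan is to reduce everything to Lemma~\ref{lem: non-Hausdorff for amenable central subgroups}, applied to the central extension coming from the center of $\widetilde G$. Let $p\colon\widetilde G\to G$ be the covering. The kernel $\pi_1(G)$ is central in $\widetilde G$ and infinite; it is finitely generated abelian, since $G$ has finite center and $Z(\widetilde G)$ is finitely generated. Set $A=\Gamma\cap Z(\widetilde G)$ and $Q=p(\Gamma)$.

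The first step is to check that, after passing to a finite index subgroup of $\Gamma$, $A$ is infinite and $Q$ is a lattice in $G$ (or in $G/Z(G)$). The standard argument is this: $\Gamma Z(\widetilde G)$ is discrete modulo $Z(\widetilde G)$ for lattices in groups without compact factors, by Borel density, so $\Gamma\cap Z(\widetilde G)$ has finite index in $Z(\widetilde G)$. In particular $A$ is infinite. Moreover $Q$ is a lattice in $\widetilde G/Z(\widetilde G)$, uniform exactly when $\Gamma$ is. Both $A$ and $Q$ are of type $\mathrm{FP}_\infty(\Q)$: $A$ because it is finitely generated abelian, and $Q$ by Example~\ref{ex:lattices finite type}.

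Passing to a finite index subgroup is harmless. Shapiro's lemma (or restriction–corestriction) shows that $H^*(\Gamma';\lambda_{\Gamma'})\cong H^*(\Gamma;\Ind\lambda_{\Gamma'})$, and $\Ind\lambda_{\Gamma'}=\lambda_\Gamma$. We may also replace $\Gamma$ by a torsion-free finite index subgroup so that $Q$ is torsion-free. The extension is then central with $A$ infinite. Note that $\widetilde G/Z(\widetilde G)=G/Z(G)$ has the same symmetric space and the same $\ell$.

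The second step is to determine the first degree $n$ with $H^n(Q;\lambda_Q)\neq 0$, and whether that class is purely torsion. This uses Theorem~\ref{thm:l2 invariants}.

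In case (1), $\ell>0$ and $Q$ is uniform. There is no Hausdorff part, and torsion appears first at $n=\frac{d-\ell}{2}+1$. Below $n$ the cohomology vanishes: it has neither Hausdorff part nor torsion, hence is zero. So $H^n(Q;\lambda_Q)$ is purely torsion, and the lemma gives $H^n(\Gamma;\lambda_\Gamma)\neq0$.

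In case (2), $\ell=0$ and $Q$ is uniform. Then $n=d/2$, where the cohomology is Hausdorff and nonzero. Below $d/2$ there is no torsion, since the torsion range in Theorem~\ref{thm:l2 invariants} is empty for $\ell=0$, and no Hausdorff part either. The lemma yields $H^{d/2}$ or $H^{d/2+1}$ of $\Gamma$ nonzero.

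In case (3), $Q$ is non-uniform and $\ell=0$. Here only part (1) of Theorem~\ref{thm:l2 invariants} is available, so $\bar H^{d/2}(Q;\lambda_Q)\neq0$. Let $n\le d/2$ be the first degree with $H^n(Q;\lambda_Q)\ne0$. We have $n\ge1$ because $Q$ is infinite, so $H^0=0$. In fact $n\ge2$: property (T) holds when $G$ has higher rank or is of type $\mathrm{Sp}(k,1)$ or $F_4$; otherwise nonamenability with vanishing first $\ell^2$-Betti number gives $H^1(Q;\lambda_Q)=0$, as in the Example. The lemma then gives a nonzero degree in $[n,n+1]\subseteq[2,\frac d2+1]$.

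The main obstacle is this lower bound $n\ge2$ in case (3). $H^1(Q;\lambda_Q)=0$ needs both $\bar H^1=0$ and Hausdorffness. Hausdorffness is non-amenability of $Q$. The vanishing of $\bar H^1$ is $b_1^{(2)}(Q)=0$, which holds unless $d=2$, i.e.\ $G$ locally $\SL_2(\R)$. In that case $\widetilde G$ has lattices, but non-uniform lattices in $\PSL_2(\R)$ are virtually free, with $b^{(2)}_1>0$. I would handle this by noting that then $\ell=0$ and $d/2+1=2$, so the claim needs $H^2(\Gamma;\lambda_\Gamma)\ne0$. For that I would use $n=1$ and the lemma, which gives degree $1$ or $2$, and then rule out degree $1$ by non-amenability of $\Gamma$ together with $b_1^{(2)}(\Gamma)=0$ (Cheeger–Gromov for infinite amenable normal subgroups). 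Possibly the statement implicitly excludes this case, in which case I would restrict to $d>2$.

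Finally, pure torsion in all degrees follows from the last assertion of Lemma~\ref{lem: non-Hausdorff for amenable central subgroups}, i.e.\ Cheeger–Gromov, since $A$ is an infinite amenable normal subgroup.
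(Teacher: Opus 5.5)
Your proposal is correct and follows the paper's route. You realize $\Gamma$ as a central extension, by an infinite subgroup of the center, of a lattice in $G$ (using $G/Z(G)$ instead is equivalent), then combine Lemma~\ref{lem: non-Hausdorff for amenable central subgroups} with Theorem~\ref{thm:l2 invariants}, and get the pure-torsion claim from Cheeger--Gromov.

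The one step you only assert is the discreteness of the image of $\Gamma$ modulo the center, and this is exactly what the paper proves. It uses Borel density together with the continuous commutator-lift map $(g,h)\mapsto[\tilde g,\tilde h]$, which turns non-commuting elements of the image near $1$ into nontrivial elements of $\Gamma$ near $1$. Your treatment of the lower bound $2$ in (3) is more careful than the paper's. It can be made uniform by noting that $H^0(\Gamma;\lambda_\Gamma)=H^1(\Gamma;\lambda_\Gamma)=0$, since $\Gamma$ is infinite and non-amenable and its cohomology is purely torsion.
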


\begin{proof}
Let $\pi\colon\widetilde G\to G$.
Because of lack for a reference, we show that every lattice $\Gamma<\widetilde G$ is a central extension of a lattice $\Lambda<G$. To this end, it suffices to show that $\Lambda=\pi(\Gamma)$ is discrete in $G$~\cite[Theorem~1.13 on p.~23]{raghunathan}. Suppose that $\pi(\Gamma)$ is not discrete, and let $H\subset G$ be the topological closure of $\pi(\Gamma)$. By the Borel density theorem~\cite[Section~5]{raghunathan}, $\Gamma$ is Zariski dense 
in~$\mathrm{Ad}(G)$, hence $\Lambda$ is Zariski dense in~$G$. In particular, $\Lambda$ and $H$ are not abelian. 
The commutator lift map $f\colon G\times G\to \widetilde G, (g,h)\mapsto [\tilde g,\tilde h]$ is well-defined and continuous. There are non-commuting $\lambda, \lambda'\in \Lambda$ arbitrarily close to the identity (otherwise $H$ had to be abelian), and so $1\ne f(\lambda, \lambda')\in\Gamma$ is arbitrarily close to the identity, which contradicts the discreteness of $\Gamma$. The lattice $\Gamma<\widetilde G$ is uniform if and only if the lattice $\Lambda<G$ is uniform. 

The first and second part of the theorem follow from Lemma~\ref{lem: non-Hausdorff for amenable central subgroups} and Theorem~\ref{thm:l2 invariants}. The third part follows similary using that a non-uniform lattice in~$G$ has the same non-vanishing of $\ell^2$-Betti numbers as a uniform lattice in~$G$ by Gaboriau's theorem~\cite{gaboriau}. 
\end{proof}

\subsection{$\ell^2$-invisibility}We use a modification of the proof of Lemma~\ref{lem: non-Hausdorff for amenable central subgroups} to make some slight progress in a question about $\ell^2$-invariants. 
One calls a group $\Gamma$ \emph{$\ell^2$-invisible} if $H^\ast(\Gamma, \lambda_\Gamma)$ vanishes in all degrees. A group of type $\mathrm{FP}_\infty(\C)$ is $\ell^2$-invisible if all its $\ell^2$-Betti numbers vanish and all its Novikov-Shubin invariants have the value~$\infty^+$. The following conjecture by Gromov was shown to follow from the Novikov conjecture by Yu~\cite{Yu}. See~\cite{Lott} or~\cite[Chapter~12]{LuckBook} for more background. 

\begin{conjecture}[Zero-in-the-spectrum conjecture]
    The fundamental group of a closed aspherical manifold is not $\ell^2$-invisible.
\end{conjecture}

Apart from manifold groups the following, more general question from~\cite{SauerThumann} is wide open. 
We emphasize that there are $\ell^2$-invisible groups of type $\mathrm{F}_\infty$ (but infinite cohomological dimension). Thompson's group~V is such an example~\cite{SauerThumann}. 

\begin{question}
    Is there an $\ell^2$-invisible group of type $\mathrm{F}$? 
\end{question}

The following result relies on a modification of the proof of Lemma~\ref{lem: non-Hausdorff for amenable central subgroups}. It shows that amenable extensions of groups with a non-vanishing $\ell^2$-Betti numbers (under some mild conditions) are not $\ell^2$-invisible. We will tacitly use some modern technology about $\ell^2$-Betti numbers, like L\"uck's dimension theory, that is not used elsewhere in the paper. 
We refer to L\"uck's book~\cite{LuckBook} as a background reference. 

\begin{theorem}\label{thm: non-Hausdorff for amenable normal subgroups}
Let $A\hookrightarrow \Gamma\twoheadrightarrow Q$ be an extension of groups such that 
\begin{enumerate}
\item $A$ is amenable and all of its Betti numbers are finite, and 
\item $\Gamma$ and $Q$ are of type $\mathrm{FP}_\infty(\C)$, and 
\item $Q$ is sofic or locally indicable. 
\end{enumerate}
If $\Gamma$ is $\ell^2$-invisible, then all $\ell^2$-Betti numbers of $Q$ vanish. 
\end{theorem}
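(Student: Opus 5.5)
We argue by contraposition. Assume some $\ell^2$-Betti number of $Q$ is non-zero, and let $n$ be the smallest degree with $b^{(2)}_n(Q)\neq 0$. We will show that $H^n(\Gamma;\lambda_\Gamma)\neq 0$ or $H^{n+1}(\Gamma;\lambda_\Gamma)\neq 0$, so that $\Gamma$ is not $\ell^2$-invisible. The argument follows the proof of Lemma~\ref{lem: non-Hausdorff for amenable central subgroups} and has three steps.

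\textbf{Step 1: weak containment.} Since $A$ is amenable, the trivial representation of $A$ is weakly contained in $\lambda_A$. By continuity of induction, $\lambda_Q=\Ind_A^\Gamma\mathbf{1}$, viewed as a $\Gamma$-representation, is weakly contained in $\Ind_A^\Gamma\lambda_A=\lambda_\Gamma$. By Theorem~\ref{thm:main thm finit prop 2}, applied to the $\mathrm{FP}_\infty$ group $\Gamma$, it therefore suffices to show $H^n(\Gamma;\lambda_Q)\neq 0$.

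\textbf{Step 2: the spectral sequence.} We run the Hochschild--Serre spectral sequence, but in L\"uck's algebraic framework. We use coefficients in the von Neumann algebra $\mathcal{N}(Q)$, or equivalently compute $\dim_{\mathcal N(Q)}$ of the cohomology with coefficients in $\ell^2(Q)$, because there the dimension function is additive on exact sequences. The $E_2$-term is
\[
E_2^{p,q}=H^p\bigl(Q;\,H^q(A;\mathbb{C})\otimes_{\mathbb{C}}\mathcal{N}(Q)\bigr).
\]
Here $A$ acts trivially on $\mathcal{N}(Q)$, and $H^q(A;\mathbb{C})$ is finite dimensional by assumption (1). Unlike the central case, $Q$ may act non-trivially on $H^q(A;\mathbb{C})$, so the coefficient module is $\mathcal{N}(Q)$ twisted by a finite-dimensional, possibly non-unitary, representation $\rho_q$ of $Q$.

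\textbf{Step 3: the twisted computation (main obstacle).} We need
\[
\dim_{\mathcal N(Q)} H^p\bigl(Q;\rho_q\otimes\mathcal N(Q)\bigr)=\dim(\rho_q)\cdot b^{(2)}_p(Q).
\]
For unitary $\rho_q$ this is Fell absorption. In general it is exactly where assumption (3) enters: for sofic groups and for locally indicable groups of type $\mathrm{FP}_\infty$, twisting by a finite-dimensional representation multiplies $\ell^2$-Betti numbers by the dimension. This is the twisted $\ell^2$-Betti number results in the spirit of L\"uck, Kielak and Jaikin-Zapirain, which I would quote rather than reprove.

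Granting this, every module $E_2^{p,q}$ with $p<n$ has dimension zero, while $\dim E_2^{n,0}=b^{(2)}_n(Q)>0$.

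\textbf{Conclusion.} The differentials $d_r\colon E_r^{n-r,r-1}\to E_r^{n,0}$ start from subquotients of zero-dimensional modules, so by additivity $\dim E_\infty^{n,0}=\dim E_2^{n,0}>0$. Hence $\dim_{\mathcal N(Q)} H^n(\Gamma;\mathcal N(Q))>0$. Translating back from $\mathcal{N}(Q)$-coefficients to $\ell^2(Q)$-coefficients, using L\"uck's comparison for $\mathrm{FP}_\infty$ groups, gives $H^n(\Gamma;\lambda_Q)\neq 0$, with in fact non-zero Hausdorff part. Step 1 then finishes the proof.

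I expect Step 3 to be the main obstacle, together with the bookkeeping in the passage between $\mathcal{N}(Q)$-modules and the Hilbert-space cohomology used elsewhere in the paper.
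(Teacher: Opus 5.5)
Your argument is correct, but it finishes the proof by a different mechanism than the paper.

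\textbf{The paper's route.} The paper works in homology and uses $\ell^2$-invisibility at the start. By L\"uck's Lemma~12.3, invisibility gives $H_\ast(\Gamma;C_r^\ast(\Gamma))=0$, so the finitely generated projective complex $C_r^\ast(\Gamma)\otimes_{\C[\Gamma]}P_\ast$ is contractible. Base change along $C_r^\ast(\Gamma)\to C_r^\ast(Q)\to\mathcal{N}(Q)$, which exists because $\lambda_Q\prec\lambda_\Gamma$, then gives $H_\ast(\Gamma;\mathcal{N}(Q))=0$. The Hochschild--Serre spectral sequence in homology shows this group has positive $\mathcal{N}(Q)$-dimension in the first degree where $b^{(2)}_\ast(Q)\neq 0$, a contradiction.

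\textbf{Your route.} You run the same spectral sequence in cohomology and get $\dim_{\mathcal{N}(Q)}H^n(\Gamma;\mathcal{N}(Q))>0$. L\"uck's comparison of algebraic and reduced $\ell^2$-cohomology for finitely generated free complexes turns this into $\bar{H}^n(\Gamma;\lambda_Q)\neq 0$. Only then do you use invisibility, through $\lambda_Q\prec\lambda_\Gamma$ and Theorem~\ref{thm:main thm finit prop 2}. This is the direct generalization of the proof of Lemma~\ref{lem: non-Hausdorff for amenable central subgroups}, and it is exactly the step the paper says it replaces by the $C_r^\ast$-argument.

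\textbf{What each buys.} Your route gives degree information: the first non-zero $b^{(2)}_n(Q)$ forces $H^n(\Gamma;\lambda_\Gamma)\neq 0$ or $H^{n+1}(\Gamma;\lambda_\Gamma)\neq 0$. The paper's contractibility argument uses vanishing in all degrees at once, so it does not localize. The paper's route avoids the state-space compactness argument behind Theorem~\ref{thm:main thm finit prop 2}. It also stays in homology, where the twisted Betti number theorem is formulated and where $H_q(A;\mathcal{N}(Q))\cong H_q(A;\C)\otimes_\C\mathcal{N}(Q)$ holds with no finiteness assumption. In cohomology, universal coefficients only give $\mathrm{Hom}_\C(H_q(A;\C),\mathcal{N}(Q))$, so you need hypothesis (1) already for this identification; you do use it correctly.

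\textbf{Two points to write out.} First, the twisted theorem as quoted is about homology, so you need to transfer it to cohomology. Note that $\mathrm{Hom}_{\C[Q]}(P_\ast,\rho\otimes\mathcal{N}(Q))$ is the $\mathcal{N}(Q)$-dual of a twisted chain complex for another finite-dimensional representation, namely the conjugate contragredient of $\rho$. Dual finitely generated free $\mathcal{N}(Q)$-complexes have equal homology dimensions in each degree, so the homological statement gives yours. Second, Theorem~\ref{thm:main thm finit prop 2} is stated for $\mathrm{FP}_\infty(\Q)$. Its proof only uses a finitely generated free resolution and works verbatim for $\mathrm{FP}_\infty(\C)$, which is how the paper itself applies it in Lemma~\ref{lem: non-Hausdorff for amenable central subgroups}.
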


Before we turn to the proof, we comment on the surprising condition of soficity or locally indicability. 
This is related to L\"uck's twisted $\ell^2$-Betti number conjecture. The $i$-th $\ell^2$-Betti number can expressed 
via L\"uck's dimension function for modules over the group von Neumann algebra~$\calN(\Gamma)$: 
\[ 
b_i^{(2)}(\Gamma)=\dim_{\calN(\Gamma))} H_i\bigl(\Gamma, \calN(\Gamma)\bigr)\]
Let $V$ be a finite-dimensional $\Gamma$-representation which is not necessarily unitary. By regarding $V\otimes_\C \calN(\Gamma)$ as a left $\Gamma$-module by the diagonal action and as a right $\calN(\Gamma)$-module by the action on the right factor, the homology $H_i(\Gamma, V\otimes_\C \calN(\Gamma))$ is still a $\calN(\Gamma)$-module. A special case of L\"uck's conjecture says that for every group $\Gamma$ of type $\mathrm{FP}_\infty(\C)$ and every finite-dimensional $\Gamma$-representation~$V$ we have
\begin{equation}\label{eq: twisted betti} \dim_\C(V)\cdot b_i^{(2)}(\Gamma)=\dim_{\calN(\Gamma)} H_i\bigl(\Gamma, V\otimes_\C\calN(\Gamma)\bigr).
\end{equation}
The following result by Boschheidgen-Jaikin-Zapirain~\cite{BoschheidgenJaikin} and Kielak-Sun~\cite{kielak+sun} is the reason for the third condition in 
Theorem~\ref{thm: non-Hausdorff for amenable normal subgroups}.

\begin{theorem}\label{thm: proved case of twisted betti}
The equality~\eqref{eq: twisted betti} holds for sofic and locally indicable groups of type $\mathrm{FP}_\infty(\C)$. 
\end{theorem}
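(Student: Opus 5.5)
Since $\Gamma$ is of type $\mathrm{FP}_\infty(\C)$, I would first reduce \eqref{eq: twisted betti} to a statement about ranks of matrices over $\C[\Gamma]$. Take a finitely generated free resolution $P_\ast\to\C$ with boundary matrices $\partial_q\in M_{m_q\times m_{q-1}}(\C[\Gamma])$, as in \S\ref{sec3}. Tensoring with $V\otimes_\C\calN(\Gamma)$ (diagonal left action) computes $H_i(\Gamma,V\otimes_\C\calN(\Gamma))$. With $d=\dim_\C V$, the resulting complex is $\calN(\Gamma)^{d m_q}$, and its boundary maps are the \emph{twisted} matrices $\partial_q^V$, obtained from $\partial_q$ through the algebra homomorphism
\[ \tau_V\colon \C[\Gamma]\to M_d(\C[\Gamma]),\qquad g\mapsto \rho_V(g)\, g, \]
where $\rho_V$ is the (not necessarily unitary) action on $V$. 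Additivity of L\"uck's dimension function then gives $\dim_{\calN(\Gamma)} H_i = d\,m_i - \mathrm{rk}(\partial_{i}^V)-\mathrm{rk}(\partial_{i+1}^V)$, and the same formula with $d=1$ and the untwisted matrices gives $b_i^{(2)}(\Gamma)$. Hence it suffices to prove the rank identity
\[ \mathrm{rk}_{\calN(\Gamma)}\bigl(\tau_V(A)\bigr)= d\cdot \mathrm{rk}_{\calN(\Gamma)}(A)\qquad\text{for all matrices } A \text{ over } \C[\Gamma]. \]
Equivalently, the Sylvester matrix rank function $\mathrm{rk}^V(A):=\tfrac1d\,\mathrm{rk}_{\calN(\Gamma)}(\tau_V(A))$ on $\C[\Gamma]$ must coincide with the von Neumann rank function $\mathrm{rk}_{\calN(\Gamma)}$.

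The plan is to show that both rank functions belong to a class that, for the groups in question, contains exactly one element. Checking that $\mathrm{rk}^V$ is a Sylvester matrix rank function is formal, since $\tau_V$ is a ring homomorphism into a matrix ring. I would then verify that $\mathrm{rk}^V$ inherits the good properties of $\mathrm{rk}_{\calN(\Gamma)}$, in particular that its restriction to each subgroup is compatible with induction, because $\tau_V$ commutes with restriction to subgroups.

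In the locally indicable case, I would invoke Jaikin-Zapirain--L\'opez-\'Alvarez: $\C[\Gamma]$ has a Hughes-free division ring of fractions $\mathcal D_{\C[\Gamma]}$, and the von Neumann rank is the rank over it. The task is then to show that $\mathrm{rk}^V$ is also a Hughes-free (``locally indicable-compatible'') rank function, and to apply the uniqueness of Hughes-free division rings/rank functions. I would try an induction over an indicable quotient $\Gamma'\to\Z$: writing $\C[\Gamma']$ as a twisted Laurent ring over the kernel, I expect the twist by $\rho_V$ to be absorbed by a change of variables in $M_d(\C[\ker])$.

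In the sofic case, I would use L\"uck/Jaikin algebraic approximation: $\mathrm{rk}_{\calN(\Gamma)}$ is the limit of normalized ranks of the images of $A$ under sofic approximations $\Gamma\to \mathrm{Sym}(n_k)$. I would then try to realize $\mathrm{rk}^V$ as the limit of the normalized ranks of the images of $\tau_V(A)$ in $M_d(\C)\otimes M_{n_k}(\C)$, and compare the two limits. Since $\rho_V$ need not be unitary or have finite image, I would first reduce to the case where $\rho_V$ has image in $\mathrm{GL}_d$ of a finitely generated field. Malcev then gives residual finiteness of the image, so the twist can be combined with a sofic approximation of $\Gamma\times\rho_V(\Gamma)$; soficity is preserved under such products.

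The main obstacle is precisely this uniqueness step: showing that the twisted rank function equals the untwisted one. For sofic groups the key difficulty is that the twist is non-unitary, so one cannot simply use a Fell-absorption style isomorphism $V\otimes\lambda\cong d\lambda$. I would first try to prove that $\mathrm{rk}^V$ is again a limit of sofic (``exact'') rank functions, and then appeal to Jaikin's theorem that the von Neumann rank is the unique such rank function on $\C[\Gamma]$ for sofic $\Gamma$.
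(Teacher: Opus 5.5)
The paper does not prove this statement; it quotes it from Boschheidgen--Jaikin-Zapirain and Kielak--Sun. Your reduction is sound. Computing $H_i(\Gamma,V\otimes_\C\calN(\Gamma))$ with a finite free resolution produces the twisted matrices $\tau_V(\partial_q)$, and additivity of $\dim_{\calN(\Gamma)}$ reduces \eqref{eq: twisted betti} to $\mathrm{rk}_{\calN(\Gamma)}(\tau_V(A))=d\,\mathrm{rk}_{\calN(\Gamma)}(A)$. However, that identity is the whole content of the cited theorems, and your proposal leaves it open. You acknowledge this yourself (``the main obstacle''), and the mechanisms you suggest do not close it.

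\textbf{Sofic case.} The approximation theorem you invoke, applied to the matrix $\tau_V(A)=\sum_g a_g\,\rho_V(g)g$ over $\C[\Gamma]$, already gives $d\,\mathrm{rk}^V(A)=\lim_k n_k^{-1}\mathrm{rk}_\C\bigl(\sum_g a_g\,\rho_V(g)\otimes\sigma_k(g)\bigr)$. So the only real task is comparing this with $d$ times $\lim_k n_k^{-1}\mathrm{rk}_\C\bigl(\sum_g a_g\,\sigma_k(g)\bigr)$. The natural untwisting device is the map $v\otimes e_q\mapsto \rho_V(q)v\otimes e_q$, which intertwines $I_d\otimes\lambda_Q$ with $\rho_V\otimes\lambda_Q$. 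It works only if $\rho_V$ factors through a finite group $Q$ built into the approximation. Malcev's theorem does not provide this: the finite quotients of $\rho_V(\Gamma)$ arise by reduction modulo maximal ideals of a finitely generated ring, so on them $\rho_V$ survives only as a representation over a finite field. Moreover, a sofic approximation of $\Gamma\times\rho_V(\Gamma)$ consists of permutation matrices and never reproduces the matrices $\rho_V(g)\otimes\sigma_k(g)$. Hence ``$\mathrm{rk}^V$ is a limit of sofic rank functions'' is not reduced to anything; it restates the goal.

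\textbf{Locally indicable case.} The Laurent-independence part is harmless. Take $\mathcal D_N$ to be the division closure of $\C[N]$ in $\mathcal D=\mathcal D_{\C[\Gamma]}$. Since $\rho_V(t)\in \mathrm{GL}_d(\C)$, one has $M_d(\mathcal D_N)\,\tau_V(t)^k=M_d(\mathcal D_N)\,t^k$ inside $M_d(\mathcal D)$. But Hughes' uniqueness theorem is about division rings of fractions, while $\tau_V$ lands in $M_d(\mathcal D)$, which is not a division ring. To invoke it, you would have to show that the rational closure of $\tau_V(\C[\Gamma])$ in $M_d(\mathcal D)$ is a division ring. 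In particular you would need $\tau_V(x)$ invertible, that is $\mathrm{rk}^V(x)=1$, for every $0\neq x\in\C[\Gamma]$, which is a twisted Atiyah-type statement of the same depth as the claim. The alternative would be a uniqueness theorem for Hughes-free Sylvester rank functions that applies to $\frac1d\Z$-valued rank functions. Your ``change of variables'' supplies neither. In both cases the proposal stops exactly where the cited proofs do their work.
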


The following proof is quite similar to the proof of Lemma~\ref{lem: non-Hausdorff for amenable central subgroups}. Since L\"uck's twisted $\ell^2$-Betti number conjecture is formulated for homology, we work with homology. The use of Theorem~\ref{thm:main thm finit prop 2} in the proof of Lemma~\ref{lem: non-Hausdorff for amenable central subgroups} can be replaced in homological setting by a nice short argument involving the reduced $C^\ast$-algebra. 

\begin{proof}[Proof of Theorem~\ref{thm: non-Hausdorff for amenable normal subgroups}]
The epimorphism $\Gamma\to Q$ is denoted by~$\pi$. 
The trivial representation of $A$ is weakly contained in $\lambda_A$, hence by continuity of unitary induction $\lambda_Q=\lambda_{\Gamma/A}$ -- viewed as a $\Gamma$-representation -- is weakly contained in $\lambda_{\Gamma}$. 
This implies that the $\ast$-homomorphism $\C[\Gamma]\to \C[Q]$ extends to a $\ast$-homomorphism $C_r^\ast(\Gamma)\to C_r^\ast(Q)$. By~\cite[Lemma~12.3 on p.~442]{LuckBook} the $\ell^2$-invisibility and $\mathrm{FP}_\infty(\C)$-property of~$\Gamma$ imply that  
\[ H_\ast(\Gamma, C_r^\ast(\Gamma))=0.\]
Let $P_\ast$ be a projective $\C[\Gamma]$-resolution of~$\C$. 
Since $C_r^\ast(\Gamma)\otimes_{\C[\Gamma]}P_\ast$ is a projective $C_r^\ast(\Gamma)$-complex with vanishing homology, it is homotopy equivalent as $C_r^\ast(\Gamma)$-complex to the zero complex by the fundamental theorem of homological algebra. Therefore 
\[ C_r^\ast(Q)\otimes_{\C[\Gamma]} P_\ast\cong C_r^\ast(Q)\otimes_{C_r^\ast(\Gamma)} C_r^\ast(\Gamma)\otimes_{\C[\Gamma]}P_\ast\simeq 0.\]
Here we view $C_r^\ast(Q)$ as a $C_r^\ast(\Gamma)$-module. In particular, we obtain that 
\[ \calN(Q)\otimes_{\C[\Gamma]} P_\ast\cong \calN(Q)\otimes_{C_r^\ast(Q)}C_r^\ast(Q)\otimes_{\C[\Gamma]}P_\ast\simeq 0.\]
Thus, 
\begin{equation*} H_\ast(\Gamma, \calN(Q))=0. 
\end{equation*}	
Next we show that the $\ell^2$-Betti numbers of $Q$ vanish. Assume the contrary. Let $n\in\N$ be such that 
\[
\dim_{\calN(Q)}H_i(Q,\calN(Q))=
\begin{cases}
0 &\text{for $0\le i\le n$;}\\
>0 & \text{for $i=n+1$.}
\end{cases}\]

Next we consider the Hochschild-Serre spectral sequence for our group extension with coefficients in the $\C[\Gamma]$-module $\calN(Q)$. It follows from L\"uck's dimension theory that the subcategory of $\calN(Q)$-modules of dimension zero is a Serre subcategory of the category of all $\calN(Q)$-modules. This essentially means that we the homological algebra  dimension zero modules behaves almost as if they would be actually zero. 
The $E^2$-term is given by 
\[
E^2_{pq}=H_p(Q, H_q(A, \calN(Q))).
\]	
Since the $A$-action on $\calN(Q)$ is trivial, we have 
$H_q(A, \lambda_Q)\cong H_q(A,\C)\otimes_\C \calN(Q)$ and the $Q$-action on this module is the diagonal action of $Q$ on $H_q(A,\C)$ and the canonical $Q$-action on $\calN(Q)$. By Theorem~\ref{thm: proved case of twisted betti} we have $\dim_{\calN(Q)}E^2_{pq}=0$ for every $p\in \{0,\dots, n\}$ and every $q\in\N$. Similar as in the proof of Lemma~\ref{lem: non-Hausdorff for amenable central subgroups}, this implies that \[\dim_{\calN(Q)}E^\infty_{n+1,0}=\dim_{\calN(Q)}E^2_{n+1,0}=\dim_{\calN(Q)}H_{n+1}(Q,\calN(Q))>0.\] 
Hence $\dim_{\calN(Q)}H_{n+1}(\Gamma, \calN(Q))>0$. This is a contradiction. 
\end{proof}

\section{The cohomology of semisimple Groups}\label{sec5}
In this section we turn our attention to semisimple groups and study their unitary cohomology, with an emphasis on torsion. Let $G$ be a semisimple group, as in Terminology \ref{terminology:semisimple}. We start by showing that for a unitary representation $\pi$ of $G$, both the homology and the cohomology of $G$ with coefficients in $\pi$ decompose as a direct sum of a Hiblert space and a space with the trivial topology, thus completing the proof of Theorem \ref{intro:torsion-reduced}. For this, considering a uniform lattice in $G$ suffices, together with the results of 
\S\ref{sec3}. In order to study torsion however, a lattice is not enough. In \S\ref{subsec:5.1} we introduce the notion of a \textit{cohomological witness}: a dense countable subgroup which reflects the unitary cohomology theory of $G$ up to a given degree. We show how to construct cohomological witnesses in an arithmetic way, that is, as projections of $S$-arithmetic lattices in semisimple groups with many factors. In \S\ref{subsec:5.2} we use the cohomological witness to study torsion cohomology for $G$. In \S\ref{subsec:isolation proofs} we discuss the relation between Hausdorffness of cohomology and isolation of cohomological representations, and provide proofs for the theorems in \S\ref{subsec:isolation}.

To start, we consider a unitary representation $\pi$ of $G$ and decompose it as in Theorem \ref{thm: reduced cohom} to $\pi=\pi_\mathrm{cohom} \oplus \mathring{\pi}$. We would like to claim that this decomposition yields a canonical decomposition of the homology and cohomology into a direct sum of a Hiblert space and a space with the trivial topology.
\begin{theorem}[Theorem \ref{intro:torsion-reduced}]\label{thm:decomp-homology-cohomology}
    Let $G$ be a semisimple group and let $\pi$ be a unitary representation.
    Then in every degree $q$ both $H_q(G;\pi)$ and $H^q(G;\pi)$ are decomposed canonically, as topological vector spaces, into a direct sum of a Hilbert space and a space with the trivial topology. 
\end{theorem}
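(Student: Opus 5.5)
The plan is to follow the hint in the section introduction: pass to a uniform lattice and use the Hilbert complexes of \S\ref{sec3}. First split $\pi=\pi_{\mathrm{cohom}}\oplus\mathring{\pi}$ as in Theorem~\ref{thm: reduced cohom}. Cohomology and homology are additive functors, so $H^q(G;\pi)=H^q(G;\pi_{\mathrm{cohom}})\oplus H^q(G;\mathring\pi)$ as topological vector spaces, and similarly for homology. By Theorem~\ref{thm: reduced cohom}, $H^q(G;\pi_{\mathrm{cohom}})$ is Hausdorff and equals $\bigoplus_\rho \mathrm{Hom}_G(\rho,\pi)\otimes H^q(G;\rho)$. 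This is a Hilbert space, since each $H^q(G;\rho)$ is finite dimensional by Theorem~\ref{thm:product formula} and the amplifications are Hilbertian by Theorem~\ref{thm:cohom of amplif}. Also by Theorem~\ref{thm: reduced cohom}, $\barH^q(G;\mathring\pi)=0$, so $H^q(G;\mathring\pi)$ equals the closure of zero and carries the trivial topology. This yields the cohomological decomposition, and it is canonical because it comes from a canonical decomposition of $\pi$.

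Two points still need justification: that the Hausdorff part is a Hilbert space as a topological vector space, and the corresponding statements in homology. For this I would take a uniform lattice $\Lambda<G$. Such lattices exist by Borel--Harish-Chandra for the groups in Terminology~\ref{terminology:semisimple}. By Example~\ref{ex:lattices finite type}, $\Lambda$ is of type $\mathrm{FP}_\infty(\Q)$. The Shapiro lemma (Lemma~\ref{lem: shapiro lemma}) gives
\[ H^q(\Lambda;\pi|_\Lambda)\cong H^q(G;\pi)\oplus H^q(G;\pi\otimes L^2_0(G/\Lambda)), \]
so $H^q(G;\pi)$ is a topological direct summand of $H^q(\Lambda;\pi|_\Lambda)$. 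The latter is computed by a complex of Hilbert spaces $V^{m_q}$ with bounded coboundaries. Its Hausdorff quotient $\ker d_{q+1}/\overline{\mathrm{Im} d_q}$ is therefore a Hilbert space, and $\ker d_{q+1}$ splits topologically as $\overline{\mathrm{Im}d_q}\oplus(\ker d_{q+1}\ominus \mathrm{Im} d_q)$. It follows that $H^q(\Lambda;\cdot)$ is the direct sum of a Hilbert space and $\overline{\mathrm{Im}}/\mathrm{Im}$, which has the trivial topology. Applying this to the summand $H^q(G;\pi)$ shows that $\barH^q(G;\pi)$ is a complemented subspace of a Hilbert space, hence Hilbertian.

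For homology, the first option is Corollary~\ref{cor:index shift unitary}, which identifies $\barH_q(G;\pi)$ with the conjugate dual of $\barH^q(G;\pi)$. That dual is a Hilbert space, but the identification only controls the dual topology; the remark after Theorem~\ref{thm:indexShift} warns that the canonical map need not be continuous. To avoid this pathology, I would instead run the homological version of the argument. Remark~\ref{rem:finiteness prop homology} computes $H_q(\Lambda;\pi)$ by a Banach complex $(V^{m_q},\tilde\partial_q)$ of Hilbert spaces, so $H_q(\Lambda;\pi)$ again splits as a Hilbert space plus a trivially topologized space. I also need the homological Shapiro lemma, $H_q(\Lambda;\pi|_\Lambda)\cong H_q(G;\mathrm{Ind}\,\pi|_\Lambda)$; I would justify it via Blanc's homology theory \cite{BlancHomology} or, for Lie groups, derive it from the Poincar\'e duality of Theorem~\ref{thm:Poincare duality}. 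This makes $H_q(G;\pi)$ a topological direct summand, and its Hausdorff quotient is Banach, in fact Hilbertian by Corollary~\ref{cor:homIsNotMetr} and the duality.

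Finally, the decomposition of $H_q(G;\pi)$ should be canonical. I would again split it as $H_q(G;\pi_{\mathrm{cohom}})\oplus H_q(G;\mathring\pi)$. By Corollary~\ref{cor:index shift unitary}, the first summand has vanishing torsion because $\torH^{q+1}(G;\pi_{\mathrm{cohom}})=0$. The second has $\barH_q(G;\mathring\pi)^*\cong\overline{\barH^q(G;\mathring\pi)}=0$, and a Hausdorff locally convex space with zero dual is zero, so $\barH_q(G;\mathring\pi)=0$. The main obstacle I expect is the topological bookkeeping in homology, namely that the Shapiro isomorphism in homology is a topological isomorphism of non-metrizable LB-type complexes. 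If that proves delicate, I would fall back on the Pt\'ak-space argument as in the proof of Theorem~\ref{thm:indexShift}.
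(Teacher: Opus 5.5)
Your proposal is correct and is essentially the paper's proof. Cohomology comes from Theorem~\ref{thm: reduced cohom}, the vanishing of $\torH_q(G;\pi_{\mathrm{cohom}})$ and $\barH_q(G;\mathring\pi)$ from Corollary~\ref{cor:index shift unitary}, and the Hilbertian structure of $\barH_q(G;\pi)$ from realizing it as a summand of the Hilbert-complex homology of a uniform lattice via the homological Shapiro lemma, which the paper takes from \cite[Lemma~C.35]{Petersen}; the only differences are cosmetic (you use complementation where the paper uses that a complete Hausdorff subspace of a Hilbert space is Hilbertian, and your extra lattice argument for cohomology is redundant given Theorem~\ref{thm: reduced cohom}).
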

Note that the facts regarding cohomology already follow from Theorem \ref{thm: reduced cohom}. For Lie groups, the corresponding facts about homology follow from Theorem \ref{thm:homology and cohomology}. However, we consider semisimple groups that are not real as well, for which we do not have a Poincar\'e duality result.
\begin{proof}
    As mentioned above, the facts regarding the cohomology follow from Theorem \ref{thm: reduced cohom}. Furthermore, Theorem \ref{thm:homology and cohomology} implies that for any degree $q$, $\torH_q(G;\pi_\mathrm{cohom})=0$ and $\bar{H}_q(G;\mathring{\pi})=0$. We are left to show that $\bar{H}_q(G;\pi)=H_q(G;\pi_\mathrm{cohom})$ is isomorphic, as a topological vector space, to a Hilbert space.

    Let $\Gamma$ be a uniform lattice in $G$, whose existence is guaranteed by \cite[Corollary~1.12]{BorelHarder}. By the homological Shapiro lemma \cite[Lemma~C.35]{Petersen}, $H_q(\Gamma;\pi_\mathrm{cohom}) 
    \cong H_q(G;\mathrm{Ind}_\Gamma^G(\mathrm{\pi_\mathrm{cohom}}))$, and the latter decomposes as
    $$H_q(G;\pi_\mathrm{cohom}) \oplus H_q(G;\pi_\mathrm{cohom} \otimes L^2_0(G/\Gamma)).$$
     $H_q(G;\pi_\mathrm{cohom})$ is a Hausdorff topological vector and is complete by Theorem \ref{cor:homIsNotMetr}. By remark \ref{rem:finiteness prop homology}, $H_q(\Gamma;\pi_\mathrm{cohom})$ can be calculated via a chain complex of Hilbert spaces. Therefore $H_q(\Gamma;\pi_\mathrm{cohom}) / \overline{\{0\}}$ is topologically isomorphic to a Hilbert space. A complete Hausdorff subspace of such a space is again isomorphic to a Hilbert space, and the proof is complete.
\end{proof}

\subsection{The cohomological witness}\label{subsec:5.1}
Let $L$ be a locally compact group, and $\Gamma \leq L$ a dense countable subgroup. The restriction of an irreducible unitary representation of $L$ to $\Gamma$ is still irreducible, and we obtain a map $\widehat{L}\to \widehat{\Gamma}$. This map is easily seen to be continuous. We denote by $\widehat{\Gamma}_L$ the closure of the image of this map in $\widehat{\Gamma}$.
\begin{definition}
    We say that a dense subgroup $\Gamma<L$ is a \emph{cohomological witness} if the following conditions are satisfied.
    \begin{enumerate}
       \item The restriction map $\widehat{L}\to \widehat{\Gamma}$ is a homeomorphism onto its image, identifying $\widehat{L}$ as a subspace of $\widehat{\Gamma}$.
       \item For every $L$-representation $\pi$, if $\pi|_{\Gamma}$ weakly contains a $\Gamma$-representation $\rho$ such that $\barH^q(\Gamma;\rho) \ne 0$, then $\pi$ weakly contains a representation from $\widehat{L}_{q-\mathrm{cohom}}$.
       \item For every representation $\pi$ of $L$, the map induced by the restriction, $H^q(L;\pi) \to H^q(\Gamma;\pi)$, is a topological isomorphism.
    \end{enumerate}
    $\Gamma$ is said to be a \textit{cohomological witness up to degree} $N$ if the above conditions are satisfied for $q \leq N$.
\end{definition}
The next rather general theorem enables us to construct cohomological witnesses arithmetically. 
\begin{theorem}[Theorem \ref{intro:witness}]\label{main:wittness}
    Let $K$ be a number field and $\mathbf{G}$ a
    connected and simply connected almost simple $K$-algebraic group.
    Let $S$ be a finite set of places that contains all the Archimedean places, and let $S \subseteq T$ be another set of places such that $T - S$ contains at least one place over which $\mathbf{G}$ is isotropic. Denote by $T_{\mathrm{is}} \subseteq T$ the set of places in $T$ where $\mathbf{G}$ is isotropic.  Let $\Gamma$ be a group in the (well defined) commensurability class of $\mathbf{G}(\mathcal{O}_T)$ and $G=\prod_{v \in S} \mathbf{G}(K_v)$, along with the natural embedding $\Gamma\to G$. 
    %If $T$ is infinite, denote $N=\infty$. Otherwise denote $N=\abs{T}-3$. 
    Then $\Gamma$ is a cohomological witness for $G$ up to degree $|T_{\mathrm{is}}|-1$. In particular, if $T$ is infinite it is a cohomological witness.
\end{theorem}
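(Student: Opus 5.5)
The plan is to realize $\Gamma$ as a lattice in a larger group and move every question to that group via the Shapiro lemma. Set $H=\prod_{v\in T-S}\mathbf{G}(K_v)$ (a restricted product if $T$ is infinite) and $G_T=G\times H$. Then $\Gamma$ is a lattice in $G_T$, by Borel--Harish-Chandra / Borel--Prasad, and its projection to $G$ is dense. Density comes from strong approximation, which applies because $T-S$ contains an isotropic place and $\mathbf{G}$ is simply connected and almost simple. The finite-$T$ case is the core; the infinite case follows by exhausting $T$ by finite sets $T_j$ and passing to the direct limit $\Gamma=\bigcup\mathbf{G}(\mathcal{O}_{T_j})$. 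For a unitary $G$-representation $\pi$ we view $\pi$ as a $G_T$-representation trivial on $H$, and use
\[\mathrm{Ind}_\Gamma^{G_T}(\pi|_\Gamma)\cong \pi\otimes L^2(G_T/\Gamma)=\pi\oplus\bigl(\pi\otimes L^2_0(G_T/\Gamma)\bigr).\]
Lemma \ref{lem: shapiro lemma} then gives $H^q(\Gamma;\pi)\cong H^q(G_T;\pi)\oplus H^q(G_T;\pi\otimes L^2_0)$. I expect to check that this isomorphism is topological; both sides are computed by Fr\'echet complexes and Shapiro is induced by a chain homotopy equivalence, so this should be routine.

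\textbf{Step 1: condition (3).} Apply the product formula, Theorem \ref{thm:product formula}, which I expect to extend via a Künneth argument to a product of $G$ (arbitrary coefficients) with the $p$-adic factors of $H$ (trivial coefficients). Together with Theorem \ref{thm:cass_padic} this gives $H^*(H;\mathbf 1)=\C$ in degree $0$, hence $H^q(G_T;\pi)\cong H^q(G;\pi)$, and the summand is exactly the image of restriction. It remains to show $H^q(G_T;\pi\otimes L^2_0)=0$ for $q\le |T_{\mathrm{is}}|-1$. By strong approximation and Howe--Moore, $L^2_0(G_T/\Gamma)$ has no vectors invariant under any single isotropic simple factor, and the same holds after tensoring with $\pi$, since $\pi$ is trivial on $H$. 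The vanishing theorem for products used in \cite[Theorem F]{BaderSauer} then applies: each isotropic factor kills cohomology in the degrees below its contribution, because its only non-trivial cohomological representation sits in degree $\geq 1$. Counting the isotropic factors, both in $S$ and in $T-S$, should give vanishing up to degree $|T_{\mathrm{is}}|-1$. Here I would follow the proof of \cite[Theorem F]{BaderSauer} essentially verbatim, together with Theorem \ref{thm: reduced cohom} to control the Hausdorff part and the direct integral argument of Proposition \ref{prop:red cohom direct integral}.

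\textbf{Steps 2 and 3: conditions (1) and (2).} Restriction $\widehat G\to\widehat\Gamma$ is continuous and injective because $\Gamma$ is dense, so for condition (1) we must show that $\sigma|_\Gamma\prec\pi|_\Gamma$ implies $\sigma\prec\pi$. By continuity of induction, $\sigma\oplus(\sigma\otimes L^2_0)\prec \pi\oplus(\pi\otimes L^2_0)$ as $G_T$-representations, so by Lemma \ref{weakContain} the irreducible $\sigma$ (trivial on $H$) satisfies $\sigma\prec\pi$ or $\sigma\prec\pi\otimes L^2_0$. The second option must be excluded, and this is the main obstacle. It requires a uniform spectral gap for $H$ (or a single isotropic factor $H_v$) on $L^2_0(G_T/\Gamma)$, i.e.\ property $(\tau)$ for the $S$-arithmetic lattice. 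I would try to use Clozel's theorem on property $(\tau)$ first. This gives that $\mathbf 1_{H_v}$ is not weakly contained in $L^2_0|_{H_v}$, hence not weakly contained in $(\pi\otimes L^2_0)|_{H_v}$, whereas $\sigma|_{H_v}$ is trivial.

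Condition (2) uses the same mechanism. If $\rho\prec\pi|_\Gamma$ with $\barH^q(\Gamma;\rho)\neq0$, then $\mathrm{Ind}\rho\prec\pi\otimes L^2(G_T/\Gamma)$, and by Shapiro $\mathrm{Ind}\rho$ has non-zero Hausdorff cohomology in degree $q$. Proposition \ref{prop:red cohom direct integral} and Lemma \ref{lemma:disintegratipon weakly contained} then produce an irreducible $q$-cohomological representation $\tau\times\chi$ of $G\times H$ in the support of $\pi\otimes L^2$. By Theorem \ref{thm:product formula} and the degree bound $q\le|T_{\mathrm{is}}|-1$, any Steinberg factor in $\chi$ would push the degree too high, so $\chi=\mathbf 1$. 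The spectral gap from the previous step then forces $\tau\times\mathbf 1\prec\pi$, i.e.\ $\tau\in\widehat G_{q\text{-cohom}}$ is weakly contained in $\pi$.
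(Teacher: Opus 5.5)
Your argument for condition (1) is the same as the paper's, but your treatment of conditions (2) and (3) rests on a step that fails in the main case. Lemma \ref{lem: shapiro lemma} (Shapiro with $L^2$-induction) requires $\Gamma$ to be cocompact. However, $\mathbf{G}(\mathcal{O}_T)$ is a \emph{non-uniform} lattice whenever $\mathbf{G}$ is $K$-isotropic, e.g.\ $\SL_n(\Z[1/p])<\SL_n(\R)\times\SL_n(\Q_p)$, or any split $\mathbf{G}$ in Theorem \ref{intro:witness-Lie}. For non-uniform lattices the splitting $H^q(\Gamma;\pi)\cong H^q(G_T;\pi)\oplus H^q(G_T;\pi\otimes L^2_0)$ is false in general. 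Already for free lattices in $\PSL_2(\R)$, the Eisenstein classes in $H^1(\Gamma;\C)$ are invisible in $L^2(G/\Gamma)$. So both your Step 1 and your claim in (2) that $\mathrm{Ind}\rho$ has non-zero Hausdorff cohomology are unjustified.

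The missing ingredient is Theorem \ref{lem:poly shapiro}. In the non-uniform case the paper uses Bader--Sauer's polynomial cohomology, with the Bestvina--Eskin--Wortman bound $\mathrm{rank}_{\mathrm{pol}}(\Delta)\geq|S'|$ and universal integrability. This yields surjectivity of $H^q(L;\mathrm{Ind}\rho)\to H^q(\Delta;\rho)$ for $q\leq|S'|-1$, and injectivity on the $\pi$-summand. That is what makes the range $q\leq|T_{\mathrm{is}}|-1$ work for non-uniform $\Gamma$. Topologicity is then obtained by splitting $\pi=\pi_{\mathrm{cohom}}\oplus\mathring\pi$ and using the open mapping theorem, not from a chain homotopy equivalence.

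There is a second, independent problem with spectral gap.

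\textbf{Degree counting in (2).} It does not give $\chi=\mathbf{1}$. Each isotropic place where $\sigma$ is non-trivial contributes at least one degree, so you only get \emph{one} isotropic place $v\in T'$ where $\sigma$ is trivial, and that place may lie in $S$. For example, $\mathbf{1}_G\times\mathrm{St}_{v_1}$ has cohomology in degree $\mathrm{rank}\,\mathbf{G}(K_{v_1})$, which can be $\leq|T_{\mathrm{is}}|-1$. To exclude $\sigma\prec\pi\otimes L^2_0$ you then need spectral gap at a place of $S$, where $\pi$ is arbitrary. That is the tensor form of Clozel's theorem, Theorem \ref{clozel}: $U\otimes L^2_0$ does not weakly contain $\mathbf{1}$ for every unitary $U$. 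Only a posteriori does $\sigma\prec\pi$ force $\chi=\mathbf{1}$.

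\textbf{The same gap in Step 1.} Theorem \ref{thm:BS vanishing semisimple} needs $\pi\otimes L^2_0$ not to weakly contain $\mathbf{1}$ on every non-compact factor, including those in $S$. ``No invariant vectors'' from Howe--Moore is not enough on rank-one factors without (T).

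\textbf{Infinite $T$.} ``Passing to the direct limit'' needs real arguments. For (2) you need something like Lemma \ref{lemma:HausdorffCohomLimitgroups}, i.e.\ non-zero Hausdorff cohomology of the limit is detected at a finite stage. For (3) you need \cite[Theorem~E]{BaderSauer} or a $\varprojlim^1$ argument.
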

In order to prove Theorem \ref{main:wittness} we need some preparation. We define the group $M=\mathbf{G}(\mathbb{A}_T)$, so that $\Gamma$ is a lattice in $M$ (where $\mathbb{A}_T$ is the ring of $T$-adeles). Write $M=G \times H$ where $H=\mathbf{G}(\mathbb{A}_{T-S})$. Therefore $H$ is the restricted product of $p$-adic groups. We consider $\Gamma$ both as a lattice in $M$ and as a (dense) subgroup of $G$, via the projection $M \to G$. 

The first observation that we need is that pulling back representations from $G$ to $M$ yields a topological isomorphism in cohomology.
\begin{lemma}\label{lem:topological HS}
    Let $G$ and $M$ be the groups constructed above, and $\pi$ a unitary representation of $M$ that factors through $G$. The projection $M \to G$ induces a topological isomorphism $H^q(G;\pi) \cong H^q(M;\pi)$ in every degree $q \geq 0$.
\end{lemma}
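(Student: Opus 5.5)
We will use the Hochschild--Serre spectral sequence for the direct product $M=G\times H$, in the strong topological form available for closed normal subgroups (Blanc, or \cite{Guichardet}). The representation $\pi$ is trivial on $H$, so
\[
E_2^{p,q}=H^p\bigl(G;H^q(H;\pi)\bigr),\qquad H^q(H;\pi)\cong H^q(H;\C)\,\widehat{\otimes}\,\pi .
\]
The second identification holds because the $H$-action on $\pi$ is trivial and $\pi$ is a Hilbert space. Taking the inhomogeneous $L^2_{\mathrm{loc}}$-cochains of $H$ with values in $\pi$ gives $L^2_{\mathrm{loc}}(H^n)\widehat{\otimes}\pi$ with the differential $d\otimes\mathrm{id}$, and completed tensoring of Fr\'echet complexes with a Hilbert space preserves closedness of images, as in the argument of Theorem~\ref{thm:cohom of amplif}. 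The claim therefore reduces to showing
\[
H^q(H;\C)=0 \quad\text{for } q>0,\qquad H^0(H;\C)=\C,
\]
with the latter Hausdorff. Once this holds, the spectral sequence collapses onto the row $q=0$, and the edge map $H^p(G;\pi)\to H^p(M;\pi)$, which is exactly inflation along the projection $M\to G$, is an isomorphism.

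The key input is the vanishing of the continuous cohomology of $H=\mathbf{G}(\mathbb{A}_{T-S})$ with trivial coefficients. Write $H=\varinjlim H_F$, where $F$ runs over finite subsets of $T-S$ and
\[
H_F=\prod_{v\in F}\mathbf{G}(K_v)\times\prod_{v\notin F}\mathbf{G}(\mathcal{O}_v)
\]
are open subgroups. For each $v$ we have $H^q(\mathbf{G}(K_v);\C)=0$ for $q>0$ by Theorem~\ref{thm:cass_padic}; at anisotropic places the group is compact, so this is automatic. The compact factor $\prod_{v\notin F}\mathbf{G}(\mathcal{O}_v)$ also has no higher cohomology. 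The product formula (Theorem~\ref{thm:product formula}), or directly a K\"unneth argument, then gives $H^q(H_F;\C)=0$ for $q>0$.

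Passing from the $H_F$ to $H$ requires a $\varprojlim^1$ argument for the open exhaustion. A continuous cochain on $H$ restricts compatibly to each $H_F$, and $H^q(H)$ sits in a Milnor sequence with $\varprojlim H^q(H_F)$ and $\varprojlim^1 H^{q-1}(H_F)$. Since $H^0(H_F)=\C$ for all $F$ with surjective (identity) transition maps, and all higher groups vanish, both terms are zero for $q>0$. I expect this step to be the main obstacle. One must check that the restriction maps on cochain complexes satisfy the Mittag-Leffler condition, and that the resulting isomorphism is topological. An alternative is to replace $H$ by its compact open subgroup $U=\prod_{v\notin S}\mathbf{G}(\mathcal{O}_v)$ when possible, but $H$ itself is not compactly generated, so the limit argument seems unavoidable.

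Finally, topological isomorphism of the collapsed edge map follows since both sides are computed by Fr\'echet complexes: a continuous linear bijection $H^q(G;\pi)\to H^q(M;\pi)$ that lifts to a homotopy equivalence of strong resolutions is a topological isomorphism. Indeed, inflation of $L^2_{\mathrm{loc}}$-cochains from $G$ to $M$ and averaging over compact open subgroups of $H$ provide continuous maps in both directions, so open mapping applied on cocycle spaces modulo boundaries preserves both the Hausdorff quotient and the torsion part.
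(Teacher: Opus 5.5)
Your proposal is correct and is essentially the paper's proof: the Hochschild--Serre spectral sequence for $1\to H\to M\to G\to 1$ collapses because $H$ acts trivially on $\pi$ and has no positive-degree cohomology with trivial coefficients, and the inflation isomorphism is topological because it has a continuous left inverse on cochains. The paper takes that inverse to be plain restriction along $G\hookrightarrow M$ (the composite $G\hookrightarrow M\twoheadrightarrow G$ is the identity), which is simpler than your restriction to $G\times U$ followed by averaging over a compact open $U\le H$. Your exhaustion by the open subgroups $H_F$ with the Milnor sequence usefully supplies the vanishing of $H^q(H;\C)$ for infinite $T$, which the paper only asserts; Mittag-Leffler is automatic there, since the $H_F^n$ are clopen and restriction of cochains is surjective. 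By contrast, the remarks in your last paragraph about a homotopy equivalence of strong resolutions and about applying the open mapping theorem to cocycles modulo boundaries are unnecessary and not accurate as stated.
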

\begin{proof}
We use the Hochschild-Serre spectral sequence \cite[Theorem 9.1]{Blanc} with respect to the short exact sequence $1 \rightarrow H \rightarrow M \rightarrow G \rightarrow 1$. We have $H^q(H;\pi)=0$ unless $q=0$, because $H$ acts trivially and it is the (restricted) product of simple $p$-adic groups and compact groups, and therefore never has cohomology with trivial coefficients for $q > 0$. In degree zero, $H^0(H;\pi)$ is just $\pi$ as a $G$-representation. Thus the spectral sequence stabilizes immediately, and yields an isomorphism $H^q(G;\pi)\xrightarrow{\cong} H^q(M;\pi)$. The isomorphism is continuous because it is induced by the projection $M \to G$. The inverse is continuous since it is induced by the inclusion of $G \hookrightarrow M$ into $M$. 
\end{proof}
A fundamental spectral gap result due to Clozel states that the trivial representation is isolated in the automorphic dual associated with any rational structure on an almost simple, simply connected algebraic group over a local field \cite[Theorem~3.1]{Clozel}. We will require the following generalization due to Bader and Sauer.
\begin{theorem}\cite[Theorem~4.13]{BaderSauer}\label{clozel}
    Let $F$ be a number field and $\mathbf{L}$ be an almost simple, connected and simply connected $F$-algebraic group. Let $S'$ be a set of places that contains all the Archimedean ones, and let $\Delta \subset \mathbf{L}(F)$ be an $S'$-arithmetic subgroup. Let $L$ be the restricted product of the groups $\mathbf{L}(F_v)$ for $v \in S'$, and consider $\Delta$ as a lattice in $L$. Fix $v' \in S'$ such that $\mathbf{L}(F_{v'})$ is non-compact. Then, for any unitary $\mathbf{L}(F_{v'})$-representation $U$, the tensor product $U \otimes L^2_0(L/\Delta)$ does not weakly contain the trivial representation.
\end{theorem}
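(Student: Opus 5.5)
The statement to prove is \cite[Theorem~4.13]{BaderSauer}, quoted as a black box. If I had to supply a proof, I would reduce it to Clozel's spectral gap theorem together with a strong approximation argument.

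The first step is to reduce to the case where $\Delta$ is a congruence subgroup, or more simply the full $S'$-arithmetic group $\mathbf{L}(\mathcal{O}_{S'})$. Replacing $\Delta$ by a finite index subgroup only enlarges $L^2_0(L/\Delta)$, so a bound for the smaller group suffices. When $S'$ is infinite, $L$ is a restricted product. I would write $L^2_0(L/\Delta)$ as the closure of the union of the $K_f$-invariant vectors over compact open subgroups $K_f$ of the non-Archimedean factors outside a finite set $S_0\ni v'$. The $K_f$-invariants identify with $L^2_0(L_{S_0}/\Delta_{S_0})$ plus finite-dimensional pieces, and strong approximation (using that $\mathbf{L}$ is simply connected and $\mathbf{L}(F_{v'})$ is non-compact) shows that these pieces contain no nontrivial invariants. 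It therefore suffices to obtain a spectral gap that is uniform over finite $S_0$ and over congruence levels.

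The second step is to establish that uniform gap for the restriction of $L^2_0(L/\Delta)$ to the single factor $L_{v'}=\mathbf{L}(F_{v'})$. By strong approximation again, $L_{v'}$ has no invariant vectors in $L^2_0$. Clozel's theorem \cite[Theorem~3.1]{Clozel}, in its uniform form (property $(\tau)$ relative to congruence subgroups, extended to all places via Burger--Sarnak type restriction), then says that the restriction of $L^2_0(L/\Delta)$ to $L_{v'}$ does not weakly contain $\mathbf{1}$. Equivalently, it is strongly $L^p$ for some $p<\infty$ depending only on $\mathbf{L}$ and $F_{v'}$, meaning that its $K$-finite matrix coefficients lie in $L^{p+\epsilon}(L_{v'})$.

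The final step handles the tensor product $U\otimes L^2_0(L/\Delta)$, with $L$ acting through the factor $L_{v'}$ on $U$. Suppose it weakly contains $\mathbf{1}$. Restricting to $L_{v'}$ gives that $U\otimes\sigma$, where $\sigma$ is the restriction of $L^2_0$ to $L_{v'}$, almost has invariant vectors. Taking a tensor power $k>p/2$, the representation $\sigma^{\otimes k}$ is weakly contained in the regular representation by Cowling--Haagerup--Howe. Hence $(U\otimes\sigma)^{\otimes k}\otimes\overline{(U\otimes\sigma)}^{\otimes k}$ is weakly contained in a multiple of $\lambda_{L_{v'}}$ by Fell absorption. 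It also contains $\mathbf{1}$ weakly, because tensor products of almost invariant vectors are almost invariant. This contradicts the non-amenability of the non-compact simple group $L_{v'}$.

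The main obstacle is the uniformity in the second step, namely the passage from Clozel's result for a fixed level and finite $S$ to an infinite restricted product. I would handle it by quoting Clozel's theorem in its full uniform-over-congruence-subgroups form and noting that the exponent $p$ depends only on the local group $L_{v'}$.
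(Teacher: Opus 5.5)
Your final step is correct. Once $\sigma=L^2_0(L/\Delta)|_{L_{v'}}$ satisfies $\sigma^{\otimes k}\prec\lambda$ for some $k$, you get $(U\otimes\sigma)^{\otimes k}\cong U^{\otimes k}\otimes\sigma^{\otimes k}\prec U^{\otimes k}\otimes\lambda\cong\infty\cdot\lambda$. Almost invariant vectors of $U\otimes\sigma$ would then give $\mathbf{1}\prec\lambda$, contradicting non-amenability; the conjugate factors are not needed.

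The gap is the sentence ``Equivalently, it is strongly $L^p$ for some $p<\infty$.'' Clozel's theorem gives isolation of $\mathbf{1}$ in the automorphic dual at $v'$, i.e.\ only $\mathbf{1}\not\prec\sigma$. Spectral gap does not formally imply strong $L^p$-integrability. This implication is exactly what the quoted theorem adds to Clozel: the remark following the statement says that Clozel only promises the gap, and the point of the Bader--Sauer result is that the gap survives tensoring.

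Note also that if $L_{v'}$ has property (T), the statement needs no Clozel at all. In that case $\mathbf{1}\prec U\otimes\sigma$ gives an invariant vector, i.e.\ a nonzero Hilbert--Schmidt intertwiner $T\colon\overline{U}\to\sigma$. The eigenspaces of $TT^*$ then give a finite-dimensional, hence trivial, subrepresentation of $\sigma$, which strong approximation excludes. So the whole content lies in the rank-one factors without (T), which is exactly where your ``equivalently'' does unproved work. In particular, the obstacle you single out (uniformity over levels and finite $S_0$) is already handled by Clozel's adelic statement plus strong approximation; the step actually missing is this one.

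To close the gap you need the following fact. For every neighbourhood $V$ of $\mathbf{1}$ in $\widehat{L_{v'}}$ there is $p_V<\infty$ such that every irreducible unitary representation outside $V$ has $K$-finite coefficients in $L^{p_V}$. For $\SO^\circ(n,1)$ and $\mathrm{SU}(n,1)$ this comes from the description of the unitary dual near $\mathbf{1}$: only the spherical complementary series approaches $\mathbf{1}$, and the other complementary series have parameters bounded away from it. Alternatively, invoke the general absorption result \cite{Gorfine} that the paper mentions.

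Then disintegrate $\sigma=\int^\oplus\pi_t\,d\mu(t)$. By Clozel, $\mathrm{supp}(\sigma)$ misses such a $V$, so $\pi_t\notin V$ almost everywhere. For $k>p_V/2$, each $\pi_{t_1}\otimes\cdots\otimes\pi_{t_k}$ has square-integrable coefficients on a dense set by H\"older. Hence $\sigma^{\otimes k}\cong\int^\oplus\pi_{t_1}\otimes\cdots\otimes\pi_{t_k}\,d\mu^{\otimes k}\prec\lambda$, and your last paragraph runs verbatim.

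Two smaller points in your first step:
\begin{enumerate}
\item Reducing to the full group $\mathbf{L}(\mathcal{O}_{S'})$ goes in the wrong direction, since enlarging $\Delta$ shrinks $L^2_0(L/\Delta)$.
\item Reducing a general $S'$-arithmetic $\Delta$ to a congruence subgroup by passing to a finite-index subgroup presupposes that $\Delta$ contains one, i.e.\ the congruence subgroup property. You should either assume this or circumvent it.
\end{enumerate}
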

Note that Clozel's theorem only promises the spectral gap of $L^2_0(L/\Delta)$. The above result by Bader-Sauer is stronger, as it ensures that the spectral gap of $L^2_0(L/\Delta)$ is preserved under tensor products. The absorption of spectral gap under tensor products is in fact a general phenomenon for unitary representations of semisimple groups \cite{Gorfine}.

The next piece of information that we need is the following result of Bader-Sauer, which promises the vanishing of cohomology below the rank of a semisimple group for representations with spectral gap for each factor.
\begin{theorem}\cite[Theorem~5.6]{BaderSauer}\label{thm:BS vanishing semisimple}
    Let $L=\prod_i \mathbf{L}(F_i)$ be a non-compact semisimple group where, for each $i$, $F_i$ is a local field and $\mathbf{L}_i$ is a connected almost simple $F_i$-group. Let $L_i=\mathbf{L}_i(F_i)$. Let $\pi$ be a unitary representation such that $\pi|_{L_i}$ does not weakly contain the trivial representation for each non-compact factor $L_i$. Then $H^q(L;\pi)=0$ for every $q < \mathrm{rank}(L)$.
\end{theorem}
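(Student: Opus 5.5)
The statement above is quoted from \cite[Theorem~5.6]{BaderSauer}, but I would reprove it with the tools already set up here. The plan has two halves: first kill the Hausdorff cohomology of $L$ below the rank, then kill the torsion.

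\textbf{Hausdorff part.} Since $L$ is type I, disintegrate $\pi=\int^\oplus \pi_\alpha\,d\mu(\alpha)$ over $\widehat L$.
\begin{itemize}
\item By Lemma~\ref{lemma:disintegratipon weakly contained}, for $\mu$-almost every $\alpha$ the component $\pi_\alpha$ is weakly contained in $\pi$.
\item Write $\pi_\alpha=\times_i \pi_{\alpha,i}$. For each non-compact factor $L_i$, the trivial representation of $L_i$ is not weakly contained in $\pi|_{L_i}$, so $\pi_{\alpha,i}\neq \mathbf 1$ almost surely.
\item By the product formula, Theorem~\ref{thm:product formula}, $H^*(L;\pi_\alpha)=\bigotimes_i H^*(L_i;\pi_{\alpha,i})$.
\item So it suffices to know the simple-factor input: a non-trivial irreducible representation of a simple $L_i$ has cohomology only in degrees $\ge \mathrm{rank}(L_i)$.
\item For $p$-adic factors this is Casselman's Theorem~\ref{thm:cass_padic}: only $\mathrm{St}$ contributes, and only in degree $\mathrm{rank}$.
\item For real factors I would read it off the Vogan--Zuckerman classification. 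A non-trivial $A_{\mathfrak q}(\lambda)$ has lowest cohomological degree $\dim(\mathfrak u\cap\mathfrak p)$, which is at least the real rank.
\item Compact factors contribute only $H^0$.
\item Summing degrees shows $H^q(L;\pi_\alpha)=0$ for $q<\mathrm{rank}(L)=\sum_i \mathrm{rank}(L_i)$ almost surely. Proposition~\ref{prop:red cohom direct integral} then gives $\bar H^q(L;\pi)=0$ for $q<\mathrm{rank}(L)$.
\end{itemize}

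\textbf{Torsion part.} I would pass to a uniform lattice $\Gamma<L$, which exists by \cite{BorelHarder}. The Shapiro lemma, Lemma~\ref{lem: shapiro lemma}, exhibits $H^q(L;\pi)$ as a direct summand of $H^q(\Gamma;\pi|_\Gamma)$. Since $\Gamma$ is of type $\mathrm{FP}_\infty(\Q)$, Theorem~\ref{thm:main thm finit prop 1} applies: torsion in degree $q$ for the summand $\pi$ yields, via the compactness-of-states argument, a representation $\sigma\prec\pi$ with $\bar H^{q}(\Gamma;\sigma)\neq0$. Here I would run that argument inside the $L$-equivariant summand. Concretely, I would work directly with an $L$-Hilbert cochain complex obtained from the $\Gamma$-complex by inducing, so that the limit state is still an $L$-representation $\sigma\prec\pi$ with $\bar H^{q}(L;\sigma)\neq 0$. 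Because weak containment is transitive, $\sigma$ again has spectral gap in every non-compact factor. The Hausdorff part above then forces $q\ge \mathrm{rank}(L)$, so there is no torsion in degrees below the rank.

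\textbf{Expected obstacle.} The hardest step is the last one: making the limiting representation $\sigma$ an $L$-representation rather than merely a $\Gamma$-representation. Theorem~\ref{thm:main thm finit prop 1} naturally produces $\Gamma$-representations weakly contained in $\mathrm{Ind}_\Gamma^L\pi|_\Gamma=\pi\otimes L^2(L/\Gamma)$, and these can come from $\pi\otimes L^2_0(L/\Gamma)$.

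If the equivariant version fails, I would instead work with the whole of $\pi\otimes L^2(L/\Gamma)$. The required input is that $\pi\otimes L^2(L/\Gamma)$ still has spectral gap in each non-compact factor. For $\pi$ this is the hypothesis; for $\pi\otimes L^2_0(L/\Gamma)$ I would invoke the tensor-product spectral gap of Theorem~\ref{clozel}, or the general absorption result \cite{Gorfine}, factor by factor. Then the Hausdorff argument applied to the limiting $\Gamma$-representation, induced back to $L$, again gives vanishing below the rank.
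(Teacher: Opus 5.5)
The paper gives no proof of this statement; it quotes \cite[Theorem~5.6]{BaderSauer}. Your argument is therefore a genuinely independent reproof. It is correct, provided you commit to your fallback for the torsion half.

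\textbf{The primary torsion plan does not work.} The differentials of the $\Gamma$-complex are matrices over $\Q[\Gamma]$. So the compactness-of-states argument of Theorem~\ref{thm:main thm finit prop 1} only ever produces a $\Gamma$-representation $\sigma\prec\pi|_\Gamma$ with $\bar H^q(\Gamma;\sigma)\neq 0$. No ``induced'' Hilbert complex turns that limit into an $L$-representation weakly contained in $\pi$.

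\textbf{The fallback is exactly what is needed.} The steps are:
\begin{itemize}
\item By continuity of induction, $\mathrm{Ind}_\Gamma^L\sigma\prec\pi\otimes L^2(L/\Gamma)$.
\item By Lemma~\ref{lem: shapiro lemma}, $\bar H^q(L;\mathrm{Ind}_\Gamma^L\sigma)\cong\bar H^q(\Gamma;\sigma)\neq 0$.
\item Your Hausdorff half applied to $\mathrm{Ind}_\Gamma^L\sigma$ then gives the contradiction, provided $(\pi\otimes L^2(L/\Gamma))|_{L_j}\not\succ 1$ for every non-compact $L_j$.
\end{itemize}

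\textbf{The spectral-gap input.} Citing \cite{Gorfine} works for any uniform $\Gamma$: a spectral gap of $\pi|_{L_j}$ survives tensoring with an arbitrary unitary representation. Theorem~\ref{clozel} is the other kind of absorption: there the gap sits on $L^2_0$ of an $S'$-arithmetic lattice of a single group. It does not apply to an arbitrary Borel--Harder lattice in a product of different groups. To use it, take $\Gamma=\prod_j\Gamma_j$ with each $\Gamma_j$ arithmetic and cocompact in $L_j$ up to compact factors, as in the proof of Theorem~\ref{thm:witness exists}. Then every summand of $(\pi\otimes L^2(L/\Gamma))|_{L_j}$ is either a multiple of $\pi|_{L_j}$ or of the form $U\otimes L^2_0(\,\cdot\,/\Gamma_j)$.

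\textbf{What your route buys.} It reduces the statement to the irreducible case plus the machinery of \S\ref{sec3}. It also avoids a circularity in the obvious shortcut. That shortcut would go through Theorem~\ref{main1}: a torsion class in degree $q$ yields a $q$-cohomological $\rho\prec\pi$, nontrivial on every non-compact factor, hence $q\geq\mathrm{rank}(L)$. But Theorem~\ref{main1} rests on the witnesses of Theorem~\ref{thm:witness exists}, whose proof uses this very statement. Theorem~\ref{thm:main thm finit prop 1} applied to a uniform lattice does not.

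\textbf{What it costs.}
\begin{itemize}
\item The real-factor input is a genuine theorem, not something visible from the shape of $A_{\mathfrak q}(\lambda)$. That input is that nontrivial irreducible unitary representations have no cohomology below the real rank, i.e.\ $\dim(\mathfrak u\cap\mathfrak p)\ge\mathrm{rank}_\R$ for $\mathfrak q\neq\mathfrak g$. It is Zuckerman's vanishing theorem (cf.\ the table in \cite[Appendix~A]{BaderSauer}) and should be cited as such.
\item You use cocompact lattices \cite{BorelHarder} and the product formula as quoted here. Both are characteristic-zero statements, whereas the quoted theorem allows arbitrary local fields. This is harmless, since the paper only applies the result in characteristic zero.
\end{itemize}
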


The following theorem is based on \cite[\S6]{BaderSauer}. It enables us to use a Shapiro-like lemma for not necessarily uniform lattices in semisimple groups, and relates the cohomology of a semisimple group and of its lattice below the rank.
\begin{theorem}\label{lem:poly shapiro}
    Let $F$ be a number field and $\mathbf{L}$ be an almost simple, connected and simply connected $F$-algebraic group. Let $S'$ be a finite set of places containing all the Archimedean places, and $\Delta=\mathbf{L}(\mathcal O_{S'})$ an $S'$-arithmetic group, which is a lattice in $L=\prod_{v \in S'}\mathbf{L}(F_v)$. Denote by $S'_{\mathrm{is}}$ the subset of places in $S'$ over which $\mathbf{L}$ is isotropic. Let $\pi$ be a unitary $L$-representation, and $q \leq \abs{S'_\mathrm{is}}-1$. Then restriction yields a topological isomorphism $H^q(L;\pi) \cong H^q(\Delta;\pi)$.
\end{theorem}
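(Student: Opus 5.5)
The natural route is a Shapiro-type argument for the (possibly non-uniform) lattice $\Delta<L$, with the correction term controlled through spectral gap. Concretely, I would compare $H^q(\Delta;\pi|_\Delta)$ with $H^q(L;\pi\otimes L^2(L/\Delta))$. When $\Delta$ is uniform this is exactly Lemma~\ref{lem: shapiro lemma}. In general $\Delta$ need not be cocompact, so I would follow \cite[\S6]{BaderSauer} and replace $L^2(L/\Delta)$ by a suitable induction module. One option is a polynomially weighted space of functions on $L/\Delta$, or the $L^2$-induction with an integrability condition. Either way, Shapiro's lemma should hold for it because $\Delta$ has finite covolume and the reduction theory of $L/\Delta$ (Siegel sets) gives a fundamental domain with controlled geometry. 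The weighted space sits between $\pi$ and $\pi\otimes L^2(L/\Delta)$, and the aim is to show that the cohomology of the complement of the constants vanishes in degrees $q\le |S'_{\mathrm{is}}|-1$.

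The key steps, in order, are as follows.

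\textbf{Step 1.} Write the induced module as $\pi\oplus(\pi\otimes L^2_0(L/\Delta))$, or its weighted analogue. Under this splitting, the restriction map $H^q(L;\pi)\to H^q(\Delta;\pi)$ corresponds to inclusion of the first summand.

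\textbf{Step 2.} Show that $\pi\otimes L^2_0(L/\Delta)$ restricted to each non-compact factor $L_v=\mathbf{L}(F_v)$ does not weakly contain the trivial representation. For this I would apply Theorem~\ref{clozel} with $v'=v$ and $U=\pi|_{L_v}$, since $\pi|_{L_v}$ is a unitary $L_v$-representation.

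\textbf{Step 3.} Apply Theorem~\ref{thm:BS vanishing semisimple}. Here $\mathrm{rank}(L)\ge |S'_{\mathrm{is}}|$, since every isotropic factor contributes rank at least one. This gives $H^q(L;\pi\otimes L^2_0(L/\Delta))=0$ for $q\le |S'_{\mathrm{is}}|-1$.

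\textbf{Step 4.} Conclude that restriction is a linear isomorphism in these degrees. For the topological statement, the isomorphism is induced by continuous maps of resolutions: restriction in one direction, and in the other direction the Shapiro map composed with projection onto the first summand. Alternatively, one can argue that a continuous bijection between cohomology spaces, both computed by Fr\'echet complexes, is compatible with the Hausdorff and torsion splittings. The cleanest approach is to note that the direct-summand decomposition is topological, so the cohomology of the first summand is a topological direct summand of $H^q(\Delta;\pi)$ and the complement vanishes.

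The main obstacle is the non-uniform case. Plain $L^2$-induction does not give a Shapiro lemma, because $L^2(L/\Delta)$-valued cochains do not match $\Delta$-cochains when the fundamental domain is non-compact. What I would try first is the construction from \cite[\S6]{BaderSauer}: use $\Delta$-cochains of polynomial growth (an $\ell^2$-type condition with polynomial weights), which compute the same cohomology as ordinary cochains because $\Delta$ is of type $\mathrm{FP}_\infty(\Q)$ by Example~\ref{ex:lattices finite type} and the finite free resolution makes every cochain automatically bounded. On the $L$-side I would use a weighted $L^2$-induced module that, via integrability of the Siegel-set weights, still contains the constants as a complemented summand with the complement having spectral gap on every factor.

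A fallback is available when $|S'_{\mathrm{is}}|\ge2$ and $\mathbf{L}$ has $F$-rank zero. In that case $\Delta$ is cocompact and the uniform Shapiro argument above applies directly. The general case then requires the Bader--Sauer machinery, whose vanishing result (Theorem~\ref{thm:BS vanishing semisimple}) must be checked to apply to the weighted complement. This check is where I expect most of the work to lie.
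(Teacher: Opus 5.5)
Your uniform case follows the paper's argument: Shapiro's Lemma~\ref{lem: shapiro lemma} (a topological isomorphism), spectral gap of $\pi\otimes L^2_0(L/\Delta)$ on every non-compact factor by Theorem~\ref{clozel}, and vanishing by Theorem~\ref{thm:BS vanishing semisimple} since $\mathrm{rank}(L)\ge|S'_{\mathrm{is}}|$. Your ``fallback'' is just this case and needs no hypothesis $|S'_{\mathrm{is}}|\ge 2$.

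The non-uniform case, however, has a genuine gap, and it is not where you put it. Your claim that polynomially bounded $\Delta$-cochains compute ordinary cohomology because $\Delta$ is of type $\mathrm{FP}_\infty(\Q)$ is false. Cochains on the finite free resolution are just vectors in $V^{m_q}$. The induction map, however, needs cochains on $\Delta^{q+1}$ of polynomial growth in word length. Passing between the two resolutions with polynomial control is a filling problem that finiteness properties do not solve: for example, $\mathrm{SL}_3(\Z)$ is of type $\mathrm{FP}_\infty(\Q)$ but has exponential Dehn function. The missing ingredient is the Bestvina--Eskin--Wortman bound $\mathrm{rank}_{\mathrm{pol}}(\Delta)\ge|S'|$ \cite{BEW13}, together with universal integrability of $\Delta$ \cite[Proposition~6.19]{BaderSauer}. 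With these, \cite[Theorem~6.23]{BaderSauer} gives surjectivity of $H^q(L;\mathrm{Ind}_\Delta^L(\pi))\to H^q(\Delta;\pi)$ for $q<\mathrm{rank}_{\mathrm{pol}}(\Delta)$. Then \cite[Theorem~6.26]{BaderSauer} says that on the summand $\pi$ this map is restriction and is injective. This is also where the degree range really comes from. For non-uniform $\Delta$ one has $S'=S'_{\mathrm{is}}$, and $q\le|S'|-1$ is what keeps you below $\mathrm{rank}_{\mathrm{pol}}(\Delta)$; your argument only ever uses $q<\mathrm{rank}(L)$.

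Second, the weighted $L^2$-space on $L/\Delta$ you propose for the $L$-side is not a unitary $L$-representation, since a non-constant weight on the homogeneous space $L/\Delta$ is not $L$-invariant. Theorem~\ref{clozel} and Theorem~\ref{thm:BS vanishing semisimple} are both about unitary representations, so they give nothing for its complement; the check you defer is not available. The paper avoids this by keeping the honest unitary induction $\mathrm{Ind}_\Delta^L(\pi)=\pi\oplus(\pi\otimes L^2_0(L/\Delta))$ and imposing growth conditions only on the $\Delta$-side. The vanishing of $H^q(L;\pi\otimes L^2_0(L/\Delta))$ is then proved exactly as in the uniform case.

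Finally, your Step~4 does not settle the topology in the non-uniform case. No topological Shapiro isomorphism is available and there is no continuous inverse at cochain level; you only get a continuous bijection $H^q(L;\pi)\to H^q(\Delta;\pi)$. The paper upgrades it using $\pi=\pi_{\mathrm{cohom}}\oplus\mathring{\pi}$ from Theorem~\ref{thm: reduced cohom}:
\begin{itemize}
\item $H^q(L;\mathring{\pi})$ has the trivial topology, so any continuous bijection out of it is a homeomorphism.
\item For an isotypic piece $\rho\otimes U$, the space $H^q(\Delta;\rho)\cong H^q(L;\rho)$ is finite dimensional, hence Hausdorff. This passes to $\rho\otimes U$ by Theorem~\ref{thm:cohom of amplif}, and the open mapping theorem concludes.
\end{itemize}
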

\begin{proof}
    Assume first that $\Delta$ is a uniform lattice. Let $\pi$ be a unitary $L$-representation. By Shapiro Lemma \ref{lem: shapiro lemma}, the map $H^q(L;\mathrm{Ind}_\Delta^L(\pi)) \to H^q(\Delta;\pi)$ is a topological isomorphism. To finish the proof in this case, we that need $H^q(\pi \otimes L^2_0(L / \Delta))=0$ for $q \leq \abs{S'_\mathrm{is}}-1$. But this is provided by Theorem \ref{thm:BS vanishing semisimple}, once we note that $\pi \otimes L^2_0(L / \Delta)$ has a spectral gap for each non-compact factor by Theorem \ref{clozel}.
    
    Assume now that $\Delta$ is non-uniform. Note that in this case, $S'=S'_\mathrm{is}$ so we will not differentiate between these for the rest of the proof. We use the polynomial cohomology and terminology discussed in \cite[\S6]{BaderSauer}. By the work of Bestvina-Eskin-Wortman (\cite[Corollary~5]{BEW13} and \cite[Theorem~6.11]{BaderSauer}), the polynomial filling rank of $\Delta$, $\mathrm{rank}_{\mathrm{pol}}(\Delta)$, satisfies $\mathrm{rank}_{\mathrm{pol}}(\Delta) \geq \abs{S'}$. Moreover, $\Delta$ is a universally integrable lattice \cite[Proposition~6.19]{BaderSauer}. Fix a degree $q$ which satisfies $q \leq \abs{S'}-1$. In particular, $q < \mathrm{rank}_{\mathrm{pol}}(\Delta)$.
    
    First, let $\pi$ be a unitary $\Delta$-representation. The map $H^q(L;\mathrm{Ind}_\Delta^L(\pi)) \to H^q(\Delta;\pi)$ is surjective by \cite[Theorem~6.23]{BaderSauer} and is obviously continuous.
    
    We now take $\pi$ to be a unitary $L$-representation. Decompose $\mathrm{Ind}_\Delta^L(\pi)$ as $\mathrm{Ind}_\Delta^L(\pi)=\pi \oplus (\pi \otimes L^2_0(L /\Delta))$. The map $H^q(L;\mathrm{Ind}_\Delta^L(\pi)) \to H^q(\Delta;\pi)$, when restricted to the component $\pi$ of this decomposition, coincides with the map induced by the restriction \cite[Theorem~6.26]{BaderSauer}. It is moreover injective on this component \cite[Theorem~6.26]{BaderSauer}. Since the map on $\mathrm{Ind}_\Delta^L(\pi)$ is surjective, to complete the algebraic isomorphism we need to show that $H^q(L;\pi \otimes L^2_0(L/\Delta))=0$. But this follows from Theorem \ref{thm:BS vanishing semisimple} for $q<|S'|$, since $\pi \otimes L^2_0(L/\Delta)$ has a spectral gap for each non-compact factor of $L$ by Theorem \ref{clozel}.

    We have a continuous bijective map $H^q(L;\pi) \to H^q(\Delta;\pi)$, and are left to show that it is a topological isomorphism. We decompose $\pi=\pi_\mathrm{cohom} \oplus \mathring{\pi}$, as in Theorem \ref{thm: reduced cohom}, and prove the claim for each summand. For $\mathring{\pi}$, the topology on $H^q(L;\mathring\pi)$ is trivial, and hence the continuous bijection to $H^q(\Delta;\mathring\pi)$ must be a topological isomorphism. For $\pi_\mathrm{cohom}$, it is enough to show that the topology on $H^q(\Delta;\pi_\mathrm{cohom})$ is Hausdorff. The representation $\pi_\mathrm{cohom}$ decomposes as a direct sum of finitely many isotypic representations, and we may work with each of them separately. Namely, assume that we have a representation of the form $\rho \otimes U$, where $\rho$ is an irreducible representation and the action on the multiplicity module $U$ is trivial. The isomorphism $H^q(L;\rho) \to H^q(\Delta;\rho)$ is topological. Indeed, $H^q(L;\rho)$ is finite dimensional by Theorem \ref{thm:product formula}, and therefore so is $H^q(\Delta;\rho)$. But finite dimensional cohomologies are automatically Hausdorff \cite[Lemma~IV.3.4]{BorelWallach}. Therefore the cohomologies of $\rho \otimes U$ are also Hausdorff, both for $L$ and for $\Delta$, and in fact are isomorphic to Hilbert spaces, by Theorem \ref{thm:cohom of amplif}. The open mapping theorem now says that the isomorphism on $\pi_\mathrm{cohom}$ is topological, and this finishes the proof.
\end{proof}
In order to treat the cases where $\abs{T}=\infty$ in Theorem \ref{main:wittness} we use the following lemma, which relates the Hausdorff cohomology of a direct limit to the Hausdorff cohomology of the step groups.
\begin{lemma}\label{lemma:HausdorffCohomLimitgroups}
    Let $\Delta=\varinjlim \Delta_i$ be a direct limit of an increasing sequence of subgroups, and consider a unitary representation $V$ of $\Delta$. Fix $q \geq 0$. If $\bar{H}^q(\Delta ; V) \ne 0$ then $\bar{H}^q(\Delta_i ; V) \ne 0$ for $i$ large enough.
\end{lemma}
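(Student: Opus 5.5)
We will prove the contrapositive, in the slightly stronger form: if $\bar{H}^q(\Delta_i;V)=0$ for infinitely many $i$, then $\bar{H}^q(\Delta;V)=0$. Since the $\Delta_i$ are increasing, the indices with $\bar{H}^q(\Delta_i;V)=0$ form a cofinal set. So it suffices to show that every $q$-cocycle of $\Delta$ lies in the closure of the $q$-coboundaries, using only those cofinally many $i$.

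\textbf{Choice of complex.} The groups involved are countable and discrete. By Remark~\ref{rem:lp loc} and Remark~\ref{rem:inhomogeneous}, the cohomology of $\Delta$ and of each $\Delta_i$, together with its topology, is computed by the inhomogeneous complex of all functions $C(\Delta^q,V)$. These spaces carry the Fr\'echet topology of pointwise convergence, and the coboundary $d$ is given by the explicit formula of Remark~\ref{rem:inhomogeneous}. In this topology a subset is a neighbourhood basis at $f$ if it is determined by finitely many coordinates. Hence a cocycle $c$ lies in $\overline{B^q(\Delta)}$ if and only if the following holds: for every finite $F\subset\Delta^q$ and every $\varepsilon>0$ there is $b\in C(\Delta^{q-1},V)$ with $\|c(x)-db(x)\|<\varepsilon$ for all $x\in F$.

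\textbf{Key steps.} Let $c\in Z^q(\Delta;V)$, and fix a finite set $F\subset\Delta^q$ and $\varepsilon>0$.
\begin{enumerate}
\item Since $F$ is finite and the $\Delta_i$ exhaust $\Delta$ increasingly, choose an index $i$ from the cofinal set with $F\subset\Delta_i^q$.
\item The restriction $c|_{\Delta_i^q}$ is a cocycle for $\Delta_i$, because the cocycle identity at points of $\Delta_i^{q+1}$ only involves values of $c$ on $\Delta_i^q$.
\item Because $\bar{H}^q(\Delta_i;V)=0$, the restriction $c|_{\Delta_i^q}$ lies in the closure of $B^q(\Delta_i)$. So there is $b_i\in C(\Delta_i^{q-1},V)$ with $d_ib_i$ $\varepsilon$-close to $c$ on $F$.
\item Extend $b_i$ by zero to a function $b\in C(\Delta^{q-1},V)$.
\item The formula for $d$ at a point $(g_1,\dots,g_q)\in\Delta_i^q$ only evaluates $b$ at tuples whose entries are among the $g_j$ or products $g_jg_{j+1}$. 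All of these lie in $\Delta_i^{q-1}$ because $\Delta_i$ is a subgroup. Therefore $db$ agrees with $d_ib_i$ on $\Delta_i^q\supseteq F$.
\end{enumerate}
It follows that $c\in\overline{B^q(\Delta)}$. Since $c$ was arbitrary, $\bar{H}^q(\Delta;V)=0$.

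\textbf{Main obstacle.} The only point needing care is the identification of the topology on $H^q$: we must use the complex in which the cochains of $\Delta$ form the inverse limit of the cochains of the $\Delta_i$, with the topology of pointwise convergence. For discrete groups this is exactly the $C(\Delta^q,V)$ resolution, so Remark~\ref{rem:lp loc} settles it. No unitarity or finiteness property is actually needed beyond this.
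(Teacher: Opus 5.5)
Your proof is correct and takes essentially the same route as the paper. You restrict a $\Delta$-cocycle to the cofinally many $\Delta_i$ with vanishing Hausdorff cohomology, approximate it on finite sets by $\Delta_i$-coboundaries in the pointwise-convergence topology of $C(\Delta^q,V)$, and extend the primitives to $\Delta$, observing that $d$ on $\Delta_i^q$ only sees values on $\Delta_i^{q-1}$. The only differences are cosmetic: the paper argues by contradiction with an exhausting sequence of finite sets $A_k$ and an arbitrary extension, whereas you argue directly with neighbourhoods and extend by zero.
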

\begin{proof}
    The claim is trivial for $q=0$. We assume that $q>0$. We use the inhomogeneous bar complex (see Remark \ref{rem:inhomogeneous}). Suppose that $\bar{H}^q(\Delta ; V) \ne 0$, and assume towards contradiction that $\bar{H}^q(\Delta_i ; V) = 0$ for infinitely many $i$'s. Let $v \in C(\Delta^q,V)$ be a $q$-cocycle that represents a non-zero \textit{Hausdorff}cohomology class. Restricting $v$ to $C(\Delta_i^q,V)$ yields a cocycle $v|_{\Delta_i}$. The assumption that $\bar{H}^q(\Delta_i ; V) = 0$ for infinitely many $i$'s implies that $v|_{\Delta_i}$ is in the closure of the coboundaries for infinitely many $i$'s which we enumerate $i_k$.
    % If it is a coboundary for some $i$, then it is also a coboundary for any $j < i$. Therefore, the assumption that $\bar{H}^q(\Delta_i ; V) = 0$ for infinitely many $i$'s implies that $v$ is in the closure of the coboundaries for infinitely many $i$'s, or that it is an actual coboundary for \textit{all} $i$.
    Fix an increasing sequence of finite sets $A_k$ such that $A_k \subseteq \Delta_{i_k}$ and $A_k$ exhausts $\Delta$.
     There exists $u_k \in C(\Delta_{i_k}^{q-1},V)$ such that $|d_q(u_k)(x) -v(x)|<\frac{1}{k}$ for all $x \in A_k^q$. We choose an arbitrary extension $\tilde{u}_k$ of $u_k$ to $C(\Delta^{q-1},V)$ and note that $d_q(\tilde{u}_k)|_{\Delta_{i_k}} = d_q(u_k)$. Since $A_k$ exhaust $\Delta$, $d_q(\tilde{u}_k) \xrightarrow[k \to \infty]{} v$ pointwise which is a contradiction.
\end{proof}
We are now ready to prove Theorem \ref{main:wittness}.
\begin{proof}[Proof of Theorem \ref{main:wittness}]
    We begin by showing that the restriction map $\widehat{G} \to \widehat{\Gamma}$ is a homeomorphism onto its image. It is enough to show that for every two $G$-representations $\pi$ and $\rho$ such that $\pi$ is irreducible and $\pi \prec_{\Gamma} \rho$, we have that $\pi \prec_G \rho$. Let $\pi$ and $\rho$ be such representations. By continuity of the induction we have that $\mathrm{Ind}_\Gamma^M(\pi) \prec_M \mathrm{Ind}_\Gamma^M(\rho)$. Restricting the representations $\pi$ and $\rho$ from $G$ to $\Gamma$ is the same as pulling them back to $M$ and then restricting to $\Gamma \leq M$. Therefore the induced representations satisfy $\mathrm{Ind}_\Gamma^M(\pi)=\pi \oplus (\pi \otimes L^2_0(M/\Gamma))$ and $\mathrm{Ind}_\Gamma^M(\rho)=\rho \oplus (\rho \otimes L^2_0(M/\Gamma))$. We therefore get
    $$\pi \prec_M \rho \oplus (\rho \otimes L^2_0(M/\Gamma))$$
    By Lemma \ref{weakContain}, $\pi$ is weakly contained in one of the summands. Assume that it is weakly contained in $\rho \otimes L^2_0(M/\Gamma)$. Choose a place $v \in T-S$ such that $\mathbf{G}(K_v)$ is non-compact. Then $\pi|_{\mathbf{G}(K_v)}$ and $\rho|_{\mathbf{G}(K_v)}$ are both trivial, and therefore $1 \prec_{\mathbf{G}(K_v)} L^2_0(M/\Gamma)$, contradicting Clozel's Theorem \ref{clozel}. So $\pi \prec_M \rho$, and restricting to $G$ finishes the proof.

    Next, let $\pi$ be a representation of $G$, and assume that there exists a $\Gamma$-representation $\rho$, weakly contained in $\pi|_\Gamma$, such that $\barH^q(\Gamma;\rho) \ne 0$. If $T$ is infinite, Lemma \ref{lemma:HausdorffCohomLimitgroups} shows that there exists a finite set $S\subseteq T' \subseteq T$ such that $\barH^q(\mathbf{G}(\mathcal O_{T'});\rho) \ne 0$. We assume without loss of generality that $\abs{T'_\mathrm{is}} > q$. Write $\Gamma_{T'}=\mathbf{G}(\mathcal O_{T'})$ and $M_{T'}=\prod_{v \in {T'}} \mathbf{G}(K_v)$, so that $\Gamma_{T'}$ is a lattice in $M_{T'}$. If $T$ were finite to begin with, we take $T'=T$, $M_{T'}=M$ and $\Gamma_{T'}=\Gamma$. Theorem \ref{lem:poly shapiro} implies that $\bar{H}^q(M_{T'};\mathrm{Ind}_{\Gamma_{T'}}^{M_{T'}}(\rho))$ is non-zero. So, $\mathrm{Ind}_{\Gamma_{T'}}^{M_{T'}}(\rho)$ contains an irreducible $q$-cohomological subrepresentation $\sigma$. Since $|T'_\mathrm{is}| > q$, the product formula Theorem \ref{thm:product formula} implies that there exists a place $v \in T'_\mathrm{is}$ such that $\sigma|_{\mathbf{G}(K_v)}$ is trivial. Since $\sigma$ is irreducible, and
    $$\sigma \leq \mathrm{Ind}_{\Gamma_{T'}}^{M_{T'}}(\rho) \prec_{M_{T'}} \mathrm{Ind}_{\Gamma_{T'}}^{M_{T'}}(\pi)=\pi \oplus (\pi \otimes L^2_0(M_{T'} / \Gamma_{T'})),$$$\sigma$ is weakly contained in one of the summands $\pi$ and $\pi \otimes L^2_0(M_{T'} / \Gamma_{T'})$. Because $\sigma|_{\mathbf{G}(K_v)}$ is trivial, $\sigma$ can not be weakly contained in $\pi \otimes L^2_0(M_{T'} / \Gamma_{T'})$ by Theorem \ref{clozel} so $\sigma \prec_{M_{T'}} \pi$.

    Lastly, consider the restriction from $G$ to $\Gamma$. Let $\pi$ be a $G$-representation, and $q \leq \abs{T_\mathrm{is}} - 1$, then $H^q(G;\pi) \cong H^q(M ; \pi)$ by Lemma \ref{lem:topological HS}. We need to show that the restriction from $M$ to $\Gamma$ yields an isomorphism $H^q(M;\pi) \to H^q(\Gamma ; \pi)$. This follows from Theorem \ref{lem:poly shapiro} in case $T$ is finite. In case $T$ is infinite, this is \cite[Theorem~E]{BaderSauer}. In \cite{BaderSauer} the assumption is that $T=V(K)$ is the set of all places of $K$, but the proof applies verbatim for any infinite set $T$ which contains all the Archimedean places. We note that in our case, the isomorphism in \cite[Theorem~E]{BaderSauer} for the restriction of a representation from an adelic group to a lattice is topological. Indeed, the representation $\pi$ is pulled back from $G$, and as a $G$-representation we can decompose it as $\pi=\pi_\mathrm{cohom} \oplus \mathring\pi$ according to Theorem \ref{thm: reduced cohom}. The proof that the continuous bijective map $H^q(M;\pi) \to H^q(\Gamma;\pi)$ is a topological isomorphism is the same as the proof of the last part of Theorem \ref{lem:poly shapiro}. That is, working with $\mathring \pi$ and $\pi_\mathrm{cohom}$ separately, and using the fact that $\pi_\mathrm{cohom}$ is a finite direct sum of isotypic representations, and each isotypic representation has Hausdorff cohomology for $M$ and for $\Gamma$.
\end{proof}
\begin{proof}[Proof of Theorem \ref{intro:witness-Lie}]
    This is a special case of Theorem \ref{main:wittness}.
\end{proof}
The cohomological witnesses of infinite degree that Theorem \ref{main:wittness} provides are not of finite type, and are therefore not useful for exploiting the results of \S\ref{sec3}. For semisimple groups arising from simple groups over a global field, Theorem \ref{main:wittness} provides also a cohomological witness of type $FP_\infty$ up to any degree. We would like to have this tool for general semisimple groups so we include the following variation of the theorem:
\begin{theorem}[Theorem~\ref{intro:finite witness}]\label{thm:witness exists}
    Let $G=\prod_{i=1}^n \mathbf{G}_i(k_i)$ be a semisimple group, where each $k_i$ is a local field of characteristic zero and each factor $\mathbf{G}_i$ is an almost simple, connected and simply connected $k_i$-algebraic group. Then for every $N$ there exists a countable dense subgroup $\Gamma<G$ of type $\mathrm{FP}_\infty(\Q)$
    which is a cohomological witness up to degree $N$.
    In particular, there exists a countable dense subgroup $\Gamma<G$ of type $\mathrm{FP}_\infty(\Q)$
    which is a cohomological witness up to the cohomological degree of $G$, $d(G)$.\footnote{By the \textit{cohomological dimension} of a locally compact group we mean the smallest $N$ such that $H^q(G;V)=0$ for any Fr\'echet $G$-module $V$ and any $q>N$. The cohomological dimension equals the dimension of the symmetric space for semisimple Lie groups, and for semisimple $p$-adic groups it equals the rank. The cohomological dimension is additive in products of groups.}
\end{theorem}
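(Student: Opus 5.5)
We will realize $G$ (up to a harmless modification) as a product of local factors of a single global group, and then apply Theorem \ref{main:wittness} with a finite set $T$ of places containing more than $N$ isotropic places. The first step is global: for each factor $\mathbf{G}_i(k_i)$ we choose a number field $K_i$ with a place $v_i$ such that $(K_i)_{v_i}\cong k_i$, and an almost simple, simply connected $K_i$-group $\mathbf{H}_i$ with $\mathbf{H}_i\otimes_{K_i}(K_i)_{v_i}\cong \mathbf{G}_i$. For a real or complex $k_i$ this is the classical existence of global forms with prescribed local behaviour (Borel--Harder). For a $p$-adic $k_i$ we first take a number field that is dense in $k_i$, and then use the same local-to-global existence result. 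We also want the remaining archimedean places to be harmless. To arrange this, we either choose forms that are anisotropic (hence compact) at the other archimedean places where possible, or we allow extra archimedean factors and handle them as follows.

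The key technical step is a restriction of scalars. We set $\mathbf{L}_i=\mathrm{Res}_{K_i/\Q}\mathbf{H}_i$ and take $\Gamma_i$ to be an $S_i$-arithmetic subgroup of $\mathbf{H}_i(K_i)$. Here $S_i$ is finite and contains all archimedean places, $v_i$, and more than $N$ further isotropic non-archimedean places; such places exist because $\mathbf{H}_i$ is quasi-split at almost all places. Theorem \ref{main:wittness} applies with $S=\{\text{archimedean places}\}\cup\{v_i\}$ and $T=S_i$. It shows that $\Gamma_i$, as a dense subgroup of $G_i':=\prod_{v\in S}\mathbf{H}_i(K_{i,v})$, is a cohomological witness up to degree $|T_{\mathrm{is}}|-1\ge N$. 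Moreover, $\Gamma_i$ is a lattice in a product of local groups, so it is of type $\mathrm{FP}_\infty(\Q)$ by Borel--Serre (Example \ref{ex:lattices finite type}). The group $G_i'$ equals $\mathbf{G}_i(k_i)\times C_i$, where $C_i$ is a product of the other archimedean factors. If these are compact, we pass from $G_i'$ to $\mathbf{G}_i(k_i)$ by projecting. If they are not compact, we instead treat $\prod_i G_i'$ as the semisimple group and deduce the statement for $G$ from the one for the larger group, using the Künneth-type product formula (Theorem \ref{thm:product formula}) and the fact that the projection onto $G$ is dense. This reduction is the step I expect to be the main obstacle.

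Next we take $\Gamma=\prod_i\Gamma_i$, projected into $G$. A finite product of $\mathrm{FP}_\infty(\Q)$ groups is again $\mathrm{FP}_\infty(\Q)$ (tensor the resolutions), and $\Gamma$ is dense in $G$. We then check the three witness conditions for the product, following the proof of Theorem \ref{main:wittness} but applied to the product adelic group $M=\prod_i\mathbf{L}_i(\mathbb{A}_{S_i})$, in which $\Gamma$ is a lattice. Condition (1) uses induction and Clozel/Bader--Sauer spectral gap (Theorem \ref{clozel}) at a non-archimedean isotropic place for each factor. Condition (3) combines Lemma \ref{lem:topological HS} with Theorem \ref{lem:poly shapiro}, applied via Theorem \ref{thm:BS vanishing semisimple} for the product, whose rank is at least $\sum_i|S_{i,\mathrm{is}}|>N$. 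Condition (2) follows as in the earlier proof: a $q$-cohomological irreducible summand must be trivial on some non-archimedean isotropic factor, by the product formula and because $q\le N$ is less than the number of such factors, and spectral gap then rules out the $L^2_0$ part.

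Finally, the ``in particular'' statement follows by choosing $N=d(G)$. In degrees $q>d(G)$ the cohomology of $G$ vanishes anyway, so the witness property is only needed up to $d(G)$.
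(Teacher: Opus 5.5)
Your overall architecture matches the paper's: Borel--Harder global forms, finite sets of places with more than $N$ isotropic places, $\Gamma=\prod_i\Gamma_i$, and a rerun of the proof of Theorem~\ref{main:wittness} with $\mathrm{Ind}_\Gamma^M\pi=\pi\oplus\pi'$. (The restriction of scalars plays no role and can be dropped.) However, two steps fail as written.

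\textbf{The other archimedean places.} Your fallback for non-compact extra archimedean factors is false. Suppose some $C_i$ is non-compact, and let $\pi$ be a $G$-representation. Then restriction $H^q(G;\pi)\to H^q(\Gamma;\pi)$ factors through the inflation $H^q(G;\pi)\to H^q(G\times\prod_i C_i;\pi)$. By the product formula (Theorem~\ref{thm:product formula}), the image of inflation is only the K\"unneth summand carrying degree $0$ on the $C_i$. Take $\pi=\C$ and $q=d(C_i)\le N$. The summand $H^{d(C_i)}(C_i;\C)\neq 0$ (the top class of the compact dual) is then missed, so condition (3) fails for the projected group. Density of the projection does not help.

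What you need, and what the paper uses, is that Borel--Harder (Corollary~1.12) \emph{always} allows you to choose $K_i$ and the $K_i$-form so that it is anisotropic at every archimedean place other than $v_i$ and anisotropic over $K_i$. Then $C_i$ is compact, and $\Gamma_i$ is a \emph{cocompact} lattice in $M_i$. Your proposal never secures this cocompactness; for example, $k_i=\R$, $K_i=\Q$ and a $\Q$-isotropic form give a non-uniform $\Gamma_i$.

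\textbf{Why cocompactness matters, and the vanishing step in (3).} Theorem~\ref{lem:poly shapiro} is only stated for a single almost simple group over one number field. For $\Gamma=\prod_i\Gamma_i<M=\prod_iM_i$ you must therefore use the ordinary Shapiro lemma (Lemma~\ref{lem: shapiro lemma}), in both (2) and (3), and that lemma requires cocompactness.

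In (3) you also misapply Theorem~\ref{thm:BS vanishing semisimple}. Consider a summand $\sigma=\pi\otimes L^2_0(M_{i_1}/\Gamma_{i_1})\otimes\cdots\otimes L^2_0(M_{i_k}/\Gamma_{i_k})$ of $\pi'$. For $j\notin\{i_1,\dots,i_k\}$, the non-archimedean factors of $M_j$ act trivially on $\sigma$: they act trivially on $\pi$ (pulled back from $G$) and on every $L^2_0(M_i/\Gamma_i)$. So the spectral-gap hypothesis fails on $M$, and the rank $\sum_i|S_{i,\mathrm{is}}|$ of $M$ is irrelevant.

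The correct argument, as in the paper, has two steps:
\begin{itemize}
\item Apply Theorem~\ref{thm:BS vanishing semisimple} to $M_{i_1}$ alone. There $\sigma$ has spectral gap on every non-compact factor by Theorem~\ref{clozel}, and $\mathrm{rank}(M_{i_1})>N$, so $H^q(M_{i_1};\sigma)=0$ for $q\le N$.
\item Conclude $H^q(M;\sigma)=0$ for $q\le N$ via the Hochschild--Serre spectral sequence for $M_{i_1}\hookrightarrow M\twoheadrightarrow M/M_{i_1}$, whose rows $0,\dots,N$ vanish.
\end{itemize}
With these two repairs, your factor-by-factor verification of (1) and (2) goes through as you describe.
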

\begin{proof}
    For each $i$ we choose a number field $K_i$ such that $k_i=(K_i)_v$ for some place $v$ and a $K_i$-form of $\mathbf{G}_i$. By \cite[Corollary~1.12]{BorelHarder}, we may choose the number field $K_i$ and the $K_i$-form of $\mathbf{G}_i$ such that for every $v' \in V_\infty(K)-\{v\}$ the group $\mathbf{G}_i$ is $K_{v'}$-anisotropic. We choose a set of places $T_i$, which contains all the Archimedean places, such that $\abs{T_{i,\mathrm{is}}} > N$. By Theorem \ref{main:wittness}, the group $\Gamma_i=\mathbf{G}_i(\mathcal O_{T_i})$ is a cohomological witness up to degree $N$ for $G=\mathbf{G}(k_i)$. This group is a lattice in the group $M_i=G_i \times H_i$, where $M_i=\prod_{v' \in T_i} \mathbf{G}((K_i)_{v'})$ and $H_i=\prod_{v' \in T_i - \{v\}} \mathbf{G}((K_i)_{v'})$. Note that since we chose the $K_i$-form to satisfy that for any $v' \in V_\infty(K)-\{v\}$ the group $\mathbf{G}_i$ is $K_{v'}$-anisotropic, we have that $H_i$ is a product of compact and $p$-adic groups. By construction, $\Gamma=\prod \Gamma_i$ is a uniform lattice in $M$.

    We claim that $\Gamma=\prod \Gamma_i$ is a cohomological witness up to degree $N$ for $G$. This does not follow abstractly, but rather from repeating the proof of Theorem \ref{main:wittness}. The main caveat is that whenever we induce a representation $\pi$ from $\Gamma$ to $M$, the induced representation $\pi \otimes L^2(M/\Gamma)$ is a direct sum of several summands and not just $\pi \oplus (\pi \otimes L^2_0(M/ \Gamma)$. That is, we have $\mathrm{Ind}_\Gamma^M(\pi)=\pi \otimes L^2(M/\Gamma)=\pi \oplus \pi'$, where $\pi'$ is the sum of representations of the form $\pi \otimes L^2_0(M_{i_1}/\Gamma_{i_1}) \otimes ... \otimes L^2_0(M_{i_k}/\Gamma_{i_k})$ for some non-empty set of indices $i_1, ... , i_k$. Working with each summand separately, the proof of items $(1)$ and $(2)$ in the definition of a cohomological witness follow exactly the same steps as in the proof of Theorem \ref{main:wittness}. For part $(3)$, we need to show that restriction in from $G$ to $\Gamma$ yields a topological isomorphism in degrees $N$ and below. Pulling back a representation $\pi$ from $G$ to $M$ is the same as in the simple case (using Lemma \ref{lem:topological HS}). We are left to show that $H^q(M;\pi) \cong H^q(\Gamma;\pi)$. We use Shapiro's lemma \ref{lem: shapiro lemma}, and are left to show that $H^q(M;\pi')=0$. That is, to show the vanishing of cohomology for $q \leq N$ for every summand of the form  $\pi \otimes L^2_0(M_{i_1}/\Gamma_{i_1}) \otimes ... \otimes L^2_0(M_{i_k}/\Gamma_{i_k})$. Fix one such summand $\sigma$, and let $i$ be an index such that $L^2_0(M_i / \Gamma_i)$ appears in it. We consider it as an $M_i$-representation. Then it has a spectral gap for every non-compact factor of $M_i$ by Theorem \ref{clozel}, and hence $H^q(M_i;\sigma)=0$ for $q \leq N$ by Theorem \ref{thm:BS vanishing semisimple}. This means that in the Hochschild-Serre spectral sequence for the decomposition $1 \to M_i \to M \to M/M_i$, the first $N$ rows of the $E^2$ page vanish. Thus $H^q(M;\sigma)$ must vanish as well for $q \leq N$. 
\end{proof}
\subsection{Torsion cohomology}\label{subsec:5.2}
Our next goal is to exploit the results of \S\ref{sec3} in order to study torsion in cohomology. Let $G$ be a semisimple group of the form $G= \prod_{i=1}^n \mathbf{G}_i(k_i)$, where each $k_i$ is a local field of characteristic zero, and $\mathbf{G}_i$ is connected and simply connected almost simple algebraic $k_i$-group. We denote by $G_i$ the group of $k_i$-points $\mathbf{G}_i(k_i)$. Note that in Terminology \ref{terminology:semisimple}, we include under ``semisimple group'' the case of connected semisimple Lie groups with finite center that are not necessarily simply connected algebraic groups. The results for these cases follow easily from our main results, via the following observation.
\begin{remark}\label{rem:connected semisimple}
    Let $G'$ be a connected semisimple Lie group with finite center. There exists a group $G$ which satisfies the assumptions above such that we have either $p:G \rightarrow G'$ or $p:G \rightarrow G'$, where $p$ is a surjection with finite kernel. It follows from \cite[Lemma~3.4]{BaderSauer} that every cohomological representation factors through $p$, and that $p$ induces a topological isomorphism on cohomology. Therefore all of the results of this section carry on to the connected semisimple case.
\end{remark}
We will mostly be interested in the Archimedean case: the case of simple $p$-adic groups is well known (in higher-rank they never admit torsion cohomology, and in rank-$1$ the torsion is fully understood: it is always in degree $1$, and occurs exactly when there are almost invariant vectors). However, the mixed fields case is interesting enough, and our methods work similarly for both cases, so we chose to work in wider generality. 
\begin{theorem}[Theorem \ref{intro: main thm1}]\label{main1}
    Let $G$ be a semisimple group and let $\pi$ be a unitary representation of $G$.
    For every degree $q$: 
     \[ \torH^q(G;\pi)\neq 0  \Longrightarrow \mathrm{supp}(\pi)' \cap \widehat{G}_{\mathrm{q-cohom}} \neq \emptyset ~~\text{and}~~\mathrm{supp}(\pi)' \cap \widehat{G}_{\mathrm{(q-1)-cohom}} \neq \emptyset.\]
    Moreover, $\pi$ essentially weakly contains irreducible representations $\rho_1\in \widehat{G}_{\mathrm{q-cohom}}$ and $\rho_2\in \widehat{G}_{\mathrm{(q-1)-cohom}}$ such that $\rho_1$ and $\rho_2$ are inseparable by neighborhoods in $\mathrm{supp}(\pi)$. That is, if $U_1$ is a neighborhood of $\rho_1$ in $\widehat{G}$, and $U_2$ is a neighborhood of $\rho_2$, then $\mathrm{supp}(\pi) \cap U_1 \cap  U_2 \ne \emptyset$.
\end{theorem}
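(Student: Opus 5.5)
We plan to transfer the problem to a cohomological witness of finite type and then rerun the argument of Theorem~\ref{thm:main thm finit prop 1}, keeping track of the fact that both limiting states arise from the \emph{same} almost kernel. First we reduce to $\pi=\mathring{\pi}$. By Theorem~\ref{thm: reduced cohom}, $\torH^q(G;\pi)=\torH^q(G;\mathring{\pi})$. Moreover, any cohomological $\rho\in\mathrm{supp}(\mathring\pi)$ lies in $\mathrm{supp}(\pi)'$, since $\mathring\pi$ is orthogonal to the $\rho$-isotypic part of $\pi$. Next, by Theorem~\ref{thm:witness exists} we fix a dense subgroup $\Gamma<G$ of type $\mathrm{FP}_\infty(\Q)$ which is a cohomological witness up to degree $q+1$. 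By condition (3), $\torH^q(\Gamma;\mathring\pi|_\Gamma)\neq 0$.

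We then follow the proof of Theorem~\ref{thm:main thm finit prop 1}. Lemma~\ref{lemma:non-Hausdorff characterization} gives an almost kernel $\bar v_i\in V^{m_{q-1}}$ of $d_{q,V}$ with $\bar v_i\in\ker(\Delta^-_{q-1,V})$. The images $\bar w_i=d_{q,V}\bar v_i/\|d_{q,V}\bar v_i\|\in V^{m_q}$ then form an almost kernel of $\Delta^-_{q,V}$ lying in $\ker\Delta^+_{q,V}$. Passing to a subsequence, the states of $\bar v_i$ and $\bar w_i$ converge, and by Lemma~\ref{lemma:reps of amplification} we obtain $\Gamma$-representations $\sigma_2,\sigma_1\prec\mathring\pi|_\Gamma$ with $\bar H^{q-1}(\Gamma;\sigma_2)\ne0$ and $\bar H^q(\Gamma;\sigma_1)\ne0$. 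Condition (2) of the witness definition then gives $\rho_2\in\widehat G_{(q-1)\text{-cohom}}$ and $\rho_1\in\widehat G_{q\text{-cohom}}$ in $\mathrm{supp}(\mathring\pi)\subseteq\mathrm{supp}(\pi)'$. This settles the first part of the statement.

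For the ``moreover'' part we use a finer structure: $\bar v_i$ and $\bar w_i$ are linked by the fixed matrix $d_q$. We take the combined vectors $\bar s_i=(\bar v_i,\bar w_i)/\sqrt2\in V^{m_{q-1}+m_q}$ and the limit state on $M_{m_{q-1}+m_q}(C^*(\Gamma))$. Its GNS representation is an amplification $W^{m_{q-1}+m_q}$ of a single representation $W\prec\mathring\pi|_\Gamma$. This $W$ carries both a $(q-1)$-harmonic vector and a $q$-harmonic vector, and these are coupled by $d_q$ through the normalization of $\bar w_i$. The goal is then to show that the supports of $\bar H^{q-1}$ and $\bar H^q$ within $\mathrm{supp}(W)$ cannot be separated by neighbourhoods.

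Concretely, suppose $U_1\ni\rho_1$ and $U_2\ni\rho_2$ are neighbourhoods with $\mathrm{supp}(\pi)\cap U_1\cap U_2=\emptyset$, for every admissible choice of $\rho_1,\rho_2$. Since $\widehat G_{\mathrm{cohom}}$ is finite, we can partition $\mathring\pi=\pi_A\oplus\pi_B$ spectrally (using a direct integral over $\widehat G$) so that $\mathrm{supp}(\pi_A)$ avoids the relevant $(q-1)$-cohomological points and $\mathrm{supp}(\pi_B)$ avoids the relevant $q$-cohomological points. By the first part applied to each summand, $\torH^q(G;\pi_A)=\torH^q(G;\pi_B)=0$, and hence $\torH^q(G;\pi)=0$, a contradiction. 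This direct-sum argument looks cleaner than the state argument, so we would try it first.

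The main obstacle is the spectral partition. The Fell topology is non-Hausdorff, and the neighbourhoods $U_1,U_2$ are only given pairwise. We must cover $\mathrm{supp}(\pi)$ by two measurable pieces whose closures each miss one of the two relevant cohomological sets. To do this we would use finiteness of $\widehat G_{\mathrm{cohom}}$, intersecting finitely many neighbourhoods, and that $G$ is CCR, so points are closed. We then need to check that supports of sub-integrals are closures of the measure's support on each piece; this should follow from Lemma~\ref{lemma:disintegratipon weakly contained} together with Theorem~\ref{thm:type I groups}.
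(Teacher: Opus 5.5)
Your overall route is the paper's. You reduce to $\mathring\pi$, pass to a finite-type cohomological witness, and apply Theorem~\ref{thm:main thm finit prop 1} with condition (2) for the first part. For the ``moreover'' part you argue by contradiction, exactly as the paper does. Finiteness of $\widehat G_{\mathrm{cohom}}$ gives disjoint relatively open sets $U\supseteq\mathrm{supp}(\pi)\cap\widehat G_{\mathrm{q-cohom}}$ and $V\supseteq\mathrm{supp}(\pi)\cap\widehat G_{\mathrm{(q-1)-cohom}}$ (intersect the pairwise neighbourhoods at each point, then take unions). You then split the direct-integral measure along $U$ and its complement and apply the first part to both summands. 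Your support bookkeeping via Lemma~\ref{lemma:disintegratipon weakly contained} is what is needed: the support of the $U$-piece lies in the closure of $U$ in $\mathrm{supp}(\pi)$, which misses $V$. The combined-state route on $M_{m_{q-1}+m_q}(C^*(\Gamma))$ is unnecessary.

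One step as written is false, however. The normalized images $\bar w_i=d_{q,V}\bar v_i/\|d_{q,V}\bar v_i\|$ of an arbitrary almost kernel need not form an almost kernel of $\Delta^-_{q,V}$. Let $\mu_i$ be the spectral measure of $\bar v_i$ for $|d_{q,V}|$. Then
\[ \|d_{q,V}^*\bar w_i\|^2=\frac{\int\lambda^4\,d\mu_i(\lambda)}{\int\lambda^2\,d\mu_i(\lambda)}, \]
and an almost kernel only forces $\int\lambda^2\,d\mu_i\to0$. A component of mass $\epsilon_i$ near $\lambda=1$ keeps this ratio near $1$.

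The error does not damage the proof, and there are two easy repairs. You can obtain $\sigma_1$ from a separate almost kernel of $\Delta^-_{q,V}$, supplied by part (2) of Lemma~\ref{lemma:non-Hausdorff characterization}; this is what the paper does by simply citing Theorem~\ref{thm:main thm finit prop 1}. Alternatively, choose $\bar v_i$ in the range of the spectral projection $\mathbf 1_{(0,\epsilon_i]}(|d_{q,V}|)$. That range is non-zero because $\mathrm{Im}(d_{q,V})$ is not closed, and the choice bounds the ratio by $\epsilon_i^2$. In any case, the coupling between $\bar v_i$ and $\bar w_i$ is never used in the direct-sum argument you adopt.
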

\begin{remark}
It is important to note that the representations $\rho_1$ and $\rho_2$ in Theorem $\ref{main1}$ might be the same representation which is cohomological in two successive degrees. If however $\rho_1 \neq \rho_2$ then $\rho_1$ and $\rho_2$ are points which demonstrate that $\widehat{G}$ is not Hausdorff. 
\end{remark}
\begin{proof}[Proof of Theorem \ref{main1}]
Let $\pi$ be a representation of $G$ such that $\torH^q(G;\pi) \ne 0$. Let $N$ be the cohomological dimension of $G$, and let $\Gamma$ be a cohomological witness of type $\mathrm{FP}_\infty(\Q)$ up to degree $N+1$, as promised by Theorem \ref{thm:witness exists}. Restricting the representation $\pi$ to $\Gamma$, we get that $\mathring{H}^q(\Gamma;\pi) \ne 0$. By Theorem \ref{thm:main thm finit prop 1} for $\Gamma$, we get that $\pi$ weakly contains irreducible cohomological representations in degrees $q$ and $q-1$. By property $(2)$ of the cohomological witness, we get the existence of irreducible $G$-representations $\rho_1\in \widehat{G}_{\mathrm{q-cohom}}$ and $\rho_2 \in \widehat{G}_{\mathrm{(q-1)-cohom}}$ that are weakly contained in $\pi$ as $\Gamma$-representations. By property $(1)$ of the cohomological witness, they are weakly contained in $\pi$ as $G$-representations. 

To show that $\rho_1$ and $\rho_2$ are essentially weakly contained in $\pi$, we may assume without loss of generality that $\pi=\mathring\pi$. By definition, any cohomological representation in $\mathrm{supp}(\mathring\pi)$ is in $\mathrm{supp}(\mathring\pi)'$. We are left to show that there exists such a pair which is inseparable by neighborhoods in $\mathrm{supp}(\pi)$. Assume towards contradiction that every such pair is separable by neighborhoods in $\mathrm{supp}(\pi)$. Recall from Theorems \ref{thm:cass_padic}, \ref{thm:vogan-zuckerman} and \ref{thm:product formula} that $\widehat{G}_\mathrm{cohom}$ is finite. Therefore we can find disjoint open subsets $U$ and $V$ in $\mathrm{supp}(\pi)$, such that $U$ contains $\mathrm{supp}(\pi)\cap\widehat{G}_{\mathrm{q-cohom}}$ and $V$ $\mathrm{supp}(\pi)\cap\widehat{G}_{\mathrm{(q-1)-cohom}}$. Since $G$ is type I, we can decompose $\pi$ uniquely as a direct integral over $\mathrm{supp}(\pi)$ (Theorem \ref{thm:type I groups}). Decomposing the measure into disjoint measures supported on $U$ and $\mathrm{supp}(\pi) \setminus U$ yields a decomposition $\pi=\pi_1 \oplus \pi_2$, with $\mathrm{supp}(\pi_1) \subseteq \overline{U} \subseteq \supp(\pi)\backslash V$ and $\mathrm{supp}(\pi_2)=\mathrm{supp}(\pi) \setminus U$. $\pi_1$ does not weakly contain any $(q-1)$-cohomological representation, and $\pi_2$ does not weakly contain any $q$-cohomological representation. Using what we showed above nither  $\pi_1$ nor $\pi_2$ admits non-zero torsion in degree $q$, contradicting the fact that $H^q(G;\pi) \cong H^q(G;\pi_1) \oplus H^q(G;\pi_2)$.
\end{proof}

\begin{theorem}[Theorem \ref{intro: main thm2}]\label{main2}
    Let $G$ be a semisimple group and let $\pi$ be a unitary representation of $G$.
    Then $\pi$ has torsion cohomology in degrees $q$ or $q+1$ if and only if it essentially weakly contains a cohomological representation in degree $q$, that is
    \[ \torH^q(G;\pi)\neq 0~\text{or}~\torH^{q+1}(G;\pi)\neq 0 \iff \mathrm{supp}(\pi)' \cap \widehat{G}_{\mathrm{q-cohom}} \neq \emptyset.  \]
\end{theorem}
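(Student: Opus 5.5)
The forward implication should follow from Theorem~\ref{main1} applied in two degrees, and the converse from Theorem~\ref{thm:main thm finit prop 2} after restricting to a cohomological witness.

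For ``$\Rightarrow$'', apply Theorem~\ref{main1} in the relevant degree.
\begin{itemize}
\item If $\torH^q(G;\pi)\neq 0$, Theorem~\ref{main1} in degree $q$ gives directly that $\mathrm{supp}(\pi)'\cap\widehat{G}_{\mathrm{q-cohom}}\neq\emptyset$.
\item If $\torH^{q+1}(G;\pi)\neq 0$, Theorem~\ref{main1} in degree $q+1$ yields, through its second conclusion for degree $(q+1)-1=q$, an element of $\mathrm{supp}(\pi)'\cap \widehat{G}_{\mathrm{q-cohom}}$.
\end{itemize}
No further work is needed for this direction.

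For ``$\Leftarrow$'', fix $\rho\in \mathrm{supp}(\pi)'\cap\widehat{G}_{\mathrm{q-cohom}}$.

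\emph{Step 1: reduce to a summand with no cohomological subrepresentations.} Let $\pi_0$ be the orthogonal complement of the $\rho$-isotypic component of $\pi$. By definition of essential weak containment, $\rho\prec\pi_0$. Decompose $\pi_0=(\pi_0)_{\mathrm{cohom}}\oplus\mathring{\pi}_0$ as in Theorem~\ref{thm: reduced cohom}. The summand $(\pi_0)_{\mathrm{cohom}}$ is a finite sum of isotypic representations of irreducibles $\rho'\neq\rho$. Each such isotypic component has support $\{\rho'\}$, and since $G$ is CCR, points of $\widehat{G}$ are closed. Hence $\rho$ is not weakly contained in $(\pi_0)_{\mathrm{cohom}}$, and Lemma~\ref{weakContain} gives $\rho\prec\mathring{\pi}_0$. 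Since $\mathring{\pi}_0$ is a direct summand of $\pi$, it suffices to show that $\torH^q(G;\mathring\pi_0)\neq 0$ or $\torH^{q+1}(G;\mathring\pi_0)\neq0$.

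\emph{Step 2: pass to a cohomological witness.} Let $N$ be the cohomological dimension of $G$. Take $\Gamma<G$ of type $\mathrm{FP}_\infty(\Q)$ which is a cohomological witness up to degree $N+1$, as provided by Theorem~\ref{thm:witness exists}. Restriction to $\Gamma$ preserves weak containment, so $\rho|_\Gamma\prec\mathring\pi_0|_\Gamma$. By property (3) of a witness, $H^q(\Gamma;\rho)\cong H^q(G;\rho)\neq 0$. This is the one place where we need $q+1\le N+1$; if $q>N$, then $H^q(G;\rho)=0$ and the hypothesis is vacuous.

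\emph{Step 3: apply the finiteness result and transfer back.} Theorem~\ref{thm:main thm finit prop 2}, applied with $\sigma=\rho|_\Gamma$ and $\mathring\pi_0|_\Gamma$, gives that at least one of $\barH^q(\Gamma;\mathring\pi_0)$, $\torH^q(\Gamma;\mathring\pi_0)$, $\torH^{q+1}(\Gamma;\mathring\pi_0)$ is non-zero. Property (3) again identifies $H^j(\Gamma;\mathring\pi_0)\cong H^j(G;\mathring\pi_0)$ as topological vector spaces for $j\in\{q,q+1\}$, and so it respects the splitting into torsion and Hausdorff parts. By Theorem~\ref{thm: reduced cohom}, $\barH^q(G;\mathring\pi_0)=0$, so the first alternative cannot occur. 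Hence $\mathring\pi_0$, and therefore $\pi$, has non-zero torsion in degree $q$ or $q+1$.

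The main point needing care is Step~1: we must discard all cohomological subrepresentations, not only the $\rho$-isotypic one, so that the Hausdorff alternative in Theorem~\ref{thm:main thm finit prop 2} is excluded after transferring back to $G$. This relies on the $T_1$ property of $\widehat{G}$ together with Lemma~\ref{weakContain}. The secondary point is making sure the witness covers degree $q+1$, which is why we take it up to degree $N+1$.
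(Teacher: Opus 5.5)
Your proposal is correct and follows essentially the same route as the paper. The forward direction uses Theorem~\ref{main1} in degrees $q$ and $q+1$. The converse reduces to $\mathring{\pi}$, restricts to an $\mathrm{FP}_\infty(\Q)$ cohomological witness up to degree $N+1$, applies Theorem~\ref{thm:main thm finit prop 2}, and rules out the Hausdorff alternative via Theorem~\ref{thm: reduced cohom}. Your Step~1 makes explicit, via the $T_1$ property of $\widehat{G}$ and Lemma~\ref{weakContain}, why essential weak containment of $\rho$ survives passing to $\mathring{\pi}$; the paper only asserts this point.
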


\begin{proof}
In the first direction, assume that $\torH^q(G;\pi)\neq 0$ or that  $\torH^{q+1}(G;\pi)\neq 0$. Both cases imply, by Theorem \ref{main1}, that $\mathrm{supp}(\pi)' \cap \widehat{G}_{\mathrm{q-cohom}} \neq \emptyset$.

We prove the opposite direction. Let $N$ be the cohomological dimension of $G$, and $\Gamma$ be a cohomological witness of type $\mathrm{FP}_\infty(\Q)$ up to degree $N+1$, as promised by Theorem \ref{thm:witness exists}. Let $\sigma \in \mathrm{supp}(\pi)' \cap \widehat{G}_{\mathrm{q-cohom}}$. We may assume without loss of generality that $\pi=\mathring{\pi}$. Indeed, we are interested in torsion cohomology, and passing to $\mathring{\pi}$ does not affect the weak containment of $\sigma$ in $\pi$ since we assumed the weak containment is essential. Applying Theorem \ref{thm:main thm finit prop 2} to $\Gamma$, we get that $H^q(\Gamma; \pi) \ne 0$ (cases $(1)$ and $(2)$ in Theorem \ref{thm:main thm finit prop 2}), or that $\torH^{q+1}(\Gamma; \pi) \ne 0$ (case $(3)$). Since $\Gamma$ is a cohomological witness, so restriction from $G$ to $\Gamma$ yields an isomorphism in cohomology in degrees $q$ and $q+1$. But out assumption that $\pi=\mathring{\pi}$ does not allow $H^q(\Gamma;\pi)$ to be Hausdorff, as may happen by case $(1)$ of Theorem \ref{thm:main thm finit prop 2}.
\end{proof}
Combining Theorems \ref{main1} and \ref{main2}, we get the following corollary:
\begin{corollary}\label{nonIsol}
    Assume that $\pi \in \widehat{G}_{\mathrm{q-cohom}}$ is non-isolated in the unitary dual of $G$. Then, there exists a representation $\rho$ in $\widehat{G}_{\mathrm{(q+1)-cohom}}$ or in $\widehat{G}_{\mathrm{(q-1)-cohom}}$ that is inseparable by neighborhoods from $\pi$. 
\end{corollary}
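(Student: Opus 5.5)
Since $\pi$ is non-isolated in $\widehat{G}$, we may pick a unitary representation $\sigma$ of $G$ with $\pi\prec\sigma$ whose support does not contain $\pi$ as an isolated point. The easiest choice is the direct integral (or direct sum) of the irreducible representations in a net converging to $\pi$ and different from $\pi$. We take $\sigma$ to be this direct sum, restricted to representations $\neq\pi$. Then $\sigma$ has no subrepresentation isomorphic to $\pi$. After removing the isotypic components of the finitely many cohomological representations from $\sigma$, we still have $\pi\in\mathrm{supp}(\sigma)$. Indeed, $\pi\prec\sigma_{\mathrm{cohom}}\oplus\mathring\sigma$, so by Lemma~\ref{weakContain} either $\pi\prec\mathring\sigma$, or $\pi$ is weakly contained in a single isotypic component of some $\rho\ne\pi$. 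In the latter case $\pi\prec\rho$ with both irreducible, and $\widehat{G}$ is $T_1$ since $G$ is CCR, so this is impossible. Hence we may assume $\sigma=\mathring\sigma$, and then $\pi\in\mathrm{supp}(\sigma)'\cap\widehat{G}_{\mathrm{q-cohom}}$.

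We then apply Theorem~\ref{main2} to $\sigma$. It gives $\torH^q(G;\sigma)\neq0$ or $\torH^{q+1}(G;\sigma)\neq0$. In either case we apply Theorem~\ref{main1} in the relevant degree, namely $q$ or $q+1$. This yields a pair $\rho_1,\rho_2$ that is inseparable by neighborhoods in $\mathrm{supp}(\sigma)$, with cohomological degrees $\{q,q-1\}$ or $\{q+1,q\}$ respectively.

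The main obstacle is that the pair produced by Theorem~\ref{main1} need not contain $\pi$ itself. We handle this by shrinking $\sigma$ so that $\pi$ is its only $q$-cohomological support point. Since $\widehat{G}_{\mathrm{cohom}}$ is finite and $\widehat{G}$ is $T_1$, we can choose a neighborhood $W$ of $\pi$ containing no other cohomological representation. We then take the net converging to $\pi$ inside $W$, so that $\sigma$ is supported on $\overline{W}$. A point of $\overline{W}$ could still be a cohomological $\rho$ that is inseparable from $\pi$; that situation is already the desired conclusion, provided $\rho$ is $(q\pm1)$-cohomological. To handle this cleanly, we argue by contradiction. Suppose $\pi$ is separable by neighborhoods from every representation in $\widehat{G}_{\mathrm{(q+1)-cohom}}\cup\widehat{G}_{\mathrm{(q-1)-cohom}}$. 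By finiteness we can choose $W$ disjoint from neighborhoods of all of them. Then every point of $\mathrm{supp}(\sigma)$ that is $(q\pm1)$-cohomological lies outside $W$, and it has a neighborhood disjoint from $W$, which contains $\mathrm{supp}(\sigma)$ up to closure. Consequently $\mathrm{supp}(\sigma)\cap\widehat{G}_{\mathrm{(q\pm1)-cohom}}=\emptyset$ (points of $\overline W$ outside $W$ meet every neighborhood of themselves in $W$).

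Now Theorem~\ref{main1} applied to $\sigma$ requires a $(q-1)$-cohomological point of $\mathrm{supp}(\sigma)$ if $\torH^q(G;\sigma)\ne0$, and a $(q+1)$-cohomological point if $\torH^{q+1}(G;\sigma)\ne0$. Neither exists, which contradicts Theorem~\ref{main2}. Hence $\pi$ is inseparable from some $\rho$ that is $(q+1)$- or $(q-1)$-cohomological. This $\rho$ may coincide with $\pi$ when $\pi$ is cohomological in adjacent degrees, which is consistent with the statement. The main thing to verify carefully is the point-set step: a support point $\rho$ outside $W$ cannot lie in $\overline{W}$ once $\rho$ and $\pi$ have disjoint neighborhoods and $W$ has been shrunk inside $\pi$'s neighborhood.
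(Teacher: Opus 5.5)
Your proof is correct and follows the route the paper intends: the paper only says the corollary follows by combining Theorems~\ref{main1} and~\ref{main2}. Your contradiction step chooses $\sigma=\bigoplus_n\pi_n$, with $\pi_n\neq\pi$ accumulating at $\pi$ inside a neighbourhood $W$ of $\pi$ that is disjoint from separating neighbourhoods of all $(q\pm1)$-cohomological points. This supplies exactly the localization that the paper's one-line deduction leaves implicit, and it shows that only the first part of Theorem~\ref{main1}, not its inseparability addendum, is needed.

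Two small cleanups:
\begin{itemize}
\item Take a sequence rather than a net, which is possible since $\widehat{G}$ is second countable for second countable $G$. Then $\sigma$ lives on a separable Hilbert space, as the paper assumes throughout.
\item Drop the reduction to $\mathring\sigma$. It is unnecessary, because $\pi\in\mathrm{supp}(\sigma)'$ holds automatically once the $\pi_n\neq\pi$ converge to $\pi$.
\end{itemize}
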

 Corollary \ref{nonIsol} does not imply necessarily that $\pi$ is a non-Hausdorff point in the unitary dual. It may be that the promised representation $\rho$ is $\pi$ itself, that is, that $\pi$ contributes cohomology in two successive degrees.

Next we will discuss which cohomological properties of representations are \emph{topological properties}. Let $G$ be a semisimple group, $\pi$ a unitary representation of $G$ and $\mu_\pi$ the unique measure class on $\widehat{G}$ that realizes $\pi$ as a direct integral (see Theorem \ref{thm:type I groups}). We say that a property of the unitary representation $\pi$ is \emph{topological} if it depends only on the support $\mathrm{supp}(\pi)$, and not on the specific measure class $\mu_\pi$. Proposition \ref{prop:red cohom direct integral} implies that the existence of Hausdorff cohomology and the existence of Hausdorff cohomology in a certain degree $q$ both depend on the atoms of the measure $\mu_\pi$. As a result, these properties are not topological. The following theorem shows that the existence of cohomology and the existence of torsion cohomology are topological properties.
\begin{theorem}[Theorem \ref{intro:topological propery1}]\label{thm:topological property1}
    Let $G$ be a semisimple group and let $\pi$ be a unitary representation of $G$.
    Then $\pi$ has cohomology in some degree if and only if it weakly contains a cohomological representation, and $\pi$ has cohomological torsion in some degree if and only if it essentially weakly contains a cohomological representation, that is 
    \[ \exists_{q\in\N}~H^q(G;\pi)\neq 0 \iff \mathrm{supp}(\pi) \cap \widehat{G}_{\mathrm{cohom}} \neq \emptyset,  \]
    and 
    \[ \exists_{q\in\N}~\torH^q(G;\pi)\neq 0 \iff \mathrm{supp}(\pi)' \cap \widehat{G}_{\mathrm{cohom}} \neq \emptyset.  \]
\end{theorem}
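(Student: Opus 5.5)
The plan is to treat the two equivalences separately, reducing each to results already established in this section. The second equivalence (torsion in some degree) follows almost directly from Theorems \ref{main1} and \ref{main2}. If $\torH^q(G;\pi)\neq 0$ for some $q$, then Theorem \ref{main1} gives $\mathrm{supp}(\pi)'\cap\widehat{G}_{\mathrm{q-cohom}}\neq\emptyset$, hence $\mathrm{supp}(\pi)'\cap\widehat{G}_{\mathrm{cohom}}\neq\emptyset$. Conversely, if some $\sigma\in\mathrm{supp}(\pi)'\cap\widehat{G}_{\mathrm{cohom}}$ exists, then $\sigma$ is $q$-cohomological for some $q$. Theorem \ref{main2} in degree $q$ then yields $\torH^q(G;\pi)\neq 0$ or $\torH^{q+1}(G;\pi)\neq0$, which is torsion in some degree.

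For the first equivalence, I would decompose $\pi=\pi_{\mathrm{cohom}}\oplus\mathring\pi$ as in Theorem \ref{thm: reduced cohom}. Then $H^q(G;\pi)=\barH^q(G;\pi_{\mathrm{cohom}})\oplus\torH^q(G;\mathring\pi)$.

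For ($\Rightarrow$), suppose $H^q(G;\pi)\neq 0$. Either $\barH^q(G;\pi_{\mathrm{cohom}})\neq0$ or $\torH^q(G;\mathring\pi)\neq 0$. In the first case, the formula of Theorem \ref{thm: reduced cohom} shows that some $\rho\in\widehat{G}_{\mathrm{cohom}}$ is actually contained in $\pi$, so $\rho\in\mathrm{supp}(\pi)$. In the second case, Theorem \ref{main1} applied to $\mathring\pi$ produces a cohomological representation in $\mathrm{supp}(\mathring\pi)\subseteq\mathrm{supp}(\pi)$.

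For ($\Leftarrow$), let $\rho\in\mathrm{supp}(\pi)\cap\widehat{G}_{\mathrm{cohom}}$. I would use Lemma \ref{weakContain} with the decomposition $\pi=\pi_{\mathrm{cohom}}\oplus\mathring\pi$: either $\rho\prec\pi_{\mathrm{cohom}}$ or $\rho\prec\mathring\pi$. In the second case $\rho$ is not a subrepresentation of $\mathring\pi$, so $\rho$ is a non-isolated point of $\mathrm{supp}(\mathring\pi)$. The torsion equivalence already proved then gives $\torH^*(G;\mathring\pi)\neq0$, hence $H^*(G;\pi)\neq 0$.

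The main obstacle is the first case, $\rho\prec\pi_{\mathrm{cohom}}$. Here $\pi_{\mathrm{cohom}}$ is a finite sum of isotypic components, and its support is the finite set of irreducibles occurring in it, since an amplification of $\sigma$ is weakly equivalent to $\sigma$. Applying Lemma \ref{weakContain} repeatedly, $\rho\prec\sigma$ for some irreducible cohomological $\sigma$ contained in $\pi$. The conclusion only requires $\pi_{\mathrm{cohom}}\neq0$: then $\pi$ contains a cohomological $\sigma$, so $H^*(G;\sigma)\neq0$, and $\sigma$ is a direct summand of $\pi$, which gives $H^*(G;\pi)\neq0$. Thus one never needs $\rho=\sigma$; the argument only needs that $\pi_{\mathrm{cohom}}$ is nonzero whenever $\rho$ is weakly contained in it, which is immediate.
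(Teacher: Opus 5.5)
Your proposal is correct and follows essentially the same route as the paper. The torsion equivalence is read off from Theorems~\ref{main1} and~\ref{main2}. The first equivalence separates cohomological points that occur as genuine subrepresentations (giving Hausdorff cohomology) from those that are only essentially weakly contained (giving torsion via the second equivalence).

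The difference is cosmetic. You organize this dichotomy through $\pi=\pi_{\mathrm{cohom}}\oplus\mathring{\pi}$ and Lemma~\ref{weakContain}. The paper instead splits according to whether the cohomological point is isolated in $\mathrm{supp}(\pi)$, and cites Proposition~\ref{prop:red cohom direct integral} for the Hausdorff part of the forward implication.
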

\begin{proof}
    The second part of the theorem is an immediate corollary from Theorem \ref{main2}. One implication of the first part is Proposition \ref{prop:red cohom direct integral}. The second implication follows from the second part of the theorem, and from the observation that if $(\mathrm{supp(\pi)}-\mathrm{supp(\pi)'}) \cap \widehat{G}_\mathrm{cohom} \ne \emptyset$, then $\pi$ has a cohomological subrepresentation and its Hausdorff cohomology is non-zero. 
\end{proof}
The next result shows that having torsion cohomology in a certain degree $q$ is also a topological property.
\begin{theorem}[Theorem \ref{intro:topological propery}]\label{thm:non-Hausdorff is topological}
    Let $G$ be a semisimple group and let $\pi$ and $\rho$ be unitary representations of $G$.
    If $\rho$ is weakly contained in $\pi$ and $\torH^q(G;\rho) \ne 0$, then $\torH^q(G;\pi) \ne 0$. In particular, if $\mathrm{supp}(\pi)=\mathrm{supp}(\rho)$ then 
    \[ \torH^q(G;\pi)\neq 0 \qquad \Longleftrightarrow \qquad \torH^q(G;\rho) \neq 0. \]
\end{theorem}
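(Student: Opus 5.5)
We reduce to Proposition \ref{prop:non-red is topological finit prop} by restricting to a cohomological witness. Let $N$ be the cohomological dimension of $G$, and let $\Gamma<G$ be a countable dense subgroup of type $\mathrm{FP}_\infty(\Q)$ which is a cohomological witness up to degree $N+1$; such a $\Gamma$ exists by Theorem \ref{thm:witness exists}, after passing to the simply connected algebraic cover as in Remark \ref{rem:connected semisimple}. We may assume $q\le N$, since otherwise $\torH^q(G;\rho)=0$ and there is nothing to prove. The ``in particular'' clause follows by applying the main statement in both directions, because equal supports give mutual weak containment.

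The first step is to replace $\rho$ by its torsion part $\mathring{\rho}$. By Theorem \ref{thm: reduced cohom} we have $\torH^q(G;\mathring\rho)=\torH^q(G;\rho)\neq 0$ and $\barH^q(G;\mathring\rho)=0$. Hence $H^q(G;\mathring\rho)$ is purely torsion and non-zero. Since $\mathring\rho$ is a subrepresentation of $\rho$, it is still weakly contained in $\pi$.

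Next we restrict everything to $\Gamma$. By property (3) of the cohomological witness, restriction gives a topological isomorphism $H^q(G;\mathring\rho)\cong H^q(\Gamma;\mathring\rho|_\Gamma)$. Therefore $H^q(\Gamma;\mathring\rho|_\Gamma)=\torH^q(\Gamma;\mathring\rho|_\Gamma)\neq 0$. Weak containment passes to restrictions, so $\mathring\rho|_\Gamma\prec\pi|_\Gamma$. Proposition \ref{prop:non-red is topological finit prop} applied to $\Gamma$ then gives $\torH^q(\Gamma;\pi|_\Gamma)\ne 0$.

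Finally we pull this back to $G$. Property (3) again identifies $H^q(G;\pi)$ with $H^q(\Gamma;\pi|_\Gamma)$ topologically. A topological isomorphism carries the closure of $\{0\}$ onto the closure of $\{0\}$, so $\torH^q(G;\pi)\neq0$.

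The only real obstacle is that Proposition \ref{prop:non-red is topological finit prop} needs purely torsion cohomology. This is exactly why the reduction to $\mathring\rho$ is done first. It is also the point where we use that the isomorphism in property (3) is topological, not merely algebraic: a merely algebraic isomorphism would not preserve the torsion/Hausdorff splitting, and the argument would fail.
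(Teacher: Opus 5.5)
Your proof is correct and follows the paper's argument: you reduce to the purely torsion part $\mathring\rho$ via Theorem \ref{thm: reduced cohom}, transfer to an $\mathrm{FP}_\infty(\Q)$ cohomological witness up to degree $N+1$ using the topological isomorphisms of property (3), and invoke Proposition \ref{prop:non-red is topological finit prop}. You also spell out details the paper leaves implicit: the ``in particular'' clause via mutual weak containment, and why the isomorphism must be topological rather than merely algebraic.
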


\begin{proof}
    Let $\pi$ and $\rho$ be as above. Assume without loss of generality that $\bar{H}^q(G;\rho)=0$. Indeed, we may decompose $\rho$ as in Theorem \ref{thm: reduced cohom} into $\rho_\mathrm{cohom} \oplus \mathring{\rho}$ such that the cohomology of $\mathring{\rho}$ is purely torsion. Let $\Gamma$ be a group of type $\mathrm{FP}_\infty(\Q)$ which is a cohomological witness up to degree $N+1$, where $N$ is the cohomological dimension of $G$. We have that $H^q(G;\pi) \cong H^q(\Gamma;\pi)$ and $H^q(G;\rho) \cong H^q(\Gamma;\rho)$ are topological isomorphisms. The desired conclusion follows from Proposition \ref{prop:non-red is topological finit prop}.
\end{proof}

Theorem \ref{thm:topological property1} shows that every representation that essentially weakly contains a cohomological representation admits non-zero torsion in cohomology. This completely resolves the question of identifying which unitary representations admit non-zero torsion in cohomology. The question about having non-zero torsion in a certain degree is more subtle. Indeed, if a representation $\pi$ essentially weakly contains some irreducible $q$-cohomological representation, Theorem \ref{main2} allows two options for the degree where non-zero torsion occurs: $q$ or $q+1$. Both options can occur. To illustrate this, consider the group $G=\SL_2(\R)$, and the trivial representation $1$. It is cohomological in degrees $0$ and $2$. Let $\sigma$ be a representation of $\SL_2(\R)$ with almost invariant vectors. Considering $1$ as a cohomological representation in degree $q=0$, by Theorem \ref{main2} $\sigma$ must admit torsion cohomology in degree $0$ or $1$. The cohomology in degree $0$ is always Hausdorff, so we must have torsion in degree $q+1 = 1$. Now consider $1$ as a cohomological representation in degree $q=2$. By Theorem \ref{main2}, $\sigma$ must admit torsion cohomology in degree $2$ or $3$. The cohomological dimension of $\SL_2(\R)$ is $2$, so the torsion must occur in degree $q=2$. In  a more general setting, we point out the following general phenomenon: 
\begin{remark}\label{rem:4-degrees}
    Let $G$ be a semisimple group, and let $\pi$ be a unitary representation of $G$. 
    Suppose that $\pi$ essentially weakly contains a representation that is cohomological in degree $q$ we wish to understand if $\pi$ has torsion in degree $q$. 
    
     Assume $\pi$ does not have torsion in degree $q$. By Theorem \ref{main1} $\pi$ essentially weakly contains a $(q-1)$-cohomological representation. By Theorem \ref{main2} $\pi$ must have torsion in degree $q$ or $q-1$, so by our assumption it has torsion in degree $q-1$. It now follows from Theorem \ref{main1} that $\pi$ essentially weakly contains a $(q-2)$-cohomological representation. In addition, since there is no torsion in degree $q$, $\pi$ must have torsion in degree $q+1$ by Theorem \ref{main2}; and so must essentially weakly contain a $(q+1)$-cohomological representation by Theorem \ref{main1}. To sum it all up, if there is no torsion in degree $q$, $\pi$ must essentially weakly contain representations in all four degrees $q-2,q-1,q$ and $q+1$.

    On the other hand, if we assume that $\pi$ essentially weakly contains representations in all four degrees $q-2,q-1,q$ and $q+1$ Theorems \ref{main1} and \ref{main2} do not give us enough information to determine weather $\pi$ has torsion in degree $q$. In other words, if $\pi$ essentially weakly contains irreducible cohomological representations in four successive degrees, we can not tell weather there is torsion cohomology in the third degree. 
\end{remark}
Situations as in Remark \ref{rem:4-degrees} give rise to the following natural question, which asks whether Theorem \ref{main1} has an inverse direction:
\begin{question}\label{q: inverse main1}
    Let $G$ be a semisimple group. Suppose that $\pi$ is a representation of $G$ that essentially weakly contains representations $\sigma_1 \in \widehat{G}_{\mathrm{q-cohom}}$ and $\sigma_2 \in \widehat{G}_{\mathrm{(q-1)-cohom}}$, such that $\sigma_1$ and $\sigma_2$ are inseparable by neighborhoods in $\mathrm{supp}(\pi)$. Is it necessarily that $\torH^q(G;\pi) \ne 0$?
\end{question}
The discussion in Remark \ref{rem:4-degrees} naturally occurs in many examples. The problem with determining the degree always occurs, as illustrated above, whenever a representation $\pi$ essentially weakly contains cohomological representations in four successive degrees that are inseparable by neighborhoods in $\mathrm{supp}(\pi)$. We will refer to these situations as situations of ``four successive degrees''. In \S\ref{sec6}, we will study torsion cohomology for the groups $\SO^\circ(n,1)$ and $\mathrm{SU}(n,1)$. As we will see, for the group $\SO^\circ(2k,1)$ the situation of four successive degrees never occurs,
and we will use this to give a full solution to Problem~\ref{prob:main} for these groups.
This is not the case for the groups $\SO^\circ(2k+1,1)$ and $\mathrm{SU}(n,1)$. For these groups the situation of four successive degrees occurs. For the group $\mathrm{SU}(2,1)$ we are able to provide a complete answer, and show the existence of torsion cohomology in the third of four successive degrees as above. For $\SO^\circ(2k+1,1)$ we are able to resolve the situation using Lie algebra cohomology, this time showing vanishing of torsion in the third of four successive degrees. In \S \ref{sec7} an alternative, constructive proof is provided for $\SO^\circ(3,1)=\PSL_2(\C)$. In particular, we get the following.
\begin{theorem}\label{thm:neg answer sl2C}
      The answer to Question \ref{q: inverse main1} is negative, and a counterexample may be given in $\SO^{\circ}(2k+1,1)$.
\end{theorem}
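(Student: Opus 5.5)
The plan is to give an explicit representation $\pi$ of $G=\SO^\circ(2k+1,1)$ and compute its cohomology directly with the relative Lie algebra complex. Let $\rho=\pi_k$ be the irreducible tempered (fundamental series) representation cohomological in the two middle degrees $k$ and $k+1$ (recall $d(G)=2k+1$ and $\ell(G)=1$). Let $\pi=\int^\oplus_{[-\varepsilon,\varepsilon]}\pi_\nu\,d\nu$ be the direct integral of the unitary principal series $\pi_\nu=\Ind_{MAN}^G(\sigma\otimes e^{i\nu}\otimes 1)$ near $\nu=0$. Here $M=\SO(2k)$ and $\sigma$ is one of the two irreducible $M$-types occurring in $\wedge^k\C^{2k}$, so that $\pi_0$ contains $\pi_k$ as a constituent.

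Then $\mathrm{supp}(\pi)'\ni\pi_k$ and $\pi_k\in\widehat{G}_{k\mathrm{-cohom}}\cap\widehat{G}_{(k+1)\mathrm{-cohom}}$. The point $\pi_k$ is trivially inseparable from itself, so with $q=k+1$ the hypotheses of Question~\ref{q: inverse main1} hold. It remains to prove $\torH^{k+1}(G;\pi)=0$.

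First I reduce to $(\mathfrak g,K)$-cohomology. By Lemma~\ref{lem:smooth and cont cohom} and the van Est type theorem for smooth vectors, $H^*(G;\pi)$ is computed by the complex $C^q(\pi)=\mathrm{Hom}_K(\wedge^q\mathfrak p,\mathcal H_\pi)$. Since only finitely many $K$-types occur in $\wedge^*\mathfrak p$, each $C^q(\pi)$ is a Hilbert space, and the complex decomposes as a direct integral of fiber complexes $C^\bullet(\pi_\nu)$. Each fiber complex is finite dimensional; by Frobenius reciprocity its dimensions are $\dim\mathrm{Hom}_M(\wedge^q\mathfrak p|_M,\sigma)$ and do not depend on $\nu$.

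The differentials $d_q(\nu)$ are given by matrices depending analytically on $\nu$. Kuga's formula gives $d d^*+d^*d=\Omega-\Omega_{\mathrm{triv}}$ fiberwise, which equals $c\,\nu^2$ times the identity on $C^\bullet(\pi_\nu)$. Consequently, for $\nu\ne0$ the fiber complex is exact. In that case $d_q$ has closed range on the direct integral exactly when the smallest nonzero singular value of $d_q(\nu)$ stays bounded away from $0$ as $\nu\to0$. By Lemma~\ref{lemma:non-Hausdorff characterization}, torsion in degree $k+1$ is therefore equivalent to $\|d_{k+1}(\nu)^{-1}|_{\mathrm{Im}}\|$ blowing up.

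The key computation concerns the small complex $C^{k-1}\to C^k\to C^{k+1}\to C^{k+2}$ for the chosen $\sigma$. Writing $\mathfrak p\cong\C^{2k+1}$ restricted to $M$ as $\C^{2k}\oplus\C$, I would compute $\mathrm{Hom}_M(\wedge^q\mathfrak p,\sigma)$ in these degrees. Choosing the self-dual or anti-self-dual half $\sigma$ should make these dimensions $0,1,1,0$ or $1,1,0,\dots$ in a shifted pattern.

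The goal is the following dichotomy. Since $(d d^*+d^*d)$ equals $c\nu^2$, any differential that is nonzero between one-dimensional spaces has singular value of order $|\nu|$. So I need to arrange that the only differentials with singular values tending to zero are $d_k$ and $d_{k+2}$. Meanwhile $d_{k+1}(\nu)$ should be either identically zero or have a bounded-below nonzero block. In that case the Laplacian in degrees $k$ and $k+1$ is supplied by the neighbouring maps, which produces torsion in degrees $k$ and $k+2$ but not in degree $k+1$.

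I expect this explicit determination of the differentials to be the main obstacle. It requires writing the fiber differentials in the compact picture and checking which entries vanish to first order at $\nu=0$. If one choice of $\sigma$ produces torsion in degree $k+1$ instead, I would switch to the other half of $\wedge^k\C^{2k}$ or to the direct sum of both. Theorem~\ref{main2} is a useful consistency check here: torsion must appear in degree $k$ or $k+1$, and in degree $k+1$ or $k+2$.

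Finally, the vanishing of $\torH^{k+1}$ passes from the $(\mathfrak g,K)$-complex to continuous cohomology through the topological isomorphism of Lemma~\ref{lem:smooth and cont cohom}.
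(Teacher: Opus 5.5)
\textbf{There is a genuine gap: the representation you chose is not a counterexample, because it does have torsion in degree $k+1$.}

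Your own fiber computation already shows this. For $\sigma$ either half of $\wedge^k\C^{2k}$, or their sum, the space $\mathrm{Hom}_M(\wedge^q\C^{2k}\oplus\wedge^{q-1}\C^{2k},\sigma)$ is nonzero only for $q=k$ and $q=k+1$. The ``shifted'' pattern never occurs. So every fiber complex has the form $0\to C^k\xrightarrow{d_{k+1}(\nu)}C^{k+1}\to 0$ with one-dimensional (for a single half) spaces. At $\nu=0$ we have $d_{k+1}(0)=0$, since $I_k$ is cohomological in both degrees. By your Kuga argument, $|d_{k+1}(\nu)|$ is of order $|\nu|$. On the direct integral, $d_{k+1}$ is therefore multiplication by a function vanishing only at $\nu=0$. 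It is injective with dense, non-closed range, so $H^{k+1}(G;\pi)=\torH^{k+1}(G;\pi)\neq 0$.

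The paper's general theorems force the same conclusion. The only cohomological point of $\mathrm{supp}(\pi)$ is $I_k$, so Theorem~\ref{main1} rules out torsion in degree $k$. Theorem~\ref{main2} with $q=k$ then forces torsion in degree $k+1$. This is exactly part (3) of Theorem~\ref{thm:non-Hausdorff for so(2k+1,1)}. Moreover, $q=k+1$ is the only degree in which your $\pi$ satisfies the hypotheses of Question~\ref{q: inverse main1}. Hence no tempered representation supported on this principal series near $I_k$ can be a counterexample, and switching halves or taking their sum does not help.

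\textbf{How to fix it.} The behaviour you set as your goal ($d_k$ and $d_{k+2}$ degenerating while $d_{k+1}\equiv 0$) is what happens on the non-tempered side of $I_k$, and that is the counterexample the paper uses. Take $\pi$ supported on the end of the complementary series $J(\sigma_{k-1},s)$ with $\sigma_{k-1}=\wedge^{k-1}\C^{2k}$, whose endpoints are $J_{k-1}$ and $I_k$.
\begin{itemize}
\item Then $I_k\in\mathrm{supp}(\pi)'$ is both $k$- and $(k+1)$-cohomological, so the hypotheses of Question~\ref{q: inverse main1} hold for $q=k+1$.
\item By Lemma~\ref{lem:one dimensional}, each fiber complex is one-dimensional exactly in degrees $k-1,\dots,k+2$.
\item Since these fibers are not cohomological, the complex is exact. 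Exactness forces $d_{k+1}=0$ and $d_{k+2}$ bijective, so there are no nonzero $(k+1)$-cocycles fiberwise.
\item Lemma~\ref{lem:d=0 integral} then gives $H^{k+1}(G;\pi)=0$; this is Theorem~\ref{greatQuestion}.
\end{itemize}
On this side no singular-value estimates are needed at all.
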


After we answering Question \ref{q: inverse main1} in the negative, we are left with the problem of finding an ``if and only if" criterion for the existence of torsion cohomology in a certain degree $q$. To elaborate on the specific problem that we point out, we consider the following situation: let $\pi \in \widehat{G}_\mathrm{q-cohom}$ be non-isolated point, and $\{\pi_n \}$ be a sequence of distinct representations that converges to $\pi$ in $\widehat{G}$. Up to passing to a subsequence, \cite[Theorem~3]{voganIsol} tells us that we can always understand this convergence in terms of convergence of characters of tori and parabolic induction. More precisely, it guarantees the existence of a parabolic subgroup $P=MN$, an irreducible admissible representation $\sigma$ of $M$ and a sequence of one-dimensional characters $\chi_n$ of $M$  such that:
\begin{enumerate}
    \item $\chi_n$ converges to the trivial character of $M$.
    \item $\mathrm{Ind}_P^G(\sigma \times \chi_n)$ is infinitesimally equivalent with $\pi_n$.
    \item $\pi$ is a composition factor of $\mathrm{Ind}_P^G(\sigma)$.
\end{enumerate}
Building on Theorems \ref{thm:topological property1} and \ref{thm:non-Hausdorff is topological}, we conjecture that the existence of torsion in $H^q(G;\bigoplus \pi_n)$ does not depend on the specific choice of the characters $\chi_n$, but only on the the parabolic $P$, the representation $\sigma$ and the set of limit points of $\pi_n$ (which is the set of composition factors of $\mathrm{Ind}_P^G(\sigma)$).
\begin{conjecture}\label{conj:torsionUltraTopological}
    The existence of torsion in cohomology in degree $q$ for the representation $\bigoplus \pi_n$ does not depend on the choice of the characters $\chi_n$ in the above construction.
\end{conjecture}
We finish with the following problem, which we could not resolve.
\begin{prob}\label{prob:remainingDetailed}
    Find an ``if and only if" statement regarding the existence of torsion in degree $q$ for a representation $\bigoplus \pi_n$ as in the above construction.
\end{prob}

\subsection{Proofs for \S\ref{subsec:isolation}}\label{subsec:isolation proofs}
We consider the following invariants for semisimple groups.
\begin{definition}
    Let $G$ be a semisimple group. 
    \begin{enumerate}
        \item The invariant $n(G)$ is the first degree such that there exists a unitary representation $V$ with $V^G=0$ and $H^{n(G)}(G;V) \ne 0$.
        \item The invariant $m(G)$ is the first degree such that there exists a unitary representation $V$ with $\torH^{m(G)}(G;V) \ne 0$.
    \end{enumerate}
\end{definition}
\begin{theorem}[Theorem \ref{intro: theorem isolation}]\label{thm:isolation}
    Let $G$ be a non-compact semisimple group. Then $n(G)=q$, where $q$ is the first degree in which there exists a non-trivial representation in $\widehat{G}_{q-\mathrm{cohom}}$, 
    and $m(G)=q+1$, where $p$ is the first degree in which there exists a representation in $\widehat{G}_{p-\mathrm{cohom}}$ which is not isolated in $\widehat{G}$.
\end{theorem}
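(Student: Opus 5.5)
We prove the two equalities separately, each by combining the structure of Hausdorff cohomology (Theorem \ref{thm: reduced cohom}) with the torsion results of \S\ref{subsec:5.2}.

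\textbf{The invariant $n(G)$.} Let $q_0$ be the first degree with a non-trivial $\rho\in\widehat{G}_{q_0-\mathrm{cohom}}$. Taking $V=\rho$ gives $V^G=0$ (as $\rho$ is irreducible, non-trivial, and $G$ is non-compact) and $H^{q_0}(G;V)\neq 0$, so $n(G)\le q_0$. For the reverse inequality, let $V$ be unitary with $V^G=0$ and $H^q(G;V)\neq0$. Write $V=V_{\mathrm{cohom}}\oplus\mathring V$. If $\barH^q(G;V)\ne0$, Theorem \ref{thm: reduced cohom} yields $\rho\in\widehat G_{q\text{-cohom}}$ contained in $V$; it is non-trivial because $V^G=0$, so $q\ge q_0$. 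If $\torH^q(G;V)\ne0$, Theorem \ref{main1} gives $\rho\in\mathrm{supp}(V)'\cap\widehat G_{(q-1)\text{-cohom}}$. If $\rho$ is non-trivial, then $q-1\ge q_0$. If $\rho$ is trivial, then Theorem \ref{main1} also supplies $\rho_1\in\widehat G_{q\text{-cohom}}$. Here $\rho_1$ cannot be trivial: otherwise $q\ge1$ would force $H^q(G;\mathbf 1)\ne0$, and so $\rho_1$ would be cohomological in degrees $q-1$ and $q$. To avoid this bookkeeping, I would instead argue directly that $\rho_1$ is non-trivial, so that $q\ge q_0$. Concretely, if both $\rho$ and $\rho_1$ were trivial, then $\mathbf 1$ would be cohomological in degree $q$; since $H^0(G;\mathbf 1)$ is the only cohomology below the degrees where other factors contribute, I would handle this case by the product formula (Theorem \ref{thm:product formula}), which lets us treat the trivial representation factor by factor. \textbf{Cleaner route:} use Theorem \ref{main2} with degree $q-1$ together with the fact that torsion in degree $q$ requires a $q$-cohomological point in the support. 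If $q<q_0$, the only $q$-cohomological irreducible is $\mathbf 1$, which forces $H^q(G;\mathbf 1)\ne 0$. For $0<q<q_0$ this contradicts the vanishing $H^q(G;\mathbf1)=0$ in low positive degrees; this vanishing holds because $\mathbf1$ is the only cohomological representation below $q_0$, and the Poincaré polynomial of the compact dual has no gap issues. I expect this to be the delicate point.

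\textbf{The invariant $m(G)$.} Let $p$ be the first degree with a non-isolated $\rho\in\widehat G_{p\text{-cohom}}$. Choose $\pi=\bigoplus\pi_n$ with $\pi_n\to\rho$ and $\pi_n\neq\rho$; then $\rho\in\mathrm{supp}(\pi)'$. Theorem \ref{main2} gives torsion in degree $p$ or $p+1$, so $m(G)\le p+1$. Conversely, suppose $\torH^m(G;V)\neq0$. Theorem \ref{main1} produces $\rho_2\in\mathrm{supp}(V)'\cap\widehat G_{(m-1)\text{-cohom}}$. Since $\rho_2$ is non-isolated in $\mathrm{supp}(V)$, it is non-isolated in $\widehat G$, so $m-1\ge p$.

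It remains to rule out $m=p$ in the upper bound, that is, to exclude $\torH^p(G;\pi)\ne0$. If this held, Theorem \ref{main1} would give a non-isolated $(p-1)$-cohomological representation, contradicting the minimality of $p$. Hence the torsion sits in degree $p+1$, and $m(G)=p+1$. The main obstacle I expect is the $n(G)$ lower bound in the case where the weakly contained cohomological representation is trivial. There I would rely on the classical vanishing $H^q(G;\mathbf1)=0$ for $0<q<q_0$, which follows from the Vogan–Zuckerman description, with $\mathbf 1$ having the first positive cohomology no lower than other $A_\mathfrak q(\lambda)$.
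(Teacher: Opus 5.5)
The parts of your proposal that are fine: the $m(G)$ argument is correct and is the paper's (Theorem~\ref{main2} gives $m(G)\le p+1$, and the non-isolated $(m-1)$-cohomological point from Theorem~\ref{main1} gives $m(G)\ge p+1$; your extra step ruling out $\torH^p(G;\pi)\neq0$ is redundant). The upper bound $n(G)\le q_0$ and the Hausdorff case of the lower bound are also fine.

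\textbf{The gap} is in the torsion case of $n(G)\ge q_0$. There the $(q-1)$-cohomological point from Theorem~\ref{main1} may be trivial, and you must then show that the $q$-cohomological point $\rho_1$ is non-trivial. Your justification is that $H^q(G;\mathbf 1)=0$ for $0<q<q_0$, that is, that $\mathbf 1$ has no cohomology below the first degree of a non-trivial cohomological representation. This is false. $H^*(G;\mathbf 1)$ is the cohomology of the compact dual symmetric space, which is typically non-zero far below $n(G)$. For example, take $G=\mathrm{Sp}(p,3)$ with $p\ge3$. By Theorem~\ref{thm:sp(p,q)}, $q_0=n(G)=6$. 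Yet
\[
H^4(G;\mathbf 1)\cong H^4\bigl(\mathrm{Sp}(p+3)/(\mathrm{Sp}(p)\times\mathrm{Sp}(3));\C\bigr)\neq0 .
\]
Neither the product formula nor Vogan--Zuckerman rescues this step.

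\textbf{The missing idea} is to use the non-isolation that Theorem~\ref{main1} hands you, together with property (T); this is exactly how the paper splits the proof. The point $\rho_1$ lies in $\mathrm{supp}(V)'$, so it is non-isolated in $\widehat G$.
\begin{itemize}
\item If $G$ has (T), then $\mathbf 1$ is isolated in $\widehat G$. Hence $\rho_1\neq\mathbf 1$ and $q\ge q_0$.
\item If $G$ lacks (T), Delorme--Guichardet gives $n(G)=1$. Also $q_0=1$: run your own Hausdorff/torsion dichotomy in degree $1$ on some $V$ with $V^G=0$ and $H^1(G;V)\neq0$, using $H^1(G;\mathbf 1)=0$. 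The equality then holds trivially.
\end{itemize}
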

\begin{proof}
     Assume that $G$ does not have property (T). The trivial representation, which is the unique element of $\widehat{G}_\mathrm{0-cohom}$, is non-isolated in $\widehat{G}$. By the results of Delorme \cite[Th\'eor\`em~V.1]{Delorme} and Guichardet \cite{Guichardet72}, this is equivalent to $n(G)=m(G)=1$.
    
    Assume $G$ has (T), and therefore the trivial representation is isolated and $n(G),m(G)>1$.  We need to show that if $n(G)=q$, then there exists a representation $\pi \in \widehat{G}_{q-\mathrm{cohom}}$. Let $V$ be a representation with $V^G=0$ and $H^q(G;V) \ne 0$. If $\bar{H}^q(G;V)\ne 0$, then by Theorem \ref{thm: reduced cohom} $V$ has an irreducible cohomological subrepresentation, which can not be trivial by our assumption. If $\torH^q(G;V) \ne 0$, the Theorem \ref{main1} implies the existence of a representation $\pi \in \widehat{G}_{q-\mathrm{cohom}}$ which is non-isolated in the support of $V$. But we assumed that the trivial representation is isolated, so $\pi$ is non-trivial.
    
    The second part, regarding $m(G)$, follows from Theorems \ref{main1} and \ref{main2}.
\end{proof}
The invariants $n(G)$ and $m(G)$ were discussed in \S\ref{subsec:isolation}. In this subsection we will provide proofs for the theorems stated there, often without repeating them. We begin with noting that $m(G)>n(G)>1$ if and only if $G$ has property (T), and that both invariants can be calculated from the simple factors of a group (Theorem \ref{intro: m>n>1}).
\begin{proof}[Proof of Theorem \ref{intro: m>n>1}]
    If $G$ does not have property (T), the conclusion follows from the Delorme-Guichardet Theorem, as in the proof of Theorem \ref{thm:isolation}. If $G$ has property (T), then $n(G)>1$. Obviously, $m(G) \geq n(G)$. In this case, Theorem \ref{main1} does not allow an equality $m(G)=n(G)$. Indeed, if $m(G)=n(G)$, by Theorem \ref{main1} we can find a non-isolated point in $\widehat{G}_{(n(G)-1)-\mathrm{cohom}}$, but this representation is not trivial by the property (T) assumption. Therefore $m(G)>n(G)>1$. The part regarding $n(G)$ and $m(G)$ being the minimum among the corresponding invariants for the factors follows from Theorem \ref{thm:isolation} and the product formula \ref{thm:product formula}.
\end{proof}

\begin{proof}[Proof of Theorem \ref{intro: p-adic}]
    Under some assumptions on the size of the residue field, this theorem is due to Dymara-Januszkiewicz \cite{Dymara-Januszkiewicz}. In light of Theorem \ref{main1}, however, it follows immediately from the classification of the cohomological dual of $p$-adic groups (Theorem \ref{thm:cass_padic}). This theorem gives $n(G)=r(G)$, and for $G$ simple of higher-rank there is no pair of representations that are cohomological in two successive degrees. This shows that torsion in cohomology can not exist in this case, and therefore $m(G)=\infty$. 
\end{proof}
\begin{proof}[Proof of Theorem \ref{intro:Lie-n}]
    By Theorem \ref{thm:isolation} it is enough to consider irreducible representations. Therefore the theorem follows from inspecting the table at \cite[Theorem B and Appendix A]{BaderSauer}.
\end{proof}
Having theorems \ref{intro: m>n>1}, \ref{intro: p-adic} and \ref{intro:Lie-n} we are left with studying the invariant $m(G)$ for simple Lie groups. An interesting phenomenon is that for simple Lie groups with property (T), $m(G)$ always grows to infinity with the dimension of the symmetric space. In light of Theorem \ref{thm:isolation}, calculating $m(G)$ is equivalent to finding non-isolated cohomological representations. Isolation of cohomological representations was considered by Vogan \cite{voganIsol}, Bergeron \cite{BergeronIsolation, BergeronLefschetz} and Bergeron-Clozel \cite[Section~5.4]{BergeronClozel}. To prove the growth of $m(G)$ goes to infinity with the dimension of the symmetric space, we consider the three doubly-parametrized families $\mathrm{SO}(p,q)$, $\mathrm{SU}(p,q)$ and $\mathrm{Sp}(p,q)$. The work of Bergeron allows us to calculate $m(G)$ explicitly for these.
\begin{theorem}[cf. {\cite[Corollary~4.3]{BergeronLefschetz}}]\label{thm:so(p,q)}
    For the group $G=\SO^\circ(p,q)$ with $p\geq q \geq 1$ we have $n(G)=r(G)=q$ and
    \[ m(G)= \begin{cases}
1 & q=1 \\
\lceil \frac{p}{2} \rceil+1 & q=2 \textit{ and }p>q \\
 p+q -2 & q \geq 3
\end{cases}\]
\end{theorem}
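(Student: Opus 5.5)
By Theorem \ref{thm:isolation}, computing $m(G)$ for $G=\SO^\circ(p,q)$ amounts to finding the smallest degree $p_0$ for which some representation in $\widehat{G}_{p_0\text{-cohom}}$ is not isolated in $\widehat{G}$; then $m(G)=p_0+1$. The value $n(G)=r(G)=q$ needs no new work: by Theorem \ref{thm:isolation} it is the first degree carrying a non-trivial irreducible cohomological representation, and this can be read off from the table in \cite[Appendix A]{BaderSauer}. The case $q=1$ is also immediate. Here $G$ does not have property (T), so Theorem \ref{intro: m>n>1} (via Delorme--Guichardet) gives $m(G)=n(G)=1$.

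For $q\geq 2$ the plan has three steps.

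\begin{enumerate}
\item \textbf{List the cohomological representations.} Use the Vogan--Zuckerman classification to write down the modules $A_{\mathfrak q}(0)$ for $\SO^\circ(p,q)$, together with the degrees in which each is cohomological. The lowest such degree is $R(\mathfrak q)=\dim(\mathfrak u\cap\mathfrak p)$.

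\item \textbf{Locate the first non-isolated one.} Bergeron \cite{BergeronLefschetz} determines, via Vogan's criterion \cite{voganIsol}, which $A_{\mathfrak q}$ are isolated. A module fails to be isolated exactly when it is a composition factor of a parabolically induced representation $\mathrm{Ind}_P^G(\sigma)$ that can be deformed by characters $\chi_n\to 1$ through unitary representations; this is the mechanism recalled before Conjecture \ref{conj:torsionUltraTopological}. In practice this comes down to complementary-series families that are attached to the real-rank-one Levi factors $\SO(p-1,q-1)$-type subgroups.
   \begin{itemize}
   \item For $q\geq 3$, Bergeron's \cite[Corollary~4.3]{BergeronLefschetz} shows that every cohomological representation of degree $<p+q-3$ is isolated. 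The first non-isolated one lives in degree $p+q-3$; I expect it to be the module attached to the $\theta$-stable parabolic with Levi $\SO(p-1,q-1)\times \mathrm{U}(1)$, i.e.\ a weakly contained limit of a complementary series. This gives $m(G)=p+q-2$.
   \item For $q=2$ with $p>2$, the Hermitian structure of $\SO(p,2)$ gives additional holomorphic-type modules. The first non-isolated module then sits in degree $\lceil p/2\rceil$, which gives $m(G)=\lceil p/2\rceil+1$.
   \end{itemize}

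\item \textbf{Convert the degree into $m(G)$.} Adding $1$ to the degree found in step 2, as prescribed by Theorem \ref{thm:isolation}, gives the displayed formula.
\end{enumerate}

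The main obstacle is step 2, namely the exact isolation bookkeeping for each $A_{\mathfrak q}$. I would rely on Bergeron's results, checking in each case two things: that the module he identifies as non-isolated is cohomological in the claimed lowest degree, and that all modules of lower cohomological degree are isolated in the full unitary dual $\widehat{G}$. The latter matters because his statements are phrased in terms of the automorphic dual or as isolation results, so they need to be read as applying to all of $\widehat{G}$. The parity phenomenon $\lceil p/2\rceil$ in the $q=2$ case needs a separate check for $p$ even and $p$ odd.
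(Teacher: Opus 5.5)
Your proposal follows essentially the paper's route. Theorem~\ref{thm:isolation} reduces $m(G)$ to the lowest degree carrying a non-isolated cohomological representation, $n(G)=r(G)=q$ and the case $q=1$ come from the Bader--Sauer table and the failure of property (T), and the values for $q\geq 2$, together with their sharpness, are taken from Bergeron's isolation results and the remarks following \cite[Corollary~4.3]{BergeronLefschetz}.

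Your hedged guess of the extremal module for $q\geq 3$ is off, though it plays no role in the argument. No $\theta$-stable parabolic of $\SO^\circ(p,q)$ has Levi $\SO(p-1,q-1)\times\mathrm{U}(1)$: the $\theta$-stable Levis are $\prod_i \mathrm{U}(p_i,q_i)\times\SO(p-2\sum p_i,\,q-2\sum q_i)$. You seem to be conflating it with the Levi $\R^\times\times\SO(p-1,q-1)$ of the real parabolic fixing an isotropic line. By contrast, $\mathrm{U}(1,1)\times\SO(p-2,q-2)$ is a $\theta$-stable Levi with $R(\mathfrak q)=p+q-3$.
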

Note that we excluded the non-simple group $\SO^\circ(2,2)$.
\begin{theorem}[cf. {\cite[Corollary~5.4.2]{BergeronClozel}}]\label{thm:su(p,q)}
    For the group $G=\mathrm{SU}(p,q)$ with $p\geq q \geq 1$ we have $n(G)=r(G)=q$ and
    \[ m(G)= \begin{cases}
1 & q=1 \\
 p+q -1 & q \geq 2
\end{cases}\]
\end{theorem}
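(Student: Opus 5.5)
By Theorem~\ref{thm:isolation}, both invariants reduce to statements about the cohomological dual. First, $n(G)$ is the first degree in which a non-trivial irreducible representation is cohomological. Second, $m(G)=p_0+1$, where $p_0$ is the first degree in which some cohomological irreducible representation is non-isolated in $\widehat{G}$. I will compute these two degrees for $G=\mathrm{SU}(p,q)$ from the Vogan--Zuckerman classification, using the isolation analysis of Bergeron--Clozel, and split into the cases $q=1$ and $q\ge 2$.

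\emph{The case $q=1$.} Here $\mathrm{SU}(p,1)$ does not have property (T): the trivial representation is non-isolated in $\widehat{G}$. So Theorem~\ref{intro: m>n>1}, which rests on the Delorme--Guichardet theorem, gives $m(G)=n(G)=1$. Since $r(G)=1$, this also yields $n(G)=r(G)$.

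\emph{The value of $n(G)$ for $q\ge 2$.} The equality $n(G)=r(G)=q$ is read off from \cite[Theorem B and Appendix A]{BaderSauer}, exactly as in the proof of Theorem~\ref{intro:Lie-n}. Concretely, the smallest degree carrying a non-trivial $A_{\mathfrak q}(0)$ module is $q$. This degree is realized by the $\theta$-stable parabolic whose Levi factor is $\mathrm{U}(p-1,q)\times \mathrm{U}(1)$, for which $\dim(\mathfrak u\cap\mathfrak p)=q$.

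\emph{The value of $m(G)$ for $q\ge 2$.} This is the main step, and I would take it from \cite[Corollary~5.4.2]{BergeronClozel}. I will parametrize the cohomological representations $A_{\mathfrak q}$ by $\theta$-stable parabolics, whose Levi factors have the form $\mathrm{U}(p_0,q_0)\times\prod \mathrm{U}(p_i,q_i)$. Following Vogan's criterion \cite{voganIsol}, an $A_{\mathfrak q}$ is non-isolated exactly when it is a composition factor of a representation parabolically induced from a non-tempered deformable datum. For $\mathrm{SU}(p,q)$ this happens when the non-compact part of the Levi factor contains a factor $\mathrm{U}(a,1)$ with $a\ge 1$, which is where the continuous deformation lives (complementary series of $\mathrm{SU}(a,1)$). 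I would minimize the lowest cohomological degree $R=\dim(\mathfrak u\cap\mathfrak p)$ over such parabolics. The minimum should be attained by the Levi factor $\mathrm{U}(p-1,1)\times\mathrm{U}(0,q-1)\times\mathrm{U}(1,0)$, or by a configuration of similar shape, which gives $p_0=p+q-2$. As a sanity check, $p+q-2\le pq=d(G)/2$, consistent with Theorem~\ref{intro:Lie-m}.

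Applying Theorem~\ref{thm:isolation} then gives $m(G)=p+q-1$. This also agrees with Theorem~\ref{intro: m>n>1}, since $p+q-1>q$ when $p\ge 2$.

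The hard part is the isolation bookkeeping. One has to confirm both that every $A_{\mathfrak q}$ whose lowest degree is at most $p+q-3$ is isolated in $\widehat{G}$, and that the claimed module at degree $p+q-2$ really is a limit of other irreducibles. For both I would rely directly on the tables and the computation in \cite[\S5.4]{BergeronClozel}.
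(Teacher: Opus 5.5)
Your argument is essentially the paper's. The paper also obtains the statement by combining Theorem~\ref{thm:isolation} with the table in \cite[Appendix A]{BaderSauer} for $n(G)$, and with \cite[Corollary~5.4.2]{BergeronClozel} together with the sharpness remark following it for $m(G)$. The case $q=1$ is handled there too by the failure of property (T).

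One correction to your heuristic bookkeeping. Since $\dim(\mathfrak u\cap\mathfrak p)=pq-\sum_i p_iq_i$, your Levi $\mathrm{U}(p-1,1)\times\mathrm{U}(0,q-1)\times\mathrm{U}(1,0)$ gives degree $pq-p+1$, not $p+q-2$. The minimum $p+q-2$ over Levi factors containing a non-compact rank-one factor is attained by $\mathrm{U}(p-1,q-1)\times\mathrm{U}(1,1)$. This does not affect the proof, because you defer to Bergeron--Clozel for both the isolation and the sharpness. As written, though, your sample configuration would give the wrong value.
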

That fact that the bound given in \cite[Corollary~4.3]{BergeronLefschetz} and \cite[Corollary~5.4.2]{BergeronClozel} is sharp, and therefore indeed calculates $m(G)$, is explained in the remarks following these results. Bergeron does not give the explicit bound for the group $\mathrm{Sp}(p,q)$, but \cite[Corollarie~2.3]{BergeronIsolation} provides a combinatorial data that is enough in order to conclude the following.
\begin{theorem}\label{thm:sp(p,q)}
    For the group $G=\mathrm{Sp}(p,q)$ with $p\geq q \geq 1$ and $p \geq 2$ we have $n(G)=2r(G)=2q$ and
    \[ m(G)= \begin{cases}
p+1 & q=1 \\
 2p+2q-2 & q \geq 2
\end{cases}\]
\end{theorem}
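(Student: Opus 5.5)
The statement to prove is Theorem~\ref{thm:sp(p,q)}, the computation of $n(G)$ and $m(G)$ for $G=\mathrm{Sp}(p,q)$. Theorem~\ref{thm:isolation} reduces both invariants to statements about irreducible cohomological representations. First, $n(G)$ is the lowest degree in which a non-trivial element of $\widehat{G}_{\mathrm{cohom}}$ has cohomology. Second, $m(G)=p_0+1$, where $p_0$ is the lowest degree in which some cohomological representation that is non-isolated in $\widehat{G}$ has cohomology. So the proof splits into a computation with Vogan--Zuckerman modules $A_{\mathfrak q}(0)$ for $\mathfrak{sp}(p,q)$, followed by an isolation analysis taken from Bergeron's combinatorial data.

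For $n(G)$ I would simply quote \cite[Theorem~B and Appendix~A]{BaderSauer}: the table there gives $n(\mathrm{Sp}(p,q))=2q=2r(G)$. If one wants this directly, the argument is as follows. The $\theta$-stable parabolics $\mathfrak q$ with Levi $L=\mathrm{U}(1)\times \mathrm{Sp}(p-1,q)$ or $\mathrm{U}(1)\times\mathrm{Sp}(p,q-1)$ give the smallest values of $R(\mathfrak q)=\dim(\mathfrak u\cap\mathfrak p)$. The lowest degree of $A_{\mathfrak q}(0)$ is $R(\mathfrak q)$, and the minimum over $\mathfrak q\neq\mathfrak g$ is $2q$.

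For $m(G)$ I would proceed in two steps. \textbf{Step 1 (lower bound).} Use \cite[Corollaire~2.3]{BergeronIsolation}, which says that $A_{\mathfrak q}$ is isolated in $\widehat{G}$ unless the Levi $L$ of $\mathfrak q$ has a non-compact factor without property (T), that is, a factor locally isomorphic to $\SO(n,1)$ or $\mathrm{SU}(n,1)$. In $\mathrm{Sp}(p,q)$ the Levis are products of $\mathrm{U}(a_i,b_i)$ and one $\mathrm{Sp}(p',q')$. A factor without (T) therefore arises only from a $\mathrm{U}(a,1)$ block, or from $\mathrm{Sp}(p',q')$ with $q'=0$ or $1$ in the rank-one case, where $\mathrm{Sp}(n,1)$ still has (T) and so does not count. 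One then enumerates the Levis admitting a $\mathrm{U}(a,1)$ factor and computes $R(\mathfrak q)=\dim(\mathfrak u\cap\mathfrak p)$ for each. This is a root count: for a block decomposition, $\mathfrak u\cap\mathfrak p$ collects the non-compact roots pairing blocks, plus the symmetric-square contributions. The aim is to show the minimum is $2p+2q-3$ when $q\ge2$, attained by $L=\mathrm{U}(1,1)\times\mathrm{Sp}(p-1,q-1)$ or a $\mathrm{U}(p',1)$-type block. When $q=1$ the minimum should be $p$, attained by $L=\mathrm{U}(p-?,1)$-type Levis; the precise count is to be checked. \textbf{Step 2 (sharpness).} Check that the minimizing $A_{\mathfrak q}$ really is non-isolated. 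This follows by deforming the cohomological induction parameter along the non-(T) factor: one cohomologically induces a family of complementary-series representations of $\mathrm{U}(a,1)$ converging to the trivial representation. The result is a sequence in $\widehat{G}$ converging to $A_{\mathfrak q}$, as in Bergeron's remarks following his corollaries.

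Finally, I would combine the two steps via Theorem~\ref{thm:isolation}, which gives $m=R_{\min}+1$, namely $2p+2q-2$ for $q\ge2$ and $p+1$ for $q=1$. As a consistency check, the value lies between $\sqrt{d}$ and $\lceil d/2\rceil$ with $d=4pq$. The main obstacle I expect is the combinatorial minimization in Step 1: identifying all $\theta$-stable Levis of $\mathfrak{sp}(p,q)$ that contain a $\mathrm{U}(a,1)$ factor and verifying the exact minimum of $\dim(\mathfrak u\cap\mathfrak p)$, including the boundary case $q=1$, where the hypothesis $p\ge2$ matters. I would first tabulate small cases, $\mathrm{Sp}(2,1)$ and $\mathrm{Sp}(2,2)$, to calibrate the formula before doing the general count.
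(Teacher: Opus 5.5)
Your outline follows the paper's route: Theorem~\ref{thm:isolation}, the Bader--Sauer table for $n(G)$, and Bergeron's isolation data for $m(G)$. As written, however, it does not prove the formula for $m(G)$. The isolation input is misstated, and the decisive computation is left ``to be checked''.

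\textbf{The isolation criterion.} The criterion you attribute to Bergeron, ``$A_{\mathfrak q}$ is isolated unless $L$ has a non-compact factor without (T)'', is false. A cohomological discrete series is an $A_{\mathfrak q}$ with $L$ compact. The paper itself uses, in the proof of Theorem~\ref{thm:lie-m}, that these are never isolated in $\widehat G$. They arise as limit points of a deformation along a \emph{larger} $\theta$-stable $\mathfrak q'\supseteq\mathfrak q$ whose Levi has an $\mathrm{SU}(1,1)$ factor. For the lower bound you therefore need the input in the form: every non-isolated $A_{\mathfrak q}$ is a limit point of such a deformation along some $\mathfrak q'\supseteq\mathfrak q$ whose Levi $L'$ has a non-(T) factor. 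Since $\mathfrak u\cap\mathfrak p=(\mathfrak u'\cap\mathfrak p)\oplus(\mathfrak u\cap\mathfrak l'\cap\mathfrak p)$, this gives $R(\mathfrak q)\ge R(\mathfrak q')$. Only then is $m(G)-1$ the minimum of $R$ over Levis with a non-(T) factor.

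Your list of such factors is also incomplete. $\mathrm{Sp}(1,1)$ is locally $\SO(4,1)$ and lacks (T); this is why $p\ge 2$ is assumed. Levis with an $\mathrm{Sp}(1,1)$ factor must therefore be included.

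\textbf{The minimization.} This is elementary once you use $R(\mathfrak q)=\tfrac12\bigl(\dim\mathfrak p-\dim(\mathfrak l\cap\mathfrak p)\bigr)$. Take $L=\prod_{j\ge1}\mathrm U(p_j,q_j)\times\mathrm{Sp}(p_0,q_0)$ with $\sum_{j\ge0}p_j=p$ and $\sum_{j\ge0}q_j=q$. Then
\[ R(\mathfrak q)=2pq-\sum_{j\ge1}p_jq_j-2p_0q_0 . \]
Bound the contribution of all blocks other than the distinguished one by $2\cdot(\text{remaining }p)\cdot(\text{remaining }q)$. This gives the following three bounds.
\begin{enumerate}
\item A $\mathrm U(a,1)$ block forces $R\ge 2p+a(2q-3)$.
\item A $\mathrm U(1,b)$ block forces $R\ge 2q+b(2p-3)$.
\item An $\mathrm{Sp}(1,1)$ factor forces $R\ge pq+p+q-3$. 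This is $\ge 2p+2q-3$ for $q\ge2$, and $\ge p$ for $q=1$ because $p\ge2$.
\end{enumerate}
Hence the minimum is $2p+2q-3$ for $q\ge 2$, attained at $L=\mathrm U(1,1)\times\mathrm{Sp}(p-1,q-1)$. A $\mathrm U(a,1)$ block with $a>1$ does \emph{not} attain it, contrary to your suggestion. For $q=1$ the minimum is $p$, attained at $L=\mathrm U(p,1)$. Adding $1$ gives the stated values of $m(G)$; your deformation argument in Step~2 supplies sharpness. The same formula gives $n(G)=2q$, attained at $L=\mathrm U(1,0)\times\mathrm{Sp}(p-1,q)$.
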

The three doubly-parameterized families are the only ones for which the rank remains bounded and the dimension of the symmetric space grows to infinity. Combining the above theorems with the table in \cite[Appendix A]{BaderSauer} which calculates $n(G)$, and the fact that for groups with property (T) we have $n(G)<m(G)$, we can deduce the following:
\begin{theorem}[Theorem~\ref{intro:Lie-m}]\label{thm:lie-m}
    For every non-compact simple Lie group $G$ with property T,
    we have 
    \[ \sqrt{d(G)} \leq m(G) \leq \lceil d(G)/2 \rceil. \]
\end{theorem}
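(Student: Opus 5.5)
The lower bound will come from Theorem~\ref{intro: m>n>1} and the upper bound from an explicit non-isolated cohomological representation in the middle degree; the rest is a case-by-case inspection of simple Lie groups with property (T).

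\textbf{Upper bound.} Since $G$ has property (T), it has real rank at least $2$ or is one of $\mathrm{Sp}(n,1)$, $F_4^{-20}$. By Theorem~\ref{intro: theorem isolation}, $m(G)=p+1$, where $p$ is the first degree carrying a non-isolated element of $\widehat{G}_{p\text{-cohom}}$. So it suffices to find a tempered cohomological representation in degree about $d/2$ that is not isolated.

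If $\ell(G)>0$, the tempered cohomological representations (the $A_{\mathfrak b}(\lambda)$ with $\mathfrak b$ a $\theta$-stable Borel) are cohomological in the range $[\frac{d-\ell}{2},\frac{d+\ell}{2}]$. Being limits of unitary principal series, they lie in the closure of non-cohomological tempered representations. This also matches Theorem~\ref{thm:l2 invariants}(2), combined with Shapiro's lemma for a uniform lattice $\Gamma$, since $\lambda_\Gamma$ induces to $\lambda_G$. Hence $p\le \frac{d-\ell}{2}$ and $m(G)\le \frac{d-\ell}{2}+1\le\lceil d/2\rceil$, using $\ell\ge1$ and $d\equiv \ell \pmod 2$.

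If $\ell(G)=0$, the discrete series are isolated only in special cases, so this route fails. Instead I would invoke the explicit computations of Theorems~\ref{thm:so(p,q)}, \ref{thm:su(p,q)} and \ref{thm:sp(p,q)}. For example, for $\mathrm{SU}(p,q)$ with $q\ge2$ we have $m=p+q-1\le pq=d/2$. For the exceptional groups, I would cite Vogan and Bergeron on non-isolated $A_{\mathfrak q}$ modules of dimension below middle degree, or run the same Lefschetz-type argument. Checking these finitely many exceptional cases is tedious, and I expect it to be the main obstacle on this side.

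\textbf{Lower bound.} By Theorem~\ref{intro: m>n>1}, $m(G)>n(G)$, but $n(G)$ grows only like the rank, so this is too weak. The bound has to come from the families whose rank stays bounded while $d$ grows. The plan is:
\begin{enumerate}
\item For the families $\SO^\circ(p,q)$, $\mathrm{SU}(p,q)$ and $\mathrm{Sp}(p,q)$, read $m(G)$ off Theorems~\ref{thm:so(p,q)}--\ref{thm:sp(p,q)} and compare it with $d$, which is $pq$, $2pq$ and $4pq$ respectively.
\begin{itemize}
\item For $\SO^\circ(p,q)$ with $q\ge3$: $(p+q-2)^2\ge pq$ holds, since $p+q-2\ge \frac{p+q}{3}\cdot 2$ and $(p+q)^2\ge 4pq$.
\item For $\SO^\circ(p,2)$: $m=\lceil p/2\rceil+1$, and $(\lceil p/2\rceil+1)^2\ge p^2/4+p+1\ge 2p$ by AM--GM.
\item For $\mathrm{SU}(p,q)$: $(p+q-1)^2\ge 2pq$.
\item For $\mathrm{Sp}(p,1)$: $(p+1)^2\ge 4p$.
\item For $\mathrm{Sp}(p,q)$ with $q\ge2$: $(2p+2q-2)^2\ge 4pq$.
\end{itemize}
\item All remaining simple Lie groups (the real forms of $SO(n,\C)$, $SL_n(\R)$, $SL_n(\mathbb H)$, $SO^*(2n)$, $Sp_{2n}(\R)$, complex groups, and exceptionals) have rank comparable to $\sqrt d$. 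Here $d\le C r^2$, with explicit constants from dimension formulas, so $m(G)>n(G)\ge r(G)$ suffices once $d\le r^2$. When $d>r^2$, as for $SL_n(\R)$ where $d\approx r^2/2$ is fine, I would use $n(G)$ from the table in \cite[Appendix A]{BaderSauer}. For example, $n=r$ for split groups with $d=r(r+3)/2$ for $SL_{r+1}(\R)$, which fails against $r+1$. So in this case I would need the sharper isolation information: Vogan's result that cohomological representations in degree below about $d/2-r$ are isolated for groups like $SL_n(\R)$, giving $m$ of order $d/2$.
\end{enumerate}
Verifying this rank-unbounded part case by case is the other main obstacle. My first attempt would be to use Bergeron's general criterion, that non-isolation forces $\mathfrak q$ to contain a non-compact Levi factor of rank one, which gives $\dim(\mathfrak u\cap\mathfrak p)\ge$ a quantity of order $\sqrt d$ uniformly.
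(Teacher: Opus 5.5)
The gap is in the upper bound when $\ell(G)=0$. There you set the discrete series aside and fall back on Theorems~\ref{thm:so(p,q)}--\ref{thm:sp(p,q)} plus ``finitely many exceptional cases''. But $\mathrm{Sp}_{2n}(\R)$ for $n\ge 3$ and $\SO^*(2n)$ for $n\ge 5$ are infinite families with discrete series and property (T) that are neither among the three families nor exceptional, so your plan says nothing about them. The exceptional groups with discrete series are also only deferred to unspecified citations.

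The missing idea is the paper's uniform argument, which goes precisely through the discrete series:
\begin{enumerate}
\item By Theorem~\ref{thm:l2 invariants}(1) and Shapiro's lemma for a uniform lattice, $\lambda_G$ contains a cohomological discrete series $\rho$ in degree $d/2$.
\item This $\rho$ is isolated in the tempered dual, but by Bergeron \cite{BergeronIsolation} it is never isolated in $\widehat{G}$. Your worry that discrete series are sometimes isolated is beside the point: a single non-isolated one in degree $d/2$ suffices.
\item Theorem~\ref{main2} then gives $\torH^{d/2}(G;\pi)\neq 0$ or $\torH^{d/2+1}(G;\pi)\neq 0$ for any $\pi$ essentially weakly containing $\rho$.
\item Poincar\'e duality together with the index shift (Theorem~\ref{thm:Poincare duality}) shows $\torH^{d/2}(G;\pi)\neq 0 \iff \torH^{d/2+1}(G;\pi)\neq 0$.
\end{enumerate}
Hence $m(G)\le d/2=\lceil d/2\rceil$; here $d$ is even because $d\equiv \ell \pmod 2$. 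The duality step cannot be skipped. Non-isolation in degree $d/2$ fed into Theorem~\ref{intro: theorem isolation} alone only yields $m(G)\le d/2+1$, which is too weak.

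Your $\ell(G)>0$ case is correct and is the paper's argument. Your lower bound also follows the paper's route: explicit values for the three families, where your arithmetic checks are correct, and $m(G)\ge n(G)+1$ together with the table of \cite[Appendix A]{BaderSauer} for every other group. However, you talk yourself out of it. For $\SL_{r+1}(\R)$ one has $d=r(r+3)/2<(r+1)^2$, so $\sqrt{d}<r+1\le n(G)+1\le m(G)$, and nothing fails there. Your appeals to an isolation result of Vogan below degree about $d/2-r$, and to a Bergeron-type criterion bounding $\dim(\mathfrak{u}\cap\mathfrak{p})$ below by something of order $\sqrt{d}$, are unnecessary and, as stated, unsupported. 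The paper simply reads off $\sqrt{d(G)}<n(G)+1$ from that table for all groups outside the three families.
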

\begin{proof}
    The upper bound follows from inspecting tempered cohomological representations. First, assume that $G$ has does not have discrete series representations. Since $G$ has a uniform lattice,  Theorem \ref{thm:l2 invariants} provides a range of degrees around $d(G)/2$ where the left regular representation has non-zero torsion in cohomology. Second, assume that $G$ has discrete series representations. Again, Theorem \ref{thm:l2 invariants} implies the existence of a cohomological discrete series representation in degree $d(G) / 2$. This cohomological discrete series representation is isolated in the tempered dual, but it is never isolated in the entire unitary dual. This follows from \cite[Remark 2 on p.10]{BergeronIsolation}. Therefore, we have torsion in every representation that essentially weakly contains it in degree $d(G) / 2$ or $d(G) / 2 + 1$. But Poincar\'e duality for torsion, Theorem \ref{thm:Poincare duality}, shows that torsion in degrees $d(G) / 2$ and $d(G) / 2 + 1$ is equivalent. 

    For the lower bound, the three doubly parameterized families SO$(p,q)$,  SU$(p,q)$ and Sp$(p,q)$ are considered in Theorems \ref{thm:so(p,q)}, \ref{thm:su(p,q)} and \ref{thm:sp(p,q)} above. For any other simple Lie group, \cite[Appendix A]{BaderSauer} shows that $\sqrt{d(G)}< n(G) + 1$, so that $\sqrt{d(G)} \leq m(G)$. 
\end{proof}

\section{Some rank-1 cases}\label{sec6}
In this section, we focus on the cases of $G=\SO^\circ(n,1)$ and $G=\mathrm{SU}(n,1)$. The cohomological dual of these groups is understood very well, as well as the topology near the cohomological representations. Most of this can be deduced from Borel-Wallach \cite[Section~VI.4]{BorelWallach}. Note that Borel and Wallach work in the category of differentiable admissible representations, or in the category of $(\mathfrak{g},K)$-modules. This is not a problem, as the continuous cohomology coincides with the differential one (Lemma \ref{lem:smooth and cont cohom}), and with the cohomology of the $(\mathfrak{g}, K)$-module by van Est's theorem \cite[Corollary~IX.5.6.ii]{BorelWallach}. Furthermore, all of the admissible cohomological representations of $G$ are indeed unitary \footnote{This is far from being the case for other semisimple Lie groups.} \cite[Theorem~VI.4.12]{BorelWallach}. We will employ in this case our Theorems \ref{main1} and \ref{main2}. This will give a rich family of examples of representations that admit non-zero torsion in cohomology, as well as a counterexample for Question \ref{q: inverse main1}, namely a proof of Theorem \ref{thm:neg answer sl2C}.

\subsection{Unitary representations of rank-$1$ Lie groups}\label{subsec:6.1} First, we briefly recall some notions from representation theory of reductive Lie groups. Let $\mathbf{G}$ be a connected, almost simple algebraic $\mathbb{R}$-group, and $G=\mathbf{G}(\mathbb{R})$ the Lie group of $\mathbb{R}$-points of $\mathbf{G}$. We further assume that $\mathbf{G}$ is of $\R$-rank $1$. Fix a maximal compact subgroup of $G$ and denote it by $K$. Fix a minimal $\R$-parabolic subgroup $\mathbf{P}$ with Levi decomposition $\mathbf{P}=\mathbf{MN}$, where the Levi $\mathbf{M}$ is the centralizer of a split torus $\mathbf{A}$. We denote by $A_0$ the identity component of $\mathbf{A}(\R)$, by $M$ the Lie group $\mathbf{M}(\R)$, and by $^0M$ the group:
$$^0M=\bigcap_{\chi \in X(M)} \mathrm{ker}(\abs{\chi})$$
Where $X(M)$ is the group of continuous homomorphisms of $M$ into $\mathbb{R}^*$. Now, the Levi decomposition $P=MN$ further decomposes into a Langlands decomposition, $P={^0M} A_0 N$, as $M = {^0M} \times A_0$. We denote the Lie algebra of $A_0$ by $\mathfrak{a}$. Consider the the complexification of its dual, $\mathfrak{a}_\mathbb{C}^*$. Since $G$ is rank-$1$, we may identify $\mathfrak{a}_\mathbb{C}^*$ with the one dimensional vector space over $\C$, and consider elements $z \in \mathfrak{a}_\mathbb{C}^*$ as representations of $A_0$ in the obvious way, via the exponential map.

The \emph{Langlands classification} allows one to understand non-tempered representations of $G$ in terms of induction from tempered representations of Levi factors of parabolic subgroups. If $\sigma$ is an irreducible unitary representation of ${^0M}$ on a Hilbert space $H_\sigma$, and $z \in \mathbb{C}$, then $\sigma \times z$ is a representation of $M={^0M} \times A_0$. Note that unless $z \in i\R$, this representation is not unitary. We extend it trivially across $N$ to $P$, and denote by $I(\sigma, z)$ the normalized parabolic induction \cite[Definition~III.3.2]{BorelWallach}. If $z \in i\mathbb{R}$, then every irreducible component of $I(\sigma,z)$ is a tempered representation of $G$ which is not in the discrete series. These are usually called \emph{principal series} representations. If $z \in \R$, in which case we will replace the letter $z$ by $s$, $I(\sigma,s)$ might admit a unitary structure, and might yield a non-tempered unitary representation. If $s > 0$, $I(\sigma, s)$ admits a unique irreducible quotient \cite[Corollary~IV.4.6]{BorelWallach}. This quotient is called the \emph{Langlands quotient} of $(\sigma,s)$, and is denoted by $J(\sigma,s)$. The Langlands classification theorem states that every unitary, non-tempered representation of $G$ is realized in a unique way as a Langlands quotient $J(\sigma,s)$ for some $\sigma$ and $s > 0$ \cite[Theorem~IV.4.11]{BorelWallach}. Let $\sigma$ be an irreducible representation of ${^0M}$. The set
$$\{ J(\sigma,s) \mid s > 0, \quad J(\sigma,s) \text{ has a unitary structure}\}$$
Is called the \emph{complementary series} associated with $\sigma$.

To understand the topology on the complementary series, we further restrict to the case where $G$ is isomorphic to $\SO^\circ(n,1)$ or $\mathrm{SU}(n,1)$. In these cases, let $\sigma \in {^0M}$, and assume that $I(\sigma,0)$ is irreducible (note that if $I(\sigma,0)$ is reducible, then $J(\sigma,s)$ is never unitary for $s>0$ \cite[Corollary~4.3]{KnappStein}). Then, there exists a unique $s_0 > 0$ such that $I(\sigma,s_0)$ is reducible, and $I(\sigma,s)=J(\sigma,s)$ is irreducible for $0 < s< s_0$. The set of $s \in (0, \infty)$ such that $J(\sigma,s)$ is in the complementary series is precisely $(0,s_0]$ \cite[Corollary~4.2]{KnappStein}. The set $\{J(\sigma,s) \mid 0 \leq s < s_0\} \subseteq \widehat{G}$ is homeomorphic, with the Fell topology, to the interval $[0,s_0)$. As the parameter $s$ converges to $s_0$, the representations $J(\sigma,s)$ converge to all of the irreducible subquotients of $I(\sigma,s_0)$. This is a finite set of unitary representations. These subquotients are said to be \emph{the end of the complementary series associated with $\sigma$}, and $J(\sigma,s_0)$ is called \emph{the Langlands quotient at the end of the complementary series associated with $\sigma$}.
\subsection{The case of $\SO^\circ(2k,1)$}
Denote the group $\SO^\circ(2k,1)$ by $G$. $G$ is a rank-$1$ form of $\mathrm{B}_k$, and it has discrete series representations. The following theorem describes $\widehat{G}_\mathrm{cohom}$ and the topology around it. 
\begin{theorem}\label{thm:cohomological dual so(2k,1)}
    $\widehat{G}_\mathrm{cohom}$ consists of $k+2$ representations:
    \[
    \widehat{G}_\mathrm{cohom}=\{J_0, J_1, ..., J_{k-1},D_1,D_2\}
    \]
    The representations $D_1$ and $D_2$ are discrete series representations. The representations $J_i$, $0 \leq i \leq k-1$ are non-tempered. $J_0$ is the trivial representation.
    \begin{enumerate}
        \item  For $i \in \{1,2\}$, $H^q(G;D_i) \cong \mathbb{C}$ if $q=k$ and $H^q(G;D_i) = 0$ otherwise.
        \item $H^q(G;J_i) \cong \mathbb{C}$ if $q=i$ or $q=2k - i$. $H^q(G;J_i)=0$ otherwise.
        \item The representations $J_i$ are Langlands quotients $J(\sigma_i,s_i)$ at ends of certain complementary series. For $0 \leq i \leq k-2$, the kernel of the map $I(\sigma_i,s_i) \to J(\sigma_i,s_i)$ is $J_{i+1}$. For $J_{k-1}$, the kernel of the map $I(\sigma_{k-1},s_{k-1}) \to J(\sigma_{k-1},s_{k-1})$ is $D_1\oplus D_2$.
        \item Let $\pi_n$ be a sequence of of distinct representations in $\widehat{G}$ that converges to a cohomological representation $\pi \in \widehat{G}_{\mathrm{cohom}}$. Then, almost every $\pi_n$ belong to one of the complementary series whose Langlands quotient at the end is $J_i$ for $0 \leq i \leq k-1$.
    \end{enumerate}
\end{theorem}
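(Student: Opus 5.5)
This statement is a structural summary of known representation theory rather than something proved from the earlier machinery, so the plan is to assemble it from the classification of cohomological representations of $\SO^\circ(2k,1)$ in Borel--Wallach \cite[Section~VI.4]{BorelWallach}. We pass freely between continuous, differentiable and $(\mathfrak{g},K)$-cohomology via Lemma~\ref{lem:smooth and cont cohom} and van Est. Borel--Wallach also guarantee that all admissible cohomological representations here are unitary, so the list below really sits inside $\widehat{G}$.

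\textbf{Step 1: the list of cohomological representations.} Here $K=\SO(2k)$ and ${^0M}=\SO(2k-1)$. By Wigner's lemma, a cohomological representation must have the infinitesimal character of the trivial representation. The irreducible $(\mathfrak{g},K)$-modules with that infinitesimal character are classified in \cite[VI.4]{BorelWallach}: the Langlands quotients $J_i=J(\sigma_i,s_i)$ and the two discrete series $D_1,D_2$. Here $\sigma_i$ is the representation of $\SO(2k-1)$ on $\wedge^i\C^{2k-1}$, and $s_i=k-\tfrac12-i$ in the usual normalisation. The two discrete series exist because $\mathrm{rank}\,K=\mathrm{rank}\,G$, and they are distinguished by the sign of the highest weight on $\wedge^k\C^{2k}$. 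This gives items (1) and (2), including the degrees $i$ and $2k-i$ for $J_i$ (symmetric by Poincar\'e duality, Theorem~\ref{thm:Poincare duality}) and degree $k$ for $D_1,D_2$. The cohomology is computed via $\mathrm{Hom}_K(\wedge^q\mathfrak{p},\pi)$ using that $\wedge^q\mathfrak{p}$ is irreducible for $q\neq k$ and splits into two pieces at $q=k$.

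\textbf{Step 2: the composition series, item (3).} The reducibility point of $I(\sigma_i,s)$ is exactly $s=s_i$, and at that point the standard module has length two. This can be read off either from Knapp--Stein intertwining operators or from the explicit $K$-type analysis in Borel--Wallach. The Langlands quotient is $J_i$ and the sub is the next module in the chain, namely $J_{i+1}$. At the middle step $i=k-1$, $\sigma_{k-1}$ restricted to the relevant $K$-types splits according to $\wedge^k\mathfrak{p}=\wedge^k_+\oplus\wedge^k_-$, so the sub there is $D_1\oplus D_2$. We identify the pieces by matching the minimal $K$-types $\wedge^{i+1}\C^{2k}$ (respectively the two halves of $\wedge^k$). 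By \cite[Corollary~4.2]{KnappStein} the complementary series of $\sigma_i$ is then $(0,s_i]$. Its Fell closure adds the endpoint constituents $J_i$ and $J_{i+1}$ (or $D_1,D_2$), as described in \S\ref{subsec:6.1}.

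\textbf{Step 3: the neighbourhoods, item (4).} This is the main obstacle. Let $\pi_n\to\pi\in\widehat{G}_{\mathrm{cohom}}$ be distinct. By \cite[Theorem~3]{voganIsol}, after passing to a subsequence, $\pi_n$ is infinitesimally equivalent to $\mathrm{Ind}_P^G(\sigma\times\chi_n)$ with $\chi_n\to1$, and $\pi$ is a constituent of $\mathrm{Ind}_P^G(\sigma)$. Since the group has rank one, $P$ is either $G$ (excluded, since the $\pi_n$ are distinct) or the minimal parabolic. So $\pi_n=J(\sigma,z_n)$ or principal series with $z_n\to z$, and $\pi$ is a constituent of $I(\sigma,z)$.

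Matching infinitesimal characters forces $I(\sigma,z)$ to have the trivial infinitesimal character, which pins $(\sigma,z)$ to $(\sigma_i,\pm s_i)$. A tempered $z_n\in i\R$ cannot converge to a real $s_i\neq0$; note that $s_i\geq\tfrac12$ for all $i$, so $s_i$ is never $0$. Hence $z_n$ is eventually real and lies in the complementary series of some $\sigma_i$. The end of that complementary series is $J_i$, which is what item (4) claims.

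One should still check that no other ${^0M}$-type $\sigma$ has $I(\sigma,s)$ with a cohomological constituent. This follows because the infinitesimal character of $I(\sigma,s)$ determines $\sigma$ up to the Weyl group. That is the step where I expect to lean most on the explicit rank-one bookkeeping of \cite{BorelWallach,KnappStein}.
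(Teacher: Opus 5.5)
Your proposal is correct and follows essentially the paper's route: items (1)--(3) are assembled from Borel--Wallach, and item (4) combines Vogan's Theorem~3 with infinitesimal-character matching. Your explicit Weyl-group computation forcing $(\sigma,z)=(\sigma_i,\pm s_i)$, together with $s_i\neq 0$ to rule out $z_n\in i\mathbb{R}$, does the work that the paper does by citing the isolation of the discrete series in the tempered dual and the characterization of $\widehat{G}_{\mathrm{cohom}}$ by trivial infinitesimal character.

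There are two harmless slips. First, for $i=k-1$ the standard module $I(\sigma_{k-1},s_{k-1})$ has length three ($J_{k-1},D_1,D_2$), not two. Second, $s_i$ is the \emph{first} reducibility point of $I(\sigma_i,s)$ for $s>0$, not the only one; the spherical series, for instance, is also reducible at $s_0+m$ for $m\in\mathbb{N}$. Only the first reducibility point is needed for the Knapp--Stein description of the complementary series.
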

\begin{proof}
    The identification of the representations, as well as items $(1)$ and $(2)$, is \cite[VI.4.4]{BorelWallach} and \cite[Theorem~4.5]{BorelWallach}. Item $(3)$ for $J_i$, $i \leq k-2$ follows from $(7)$ in \cite[VI.4.2]{BorelWallach} (see also \cite[VI.4.6]{BorelWallach}). For $J_{k-1}$, it follows from \cite[VI.1.7]{BorelWallach}.
    
    For item $(4)$, we use \cite[Theorem~3]{voganIsol}. It states that the only way to converge in the unitary dual is through parabolic induction. That is, in the rank-$1$ case, almost every $\pi_n$ belongs to a certain principal series or complementary series that converges to $\pi$ as well. Since discrete series representations are isolated in the tempered dual, and the only tempered cohomological representations are discrete, the only way to converge is through complementary series. Hence, we are left to show that there do not exist any other complementary series that can converge to a cohomological representation $\pi$. As explained in the discussion in the last paragraph of \S\ref{subsec:6.1}, every complementary series can be identified by the Langlands quotient that appears at the end of it. Hence, if the convergence happens along a complementary series other than the ones mentioned above, it must be a complementary series whose Langlands quotient at the end is some non-cohomological representation $\sigma$, and $\pi$ is some other subquotient in the same parabolically induced representation. Now, it follows from \cite[Theorems~VI.3.2-VI.3.4]{BorelWallach} that $\widehat{G}_\mathrm{cohom}$ is exactly the set of representations that have the same infinitesimal character as the trivial representation, and a trivial central character. But if $\sigma$ and $\pi$ are subquotients of the same parabolically induced representation, they share the infinitesimal character \cite[III.3.2]{BorelWallach}, so such a $\sigma$ does not exist.
\end{proof}

We will describe the topology around $\widehat{G}_{\mathrm{cohom}}$ pictorially. The topology of $\widehat{G}$ around $\widehat{G}_{\mathrm{cohom}}$ is described in Figure \ref{fig:cohom_reps_so(2k,1)}, with the following conventions:
\begin{enumerate}
    \item continuous lines represents areas in $\widehat{G}$ where the topology is that of an interval, for example, a complementary series without its ends.
    \item dotted lines represent the situation where several non-Hausdorff points of $\widehat{G}$ are "glued" at the end of a line. 
\end{enumerate}
The prototypical situation is when a complementary series converges to a finite set of several representations that lie at its end. Different complementary series might share some (usually not all) of their endpoints. 
\begin{figure}[htbp]
    \centering
    \begin{tikzpicture}
        \draw (0,0) -- (2,0);
        \draw[dotted] (2,0) -- (2.5, 0.5) node[anchor=west] {$J_1$};
        \draw[dotted] (2,0) -- (2.5, -0.5) node[anchor=west] {$J_0$};
        \draw (0,1) -- (2,1);
        \draw[dotted] (2,1) -- (2.5, 1.5) node[anchor=west] {$J_2$};
        \draw[dotted] (2,1) -- (2.5, 0.5);
        \draw[dashed] (1, 2) -- (1, 3);
        \draw (0,4) -- (2,4);
        \draw[dotted] (2,4) -- (2.5, 3.5) node[anchor=west] {$J_{k-1}$};
        \draw[dotted] (2,4) -- (2.5, 4) node[anchor=west] {$D_1$};
        \draw[dotted] (2,4) -- (2.5, 4.5) node[anchor=west] {$D_2$};
    \end{tikzpicture}
    \caption{The cohomological representations of $\SO^\circ(2k+1,1)$.}
    \label{fig:cohom_reps_so(2k,1)}
\end{figure}

Let $\sigma$ be a representation of ${^0M}$. In the notations of \S\ref{subsec:6.1}, suppose that the complementary series associated with it consists precisely of the Langlands quotients $J(\sigma,s)$, for $0<s \leq s_0$. We will use the following terminology throughout this section and the next one.
\begin{terminology}\label{term:supp on comp}
    A unitary representation $\pi$ of $G$ is said to be \emph{supported on the end} of the complementary series associated with $\sigma$ if:
    \begin{enumerate}
        \item $\mathrm{supp}(\pi)$ is contained in the complementary series $\{J(\sigma,s) \mid 0<s<s_0\}$ together with its end points.
        \item $\pi$ does not contain an irreducible cohomological subrepresentation.
        \item $\mathrm{supp}(\pi)$ contains a sequence of representations $J(\sigma,s_i)$ such that $s_i \rightarrow s_0$.    
    \end{enumerate}
\end{terminology}
With this terminology at hand, the following theorem follows from Theorem \ref{thm:cohomological dual so(2k,1)}, Theorem \ref{main1} and Theorem \ref{main2}.
It provides a full solution of Problem~\ref{prob:main} for $G=\SO^\circ(2k,1)$.
\begin{theorem}\label{thm:non-Hausdorff for so(2k,1)}
    Let $\pi$ be a unitary representation of $G$. $\pi$ has non-zero torsion cohomology in degree $q$ for some $1 \leq q \leq k$ if and only if it contains a representation supported on the end of the complementary series whose Langlands quotient at the end is $J_{q-1}$. It has non-zero torsion cohomology in degree $q$ for some $k+1 \leq q \leq 2k$ if and only if it contains a representation supported on the end of the complementary series whose Langlands quotient at the end is $J_{2k-q}$.
\end{theorem}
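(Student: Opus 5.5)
The plan is to reduce everything to the combinatorics of the cohomological degrees listed in Theorem~\ref{thm:cohomological dual so(2k,1)}, and then run Theorems~\ref{main1} and~\ref{main2} degree by degree. First I decompose an arbitrary $\pi$ as $\pi=\pi_{\mathrm{cohom}}\oplus\mathring\pi$. By Theorem~\ref{thm: reduced cohom} only $\mathring\pi$ carries torsion, so I may assume $\pi=\mathring\pi$. Using item (4) of Theorem~\ref{thm:cohomological dual so(2k,1)} and the type I disintegration (Theorem~\ref{thm:type I groups}), I split the measure class of $\mathring\pi$ along the finitely many complementary series ending at $J_0,\dots,J_{k-1}$, together with a remainder. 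The remainder has no cohomological point in the closure of its support. Each complementary-series piece either is supported on the end in the sense of Terminology~\ref{term:supp on comp}, or else has support bounded away from $s_0$, hence closed away from $\widehat G_{\mathrm{cohom}}$. By Theorem~\ref{intro:topological propery1}, only the pieces supported on ends contribute torsion, and torsion of a finite direct sum is the direct sum of torsions. It therefore suffices to treat a single representation $\pi_i$ supported on the end of the series whose Langlands quotient is $J_i$.

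Item (3) gives the essential support: $\mathrm{supp}(\pi_i)'\cap\widehat G_{\mathrm{cohom}}=\{J_i,J_{i+1}\}$ for $i\le k-2$, and $\{J_{k-1},D_1,D_2\}$ for $i=k-1$. By item (2) the relevant cohomological degrees are $\{i,2k-i,i+1,2k-i-1\}$ in the first case, and $\{k-1,k+1,k\}$ in the second. Theorem~\ref{main1} says torsion in degree $q$ requires degrees $q$ and $q-1$ both among these. That allows only $q=i+1$ and $q=2k-i$ in the first case, and $q=k,k+1$ in the second. This gives the ``only if'' direction, with $q=i+1\le k$ matching $J_{q-1}$ and $q=2k-i\ge k+1$ matching $J_{2k-q}$. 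When $i=k-1$, both $k$ and $k+1$ arise, consistent with $J_{k-1}=J_{q-1}=J_{2k-q}$.

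For the ``if'' direction I apply Theorem~\ref{main2} to $\pi_i$. Taking $q=i$ ($J_i$ is $i$-cohomological), torsion occurs in degree $i$ or $i+1$, and degree $i$ is excluded by the previous paragraph. Hence $\torH^{i+1}(G;\pi_i)\ne0$. Symmetrically, taking $q=2k-i-1$ with $J_{i+1}$ (or $D_j$ when $i=k-1$, which is $k$-cohomological) gives torsion in degree $2k-i-1$ or $2k-i$, and the former is excluded. For $i=k-1$ the first argument gives $k$ and the second gives $k+1$. Alternatively, degree $2k-i$ follows from degree $i+1$ via Poincar\'e duality (Theorem~\ref{thm:homology and cohomology}) together with the index shift of Corollary~\ref{cor:index shift unitary}.

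Finally, I reassemble: $\pi$ has torsion in degree $q$ if and only if some summand $\pi_i$ does, and that holds exactly for $i=q-1$ when $q\le k$ and $i=2k-q$ when $q\ge k+1$. The main obstacle is the reduction in the first step, namely making the claim ``contains a representation supported on the end'' precise. One has to check that truncating the measure near $s_0$ yields a subrepresentation satisfying Terminology~\ref{term:supp on comp}, and that the complement contributes no essential cohomological points. This uses item (4) and the fact that the relevant complementary series meet $\widehat G_{\mathrm{cohom}}$ only at their ends. The remaining degree bookkeeping (excluding $q=i$ and $q=2k-i-1$) is routine once the list of essential points is fixed.
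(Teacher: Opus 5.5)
Your proposal is correct and follows the paper's route: the paper just asserts that the theorem follows from Theorem~\ref{thm:cohomological dual so(2k,1)} together with Theorems~\ref{main1} and~\ref{main2}, and your splitting of $\mathring\pi$ along the complementary series, followed by the degree bookkeeping, is a faithful unpacking of that assertion.

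There is one slip, at $i=k-1$. There your second application of Theorem~\ref{main2}, using $D_j$ in degree $k$, only gives torsion in degree $k$ or $k+1$. Degree $k$ is \emph{not} excluded for the series ending at $J_{k-1}$, so this step does not produce degree $k+1$. You can repair it in either of two ways:
\begin{itemize}
\item Apply Theorem~\ref{main2} to $J_{k-1}$, viewed as a $(k+1)$-cohomological point. This gives torsion in degree $k+1$ or $k+2$, and degree $k+2$ is excluded.
\item Use your Poincar\'e-duality alternative, which works uniformly in $i$:
\[
\torH^{i+1}(G;\pi)\neq 0\iff \torH_i(G;\pi)\neq 0\iff \torH^{2k-i}(G;\pi)\neq 0 .
\]
\end{itemize}
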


\subsection{The case of $SO^\circ(2k+1,1)$}
Denote the group $\SO^\circ(2k+1,1)$ by $G$. $G$ is a rank-$1$ form of $\mathrm{D}_{k+1}$, and it does not have discrete series representations. The following theorem describes $\widehat{G}_\mathrm{cohom}$ and the topology around it. 
\begin{theorem}\label{thm:cohomological dual of SO(2k+1,1)}
    $\widehat{G}_\mathrm{cohom}$ consists of $k+1$ representations:
    \[
    \widehat{G}_\mathrm{cohom}=\{J_0, J_1, ..., J_{k-1},I_k,\}
    \]
    The representation $I_k$ is tempered, and is not a discrete series representation. The representations $J_i$, $0 \leq i \leq k-1$ are non-tempered. $J_0$ is the trivial representation.
    \begin{enumerate}
        \item $H^q(G;I_k) \cong \mathbb{C}$ if $q=k,k+1$ and $H^q(G;I_k) = 0$ otherwise.
        \item $H^q(G;J_i) \cong \mathbb{C}$ if $q=i$ or $q=2k+1 - i$. $H^q(G;J_i)=0$ otherwise.
        \item The representations $J_i$ are Langlands quotients $J(\sigma_i,s_i)$ at ends of certain complementary series. For $0 \leq i \leq k-2$, the kernel of the quotient $I(\sigma_i,s_i) \to J_i$ is $J_{i+1}$. For $J_{k-1}$, the kernel of the quotient $I(\sigma_{k-1},s_{k-1}) \to J_{k-1}$ is $I_k$.
        \item Let $\pi_n$ be a sequence of distinct representations in $\widehat{G}$ that converges to a cohomological representation $\pi \in \widehat{G}_{\mathrm{cohom}}$. Then, almost every $\pi_n$ belong to one of the complementary series whose Langlands quotient at the end is $J_i$ for $0 \leq i \leq k-1$, or is tempered and belong to a certain principal series of which $I_k$ is the origin. 
    \end{enumerate}
\end{theorem}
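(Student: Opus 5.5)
The plan follows the proof of Theorem~\ref{thm:cohomological dual so(2k,1)} closely. Most of the content is a citation of Borel--Wallach, and only the topological item $(4)$ needs an argument. First I will identify $\widehat{G}_\mathrm{cohom}$. By \cite[Theorems~VI.3.2--VI.3.4]{BorelWallach} it is the set of irreducible unitary representations with the infinitesimal character of the trivial representation (and trivial central character). By \cite[Theorem~VI.4.12]{BorelWallach} all admissible such representations are unitary. Borel--Wallach's analysis in \cite[VI.4]{BorelWallach} of the case $n=2k+1$ (type $\mathrm{D}_{k+1}$, where $\mathrm{rank}_\C G\neq \mathrm{rank}_\C K$) lists these as the Langlands quotients $J_i=J(\sigma_i,s_i)$ for $0\le i\le k-1$ together with one tempered representation $I_k$. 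Here $\sigma_i$ is the representation of ${^0M}=\SO(2k)$ on $\wedge^i\C^{2k}$, and $I_k=I(\sigma_k,0)$ is induced from the (reducible, hence sum of the two half-spin-type pieces $\wedge^k_\pm$) middle exterior power; it is irreducible and unitarily induced at $z=0$. Since $G$ has no compact Cartan subgroup, there is no discrete series, which is consistent with $I_k$ being tempered but not discrete. Items $(1)$ and $(2)$ then come from the computation of relative Lie algebra cohomology in \cite[Theorem~VI.4.5]{BorelWallach} (equivalently, Kostant/Zuckerman via van Est). Concretely, $H^*(\mathfrak g,K;I(\sigma,0))$ is computed by Frobenius reciprocity as $H^*(\mathfrak m,K_M;\sigma\otimes\wedge^*\mathfrak n^*)$-type terms. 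For $J_i$ one gets classes in degrees $i$ and $2k+1-i$, and for $I_k$ the two middle degrees $k,k+1$. As a consistency check, Poincar\'e duality (Theorem~\ref{thm:homology and cohomology}) with $d=2k+1$ and the Euler characteristic vanishing (no discrete series) confirm the degree pairing.

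Item $(3)$ comes from the composition series of $I(\sigma_i,s_i)$ at the reducibility point, as in \cite[VI.4.2~(7), VI.4.6]{BorelWallach}. For $i\le k-2$ the composition factors of $I(\sigma_i,s_i)$ are $J_i$ and $J_{i+1}$. For $i=k-1$ the exterior power $\wedge^{k}$ of the next step no longer has a positive reducibility point. Instead the subrepresentation is the tempered $I_k=I(\sigma_k,0)$, sitting as the kernel of $I(\sigma_{k-1},s_{k-1})\to J_{k-1}$ (\cite[VI.1.7]{BorelWallach}). The infinitesimal characters all agree with the trivial one, which is compatible with everything lying in $\widehat{G}_\mathrm{cohom}$.

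For item $(4)$ I will repeat the argument of Theorem~\ref{thm:cohomological dual so(2k,1)}. By \cite[Theorem~3]{voganIsol}, after passing to a subsequence, $\pi_n$ is (infinitesimally) $I(\sigma,z_n)$ or its Langlands quotient, with $z_n\to 0$ shifted to the relevant parameter, and $\pi$ is a composition factor of the limit induced representation. Since induced representations with a common inducing datum share infinitesimal characters \cite[III.3.2]{BorelWallach}, the limit series must be one whose end-point subquotients have the trivial infinitesimal character. Hence the series is either a complementary series of some $\sigma_i$, $0\le i\le k-1$ (the $\pi_n$ non-tempered, $z_n=s_n\to s_i$), or the unitary principal series $I(\sigma_k,it)$ with $t\to 0$, whose origin is $I_k$. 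The main obstacle is this last case. Unlike $\SO^\circ(2k,1)$, tempered cohomological representations here are not isolated in the tempered dual, so I must check that $I(\sigma_k,it)$ is irreducible for small $t\neq0$ and converges to $I_k$. I also need to rule out a complementary series attached to $\sigma_k$ itself. For the latter I would use the Knapp--Stein criterion \cite[Corollary~4.2, 4.3]{KnappStein}: since $\sigma_k$ is not Weyl-fixed (the Weyl element swaps the two pieces of $\wedge^k$), $J(\sigma_k,s)$ is not unitary for $s>0$, and any other $\sigma$ leads to a non-matching infinitesimal character.
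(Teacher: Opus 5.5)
Your proposal follows the paper's proof. The identification of $\widehat{G}_{\mathrm{cohom}}$ and items $(1)$--$(3)$ are cited from Borel--Wallach's treatment of $\SO^\circ(2k+1,1)$, and item $(4)$ repeats the $\SO^\circ(2k,1)$ argument (Vogan's Theorem~3 plus the common infinitesimal character of subquotients), the only new feature being the tempered principal series through $I_k$; your Knapp--Stein remark that $\wedge^k_\pm$, not being Weyl-fixed, admits no complementary series is a correct detail the paper leaves implicit. One slip to fix: $I_k$ is $I(\wedge^k_+,0)\cong I(\wedge^k_-,0)$, induced from one of the self-dual/anti-self-dual halves of $\wedge^k\C^{2k}$ (these are not half-spin representations), whereas inducing from all of $\wedge^k\C^{2k}$ gives $I_k\oplus I_k$, which is reducible and has $\C^2$ in degrees $k,k+1$ --- your later use of ``$\sigma_k$ is not Weyl-fixed'' already presupposes the correct choice.
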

\begin{proof}
    The description of the representations, as well as items $(1)$ and $(2)$, is \cite[Theorem~VI.4.2]{BorelWallach}. Item $(3)$ follows from $(7)$ in \cite[VI.4.2]{BorelWallach} for $i < k-1$, and for $i=k-1$ from \cite[VI.3.8]{BorelWallach}. The proof of item $(4)$ is precisely the same as in the case of $\SO^\circ(2k,1)$. The only difference is that in this case it is possible to converge to $I_k$ from the tempered dual, that is, from principal series. 
\end{proof}
The picture of the dual in this case is described in Figure \ref{fig:cohom_reps_so(2k+1,1)}, where the vertical line starting at $I_k$ is a principal series, and all other lines are complementary series. The conventions are the same as the ones stated for $\SO^\circ(2k,1)$.
\begin{figure}[htbp]
    \centering
    \begin{tikzpicture}
        \draw (0,0) -- (2,0);
        \draw[dotted] (2,0) -- (2.5, 0.5) node[anchor=west] {$J_1$};
        \draw[dotted] (2,0) -- (2.5, -0.5) node[anchor=west] {$J_0$};
        \draw (0,1) -- (2,1);
        \draw[dotted] (2,1) -- (2.5, 1.5) node[anchor=west] {$J_2$};
        \draw[dotted] (2,1) -- (2.5, 0.5);
        \draw[dashed] (1, 2) -- (1, 3);
        \draw (0,4) -- (2,4);
        \draw[dotted] (2,4) -- (2.5, 3.5) node[anchor=west] {$J_{k-1}$};
        \draw[dotted] (2,4) -- (2.5, 4.5) node[anchor=west] {$I_k$};
        \draw(2.5, 4.5) -- (2.5, 6);
    \end{tikzpicture}
    \caption{The cohomological representations of $\SO^\circ(2k,1)$.}
    \label{fig:cohom_reps_so(2k+1,1)}
\end{figure}
The following theorem states that Problem \ref{prob:main} is fully resolved for $\SO^\circ(2k+1,1)$ (and thus for $SO(n,1)$ in general, combining with Theorem \ref{thm:non-Hausdorff for so(2k,1)}). We use the same terminology (Terminology \ref{term:supp on comp}) as in Theorem \ref{thm:non-Hausdorff for so(2k,1)}. 
\begin{theorem}\label{thm:non-Hausdorff for so(2k+1,1)}
     Let $\pi$ be a unitary representation of $G$.
     \begin{enumerate}
         \item $\pi$ has non-zero torsion in cohomology in degree $q$ for some $1 \leq q \leq k$ if and only if it contains a representation supported on the end of the complementary series whose Langlands quotient at the end is $J_{q-1}$.
         \item $\pi$ has non-zero torsion in cohomology in degree $q$ for some $k+2 \leq q \leq 2k+1$ if and only if it contains a representation supported on the end of the complementary series whose Langlands quotient at the end is $J_{2k+1-q}$.
         \item $\pi$ has non-zero torsion in cohomology in degree $k+1$ if and only if it weakly contains a sequence of (distinct) principal series representations converging to $I_k$.
     \end{enumerate}
\end{theorem}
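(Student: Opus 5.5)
The plan is to reduce to representations whose support is concentrated near a single complementary or principal series, and then read off the torsion degree from Theorems \ref{main1} and \ref{main2}. The one exception is a ``four successive degrees'' configuration, which needs a genuinely new argument.

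\textbf{Reduction.} First decompose $\pi=\pi_{\mathrm{cohom}}\oplus\mathring\pi$. By Theorem \ref{thm: reduced cohom}, torsion only comes from $\mathring\pi$, and every cohomological point of $\mathrm{supp}(\mathring\pi)$ lies in $\mathrm{supp}(\mathring\pi)'$. By item $(4)$ of Theorem \ref{thm:cohomological dual of SO(2k+1,1)}, a small neighbourhood of $\widehat G_{\mathrm{cohom}}$ in $\widehat G$ is a union of the following pieces:
\begin{itemize}
\item tails of the complementary series $C_i$ ending at $J_i$, for $0\le i\le k-1$;
\item a tail of the principal series $P$ emanating from $I_k$.
\end{itemize}
The parts of $\mathrm{supp}(\mathring\pi)$ away from this neighbourhood contribute nothing, by Theorem \ref{main1}. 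Using the direct integral decomposition over $\widehat G$ (Theorem \ref{thm:type I groups}), split $\mathring\pi$ orthogonally into pieces supported on each of these tails, plus a remainder. Cohomology is additive over this splitting, so it suffices to determine the torsion degrees of a representation supported on the end of a single $C_i$, or accumulating only at $I_k$ along $P$. By Theorem \ref{thm:non-Hausdorff is topological}, the answer depends only on the support.

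\textbf{Bookkeeping cases.} Each of these cases is settled by Theorems \ref{main1} and \ref{main2} together with the degree table.
\begin{itemize}
\item \emph{End of $C_i$ with $i\le k-2$.} The essential cohomological points are $J_i$ and $J_{i+1}$, cohomological in degrees $i,\,i+1,\,2k-i,\,2k+1-i$. Theorem \ref{main1} permits torsion only in degrees $i+1$ and $2k+1-i$, since a torsion degree $q$ needs both $q$ and $q-1$ in this set. Theorem \ref{main2} with $q=i$ forces torsion in degree $i$ or $i+1$, hence in $i+1$. With $q=2k-i$ it forces torsion in $2k-i$ or $2k+1-i$, hence in $2k+1-i$.
\item \emph{Principal series at $I_k$.} The only cohomological degrees are $k$ and $k+1$. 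Theorem \ref{main1} permits torsion only in degree $k+1$, and Theorem \ref{main2} with $q=k$ forces it there.
\item \emph{End of $C_{k-1}$.} The essential points are $J_{k-1}$ and $I_k$, cohomological in degrees $k-1,\,k,\,k+1,\,k+2$. Theorem \ref{main1} permits torsion in degrees $k$, $k+1$, $k+2$. Theorem \ref{main2} with $q=k-1$ yields torsion in degree $k$, and with $q=k+2$ it yields torsion in degree $k+2$; here degree $k+3$ is excluded by Theorem \ref{main1}.
\end{itemize}

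Assembling these cases gives items $(1)$ and $(2)$, and the ``if'' direction of item $(3)$. The ``only if'' direction of $(3)$ then reduces to one remaining claim. This is precisely the situation of Remark \ref{rem:4-degrees}.

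\textbf{Main obstacle: no torsion in degree $k+1$ at the end of $C_{k-1}$.} Representations supported on the end of $C_{k-1}$ have $H^{k+1}$ torsion-free. I would prove this via relative Lie algebra cohomology, using van Est and Lemma \ref{lem:smooth and cont cohom}.
\begin{itemize}
\item Realise $\mathring\pi$ as a direct integral $\int^\oplus J(\sigma_{k-1},s)\,d\mu(s)$ in the compact picture. Here all $J(\sigma_{k-1},s)$ share one Hilbert space $\mathrm{Ind}_{K\cap M}^K\sigma_{k-1}$, with $K$-types independent of $s$.
\item Write the complex $\mathrm{Hom}_K(\Lambda^\bullet\mathfrak p,V)$. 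Only finitely many $K$-types occur in $\Lambda^{k+1}\mathfrak p$ and in its neighbouring degrees, so the differentials $d_{k+1}$ and $d_{k+2}$ become finite matrices depending polynomially on $s$.
\item By Lemma \ref{lemma:non-Hausdorff characterization}, torsion in degree $k+1$ is equivalent to an almost kernel of $d_{k+1}$ transverse to $\ker d_{k+1}$. For these finite matrices this means the smallest nonzero singular value of $d_{k+1}(s)$ tends to $0$ as $s\to s_0$. To rule this out, show that the rank of $d_{k+1}(s)$ does not drop at $s=s_0$.
\end{itemize}
The expected mechanism is as follows. At $s_0$ the cohomology of $I(\sigma_{k-1},s_0)$ in degrees $k$ and $k+1$ is carried by the sub-object $I_k$ and is matched with the jump in $\ker d_k$, not with $d_{k+1}$. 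Concretely, I would compute using the explicit $K$-type formulas for the action of $\mathfrak p$ on $\mathrm{Ind}_{K\cap M}^K\sigma_{k-1}$ from \cite{BorelWallach}. Those formulas show that the scalars on the relevant $K$-type transitions vanish at $s_0$ only on the transitions in degrees $k-1\to k$ and $k+1\to k+2$, which are exactly the transitions giving torsion in degrees $k$ and $k+2$.
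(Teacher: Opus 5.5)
\textbf{Verdict: there is a gap, in the degree $k+1$ step only.} Your reduction to single tails and your degree bookkeeping via Theorems \ref{main1} and \ref{main2} are correct. They are exactly how the paper obtains items (1), (2) and the ``if'' half of (3). The problem is the one step that needs a new input: showing that a representation supported on the end of the complementary series of $\sigma_{k-1}$ has no torsion in degree $k+1$ (Theorem \ref{greatQuestion}).

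There you set up the right framework: van Est, a direct integral over the complementary series, and finitely many $K$-types. But the decisive claim, that the smallest nonzero singular value of $d_{k+1}(s)$ stays away from $0$ as $s\to s_0$, is only an ``expected mechanism''. It rests on explicit $K$-type formulas from Borel--Wallach that you never carry out. It is also less routine than ``finite matrices depending polynomially on $s$'' suggests. Closedness of the range must be measured in the invariant inner products of $J(\sigma_{k-1},s)$. These vary with $s$ and degenerate at $s_0$, where the invariant form acquires the radical $I_k$. So the compact-picture matrices alone do not control singular values uniformly near $s_0$.

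\textbf{The missing idea is a dimension count, which makes any limit analysis unnecessary.}
\begin{enumerate}
\item By Frobenius reciprocity and the branching $\Lambda^q\C^{2k+1}|_{\SO(2k)}\cong\Lambda^q\C^{2k}\oplus\Lambda^{q-1}\C^{2k}$, the cochain space $C_q=\mathrm{Hom}_K(\Lambda^q(\mathfrak g/\mathfrak k),I(\sigma_{k-1},s))$ is one-dimensional for $k-1\le q\le k+2$ and zero otherwise (Lemma \ref{lem:one dimensional}).
\item For $0\le s<s_0$ the representation $I(\sigma_{k-1},s)$ is irreducible and not cohomological. So the complex $0\to C_{k-1}\to C_k\to C_{k+1}\to C_{k+2}\to 0$ is exact.
\item Exactness makes $d_k$ injective, hence an isomorphism of lines. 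Then $d_{k+1}\circ d_k=0$ forces $d_{k+1}=0$. Exactness at $C_{k+1}$ then forces $d_{k+2}$ to be injective. Hence no fibre has a nonzero $(k+1)$-cocycle.
\item Since $\pi$ has no cohomological subrepresentation, almost every fibre of its direct integral decomposition is such an $I(\sigma_{k-1},s)$. The cocycle condition can be checked fibrewise on smooth vectors (Lemma \ref{lem:d=0 integral}). So $H^{k+1}(G;\pi)=0$ outright, not merely its torsion.
\end{enumerate}

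In particular, the map you wanted to control is identically zero on the complementary series. The degree $k$ and $k+1$ cohomology of $I(\sigma_{k-1},s_0)$ comes from $d_k$ and $d_{k+2}$ degenerating, not from any change in $d_{k+1}$. With this step supplied, the rest of your argument goes through unchanged.
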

Part $(1)$ and $(2)$ in Theorem \ref{thm:non-Hausdorff for so(2k+1,1)} from from Theorems \ref{thm:cohomological dual of SO(2k+1,1)}, \ref{main1} and \ref{main2}. Part $(3)$ is more involved. To emphasize the situation that is not fully resolved, let $\pi$ be a representation that is supported on the end of the complementary series that converges to $J_{k-1}$ and $I_k$. Then, $\pi$ has torsion cohomology in degrees $k$ and $k+2$. Its cohomology in degrees different than $k$, $k+2$ and $k+1$ vanishes. We are left to understand whether or not its cohomology in degree $k+1$ vanishes. This is the content of part $(3)$ of Theorem \ref{thm:non-Hausdorff for so(2k+1,1)}, and we single it out as a separate theorem.
\begin{theorem}\label{greatQuestion}
    Let $\pi$ be a representation supported on the end of the complementary series that converges to $J_{k-1}$ and $I_k$. Then $H^{k+1}(G;\pi) = 0$. Thus, part $(3)$ of Theorem \ref{thm:non-Hausdorff for so(2k+1,1)} holds.
\end{theorem}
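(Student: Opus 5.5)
The plan is to reduce the problem to relative Lie algebra cohomology of the finite-dimensional pieces of $\pi$ cut out by the relevant $K$-types. By Theorem~\ref{intro-Hausdorff} we have $\barH^{k+1}(G;\pi)=0$, since $\pi$ has no cohomological subrepresentation. So it suffices to show $\torH^{k+1}(G;\pi)=0$.

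Write $\sigma=\sigma_{k-1}$ and $s_0=s_{k-1}$. Since $G$ is type I, by Theorem~\ref{thm:type I groups} and Terminology~\ref{term:supp on comp} we may write $\pi=\int^\oplus_{(0,s_0)} J(\sigma,s)\,d\mu(s)$ with multiplicities; the endpoints carry no mass, because $\pi$ has no cohomological subrepresentations. By Lemma~\ref{lem:smooth and cont cohom} and van Est's theorem, the cohomology is computed by the complex $C^q=\mathrm{Hom}_K(\wedge^q\mathfrak p,\pi^\infty)$. Only the finitely many $K$-types occurring in $\wedge^q\mathfrak p$ for $q\in\{k,k+1,k+2\}$ matter. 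I would first check that the non-Hausdorffness of $H^{k+1}$ can be read off from this $K$-finite model, with the topology of $\pi$ restricted to finitely many isotypic components. This is a standard but necessary point: the closed-range property of the differential on the smooth complex should agree with that of the $K$-finite model.

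For each $s$ we realize $J(\sigma,s)=I(\sigma,s)$ in the compact picture on $\mathrm{Ind}_{M\cap K}^K(\sigma)$, which is independent of $s$. By Frobenius reciprocity, the spaces $C^q(s)=\mathrm{Hom}_K(\wedge^q\mathfrak p,I(\sigma,s))$ are finite dimensional, of dimension independent of $s$. The differentials $d_q(s)$ are matrices depending polynomially on $s$, since the $\mathfrak p$-action in the compact picture is affine in $s$. The complex for $\pi$ is then the $L^2$-direct integral of these finite complexes, after replacing the unitary structure by an equivalent one on each fixed $K$-type. This replacement requires controlling the intertwining normalization of the unitary structure as $s\to s_0$; that normalization is analytic and nonvanishing on each fixed $K$-type for $s<s_0$. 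For $0<s<s_0$ the complex is exact, since $J(\sigma,s)$ is not cohomological.

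Consequently, the image of $d_q$ on the direct integral is closed if and only if the smallest nonzero singular value of $d_q(s)$ stays bounded away from $0$ as $s\to s_0$. By analyticity, this holds if and only if $\mathrm{rank}\,d_q(s_0)$ equals the generic rank. Let $r_q\ge 0$ denote the rank drop of $d_q$ at $s_0$. Then $\torH^q(G;\pi)\neq0\iff r_q>0$. Moreover, exactness for $s<s_0$ gives
\[\dim H^q(\mathfrak g,K;I(\sigma,s_0))=r_q+r_{q+1}.\]
We already know $r_k>0$ and $r_{k+2}>0$ from Theorems~\ref{main1} and~\ref{main2}. The goal is $r_{k+1}=0$.

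The main obstacle is computing $H^*(\mathfrak g,K;I(\sigma,s_0))$ precisely enough to force $r_{k+1}=0$. I would use the exact sequence $0\to I_k\to I(\sigma,s_0)\to J_{k-1}\to0$ from Theorem~\ref{thm:cohomological dual of SO(2k+1,1)}(3). By items (1) and (2) there, $H^*(I_k)$ lives in degrees $k,k+1$ and $H^*(J_{k-1})$ in degrees $k-1,k+2$. The long exact sequence then gives $\dim H^k(I(\sigma,s_0))+\dim H^{k+1}(I(\sigma,s_0))\le 2$, with equality unless the connecting maps $H^{k-1}(J_{k-1})\to H^k(I_k)$ and $H^{k+1}(I_k)\to H^{k+2}(J_{k-1})$ are nonzero. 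In parallel, I would compute $H^*(I(\sigma,s_0))$ directly via the Borel--Wallach formula for cohomology of parabolically induced modules (Shapiro/Casselman--Osborne: $H^*(\mathfrak m,M\cap K;\sigma\otimes H^*(\mathfrak n))$ paired with $\wedge\mathfrak a$). I expect this to show that $H^{k+1}(I(\sigma,s_0))$ is one-dimensional, coming only from the $\mathfrak n$-cohomology weight matching $s_0$.

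Combining, we get $r_{k+1}+r_{k+2}=\dim H^{k+1}(I(\sigma,s_0))=1$ while $r_{k+2}\ge1$. This forces $r_{k+1}=0$, hence $\torH^{k+1}(G;\pi)=0$. If the direct computation proves intractable, I would instead try to show directly that the generator of $H^{k+1}(I_k)$ is not killed. Concretely, this means checking that its image in $H^{k+1}(I(\sigma,s_0))$ is nonzero and accounts for the whole space, which again pins down the total dimension as $1$.
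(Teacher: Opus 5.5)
\textbf{There is a genuine gap.} Your conclusion is correct, but the tool you build to reach it is false as stated. You claim that the image of $d_q$ on the direct integral is closed iff the compact-picture matrix $d_q(s_0)$ has no rank drop, i.e.\ that $\torH^q(G;\pi)\neq 0\iff r_q>0$. This ignores how the Hilbert structure of $\pi$ behaves as $s\to s_0$.

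On a $K$-type $\tau$, the invariant norm of $J(\sigma,s)$ is $c_\tau(s)$ times the compact-picture norm. As $s\to s_0$, the limiting form on $I(\sigma,s_0)$ has radical $I_k$. Hence $c_{\tau'}(s)/c_\tau(s)\to 0$ whenever $\tau'$ is a $K$-type of $I_k$ and $\tau$ one of $J_{k-1}$. Your remark that the normalization is analytic and nonvanishing for $s<s_0$ is beside the point; what matters is uniformity as $s\to s_0$. The true singular values are the compact ones multiplied by $\sqrt{c_{\tau'}(s)/c_\tau(s)}$, so closedness of range is not detected by rank drop.

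Already $k=1$ gives a counterexample. There $d_1(s)$ sends the spherical vector $v_0$ to $X\mapsto Xv_0$. At $s_0$ in the compact picture, $Xv_0\neq 0$: otherwise $\C v_0$ would be a trivial submodule and $I(\sigma,s_0)$ would split, contradicting uniqueness of the Langlands quotient. So $r_1=0$. Yet $\torH^1(G;\pi)\neq 0$, since $\pi$ has almost invariant vectors and no invariant ones. Thus your assertion $r_k>0$ is unjustified and fails for $k=1$.

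Your two uses of the criterion happen to be true, but you give no valid reason for either:
\begin{itemize}
\item Torsion in degree $k+2$ implies $r_{k+2}>0$. This holds because the norm factor for $d_{k+2}$ blows up.
\item $r_{k+1}=0$ implies no torsion in degree $k+1$. This holds because $d_{k+1}$ maps the $K$-type $\wedge^k\C^{2k+1}$ to itself, so the factors cancel.
\end{itemize}
There is also an error in your long exact sequence for $0\to I_k\to I(\sigma,s_0)\to J_{k-1}\to 0$. Its connecting maps go $H^q(J_{k-1})\to H^{q+1}(I_k)$; there is no map $H^{k+1}(I_k)\to H^{k+2}(J_{k-1})$. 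Since $H^k(J_{k-1})=H^{k+1}(J_{k-1})=0$, the sequence gives $H^{k+1}(\mathfrak g,K;I(\sigma,s_0))\cong H^{k+1}(\mathfrak g,K;I_k)\cong\C$ outright.

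\textbf{The missing idea.} Compute the cochain spaces exactly instead of only noting they are finite dimensional. Use Frobenius reciprocity, the branching $\Lambda^q\C^{2k+1}|_{SO(2k)}\cong\Lambda^q\C^{2k}\oplus\Lambda^{q-1}\C^{2k}$, and Schur. This gives $\dim C^q(s)=1$ for $k-1\le q\le k+2$ and $C^q(s)=0$ otherwise. Exactness for every non-cohomological fiber then forces $d_k(s)$ to be an isomorphism, $d_{k+1}(s)=0$, and $d_{k+2}(s)$ injective. So no fiber has a nonzero $(k+1)$-cocycle.

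A cochain of the direct integral is a cocycle iff almost all its fibers are; this follows by decomposing smooth vectors fiberwise. Hence $\pi$ has no nonzero $(k+1)$-cocycles, and $H^{k+1}(G;\pi)=0$. This needs no splitting into Hausdorff and torsion parts, no analysis as $s\to s_0$, and no computation of $H^*(\mathfrak g,K;I(\sigma,s_0))$. This is the paper's argument.

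Even inside your framework, this count shows the generic rank of $d_{k+1}$ is $0$. That, not the rank-drop bookkeeping, is what makes $r_{k+1}=0$ and the closedness of the range of $d_{k+1}$ trivial.
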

In the next subsection we will prove Theorem \ref{greatQuestion}, using an approach different than the one we we have taken up until now. This will also yield a proof of Theorem \ref{thm:neg answer sl2C}.
\subsection{Proof of Theorem \ref{greatQuestion}}\label{subsec:6.4}
To prove Theorem \ref{greatQuestion}, we use methods from relative Lie algebra cohomology. We start with a general connected, algebraic semisimple Lie group $G=\mathbf{G}(\R)$. Let $(\sigma,V)$ be a unitary representation of $G$. The continuous cohomology  $H^*(G;V)$ is isomorphic to the smooth cohomology $H^*_d(G;V^\infty)$ (Lemma \ref{lem:smooth and cont cohom}), and the latter is isomorphic to the relative Lie algebra cohomology $H^*(\mathfrak{g},\mathfrak{k},V^\infty)$ by the van-Est theorem \cite[Theorem~3.2]{Kyed}, where $\mathfrak{g}$ denotes the Lie algebra of $G$ and $\mathfrak{k}$ the Lie algebra of a fixed maximal compact subgroup $K$. We will not recall the definition of relative Lie algebra cohomology; the reader is referred to \cite[Chapter~I]{BorelWallach} for an introduction to this theory. We will use the standard complex which calculates this cohomology, namely, the \textit{Chevalley-Eilenberg} complex. The $q$'th cochain space is $\mathrm{Hom}_\mathfrak{k}(\Lambda^q (\mathfrak{g}/\mathfrak{k}), V^{\infty})$. The  differential $d_q: \mathrm{Hom}_\mathfrak{k}(\Lambda^{q-1} (\mathfrak{g}/\mathfrak{k}), V^{\infty}) \to \mathrm{Hom}_\mathfrak{k}(\Lambda^q (\mathfrak{g}/\mathfrak{k}), V^{\infty})$ is given by:

\begin{equation} \label{Chevalley-Eilenberg}
\begin{split}
    df(x_1, \dots, x_{q}) &= \sum_{i=1}^{q} (-1)^{i+1} x_i \cdot f(x_1, \dots, \hat{x}_i, \dots, x_q) \\
    &\quad + \sum_{1 \le i < j \le q} (-1)^{i+j} f([x_i, x_j], x_1, \dots, \hat{x}_i, \dots, \hat{x}_j, \dots, x_q)
\end{split}
\end{equation}

\begin{remark}
    Both passing to the smooth vectors (Lemma \ref{lem:smooth and cont cohom}) and to the relative Lie algebra cohomology (the van-Est theorem \cite[Theorem~3.2]{Kyed}) do not require the representation $V$ to be irreducible, or $V^\infty$ to be admissible. But, in case $V^\infty$ is admissible, the finite multiplicity of the $K$-types implies that the cochain spaces of the Chevalley-Eilenberg complex are finite dimensional.
\end{remark}
We will need the following immediate lemma that relates cohomology of irreducible representations to the that of a direct integral.
\begin{lemma}\label{lem:d=0 integral}
    Let $(V,\sigma)$ be a unitary representation of $G$, and let $V = \int^\oplus V_t d\mu(t)$ be a direct integral decomposition such that almost every constituent $(\sigma_t,V_t)$ is irreducible. Assume that for $\mu$-almost every $t$ the space of $q$-cocycles in the Chevalley-Eilenberg complex for $V_t^\infty$ is zero. Then $H^q(G;V)=0$.
\end{lemma}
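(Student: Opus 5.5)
The plan is to compute $H^q(G;V)$ through the Chevalley--Eilenberg complex for the relative Lie algebra cohomology of $V^\infty$, and to transport the fibrewise vanishing of cocycles to $V$ by disintegrating cochains. The chain of identifications $H^q(G;V)\cong H^q_d(G;V^\infty)\cong H^q(\mathfrak{g},\mathfrak{k},V^\infty)$ comes from Lemma \ref{lem:smooth and cont cohom} together with the van Est theorem, and neither step needs irreducibility or admissibility. It therefore suffices to show that every $q$-cocycle $f\in \mathrm{Hom}_\mathfrak{k}(\Lambda^q(\mathfrak{g}/\mathfrak{k}),V^\infty)$ vanishes. In fact this shows that the whole cocycle space is zero, which is stronger than $H^q(G;V)=0$.

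First, fix a basis $x_1,\dots,x_n$ of $\mathfrak{g}/\mathfrak{k}$, lifted to $\mathfrak{g}$. A cochain $f$ is then the finite tuple of vectors $f(x_{i_1},\dots,x_{i_q})\in V^\infty$. Each such vector lies in $V$ and so is a measurable section $t\mapsto f(\cdot)_t$ with $f(\cdot)_t\in V_t$. Applying this to all basis multi-indices defines, for almost every $t$, a multilinear alternating map $f_t\colon \Lambda^q(\mathfrak{g}/\mathfrak{k})\to V_t$.

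Second, check that $f_t$ is a cocycle in the Chevalley--Eilenberg complex of $V_t^\infty$ for almost every $t$. This splits into three facts.
\begin{enumerate}
\item Smoothness passes to fibres: for $v\in V^\infty$, almost every $v_t$ lies in $V_t^\infty$, and $(Xv)_t=X v_t$ for $X$ in $\mathfrak{g}$, or in its enveloping algebra.
\item $\mathfrak{k}$-equivariance of $f$ passes to the fibres $f_t$ for the same reason.
\item Formula \eqref{Chevalley-Eilenberg} only involves the action of finitely many elements $x_i$ and the bracket terms, which are linear in $f$. Hence $(df)_t=d(f_t)$ almost everywhere, and $df=0$ gives $d(f_t)=0$ almost everywhere.
\end{enumerate}
Since only countably many vectors and operators are involved, one can discard a single null set.

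The main obstacle is fact (1), the compatibility of smooth vectors and Lie algebra derivatives with direct integrals. I would handle it as follows. If $v\in V^\infty$, then $Xv=\lim_{s\to0}(\sigma(e^{sX})v-v)/s$ in $V$. A subsequence of the difference quotients then converges fibrewise almost everywhere, so $(Xv)_t$ is the weak derivative of $s\mapsto\sigma_t(e^{sX})v_t$ along that subsequence. To upgrade this to genuine smoothness of $v_t$, use that $\Delta^m v\in V$ for the Casimir-type elliptic element $\Delta=-\sum X_j^2$ of $U(\mathfrak{g})$, for all $m$. Then $(1+\Delta)^m v_t\in V_t$ almost everywhere, in the weak sense, with countably many $m$. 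By elliptic regularity (Nelson/Goodman characterisation of smooth vectors via the Laplacian) this gives $v_t\in V_t^\infty$.

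Finally, by hypothesis the cocycle space of $V_t^\infty$ is zero for almost every $t$, so $f_t=0$ almost everywhere. Hence each $f(x_{i_1},\dots,x_{i_q})=0$ in $V$, so $f=0$. Thus the $q$-cocycles of $V^\infty$ vanish and $H^q(G;V)=0$.
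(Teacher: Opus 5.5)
Your proposal is correct and follows essentially the same route as the paper: pass to the Chevalley--Eilenberg complex of $V^\infty$, disintegrate a cochain fibrewise, observe that $df=\int^\oplus d(f_t)\,d\mu(t)$, and conclude from the almost-everywhere vanishing of the fibre cocycle spaces that $f=0$. The only difference is in your fact (1): the paper simply cites \cite[Lemma~2]{Arnal76} for the statement that smooth vectors decompose into almost everywhere smooth sections compatibly with the $\mathfrak{g}$-action, whereas you sketch a proof via Nelson's Laplacian. That sketch is sound once your ``weak sense'' step is made precise. For instance, write the resolvent $(1+\overline{\Delta})^{-1}$ as $\sigma(h)$ with $h\in L^1(G)$ by integrating the heat kernel. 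It then decomposes fibrewise, so $v=(1+\overline{\Delta})^{-m}w$ gives $v_t\in\mathrm{Dom}(\overline{\Delta_t}^{\,m})$ almost everywhere.
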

\begin{proof}
    Let $v \in V^\infty$ be a smooth vector. By \cite[Lemma~2]{Arnal76}, $v$ is almost everywhere smooth, i.e it admits a unique decomposition $v = \int^\oplus v_t d\mu(t)$ with $v_t \in V_t^\infty$ almost everywhere. Furthermore, if $X \in \mathfrak{g}$, then $Xv = \int^\oplus Xv_t d\mu(t)$. Given $f \in \mathrm{Hom}_\mathfrak{k}(\Lambda^q (\mathfrak{g}/\mathfrak{k}), V^{\infty})$ we can decompose $f(x_1,...x_q) = \int^\oplus f_t(x_1,...x_q) d\mu(t)$. It follows from Formula \ref{Chevalley-Eilenberg}
    that $d_{q+1}f = \int^\oplus d_{q+1}f_t d\mu(t)$ so $f$ is a cocycle if and only if $f_t$ is a cocycle for $\mu$-almost every $t$. Hence the space of cocycles is zero also in the Chevalley-Eilenberg complex for $V^\infty$, and therefore $H^q(G;V)=0$.
\end{proof}
We now shift our attention to the specific case of $G = \SO^\circ(2k+1,1)$. In this case the maximal compact subgroup $K$ is $\SO(2k+1)$. We use the notations for Langlands classification given in the beginning of \S\ref{sec6}. In our case ${^0}M = SO(2k) \subset K$, and we are interested in a representation $(V,\pi)$ supported on the end of the complementary series whose Langlands quotient at the end is $J_{k-1}$. By \cite[Subsection~VI.4.2]{BorelWallach}, the representation $\sigma_{k-1}$ of ${^0}M$ giving the relevant complementary series is $\sigma_{k-1} \cong \Lambda^{k-1}\C^{2k}$. Fix a direct integral decomposition of $(\pi,V)$:

$$V = \int V_t d\mu(t).$$

For almost every $t$, $V_t$ is isomorphic to a representation of the form $I(\sigma_{k-1},s)$ on the complementary series

We will need the following lemma on the dimensions of the cochain spaces for representations on our complementary series. We fix a representation of $V_s$ on the complementary series of $\sigma_{k-1}$. Namely, $V_s=I(\sigma_{k-1},s)$ for some $s$. Denote by $C_q$ the space of $q$-cochains in the Chevalley-Eilenberg complex for the representation $V_s$. That is, $C_q=\mathrm{Hom}_\mathfrak{k}(\Lambda^q \mathfrak({g}/\mathfrak{k}), V_s^\infty)$

\begin{lemma} \label{lem:one dimensional}
    We have that $\mathrm{dim}(C_q)=1$ for $k-1 \leq q \leq k+2$, and $\mathrm{dim}(C_q)=0$ otherwise.
\end{lemma}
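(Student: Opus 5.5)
Since the Chevalley--Eilenberg cochains are $\mathfrak{k}$-equivariant maps out of $\Lambda^q(\mathfrak{g}/\mathfrak{k})$ and $K$ is connected, I would first rewrite
\[
C_q=\mathrm{Hom}_K\bigl(\Lambda^q\mathfrak{p}_\C, V_s\bigr),
\]
where $\mathfrak{p}=\mathfrak{g}/\mathfrak{k}\cong\R^{2k+1}$ carries the standard representation of $K=\SO(2k+1)$. This reduces the lemma to a multiplicity count: $\dim C_q$ is the multiplicity of $\Lambda^q\C^{2k+1}$ in the $K$-type decomposition of $V_s$. The complex structure enters only through $K$-types, so the answer is independent of $s$.

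Next I would compute the $K$-types of $V_s=I(\sigma_{k-1},s)$. The restriction of the induced representation to $K$ is $\mathrm{Ind}_{{^0M}}^K\sigma_{k-1}$, with ${^0M}=\SO(2k)$ and $\sigma_{k-1}=\Lambda^{k-1}\C^{2k}$. Frobenius reciprocity then gives
\[
\dim C_q=\dim\mathrm{Hom}_{\SO(2k)}\bigl(\Lambda^q\C^{2k+1}|_{\SO(2k)},\Lambda^{k-1}\C^{2k}\bigr).
\]
Finally I would use the branching rule $\Lambda^q\C^{2k+1}|_{\SO(2k)}=\Lambda^q\C^{2k}\oplus\Lambda^{q-1}\C^{2k}$. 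As $\SO(2k)$-modules, $\Lambda^j\C^{2k}\cong\Lambda^{2k-j}\C^{2k}$, and these are irreducible for $j\ne k$, while $\Lambda^k$ splits into two irreducible summands, the self-dual and anti-self-dual parts. So $\Lambda^{k-1}\C^{2k}$ is irreducible and isomorphic exactly to $\Lambda^{k-1}$ and $\Lambda^{k+1}$, and to no other $\Lambda^j$. Each $\Lambda^q\C^{2k+1}$ is irreducible for $\SO(2k+1)$ when $0\le q\le 2k+1$.

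The count then goes as follows. The term $\Lambda^q$ contributes exactly when $q\in\{k-1,k+1\}$, and the term $\Lambda^{q-1}$ contributes exactly when $q\in\{k,k+2\}$. These two sets are disjoint, so for each $q$ at most one summand matches. This gives $\dim C_q=1$ for $k-1\le q\le k+2$ and $\dim C_q=0$ otherwise.

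I expect the main obstacle to be justifying that $V_s^\infty$ may be replaced by its $K$-finite vectors, so that $\mathrm{Hom}_K$ is computed from the $K$-type multiplicities. This holds because $\Lambda^q\mathfrak{p}_\C$ is finite dimensional: any $K$-equivariant map out of it lands in the $K$-finite vectors. A smaller point to check is the edge case $k=1$, where $\Lambda^k$ for $\SO(2)$ splits into characters; the argument goes through there too.
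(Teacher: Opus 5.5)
Your proposal is correct and follows essentially the same route as the paper: restrict $I(\sigma_{k-1},s)$ to $K$ via the Iwasawa decomposition, apply Frobenius reciprocity down to ${^0M}=\SO(2k)$, use the branching $\Lambda^q\C^{2k+1}\cong\Lambda^q\C^{2k}\oplus\Lambda^{q-1}\C^{2k}$, and finish with Schur's lemma. On the smooth-vector point you raise: to get equality with the $K$-multiplicity you also need the reverse inclusion, namely that $K$-finite vectors of the admissible representation $V_s$ are smooth; the paper leaves this step implicit as well.
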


\begin{proof}
The representation $I(\sigma_{k-1},s)$ is by definition $\Ind_P^G( \sigma_{k-1}\times s)$. The Iwasawa decomposition $G =KP$, together with the fact that and ${^0}M = K \cap P$, implies by Mackey's theorem on induction and restriction that $$\Res^G_K\Ind_P^G(\sigma_{k-1}\times s) \cong \Ind_{{^0}M}^K(\sigma_{k-1}) = \Ind_{^0M}^K(\Lambda^{k-1}\C^{2k}).$$
Note that in our case, $\mathfrak{g}/\mathfrak{k} \cong \R^{2k+1}$ as $K$-representations. We use Frobenius reciprocity, extend scalars to $\C$ and pass from the Lie algebra $\mathfrak t$ to the Lie group $K$:
\begin{align*}
\mathrm{Hom}_\mathfrak{k}(\Lambda^q (\mathfrak{g}/\mathfrak{k}), I(\sigma_{k-1},s)) 
&\cong \mathrm{Hom}_K(\Lambda^{q}(\R^{2k+1}), \Ind_{^0M}^K \Lambda^{k-1}(\C^{2k})) \\
&\cong \mathrm{Hom}_K(\Lambda^{q}(\C^{2k+1}), \Ind_{^0M}^K \Lambda^{k-1}(\C^{2k})) \\
&\cong \mathrm{Hom}_{^0M}(\Res_{^0M}^K \Lambda^{q}(\C^{2k+1}), \Lambda^{k-1}(\C^{2k}))
\end{align*}
As a ${^0}M$ representation, $\C^{2k+1} \cong \C^{2k}\oplus \C$ where the first summand is the standard representation and the second is trivial. Therefore, as $^0M$ representations:
$$\Lambda^q(\C^{2k+1}) \cong \Lambda^q(\C^{2k})\oplus\Lambda^{q-1}(\C^{2k})$$
and all in all we have
$$\mathrm{Hom}_K(\Lambda^q (\mathfrak{g}/\mathfrak{k}), I(\sigma_{k-1},s)) \cong \mathrm{Hom}_{^0M}(\Lambda^q(\C^{2k})\oplus\Lambda^{q-1}(\C^{2k}), \Lambda^{k-1}(\C^{2k}))$$
It is well known that as a $^0M =\SO(2k)$ representation, $\Lambda^q(\C^{2k})$ is irreducible for all $q \neq k$, and that $\Lambda^k(\C^{2k})$ and splits into two irreducible components which are not isomorphic to any other $\Lambda^{q}(\C^{2k})$. Moreover $\Lambda^q(\C^{2k}) \cong \Lambda^{q'}(\C^{2k})$ if and only if $q=q'$ or $q=2k-q'$. The claim now follows from Schur's Lemma.
\end{proof}

\begin{proof}[Proof of Theorem \ref{greatQuestion}]
    Since the representation $I(\sigma_{k-1},s)$ is not cohomological, its Chevalley-Eilenberg complex must be exact. By Lemma \ref{lem:one dimensional} the complex is non-zero only in degrees $k-1,k,k+1$ and $k+2$ where it is one dimensional. It is easily seen that in such an exact sequence the differential $d_{k+2}$ connecting degrees $k+1$ and $k+2$ must be a bijection, and in particular the space of $k+1$-cocycles is zero. So, in the decomposition $V=\int^\oplus V_t d\mu(t)$, for almost every $t$ the space of $k+1$-cocycles in the corresponding Chevalley-Eilenberg complex is zero. The result now follows from Lemma \ref{lem:d=0 integral}.
\end{proof}
\begin{proof}[proof of Theorem \ref{thm:neg answer sl2C}]
    This follows from Theorem Theorem \ref{thm:cohomological dual of SO(2k+1,1)} and Theorem \ref{greatQuestion}.
\end{proof}
\subsection{The case of SU(n,1)}
Similarly to $\SO^\circ(n,1)$, the cohomological dual of $G=\mathrm{SU}(n,1)$ is understood very well. The representation theoretic results described in \S\ref{subsec:6.1} apply to this case as well, and the classification of the unitary representations is mostly due to Kraljevi\'c \cite{Krlajevic}. The description of the cohomological representations given here is taken, as in the case of $\SO^\circ(n,1)$, mostly from Borel-Wallach \cite[Section~VI.3-VI.4]{BorelWallach}.

\begin{theorem}\label{thm:cohomological dual of su(n,1)}
    Let $G=\mathrm{SU}(n,1)$. The cohomological dual of $G$ is:
    \[
    \widehat{G}_\mathrm{cohom}=\{J_{i,j} \mid 0 \leq i+j \leq n-1 \} \cup \{D_i \mid 0 \leq i \leq n\}
    \]
    The representations $J_{i,j}$ are non-tempered, and the representation $D_i$ are discrete series representations. $J_{0,0}$ is the trivial representation.
    \begin{enumerate}
        \item $H^q(G;J_{i,j}) \cong \mathbb{C}$ if $q=i+j+2l$, $0 \leq l \leq n-i-j$ and $H^q(G;J_{i,j})=0$ otherwise.
        \item $H^q(G;D_i) \cong \C$ if $q=n$, and $H^q(G;D_i)=0$ otherwise.
        \item If $i+j < n-2$, then $J_{i,j}$ is a Langlands quotient appearing at the end of a certain complementary series. The other endpoints of this complementary series are $J_{i+1,j}$, $J_{i,j+1}$ and $J_{i+1,j+1}$. 
        \item If $i+j = n-2$, then $J_{i,j}$ is a Langlands quotient appearing at the end of a certain complementary series. The other endpoints of this complementary series are $J_{i+1,j}$, $J_{i,j+1}$ and $D_{i+1}$.
        \item If $i+j = n-1$, then $J_{i,j}$ is a Langlands quotient appearing at the end of a certain complementary series. The other endpoints of this complementary series are $D_{i+1}$ and $D_{i}$. 
        \item If $\pi_n$ is a sequence of representations of $G$ that converges to a cohomological representation, then almost all of the $\pi_n$ belong to one of the above-mentioned complementary series (namely, a complementary series whose Langlands quotient at the end is $J_{i,j}$ for some $0 \leq i+j \leq n-1$)
    \end{enumerate}
\end{theorem}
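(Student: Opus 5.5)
This statement is, like Theorems \ref{thm:cohomological dual so(2k,1)} and \ref{thm:cohomological dual of SO(2k+1,1)}, essentially a compilation of known results, and I would prove it the same way: cite Borel--Wallach and Kraljevi\'c for the explicit data, and argue item $(6)$ by the infinitesimal character argument used for $\SO^\circ(2k,1)$.

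\textbf{Step 1: the list of representations and items $(1)$--$(2)$.} By Vogan--Zuckerman, specialized in \cite[VI.4]{BorelWallach}, the cohomological representations of $\mathrm{SU}(n,1)$ correspond to $\theta$-stable parabolics. These give the Langlands quotients $J_{i,j}$, indexed by pairs $(i,j)$ with $i+j\le n-1$, and the $n+1$ discrete series $D_i$ with trivial infinitesimal character. For $J_{i,j}$ the cohomology is that of the compact dual $\mathbb{CP}^{n-i-j}$ shifted by $i+j$. This gives one class in each degree $i+j+2l$ for $0\le l\le n-i-j$, which is item $(1)$. Each $D_i$ contributes exactly one class, in the middle degree $n$, which is item $(2)$.

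\textbf{Step 2: the complementary series, items $(3)$--$(5)$.} Write $J_{i,j}=J(\sigma_{i,j},s_{i,j})$, where $\sigma_{i,j}$ is the ${}^0M=\mathrm{U}(n-1)$-type $\Lambda^{i,j}$. The composition series of the induced representation $I(\sigma_{i,j},s_{i,j})$ at the reducibility point $s_{i,j}$ is given by Kraljevi\'c \cite{Krlajevic} and by \cite[VI.4]{BorelWallach}. Its composition factors are obtained by raising $i$, $j$, or both, and are read off from the $K$-type diagram. Wherever a raised index would leave the range $i+j\le n-1$, the corresponding factor is replaced by a discrete series representation. This produces the three patterns in items $(3)$, $(4)$ and $(5)$. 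By \S\ref{subsec:6.1}, the reducibility point is the end of the complementary series, and as $s\to s_{i,j}$ the representations $J(\sigma_{i,j},s)$ converge to all of these subquotients.

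\textbf{Step 3: item $(6)$.} This is the only step needing an argument beyond citation, and I expect it to be the main obstacle, since it depends on Vogan's isolation theorem. Let $\pi_m\to\pi$ with $\pi$ cohomological. By \cite[Theorem~3]{voganIsol}, after passing to a subsequence, almost all $\pi_m$ lie in a single family $\mathrm{Ind}(\sigma\times\chi_m)$, and $\pi$ is a composition factor of the limit $\mathrm{Ind}(\sigma)$.

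First rule out tempered convergence. The tempered cohomological representations are exactly the discrete series $D_i$, and these are isolated in the tempered dual. Hence the family must be a complementary series, and $\pi$ lies at its end.

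Next identify which complementary series occur. Every composition factor of $\mathrm{Ind}(\sigma)$ shares the infinitesimal character of $\pi$, which is the trivial one. By \cite[VI.3.2--VI.3.4]{BorelWallach}, $\widehat{G}_{\mathrm{cohom}}$ consists precisely of the unitary representations with the trivial infinitesimal character and trivial central character. So the Langlands quotient at the end of the series is itself cohomological, i.e. equal to some $J_{i,j}$, and the series is one of those listed in items $(3)$--$(5)$.

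The one point that needs care is the central character. The center of $\mathrm{SU}(n,1)$ is nontrivial, so I have to check that in the limit the central character of the series matches that of $\pi$. This holds because the central character is constant along the family $\mathrm{Ind}(\sigma\times\chi_m)$, and all its composition factors share it.
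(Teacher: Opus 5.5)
Your proposal follows the paper's proof. Items $(1)$--$(5)$ are taken from Borel--Wallach's Theorem VI.4.11 and its proof, and item $(6)$ is the same argument the paper uses for $\SO^\circ(2k,1)$: Vogan's convergence theorem, isolation of the discrete series in the tempered dual, and the fact that all composition factors of $\mathrm{Ind}(\sigma)$ share an infinitesimal character. Your remark that the central character is constant along the family makes explicit a step the paper leaves tacit.

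One small caution about the heuristic in your Step 2 (``replace each out-of-range factor by a discrete series''). It does not literally reproduce item $(5)$: there the would-be factor indexed by $(i+1,j+1)$ does not occur at all, and only $D_{i+1}$ and $D_i$ appear. Since you rely on the citation for the composition series, this does not affect the argument.
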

\begin{proof}
    The identification of the representations, as well as $(1)$ and $(2)$, are \cite[Theorem~VI.4.11]{BorelWallach}, and items $(3)-(5)$ follow from the proof of \cite[Theorem~VI.4.11]{BorelWallach}. Item $(6)$ is proved in the same way as the analogues result for $\SO^\circ(n,1)$.
\end{proof}
Theorem \ref{main1} and Theorem \ref{thm:cohomological dual of su(n,1)} imply the following:
\begin{corollary}
    Let $\pi$ be a representation of $G$ such that $\torH^q(G;\pi) \ne 0$ for some $q > 0$. Then, $\pi$ contains a subrepresentation supported on the end of a complementary series whose Langlands quotient at the end is one of the representations $J_{i,j}$.
\end{corollary}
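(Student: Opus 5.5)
The plan is to combine Theorem~\ref{main1} with the description of the topology near $\widehat{G}_{\mathrm{cohom}}$ in Theorem~\ref{thm:cohomological dual of su(n,1)}, and then localize the measure class of $\pi$ to extract a subrepresentation with the required support.

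First, suppose $\torH^q(G;\pi)\neq 0$ with $q>0$. By Theorem~\ref{thm: reduced cohom} we have $\torH^q(G;\pi)=\torH^q(G;\mathring\pi)$, so we may replace $\pi$ by $\mathring\pi$ and assume $\pi$ has no irreducible cohomological subrepresentation. Theorem~\ref{main1} then produces some $\rho\in\mathrm{supp}(\pi)'\cap\widehat{G}_{\mathrm{q-cohom}}$.

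Second, since $G$ is type~I and second countable, $\rho\in\mathrm{supp}(\pi)'$ means there is a sequence of distinct $\pi_m\in\mathrm{supp}(\pi)$ converging to $\rho$. By item~(6) of Theorem~\ref{thm:cohomological dual of su(n,1)}, after passing to a subsequence, all $\pi_m$ lie in one complementary series $\{J(\sigma,s)\mid 0<s<s_0\}$ whose end Langlands quotient is some $J_{i,j}$. Because distinct points of this series accumulate only at its ends, and the series is homeomorphic to an interval away from its ends, we have $s_m\to s_0$.

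Third, write $\pi=\int^\oplus\pi_\alpha\,d\mu(\alpha)$ over $\widehat G$ using Theorem~\ref{thm:type I groups}. Let $\pi'$ be the subrepresentation corresponding to the restriction of $\mu$ to the Borel set $C=\{J(\sigma,s)\mid 0<s<s_0\}$. We then check the three conditions of Terminology~\ref{term:supp on comp}.
\begin{enumerate}
\item The support of $\pi'$ is contained in the closure of $C$, which is $C$ together with its endpoints.
\item $\pi'\le\mathring\pi$, so $\pi'$ contains no irreducible cohomological subrepresentation.
\item $\mathrm{supp}(\pi')$ contains points $J(\sigma,s)$ with $s\to s_0$.
\end{enumerate}

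The main obstacle is item~(3). A point $J(\sigma,s_m)$ in $\mathrm{supp}(\pi)$ means every neighborhood of it has positive $\mu$-measure. We must also rule out that this mass sits on representations outside $C$ that accumulate at $J(\sigma,s_m)$. This is excluded because $C$ minus its endpoints is open in $\widehat G$: it is an interval piece of the dual that is not glued to anything else. Hence small neighborhoods of $J(\sigma,s_m)$ are contained in $C$, their $\mu$-mass lies in $\mu|_C$, and so $J(\sigma,s_m)\in\mathrm{supp}(\pi')$. To justify that $C$ is open, I would invoke Vogan's convergence result~\cite[Theorem~3]{voganIsol} as used in item~(4) of the earlier theorems: a sequence converging to an interior point of a complementary series must eventually lie in that series.
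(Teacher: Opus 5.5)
Your proposal is correct and takes essentially the paper's approach: the paper derives the corollary directly from Theorem~\ref{main1} together with item~(6) of Theorem~\ref{thm:cohomological dual of su(n,1)}. Your restriction of the measure class to the interval $C$, with the openness of $C$ justified by Vogan's convergence result, simply spells out the step from essential weak containment of a cohomological point to a subrepresentation supported on the end of one of the listed complementary series. One small correction: extracting a sequence of distinct $\pi_m\to\rho$ needs $\widehat G$ to be $T_1$ (that is, $G$ CCR), not merely type~I, and this does hold here.
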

Theorem \ref{main2} imply that every representation that contains a subrepresentation supported on the end of a complementary series admits non-zero torsion in cohomology. In fact, if a representations $\pi$ is supported on the end of a complementary series whose Langlands quotient at the end is $J_{i,j}$, then for every $i+j+1 \leq q \leq 2n-i-j$, $\pi$ admits torsion cohomology either in degree $q$ or in degree $q+1$. However, the situation of ``four successive degrees'' discussed in \S\ref{subsec:5.2} often occurs, and we can not usually determine whether torsion occurs both in degree $q$ and $q+1$. For the case $\mathrm{SU}(2,1)$ we are able to provide a complete answer,
thus solving Problem~\ref{prob:main}.
\begin{theorem}\label{thm:non-Hausdorff-su21}
    Let $G=\mathrm{SU}(2,1)$. In the notation of Theorem \ref{thm:cohomological dual of su(n,1)}, there are three relevant complementary series -- namely, the ones whose Langlands quotients at the end is $J_{0,0}$, $J_{1,0}$ and $J_{0,1}$. Let $\pi$ be a representation that contains no irreducible cohomological subrepresentations, so that $\bar{H}^*(G;\pi)=0$
    \begin{enumerate}
        \item If a representation $\pi$ is supported at the end of the complementary series whose Langlands quotient at the end is $J_{0,1}$ or $J_{1,0}$, then $\torH^2(G;\pi) \ne 0$ and $\torH^3(G;\pi) \ne 0$, and $H^q(G;\pi)=0$ for $q \ne 2,3$.
        \item If a representation $\pi$ is supported on the end of the complementary series whose Langlands quotient at the end is $J_{0,0}$, then $\torH^q(G;\pi) \ne 0$ for $1 \leq q \leq 4$, and $H^q(G;\pi)=0$ for $q=0$ or $q>4$.
    \end{enumerate}
\end{theorem}
\begin{proof}
    Denote the set of cohomological representations weakly contained in $\pi$ by $A$. $\pi$ was built such that these cohomological representations are not isolated and inseparable by neighborhoods in $\mathrm{supp}(\pi)$. In case $(1)$, the representations in $A$ admit cohomology precisely in degrees $1,2$ and $3$, so that Theorems \ref{main1} and \ref{main2} immediately imply the result. In case $(2)$, $A$ also contains representations that are cohomological in degrees $0$ and $4$. Theorems \ref{main1} and \ref{main2} again imply that the torsion cohomology is not zero in degrees $1$ and $4$. As $\pi$ weakly contains the representation $J_{0,0}$ which is cohomological in degree $2$, it must admit torsion cohomology in degree $2$ or $3$ by Theorem \ref{main2}. But, $\torH^2(G;\pi) \ne 0$ if and only if $\torH^3(G;\pi) \ne 0$ by Poincar\'e duality \ref{thm:Poincare duality}. So both are non-zero.
\end{proof}
\section{Another look at $\PSL_2(\mathbb{C})$}\label{sec7}
In \S\ref{sec6} we completely resolved the question of unitary cohomology for the groups $\SO^\circ(n,1)$. Namely, we answered Problem \ref{prob:main} for these groups. For the case $n=2k$, our Theorems \ref{main1} and \ref{main2} were enough in order to deduce everything. In the case $n=2k+1$, however, our general theorems were not sufficient to determine the degrees of cohomology for a representation supported on the end of a certain complementary series, and we used machinery of relative Lie algebra cohomology to resolve the situation in \S\ref{subsec:6.4}. The cotent of this resolution is Theorem \ref{greatQuestion}. In this section we give a different proof for Theorem \ref{greatQuestion} for the case of $\SO^\circ(3,1)$, that shows a surprising connection to the theory of $3$-manifolds.

We fix $G=\PSL_2(\C)\cong\SO^\circ(3,1)$. The group $G$ has only two cohomological representations. In the notation of \S\ref{sec6} these are:
\begin{enumerate}
    \item $J = J_0 = \C$, the trivial representation which is cohomological in degrees $0$ and $3$.
    \item $I = I_1$, a tempered representation which is cohomological in degrees $1$ and $2$.
\end{enumerate}
The representations $I$ and $J$ both lie at the end of the (unique) complementary series of $G$, the complementary series associated with the trivial representation $1$. In addition, $I$ lies at the beginning of one of the principal series of $G$. The topology around these two representations is sketched in Figure \ref{fig:cohom_reps_SL2C}.
\begin{figure}[htbp]
    \centering
    \begin{tikzpicture}
        \draw (0,0) -- (2,0);
        \draw[dotted] (2,0) -- (2.5, 0.5) node[anchor=west] {$I$};
        \draw[dotted] (2,0) -- (2.5, -0.5) node[anchor=west] {$J$};
        \draw(2.5, 0.5) -- (2.5, 2);
    \end{tikzpicture}
    \caption{The cohomological representations of $\SL_2(\mathbb{C})$.}
    \label{fig:cohom_reps_SL2C}
\end{figure}
For the proof we present in this section, we will construct a unitary $G$-representation~$\pi$ of $G$ which weakly contains the trivial representation and whose second cohomology vanishes. In \S\ref{subsec:7.2} we show that the support of~$\pi$ contains an open neighborhood of the trivial representation. Therefore if $\sigma$ is supported on the end of the complementary series, then we may assume without loss of generality (after maybe passing to a subrepresentation supported on a smaller end) that $\pi$ weakly contains it. It then follows from Proposition \ref{thm:non-Hausdorff is topological} that any representation supported on the complementary series has vanishing second cohomology, and this completes the second proof of Theorem \ref{greatQuestion} for $G$.
\subsection{Constructing the unitary representation}\label{subsec:constructing the unitary rep}

Let $S_g$ be a closed surface of genus~$g \geq 2$ and $N = \pi_1(S_g)$. Let $\phi\colon S_g\to S_g$ be a pseudo-Anosov diffeomorphism, and let $M$ be the mapping torus of $\phi$, that is, the quotient of $S_g\times [0,1]$ by the equivalence relation $(s,0)\sim (\phi(s),1)$ for $s\in S_g$.
The fundamental group of $M$ is \[\Gamma\coloneqq\pi_1(M) = \Z \ltimes N=\langle t\rangle\ltimes N,\] where the generator $t$ of $\Z=\langle t\rangle$ acts on $N$ via the map induced by~$\phi$. By a celebrated theorem of Thurston \cite[Theorem~13.4]{Farb-Margalit} and because $\phi$ is pseudo-Anosov, $M$ admits a hyperbolic structure, which is unique by Mostow's rigidity theorem. Therefore, $\Gamma$ embeds as a uniform lattice into \[G=\PSL_2(\C) \cong \mathrm{Isom}^\circ(\mathbb{H}^3).\]

The intersection of closed curves on $S_g$ induces a natural symplectic form on the first homology $H_1(S_g;\C)\cong \C^{2g}$, which is preserved by $H_1(\phi)$. Via $H_1(\phi)$ the vector space $H_1(S_g;\C)$ becomes a (torsion) $\C[t]$-module. 

By \cite[Theorem~2]{PapadopoulosPseudoAnosov} and the surjectivity of the action of the mapping class group on homology onto $\mathrm{Sp}_{2g}(\Z)$ \cite[Theorem~6.4]{Farb-Margalit}, every symplectic automorphism of $H_1(S_g;\Z)$ can be realized as $H_1(\phi)$ for some pseudo-Anosov diffeomorphism $\phi\colon S_g\to S_g$. Thus, we may and will assume that, for the pseudo-Anosov diffeomorphism~$\phi$ defining the lattice~$\Gamma$ above, the induced automorphism $H_1(\phi;\C)$ is diagonalisable and its eigenvalues $\lambda_1,\dots, \lambda_{2g}$ do not lie on the unit circle. 
We have an isomorphism of $\C[t]$-modules 
\[ H_1(S_g;\C)\cong \bigoplus_{i=1}^{2g} \C[t]/(t-\lambda_i).\]

\begin{definition}\label{def:rep pi}
The unitary $G$-representation~$\pi$ is defined as the unitary induction of the (permutation) representation~$\sigma$ of $\Gamma$ on $\ell^2(\Gamma/N)=\ell^2(\langle t\rangle)$. So~$\pi$ is the quasi-regular $G$-representation on~$V_\pi\coloneqq L^2(G/N)$. 
\end{definition}

\begin{proposition}\label{prop: H2pi=0}
 $H^2(G;V_\pi)=0$.
\end{proposition}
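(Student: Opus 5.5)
We reduce everything to a computation for the discrete group $\Gamma$. Since $\Gamma$ is a uniform lattice in $G$ and $\pi=\mathrm{Ind}_\Gamma^G(\sigma)$, the Shapiro Lemma~\ref{lem: shapiro lemma} gives
\[ H^2(G;V_\pi)\cong H^2(\Gamma;\ell^2(\Gamma/N)). \]
The module $\ell^2(\Gamma/N)$ is induced from $N$ to $\Gamma$. The obvious plan is to use Shapiro again, now for the discrete pair $N<\Gamma$, and get $H^2(N;\C)\cong\C$. That cannot be literally true, because $N$ has infinite index and the $\ell^2$-induction is not the coinduction. So instead we would use the Hochschild--Serre spectral sequence for $1\to N\to\Gamma\to\Z\to 1$ with coefficients $V=\ell^2(\Z)$, on which $N$ acts trivially:
\[ E_2^{p,q}=H^p\bigl(\Z;H^q(N;\C)\otimes\ell^2(\Z)\bigr). \]
Since $\Z$ has cohomological dimension one, only $p=0,1$ occur and the sequence degenerates. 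This yields a short exact sequence
\[ 0\to H^1\bigl(\Z;H^1(N;\C)\otimes\ell^2(\Z)\bigr)\to H^2(\Gamma;V)\to H^0\bigl(\Z;H^2(N;\C)\otimes\ell^2(\Z)\bigr)\to 0. \]
To justify this algebraically, we would instead use the explicit finite free resolution of $\Gamma$ coming from the mapping torus, namely the algebraic mapping cone of $t-\phi_*$ on the cellular chains of $\widetilde S_g$. Since $N$ acts trivially on $V$, the cochain complex for $\Gamma$ is the mapping cone of $t^{-1}\phi^*-1$ acting on $C^*(S_g;\C)\otimes\ell^2(\Z)$. Here $t$ acts on $\ell^2(\Z)$ by the shift, which we denote $\tau$.

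\textbf{The right-hand term.} Since $S_g$ is orientable, $\phi$ acts on $H^2(N;\C)\cong\C$ by $\pm1$, and $\phi$ preserves orientation when we choose it in the mapping class group. The right-hand term is therefore the space of invariants of $\pm\tau$ on $\ell^2(\Z)$. This space is zero, since a shift-invariant $\ell^2$ sequence vanishes.

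\textbf{The left-hand term.} This is the cokernel of $1-\tau\otimes\phi^*$ acting on $\ell^2(\Z)\otimes\C^{2g}$. We diagonalize $\phi^*=H_1(\phi)^{*}$ with eigenvalues $\bar\lambda_i^{\pm1}$, which have modulus $\neq1$ by our choice of $\phi$. On each eigenline the operator becomes $1-\mu\tau$ with $|\mu|\ne1$. Passing to Fourier series, this is multiplication by $1-\mu e^{i\theta}$, which has no zero on the circle. It is therefore invertible and the cokernel vanishes.

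Combining the two vanishing terms gives $H^2(\Gamma;V)=0$, and hence $H^2(G;V_\pi)=0$.

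\textbf{Main obstacle.} The delicate step is making the spectral sequence, or the mapping-cone description, rigorous at the level of \emph{non-reduced} cohomology. We must show that $H^2$ itself vanishes, not only its Hausdorff quotient. Hence we need the closed-range and surjectivity statements, not merely dense range.

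We would handle this by writing the cochain complex explicitly as a finite Hilbert complex, as in \S\ref{sec3}. The mapping cone gives a long exact sequence
\[ \cdots\to H^1(N;\C)\otimes\ell^2(\Z)\xrightarrow{\,1-\tau\phi^*\,}H^1(N;\C)\otimes\ell^2(\Z)\to H^2(\Gamma;V)\to H^2(N;\C)\otimes\ell^2(\Z)\xrightarrow{\,1\mp\tau\,}H^2(N;\C)\otimes\ell^2(\Z)\to\cdots. \]
Here $H^q(N;V)=H^q(N;\C)\otimes\ell^2(\Z)$ holds honestly and topologically, since $N$ is of type $\mathrm{FP}_\infty$ and acts trivially on $V$. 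Exactness of this sequence holds algebraically, so the honest invertibility of $1-\tau\phi^*$ together with the injectivity of $1\mp\tau$ gives $H^2=0$ on the nose.
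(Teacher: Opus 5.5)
Your proposal is correct and follows essentially the same route as the paper. Both use Shapiro's lemma to pass to $H^2(\Gamma;\ell^2(\langle t\rangle))$, then the Hochschild--Serre spectral sequence for $N\hookrightarrow\Gamma\twoheadrightarrow\langle t\rangle$. $E_2^{0,2}$ vanishes because a (signed) shift has no nonzero $\ell^2$-invariants. $E_2^{1,1}$, the cokernel of $1-t$ on $H^1(N;\C)\otimes\ell^2(\langle t\rangle)$, vanishes because $1-\mu\tau$ with $|\mu|\neq 1$ is invertible on $\ell^2$.

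Your mapping-cone/Wang-sequence discussion only makes explicit what the paper takes from Blanc's spectral sequence. Since $\Gamma$ is discrete, this is purely algebraic, so the honest surjectivity of $1-t$ that you establish already gives vanishing of the non-reduced $H^2$.
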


\begin{proof}
    The Shapiro lemma (Lemma \ref{lem: shapiro lemma}) implies that $H^2(G;V_\pi) \cong H^2(\Gamma;\ell^2(\langle t\rangle))$. To show vanishing of the latter we apply the Hochschild-Serre spectral sequence \cite[Theorem~9.1]{Blanc} with respect to the extension $N \hookrightarrow \Gamma \twoheadrightarrow \langle t \rangle$. It suffices to show that $E_2^{p,q}= H^p\bigl(\langle t \rangle; H^q(N;\ell^2(\langle t\rangle))\bigr)=0$ for every $p,q\ge 0$ with $p+q=2$. 

    We have $E_2^{2,0}=0$ as $\langle t\rangle$ has cohomological dimension~$1$. 
    Next consider $E_2^{0,2}$. 
    Since $N$ acts trivially on $\ell^2(\langle t\rangle)$ and $t$ acts trivially on~$H^2(N;\C)=H^2(S_g;\C)$,
    there is an $\C[t]$-isomorphism $H^2\bigl(N;\ell^2(\langle t\rangle)\bigr) \cong H^2(N;\C)\otimes \ell^2(\langle t\rangle)\cong \ell^2(\langle t\rangle)$.
    The latter has no $t$-invariants, so $E_2^{0,2}=0$. 
    Finally, consider $E_2^{1,1}$, which is by duality of the Poincar\'e duality groups~$\langle t\rangle$ and $N=\pi_1(S_g)$ just the $t$-coinvariants of
    \[ H^1\bigl( N, \ell^2(\langle t\rangle)\bigr)\cong H^1\bigl( N;\C\bigr)\otimes \ell^2(\langle t\rangle)\cong H_1(S_g;\C)\otimes \ell^2(\langle t\rangle).\]
    These coinvariants vanish 
    \[ H_1(S_g;\C)\otimes_{\C[t]} \ell^2(\langle t\rangle)\cong \bigoplus_{i=1}^{2g} \C[t]/(t-\lambda_i)\otimes_{\C[t]}\ell^2(\langle t\rangle)=0\]
    because each $\lambda_i\not\in S^1$ and so $t-\lambda_i$ is invertible as a bounded operator on $\ell^2(\langle t\rangle)$.
\end{proof}

\subsection{Determining the support}\label{subsec:7.2}

In this subsection we will show that the support of the representation $\pi$ contains a full neighborhood of the trivial representation. We retain the n  otation from \S\ref{subsec:constructing the unitary rep}.

For $\theta \in \R$, denote by $\chi_\theta$ the character of $\Gamma$ which is pulled back from the character $t^{k} \mapsto e^{2\pi \theta k i}$ of the quotient $\Z \cong \langle t \rangle$. We begin by studying the representations $(W_\theta,\rho_\theta)$ of $G$ defined by $\rho_\theta = \Ind^G_\Gamma \chi_\theta$.

We will work with a concrete model for the induction. Since $\Gamma$ is cocompact in $G$, there exists a measurable section $s\colon G/\Gamma \to G$ with pre-compact image. Let $c\colon G\times G/\Gamma \to \Gamma$ be the associated cocycle, $c(g,x) = s(gx)^{-1}gs(x)$. The cocycle $c$ satisfies the cocycle relation:
$$c(gh,x) = c(g,hx)c(h,x)$$
Note that since $s$ has pre-compact image and the image of $c$ is in $\Gamma$, for any compact $K \subseteq G$ the image of $c|_{K\times G/\Gamma}$ is finite. Given a representation $(H,\sigma)$ of $\Gamma$, the induced representation $\pi=\mathrm{Ind}_\Gamma^G(\sigma)$ is defined on the space $L^2(G/\Gamma,H)$ by:
\begin{equation}\label{eq:cocycle-rep-equation}
    [\pi(g)f](x) = \sigma\bigl(c(g,g^{-1}x)\bigr)f(g^{-1}x).
\end{equation}
In this model, induction of representations defined on the same Hilbert space yields representations defined again on the same Hilbert space. In particular, representations induced from characters of $\Gamma$ are all defined on the space $L^2(G / \Gamma)$. We will apply this to $\rho_\theta$, which are induced from the characters $\chi_\theta$.

Let $K \subseteq G$ be a compact subset, and $\delta > 0$ a positive number. A unit vector~$v$ in a representation $(V, \pi)$ is called \emph{$(K, \delta)$-invariant} if $\|\pi(k)v - v\| < \delta$ for every $k \in K$.
For every open neighborhood $U$ of $1 \in \widehat{G}$ (which we treat as an ``end of the complementary series'') there exist $K$ and $\delta$ such that if a representation $\pi$ admits $(K,\delta)$-invariants, its support must intersect $U$ non-trivially.
So $(K,\delta)$-invariants give a quantitative realization of proximity to the trivial representation in $\widehat{G}$.  For the representation $\rho_0$ -- which is just the quasi-regular representation of $G$ on $L^2(G/\Gamma)$ -- the multiplicity of invariant vectors is $1$. We aim to prove Lemma \ref{lem:multiplicity 1 a.i}, which is an analogue for the multiplicity of $(K,\delta)$-invariants in a representation $\rho_\theta$ for small enough $\theta$. To do this, we first prove the following lemma. It says that when we realize all of the representations $\rho_\theta$ on the same space $L^2(G/\Gamma)$, then $(K,\delta)$-invariants are close to being constant. In order to simplify calculations we assume that the Haar measure is normalized so that $\mathrm{Vol}(G/\Gamma) = 1$.

\begin{lemma}\label{lem:a.i close to invariants}
    For every $\eta > 0$ there exist a compact subset $K \subseteq G$ and $\delta,\varepsilon > 0$ such that, if $f \in L^2(G/\Gamma)$ is a $(K,\delta)$-invariant unit vector for $\rho_\theta$ with $\abs{\theta} < \varepsilon$, then $\int f\ne 0$ and
    \[
        \left\|f-\frac{\int f}{|\int f|}\right\| < \eta.
    \]
\end{lemma}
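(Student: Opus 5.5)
The plan is to reduce everything to the spectral gap of the quasi-regular representation $\rho_0$ on $L^2(G/\Gamma)$. The $\rho_\theta$ for small $\theta$ are then uniformly small perturbations of $\rho_0$ on any fixed compact set.

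\textbf{Step 1: spectral gap for $\rho_0$.} Since $\Gamma$ is a uniform lattice, $L^2_0(G/\Gamma)$ does not weakly contain the trivial representation. Concretely, $L^2(G/\Gamma)$ decomposes discretely into irreducibles with finite multiplicities. The trivial representation is isolated among the constituents of $L^2_0(G/\Gamma)$ because the Laplacian on the compact hyperbolic manifold $M=\Gamma\backslash\mathbb{H}^3$ has $\lambda_1>0$. Consequently there are a compact $K\subseteq G$ and $\delta_0>0$ such that every $g\in L^2_0(G/\Gamma)$ satisfies $\max_{k\in K}\|\rho_0(k)g-g\|\ge \delta_0\|g\|$. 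I expect this to be the only non-formal input, and making it precise (the choice of $K$ and $\delta_0$) is the step I would be most careful with.

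\textbf{Step 2: comparing $\rho_\theta$ with $\rho_0$.} We use the common model \eqref{eq:cocycle-rep-equation} on $L^2(G/\Gamma)$. For $k\in K$ we have
\[ [\rho_\theta(k)f](x)=\chi_\theta\bigl(c(k,k^{-1}x)\bigr)\,[\rho_0(k)f](x). \]
The cocycle $c$ takes only finitely many values on $K\times G/\Gamma$. Let $L$ be the maximal absolute value of the $t$-exponent of these values. Then $|\chi_\theta(c(k,k^{-1}x))-1|\le 2\pi L|\theta|$ pointwise, so
\[ \|\rho_\theta(k)f-\rho_0(k)f\|\le 2\pi L|\theta| \quad\text{for unit } f \text{ and all } k\in K. \]
Hence a $(K,\delta)$-invariant unit vector for $\rho_\theta$ with $|\theta|<\varepsilon$ is $(K,\delta+2\pi L\varepsilon)$-invariant for $\rho_0$.

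\textbf{Step 3: conclusion.} Write $f=\int f+g$ with $g\in L^2_0$; this uses $\mathrm{Vol}(G/\Gamma)=1$, so that the orthogonal projection onto the constants is $f\mapsto\int f$. Both summands are $\rho_0$-invariant subspaces and constants are fixed, so $\rho_0(k)f-f=\rho_0(k)g-g$. Step 1 then gives
\[ \|g\|\le(\delta+2\pi L\varepsilon)/\delta_0. \]
Given $\eta$, choose $\delta,\varepsilon$ so that this bound is less than $\eta/2$ and less than $1/2$.

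Then $|\int f|^2=1-\|g\|^2>0$, so $\int f\neq0$. We also have $1-|\int f|\le\|g\|^2\le\|g\|$. Therefore
\[ \Bigl\|f-\frac{\int f}{|\int f|}\Bigr\|\le\|g\|+\bigl(1-|\textstyle\int f|\bigr)\le 2\|g\|<\eta, \]
as required.
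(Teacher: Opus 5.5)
Your proposal is correct and follows essentially the same route as the paper: the spectral gap of $\rho_0$ on $L^2_0(G/\Gamma)$ (which the paper takes from weak cocompactness of uniform lattices), the uniform closeness of $\rho_\theta$ and $\rho_0$ on $K$ because $c|_{K\times G/\Gamma}$ takes only finitely many values, and the orthogonal decomposition $f=\int f+g$ with $g\in L^2_0(G/\Gamma)$. The differences are cosmetic: you state the spectral gap quantitatively via a constant $\delta_0$, so $K$ can be fixed independently of $\eta$; you sketch that gap through $\lambda_1>0$ instead of citing it; and you bound the final distance by $2\|g\|$ rather than $\sqrt{2}\|g\|$.
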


\begin{proof}
    Put $\eta_0=\min\{1/2,\eta/2\}$. We start by considering the representation $\rho_0$. Denote by $P$ the projection onto $L^2_0(G/\Gamma)$. Since $\Gamma$ is cocompact, it is \emph{weakly cocompact}, namely the restriction of $\rho_0$ to $L^2_0(G/\Gamma)$ does not weakly contain the trivial representation \cite[Corollary~III.1.10]{Margulis}. Thus there exist a compact $K \subseteq G$ and $\delta>0$ such that if $f$ is a $(K,2\delta)$-invariant unit vector, then $\|Pf\| < \eta_0$. Write $m=\int f$. Since the Haar measure is normalized by $\mathrm{Vol}(G/\Gamma)=1$, we have the orthogonal decomposition
    \[
        f=m+Pf,\qquad 1=\|f\|^2=|m|^2+\|Pf\|^2.
    \]
    In particular, $|m|\geq\sqrt{1-\eta_0^2}>0$. Moreover,
    since $Pf$ is orthogonal to the constant functions,
    \[
        \left\|f-\frac{m}{|m|}\right\|^2
        =
        \left\|Pf+\left(m-\frac{m}{|m|}\right)\right\|^2
        =
        \|Pf\|^2+(1-|m|)^2
        \leq
        2\|Pf\|^2
        < \eta^2.
    \]

    Next, we consider the representation $\rho_\theta$ for arbitrary $\theta$, retaining $K$ and $\delta$ that were chosen for $\rho_0$. As explained in the discussion before the lemma, $c|_{K\times G/\Gamma}$ takes only finitely many values. Therefore if $\theta$ is small enough, $|\chi_\theta(c(g,x))-1| <  \delta$ for all $g\in K$ and $x \in G/\Gamma$. Using \eqref{eq:cocycle-rep-equation} for the induced representation, this implies that for $g \in  K$, $f \in L^2(G/\Gamma)$ and $x \in G/\Gamma$, we have:
    $$|[\rho_\theta(g)f](x) - [\rho_0(g)f](x)|< \delta\abs{f(g^{-1}x)}$$
    and therefore $\|\rho_\theta(g)f - \rho_0(g)f\|_2 < \delta \|f\|_2$. We conclude that if $f$ is a $(K,\delta)$-invariant unit vector for $\rho_\theta$, then it is $(K,2\delta)$-invariant for $\rho_0$, and then we already showed that $\int f\ne0$ and $\|f - \frac{\int f}{\abs{\int{f}}}\| < \eta$ as needed.
\end{proof}

This ``multiplicity $1$" result will allow us to control the multiplicity of representations close to the trivial representation in $\mathrm{supp}(\rho_{\theta})$. 
\begin{lemma}\label{lem:multiplicity 1 a.i}
    There exist a neighborhood $U$ of the trivial representation in $\widehat{G}$ and $\varepsilon > 0$ such that for any $\theta \in (-\varepsilon,\varepsilon)$, the representation $\rho_\theta$ is a direct sum of a representation whose support does not intersect $U$ and a unique irreducible unitary representation $\alpha_\theta \in U$.
\end{lemma}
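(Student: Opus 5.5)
Fix $\eta$ small (say $\eta<1/4$) and take $K,\delta,\varepsilon$ as in Lemma~\ref{lem:a.i close to invariants}. Choose a neighborhood $U$ of the trivial representation such that every irreducible $\alpha\in U$ has a $(K,\delta/2)$-invariant unit vector. This is possible because the set of irreducibles admitting $(K,\delta/2)$-invariant unit vectors contains an open neighborhood of $1$ in the Fell topology; that in turn follows from approximating the constant matrix coefficient $1$ uniformly on $K$ by a single matrix coefficient, via Lemma~\ref{lem:irred rep weak cont}. By shrinking $U$ if necessary we may also arrange, by the same reasoning, that any representation $\sigma$ with $\mathrm{supp}(\sigma)\cap U\neq\emptyset$ has a $(K,\delta)$-invariant unit vector. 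Decompose $\rho_\theta$ by Theorem~\ref{thm:type I groups} as a direct integral over $\widehat{G}$ with measure class $\mu_\theta$, and split $\rho_\theta=\rho_\theta^U\oplus\rho_\theta'$ according to $U$ and $\widehat{G}\setminus U$. The claim is then that $\rho_\theta^U$ is irreducible, or at least irreducible after shrinking $U$ suitably.

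The key step is a multiplicity bound. The first goal is to show that $\rho_\theta$ cannot contain two orthogonal unit vectors that are both $(K,\delta)$-invariant. By Lemma~\ref{lem:a.i close to invariants}, each such vector $f_j$ satisfies $\|f_j-c_j\|<\eta$ for some unimodular constant $c_j$. Hence $|\langle f_1,f_2\rangle|\geq 1-2\eta-\eta^2>0$, contradicting orthogonality. Now suppose $\rho_\theta^U$ were not irreducible. Then either $\mu_\theta|_U$ is not concentrated on a single point, so it splits into two pieces of positive measure, or it has an atom with multiplicity at least $2$. In either case $\rho_\theta^U=\sigma_1\oplus\sigma_2$ with both supports meeting $U$. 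Each $\sigma_j$ then has a $(K,\delta)$-invariant unit vector, and the two vectors are orthogonal, which is a contradiction. There is a subtle point here: a piece supported in $U$ may have support only in $\overline{U}$. I would handle this by working with a smaller neighborhood $U'\subset U$ with $\overline{U'}\subset U$, using $U'$ for the splitting and $U$ for the almost-invariance guarantee.

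It remains to show $\rho_\theta^U\neq 0$ for small $\theta$. Here I would use the proof of Lemma~\ref{lem:a.i close to invariants}. The constant function $1$ satisfies $\|\rho_\theta(g)1-1\|<\delta$ for $g\in K$ once $\theta$ is small, since $\chi_\theta\circ c$ is uniformly close to $1$ on $K\times G/\Gamma$. So $\rho_\theta$ is $(K,\delta)$-almost invariant, and its support meets any prescribed neighborhood of $1$ provided $K,\delta$ were chosen appropriately at the start. Therefore $\rho_\theta^U\neq 0$, and by the previous step it is a single irreducible $\alpha_\theta\in U$ with multiplicity one.

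The main obstacle is the bookkeeping of the quantifiers. We need $(K,\delta)$ to be fine enough that almost-invariance forces the support to meet $U$, while at the same time $U$ must be small enough that every representation in it has $(K,\delta)$-invariant vectors. I would settle this by choosing in the order $\eta$, then $(K,\delta,\varepsilon)$ from the lemma, then $U$ from $(K,\delta/2)$, then $U'$. For nonemptiness I would use the explicit near-invariance of the constant function, with a correspondingly finer $(K',\delta')$ adapted to $U'$ and $\varepsilon$ shrunk accordingly.
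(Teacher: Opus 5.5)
Your proof is correct and follows the paper's argument. Lemma~\ref{lem:a.i close to invariants} forbids two orthogonal $(K,\delta)$-invariant unit vectors in $\rho_\theta$, so at most one irreducible constituent, with multiplicity one, can lie in a neighborhood of the trivial representation all of whose points carry such vectors; almost-invariance of the constant function then guarantees that at least one constituent lies there.

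Your explicit order of choices ($U$ inside the set of irreducibles with $(K,\delta/2)$-invariant vectors, then a finer pair $(K',\delta')$ forcing the support to meet the neighborhood, then shrinking $\varepsilon$) makes precise quantifier bookkeeping that the paper's shorter write-up compresses. The auxiliary $U'$ is not needed: by Lemma~\ref{lemma:disintegratipon weakly contained}, almost every constituent of a direct integral is weakly contained in it, so any piece of positive measure on $U$ already has support meeting $U$.
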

\begin{proof}
    By Lemma \ref{lem:a.i close to invariants}, there exist $\delta>0$, $\varepsilon>0$ and $K \subseteq G$ compact such that for every $\theta$ with $\abs{\theta} < \varepsilon$, if $f$ is a $(K,\delta)$-invariant unit vector for $\rho_\theta$, then $\|f - \frac{\int f}{|\int f|}\| < \frac{\sqrt{2}}{2}$. Therefore if $f$ and $g$ are unit vectors that are $(K,\delta)$-invariant for $\rho_\theta$, then after multiplying them by scalars of absolute value $1$ we may assume that $\|f-g\|_2 < \sqrt{2}$. In particular, $f$ and $g$ cannot be orthogonal.
    
    We assume without loss of generality that $K$ is generating. There exists a neighborhood of the trivial representation $U$ in $\widehat{G}$ such that if a representation has $(K,\delta)$-invariant vectors, the support of the representation intersects $U$. If $\varepsilon$ is small enough, and  $\theta\in (-\varepsilon, \varepsilon)$, $\rho_\theta$ will have $(K,\delta)$-invariant vectors so $\mathrm{supp}(\rho_\theta)$ must intersect $U$ non-trivially. Assume that $\mathrm{supp}(\rho_\theta)$ intersects $U$ in more than one point. This means that $\rho_\theta$ has two orthogonal subrepresentations supported in $U$, and in particular one can find two orthogonal $(K,\delta)$-invariant vectors $f,g$ for $\rho_\theta$, which is a contradiction. Therefore $\mathrm{supp}(\rho_\theta) \cap U$ contains at exactly one point with multiplicity one.
\end{proof}

In view of Lemma~\ref{lem:multiplicity 1 a.i}, we get a well-defined map $\alpha\colon (-\varepsilon,\varepsilon) \to U$,
$\theta\mapsto \alpha_\theta$.

\begin{proposition}
The map $\alpha$ is continuous.
\end{proposition}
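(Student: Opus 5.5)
The plan is to show that for $\theta_n\to\theta$ in $(-\varepsilon,\varepsilon)$ we have $\alpha_{\theta_n}\to\alpha_\theta$ in the Fell topology. Since $\alpha_\theta$ is irreducible, by Lemma \ref{lem:irred rep weak cont} it suffices to approximate a single nonzero matrix coefficient of $\alpha_\theta$, uniformly on compacta, by matrix coefficients of $\alpha_{\theta_n}$. All $\rho_\theta$ act on the common space $L^2(G/\Gamma)$ via the cocycle formula \eqref{eq:cocycle-rep-equation}, and everything will be done in this model.

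\textbf{Step 1: $\rho_\theta$ depends strongly continuously on $\theta$.} For each $g$ in a compact set $C\subseteq G$, the cocycle $c|_{C\times G/\Gamma}$ takes finitely many values. Hence $\sup_{g\in C,x}|\chi_{\theta_n}(c(g,x))-\chi_\theta(c(g,x))|\to0$, and so $\|\rho_{\theta_n}(g)-\rho_\theta(g)\|_{\mathrm{op}}\to0$ uniformly for $g\in C$. This is the same computation as in the proof of Lemma \ref{lem:a.i close to invariants}.

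\textbf{Step 2: norm-continuity of the spectral projection onto the $\alpha$-component.} Let $Q_\theta$ be the orthogonal projection onto the $\alpha_\theta$-summand of $\rho_\theta$. Choose a positive function $\varphi\in L^1(G)$ (for instance a normalized bump on the generating compact $K$) such that $\rho(\varphi)$ is close to the identity on $(K,\delta)$-invariant vectors. Choose also a gap such that $\|\beta(\varphi)\|\le 1-\gamma$ for every irreducible $\beta\notin U$. This is possible after shrinking $U$, since $U$ is defined via $(K,\delta)$-invariants.

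Then $\rho_\theta(\varphi)$ has spectrum inside $[-1,1-\gamma]$ together with one point $\lambda_\theta=\alpha_\theta(\varphi)$-eigenvalue near $1$. The $\alpha_\theta$-component contains a vector fixed up to scalar: by Lemma \ref{lem:a.i close to invariants} the near-invariant vectors lie close to constants. By Step 1, $\rho_{\theta_n}(\varphi)\to\rho_\theta(\varphi)$ in norm. Holomorphic functional calculus (a contour separating the isolated part of the spectrum near $1$) then gives $\|E_{\theta_n}-E_\theta\|\to0$, where $E_\theta$ is the spectral projection of $\rho_\theta(\varphi)$ near $1$.

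\textbf{Step 3: identifying $E_\theta$ and concluding.} The main obstacle is that $E_\theta$ is not a priori $Q_\theta$, because $\alpha_\theta(\varphi)$ is an operator, not a scalar. The fix is to work with a single unit vector instead. Let $f_\theta=E_\theta\mathbf{1}/\|E_\theta\mathbf{1}\|$. Its range lies in the $\alpha_\theta$-summand: $E_\theta$ commutes with the decomposition $\rho_\theta=\alpha_\theta\oplus(\text{rest})$, and the rest contributes nothing near $1$ because of the gap. The vector $E_\theta\mathbf{1}$ is nonzero since $\mathbf{1}$ is nearly invariant for $\theta$ small.

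By Step 2, $f_{\theta_n}\to f_\theta$ in norm. Combined with Step 1, this gives that $g\mapsto\langle\rho_{\theta_n}(g)f_{\theta_n},f_{\theta_n}\rangle$ converges uniformly on compacta to $\langle\rho_\theta(g)f_\theta,f_\theta\rangle$. These are diagonal coefficients of $\alpha_{\theta_n}$ and $\alpha_\theta$ respectively, and the latter is nonzero. By Lemma \ref{lem:irred rep weak cont} and irreducibility (a single cyclic coefficient of an irreducible representation determines its Fell neighbourhoods), $\alpha_{\theta_n}\to\alpha_\theta$.
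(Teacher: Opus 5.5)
Your route is different from the paper's and can be made to work, but Step 2 does not justify its central claim. That claim is that the spectrum of $\rho_\theta(\varphi)$ above $1-\gamma$ is a single eigenvalue separated from the rest. Without it there is no separating contour, since a priori $\alpha_\theta(\varphi)$ could have spectrum filling $(1-\gamma,1]$.

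The sentence you offer does not establish this. If ``a vector fixed up to scalar'' means some $v$ with $\alpha_\theta(g)v\in\C v$ for all $g$, it is false for $\theta\neq 0$. In that case $\alpha_\theta$ is a nontrivial irreducible representation of the perfect group $\PSL_2(\C)$, so it has no such vector.

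What is needed is a calibration. Choose $\varphi\geq 0$, symmetric, with $\int\varphi=1$ and density bounded below on a large enough compact set, and choose $\gamma>0$ small, so that in every unitary representation, $\langle\pi(\varphi)v,v\rangle>1-\gamma$ forces the unit vector $v$ to be $(K,\delta)$-invariant. This one calibration does two jobs:
\begin{itemize}
\item Any irreducible $\beta$ for which $\beta(\varphi)$ has spectrum above $1-\gamma$ lies in $U$. So the gap comes from choosing $\varphi,\gamma$ given $U$, not from ``shrinking $U$'', which would only make the requirement harder.
\item Two orthonormal vectors in the spectral subspace of $\rho_\theta(\varphi)$ above $1-\gamma$ would be orthogonal $(K,\delta)$-invariant vectors, which Lemma \ref{lem:a.i close to invariants} rules out.
\end{itemize}
Alternatively, $\alpha_\theta(\varphi)$ is compact because $G$ is CCR, so only finitely many eigenvalues lie above $1-\gamma$ and a separating point can be chosen.

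The bound $\langle\rho_\theta(\varphi)\mathbf{1},\mathbf{1}\rangle>1-\gamma/2$, which places that eigenvalue above $1-\gamma/2$, holds only for $|\theta|$ small relative to $\mathrm{supp}\,\varphi$. So you obtain continuity on a possibly smaller interval. That is harmless for Proposition \ref{prop:pi contains the end of the complementary series}, but you should say it.

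Your final step uses the converse of Lemma \ref{lem:irred rep weak cont}: for irreducible $\pi$, if one nonzero diagonal coefficient is approximable by coefficients of $\rho$, then $\pi\prec\rho$. This holds because that vector is cyclic, and it is standard, but it is not the lemma as stated.

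With these repairs, compare the two proofs. The paper's proof is entirely soft. Continuity of induction gives $\rho_\theta\prec\bigoplus_i\rho_{\theta_i}$. Lemma \ref{weakContain}, together with $\mathrm{supp}(\beta_{\theta_i})\cap U=\emptyset$, leaves $\alpha_\theta\prec\bigoplus_i\alpha_{\theta_i}$, and doing this for every subsequence gives $\alpha_{\theta_i}\to\alpha_\theta$. It needs no operator-norm estimates, spectral calculus or constants, and it works on all of $(-\varepsilon,\varepsilon)$.

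Your argument replaces the qualitative separation $\mathrm{supp}(\beta_\theta)\cap U=\emptyset$ with a quantitative spectral gap for $\rho_\theta(\varphi)$. It exploits a special feature of the cocycle model: $\theta\mapsto\rho_\theta(g)$ is operator-norm continuous, uniformly on compacta. The payoff is a stronger conclusion, namely a norm-continuous family of unit vectors $f_\theta$ in the $\alpha_\theta$-summands whose coefficients vary continuously. The price is the bookkeeping above.
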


\begin{proof}
    For $\abs{\theta}<\varepsilon$ denote by $\rho_\theta = \alpha_\theta \oplus \beta_\theta$ the decomposition given by Lemma \ref{lem:multiplicity 1 a.i}. For a convergent sequence $\theta_i \to \theta$ we need to show that $\alpha_{\theta_i} \to \alpha_\theta$ in $U \subseteq \widehat{G}$. By continuity of induction we have that $\rho_\theta \prec \bigoplus_{i\in \N} \rho_{\theta_i}$ and in particular
    \[
        \alpha_\theta \prec
        \left(\bigoplus_{i\in \N} \alpha_{\theta_i}\right)
        \oplus
        \left(\bigoplus_{i\in \N} \beta_{\theta_i}\right).
    \]
    By Lemma \ref{weakContain}, $\alpha_\theta$ is weakly contained in one of the two summands. Since $\beta_{\theta_i}$ is supported outside of $U$ for all $i$, $\alpha_\theta$ cannot be weakly contained in the second summand. Therefore
    \[
        \alpha_\theta \prec \bigoplus_{i\in \N} \alpha_{\theta_i}.
    \]
    The same argument applies to every subsequence of $(\theta_i)$, because every subsequence still converges to~$\theta$. Hence $\alpha_\theta$ lies in the closure of every subsequence of $(\alpha_{\theta_i})$. Since, after shrinking $U$ if necessary, we may regard $U$ as an interval in the complementary series, this is equivalent to $\alpha_{\theta_i}\to\alpha_\theta$.
\end{proof}

Using the continuity of this map we can finally show that the support of $\pi$ contains an open neighborhood of the trivial representation.

\begin{proposition} \label{prop:pi contains the end of the complementary series}
    The support of the representation $\pi=\Ind_\Gamma^G \sigma$ contains a neighborhood of the trivial representation.
\end{proposition}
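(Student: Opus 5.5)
The plan is to write $\pi$ as a direct integral of the $\rho_\theta$ and then use the continuity of $\alpha$ to sweep out a neighborhood of the trivial representation.

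\textbf{Step 1: disintegrating $\pi$.} The representation $\sigma$ of $\Gamma$ on $\ell^2(\langle t\rangle)$ is pulled back from the regular representation of $\Z=\langle t\rangle$. By Fourier transform, $\sigma\cong\int_{\R/\Z}^{\oplus}\chi_\theta\,d\theta$. Unitary induction commutes with direct integrals, so
\[
\pi=\Ind_\Gamma^G\sigma\cong\int_{\R/\Z}^{\oplus}\rho_\theta\,d\theta .
\]
This can be checked concretely in the cocycle model \eqref{eq:cocycle-rep-equation}, where both sides live on $L^2(G/\Gamma)\otimes L^2(\R/\Z)$.

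\textbf{Step 2: locating $\alpha_\theta$ in $\mathrm{supp}(\pi)$.} By Lemma \ref{lemma:disintegratipon weakly contained}, $\pi$ is weakly equivalent to $\{\rho_\theta:\theta\in Y\}$ for some conull set $Y$. Hence $\rho_\theta\prec\pi$ for a dense set of $\theta$. Using continuity of induction ($\rho_\theta\prec\bigoplus_i\rho_{\theta_i}$ whenever $\theta_i\to\theta$) together with closedness of supports, we get $\rho_\theta\prec\pi$ for every $\theta$. In particular $\alpha_\theta\in\mathrm{supp}(\pi)$ for all $|\theta|<\varepsilon$, with $\alpha_\theta$ as in Lemma \ref{lem:multiplicity 1 a.i}. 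Since $\mathrm{supp}(\pi)$ is closed, it therefore contains the closure of $\alpha((-\varepsilon,\varepsilon))$.

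\textbf{Step 3: the image of $\alpha$ is a neighborhood of $1$.} The map $\alpha$ is continuous, and $\alpha_0=1$ because the trivial representation is the unique point of $U$ in $\rho_0=L^2(G/\Gamma)$. After shrinking $U$, the set $U$ is an initial segment $[0,s_0)$ of the complementary series, glued to its endpoints $\{J,I\}$ at $s=0$. Once we know $\alpha$ is non-constant near $0$, connectedness and the intermediate value theorem show that the image contains an interval $\{J(1,s):s_0-\eta<s\le s_0\}$ accumulating at the end. Together with the closure from Step 2, this gives a neighborhood of the trivial representation inside $\mathrm{supp}(\pi)$, since such a neighborhood consists of a tail of the complementary series plus possibly $I$.

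\textbf{Main obstacle: showing $\alpha_\theta\neq 1$ for small $\theta\neq 0$.} Every irreducible $\alpha$ near $1$ other than $1$ lies on the complementary series, so this is what makes $\alpha$ non-constant. It suffices to show $\rho_\theta^G=0$ for $\theta\neq0$. A $G$-invariant vector $f$ of $\rho_\theta$ satisfies $f(g^{-1}x)=\chi_\theta(c(g,g^{-1}x))^{-1}f(x)$. By Lemma \ref{lem:a.i close to invariants} such an $f$ is nearly constant, hence of nearly constant modulus; ergodicity then forces $|f|$ to be constant. One can then check that $f$ defines a $\Gamma$-equivariant map to $\C$ twisted by $\chi_\theta$, i.e. $\chi_\theta$ would be trivial, a contradiction. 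Equivalently, $H^0(\Gamma;\chi_\theta)=0$ by Shapiro's lemma (Lemma \ref{lem: shapiro lemma}). Thus $\alpha_\theta\ne 1$ for $\theta\ne0$. One also needs $\alpha$ to approach the end of the series as $\theta\to0$, which follows from continuity at $0$. I expect the delicate point to be the topological identification of $U$ with $[0,s_0)\cup\{\text{ends}\}$, which is needed to apply the intermediate value argument in the non-Hausdorff space $\widehat G$.
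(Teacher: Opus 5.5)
Your proposal is correct and follows essentially the paper's argument: $\rho_\theta\prec\pi$ for all small $\theta$, continuity of $\alpha$ with $\alpha_0$ trivial and $\alpha_\theta$ nontrivial for $\theta\neq 0$, and the intermediate value theorem on $U\cong[0,s_0)$. There are two differences. The paper obtains $\rho_\theta\prec\pi$ directly from $\chi_\theta\prec\sigma$ (the regular representation of $\Z$ weakly contains every character) and continuity of induction, so it needs no direct integral; and the paper only asserts $\alpha_\theta\neq J$ for $\theta\neq 0$, which your Frobenius reciprocity argument $\rho_\theta^G\cong H^0(\Gamma;\chi_\theta)=0$ justifies cleanly.
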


\begin{proof}
    Let $U$ and $\varepsilon>0$ be as in Lemma~\ref{lem:multiplicity 1 a.i}. Shrinking $U$ if necessary, we may assume that the part of $U$ at the trivial representation is precisely a small segment of the complementary series together with its endpoint. More precisely, by the description of the Fell topology on the complementary series recalled in \S\ref{subsec:6.1}, there is $s_0>0$ and a homeomorphism $U\cong [0,s_0)$, where $0$ corresponds to the trivial representation.
    By continuity of~$\alpha$, $\alpha_\theta\ne J$ for $\theta\ne 0$ and the intermediate value theorem we obtain that the image of $\alpha$ contains a subinterval at the end of the complementary series.

    For every $\theta$ the character $\chi_\theta$ is weakly contained in $\sigma$, because $\sigma$ is the pullback to $\Gamma$ of the regular representation of $\Z$. Hence $\rho_\theta=\Ind_\Gamma^G \chi_\theta$ is weakly contained in $\pi=\Ind_\Gamma^G\sigma$. In particular, for small enough $\theta$, we have $\alpha_\theta \prec \pi$. This finishes the proof.
\end{proof}

We conclude with the second proof of Theorem \ref{greatQuestion} for $\PSL_2(\C)$.
\begin{proof}[proof of Theorem \ref{greatQuestion} for $\PSL_2(\C)$]
    Let $\sigma$ be a representation supported on the end of the complementary series, and $\pi$ the representation from Definition \ref{def:rep pi}. By Proposition \ref{prop:pi contains the end of the complementary series} the support of $\pi$ a neighborhood of the trivial representation. Thus, any representation supported in a small enough neighborhood of the trivial representation is weakly contained in $\pi$, and in particular we may assume without loss of generality that $\sigma \prec \pi$. By Proposition \ref{prop: H2pi=0}, $H^2(G;\pi)=0$ and therefore Theorem \ref{thm:non-Hausdorff is topological} yields that $H^2(G;\sigma)=0$, and this finishes the proof.
\end{proof}
 
\bibliographystyle{amsplain}
\bibliography{bibliography}
\end{document}